\def\@tocline#1#2#3#4#5#6#7{\relax
  \ifnum #1>\c@tocdepth % then omit
  \else
    \par \addpenalty\@secpenalty\addvspace{#2}%
    \begingroup \hyphenpenalty\@M
    \@ifempty{#4}{%
      \@tempdima\csname r@tocindent\number#1\endcsname\relax
    }{%
      \@tempdima#4\relax
    }%
    \parindent\z@ \leftskip#3\relax \advance\leftskip\@tempdima\relax
    \rightskip\@pnumwidth plus4em \parfillskip-\@pnumwidth
    #5\leavevmode\hskip-\@tempdima
      \ifcase #1
       \or\or \hskip 1em \or \hskip 2em \else \hskip 3em \fi%
      #6\nobreak\relax
    \dotfill\hbox to\@pnumwidth{\@tocpagenum{#7}}\par
    \nobreak
    \endgroup
  \fi}
\numberwithin{equation}{section}
\theoremstyle{plain}{}
\newtheorem{theorem}{Theorem}[section]
\newtheorem*{thintro}{Theorem}
\newtheorem{fact}{Fact}
\newtheorem{coro}[theorem]{Corollary}
\theoremstyle{plain}
\newtheorem{prop}[theorem]{Proposition}
\newtheorem{propdefi}[theorem]{Proposition-Definition}
\newtheorem{lemma}[theorem]{Lemma}
\newtheorem*{lemmaintro}{Lemma}
\theoremstyle{remark}
\newtheorem{rema}[theorem]{Remark}
\newtheorem{defi}[theorem]{Definition}
\def\orange#1{\textcolor{Orange}{#1}}
\def\green#1{\textcolor{Green}{#1}}
\def\A{{\mathcal A}}
\def\C{{\mathbb  C}}
\def\Corps{\mathfrak F}
\def\G{{\rm G}}
\def\I{{{\tilde{\rm I}}}}
\def\KK{{\rm  K}}
\def\K{\KK}
\def\M{{\rm M}}
\def\N{{\mathbb N}}
\def\m{{ M}}
\def\mm{{\mathfrak m}}
\def\c{\circ}
\def\P{{\rm P}}
\def\R{{\rm R}}
\def\T{{\rm T}}
\def\V{{\rm V}}
\def\W{{\rm W}}
\def\X{{\rm X}}
\def\Z{{\mathbb Z}}
\def\Ap{\mathscr{A}}
\def\Aa{\mathcal{A}}
\def\Bb{\EuScript{B}}
\def\B{{\rm B}}
\def\Cute{\mathscr C}
\def\Dd{\EuScript{D}}
\def\Hh{{\tilde {\rm H}}}
\def\HhI{{\rm H}}
\def\Oo{\mathfrak{O}}
\def\OO{{\mathcal O}}
\def\Pp{\EuScript{P}}
\def\Uu{\EuScript{U}}
\def\Xx{\mathcal{X}}
\def\Zz{\mathcal{Z}}
\def\lp{\langle}
\def\rp{\rangle}
\def\Hom{{\rm Hom}}
\def\Im{{\rm Im}}
\def\t{{\mathbf \tau}}
\def\id{{\rm id}}
\def\Ind{{\rm Ind}}
\def\ind{{\rm ind}}
\def\1{{\mathbf 1}}
\def\Mod{{\rm Mod}(\H)}
\def\H{ {\tilde{\mathfrak H}}}
\def\Wf{\mathfrak W}
\newcommand\cal{\mathcal}
\def\fq{{\mathbb F}_q}
\def\kb{\mathbf{k}}
\def\val{\operatorname{\emph{val}}}
\def\Mod{{\rm Mod}}
\def\Htg{\tilde{\rm H}_\Z}
\def\k{k}
\def\Iw{{\rm  I}}
\def\Tp{{\rm T}}
\def\Gp{ { \rm G}}
\def\thg{{\Theta}}
 \theoremstyle{remark}%
  \newtheorem*{exa}{Example}%
\def\bf{{}_F\Bb_{F'}^+}
\def\n{N}
\definecolor{webblue}{rgb}{0, 0.7, 0.5}
\definecolor{webred}{rgb}{0.2, 0.3, 0.6} 
\title{Compatibility between  Satake and Bernstein-type isomorphisms in characteristic $p$}
\date{November 15, 2013}
\author{Rachel Ollivier}
\address{Columbia University, Mathematics, 2990 Broadway, New York, NY 10027}
\email{ollivier@math.columbia.edu}
\keywords{}
\subjclass[2010]{20C08, 22E50}
\begin{document}

\maketitle

\begin{abstract} We study the center of the pro-$p$ Iwahori-Hecke  ring $\Hh_\Z$ of a connected split $p$-adic reductive group $\G$. 
 For $k$ an algebraically closed field with characteristic $p$, we 
 prove 
 that  the center of the $k$-algebra $\Hh_\Z\otimes_\Z k$  contains an affine semigroup algebra 
 which   is naturally isomorphic to the Hecke  $k$-algebra
$\cal H(\Gp, \rho)$ attached to an irreducible smooth $k$-representation $\rho$ of a given hyperspecial maximal compact subgroup of $\Gp$. 
This isomorphism is obtained using the inverse Satake isomorphism defined in \cite{invsatake}.

We apply this to  classify the simple supersingular  $\Hh_\Z\otimes_\Z k$-modules, study the supersingular block in the category of  finite length  $\Hh_\Z\otimes_\Z k$-modules,  and relate the latter to supersingular representations of $\Gp$.

\end{abstract}

\setcounter{tocdepth}{2} 

\tableofcontents

\section{Introduction}

The Iwahori-Hecke ring of a split $p$-adic reductive group $\Gp$ is  the convolution ring  of  $\Z$-valued functions with compact support in $\Iw\backslash \Gp
/\Iw$ where $\Iw$ denotes an Iwahori subgroup of $\Gp$.  It is isomorphic to the quotient of the
extended braid group  ring associated to $\Gp$ by  quadratic relations in the standard generators.
If one replaces $\Iw$ by its pro-$p$ Sylow subgroup $\I$, then one obtains    the pro-$p$ Iwahori-Hecke ring $\Hh_\Z$.
In this article we study the center of $\Hh_\Z$.
We are motivated by the smooth representation theory of $\Gp$ over an algebraically closed field $k$ with characteristic $p$ and subsequently 
will be  interested in the  $k$-algebra $\Hh_k:=\Hh_\Z\otimes_\Z k$. 
We construct   an isomorphism of $k$-algebras  between a subring of the center of $\Hh_k$ and (generalizations of) spherical Hecke $k$-algebras by means of the inverse mod $p$ Satake isomorphism  defined  in \cite{invsatake}. 
This result is the \emph{compatibility between Bernstein and Satake isomorphisms} referred to in the title of this article.
We then explore some consequences of this compatibility. In particular, we  study and relate the notions of supersingularity for Hecke modules and $k$-representations of $\G$.

\subsection{Framework and results} Let  $\Corps$ be a nonarchimedean locally compact field  with residue characteristic $p$ and $k$  an algebraic closure of the residue field.    We choose a uniformizer
$\varpi$. %We consider the group $\Gp={\rm GL}_3(\Corps)$  and its smooth representations with c\oe fficients in an algebraically closed field with caracteristic $p$. We will denote by
Let $\Gp := \mathbf{G}(\mathfrak{F})$ be the group of $\mathfrak{F}$-rational points of a connected reductive group $\mathbf{G}$ over $\mathfrak{F}$ which we assume to be $\mathfrak{F}$-split. 
 In  the semisimple  building $\mathscr {X}$ of $\Gp$, we choose and  fix a chamber $C$
 which amounts to choosing an Iwahori subgroup $\Iw$ in $\Gp$, and we denote by
$\I$ the pro-$p$ Sylow subgroup of $\Iw$.  The choice  of $C$ is unique up to conjugacy by an element of $\Gp$. We consider the associated pro-$p$ Iwahori-Hecke ring $\Hh_\Z:=\Z[\I\backslash \Gp/ \I]$
  of  $\Z$-valued functions with compact support in $\I\backslash \Gp
/\I$ under convolution.

\medskip
Since $\G$ is split, $C$ has at least one  hyperspecial vertex    $x_0$ and we denote by $\K$ the  associated maximal compact subgroup of $\Gp$.
Fix a maximal $\Corps$-split torus $\T$ in $\Gp$ such that
the corresponding  apartment $\Ap$ in $\mathscr{X}$ contains $C$.  
The set  $\X_*(\T)$ of  cocharacters of $\T$ is naturally equipped with an action of  the finite Weyl group $\Wf$. 
The choice of $x_0$ and of $C$ induces a natural
 choice of a positive Weyl chamber of $\Ap$ that is to say of a semigroup $\X^+_*(\T)$ of dominant
cocharacters  of $\T$.

\subsubsection{\label{classi} The complex case} The structure of  the spherical algebra $\C[\K\backslash \Gp/ \K]$ of  complex functions compactly supported  on $\K\backslash \Gp/ \K$
  is understood 
  thanks to 
 the classical Satake isomosphism (\cite{Sat}, see also \cite{Gross}, \cite{Haines})
$$s: \C[\K\backslash \Gp/\K]\overset{\simeq}\longrightarrow (\C[\X_*(\T)])^\Wf.$$
On the other hand, 
the complex
Iwahori-Hecke algebra $\HhI_\C:=\C[\Iw\backslash \Gp/ \Iw]$ of  complex functions compactly supported on  $\Iw\backslash \Gp/ \Iw$ contains
a large commutative subalgebra $\Aa_\C$ defined as  the image of the \emph{Bernstein map}  $\theta: \C[\X_*(\Tp)]\hookrightarrow \HhI_\C$  which depends
on the choice of the dominant Weyl chamber (see \cite[3.2]{Lu}).
The algebra $\HhI_{\C}$  is free of finite rank over $\Aa_\C$ and 
its center $\Zz(\HhI_{\C})$   is contained in $\Aa_\C$. Furthermore, the map $\theta$  yields  an isomorphism 
$$b: (\C[\X_*(\T)])^\Wf\overset{\simeq}\longrightarrow \cal Z(\HhI_{\C}).$$
This was proved  by Bernstein     (\cite[3.5]{Lu}, see also \cite[Theorem 2.3]{Haines}).
By  \cite[Corollary 3.1]{Dat} and  \cite[Proposition 10.1]{Haines},
 the  \emph{Bernstein isomophism} $b$
is compatible with $s$ in the sense that 
the composition $(e_\K\star.) b$ is an inverse for $s$, where $(e_\K\star.) $ denotes the convolution  by the characteristic function of $\K$.

\medskip

\subsubsection{Bernstein and Satake isomorphisms in characteristic  $p$}  After defining an integral version of the complex Bernstein map, Vignéras gave in \cite{Ann}
a basis for the center of  $\Hh_\Z$ and proved that $\Hh_\Z$ is noetherian and finitely generated over its center.  In the  first section of this article, we define a subring  $\Zz^\c(\Hh_\Z)$ of the center of $\Hh_\Z$   over which $\Hh_\Z$ is still finitely generated.
In Proposition \ref{prop:conjugacy} we prove that
$\Zz^\c(\Hh_\Z)$ is not affected by the choice of another apartment containing $C$ and of another hyperspecial vertex of $C$ as long as it is conjugate to $x_0$. In particular, if $\mathbf G$ is of adjoint type or $\mathbf G={\rm GL}_n$, then
$\Zz^\c(\Hh_\Z)$ depends only on the choice of the uniformizer $\varpi$.  

\medskip

The  natural image of  $\Zz^\c(\Hh_\Z)$ in $\Hh_k=\Hh_\Z\otimes_\Z k$ is denoted by $\Zz^\c(\Hh_k)$  and we prove that  it has a structure of  affine semigroup algebra. More precisely,
  we have an isomorphism of $k$-algebras (Proposition \ref{prop:Zmap})
   \begin{equation}\label{bisom}
 k[\X_*^+(\T)]\overset{\sim}\longrightarrow   \Zz^\c(\Hh_k) \subseteq \Hh_k.
 \end{equation} 
%It is  the isomorphism \emph{à la Bernstein} referred to in the title of this article. This denomination will be justified in \ref{classi}.
By the main theorem in  \cite{satake} (and in \cite{invsatake}),  this 
makes $\Zz^\c(\Hh_k)$ isomorphic to  the
 algebra ${\cal H}(\Gp, \rho)$ of  any irreducible smooth $k$-representation $\rho$ of  $\K$.  Note that when $\rho$ is the $k$-valued trivial representation $\mathbf 1_\K$ of $\K$, ones retrieves the convolution algebra $k[\K\backslash \Gp/ \K]={\cal H}(\Gp, \mathbf 1_\K)$.
  
 % which is why we  call  $\Zz^\c(\Hh_k)$  the \emph{spherical center} of $\Hh_k$ (relative to the choice of $\varpi$ and $x_0$).

\medskip

%In \cite{satake} indeed, Herzig 
%constructs  an isomorphism $\EuScript S: {\cal H}(\Gp, \rho)\overset{\simeq}\longrightarrow k[\X_*^+(\T)]$. 

In \cite{invsatake}, we constructed an isomorphism\begin{equation}
\label{monsatake}\EuScript T:  k[\X_*^+(\T)]\overset{\simeq}\longrightarrow {\cal H}(\Gp, \rho).\end{equation} 
%Under the hypothesis that  the derived  subgroup of $\mathbf G$ is  simply connected and  that $\Corps$ is a finite extention of $\mathbb Q_p${\color{magenta}
In the current article,  we prove the following theorem. 
\begin{theorem}[Theorem \ref{coro:diag}]\label{theointro}
We have a commutative diagram  of isomorphisms of $k$-algebras
  \begin{equation}\label{diag:intro}\begin{CD}k[\X_*^+(\T)] @>{\eqref{bisom}}>>  \Zz^\c(\Hh_k)\\
@|   @VV{ }V \\
k[\X_*^+(\T)] @>{\EuScript{T}}>>        \cal H(\Gp, \rho)\\
\end{CD}\end{equation}
where the vertical arrow on the right hand side is the natural morphism  of  $k$-algebras  \eqref{satakecenter} described in Section \ref{sec:compa}.
\end{theorem}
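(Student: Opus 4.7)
The plan is to verify the diagram by evaluating both compositions on the canonical generators $e^\lambda$, $\lambda \in \X_*^+(\T)$, of the semigroup $k$-algebra $k[\X_*^+(\T)]$. Because all four maps in the diagram are $k$-linear and multiplicative, and because $k[\X_*^+(\T)]$ is spanned by such monomials, commutativity is equivalent to the identity
\[ (\textrm{right vertical}) \circ \eqref{bisom} \, (e^\lambda) \;=\; \EuScript T(e^\lambda) \quad\text{in } \cal H(\Gp,\rho), \qquad \forall \lambda \in \X_*^+(\T). \]

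First I would recall the explicit form of the top horizontal map. The isomorphism \eqref{bisom} is, by Proposition \ref{prop:Zmap}, the mod~$p$ reduction of the integral Bernstein-type construction of \cite{Ann}: it sends $e^\lambda$ to the central element $z_\lambda \in \Zz^\c(\Hh_k)$ which is a sum, over a suitable $\Wf$-orbit of integral Bernstein elements $E(\lambda')$, of characteristic-function-type basis elements supported on the Iwahori double cosets $\I t_{\lambda'}\I$ with $\lambda' \in \Wf\cdot\lambda$. At the same time, by \cite{invsatake}, the inverse Satake isomorphism $\EuScript T$ sends $e^\lambda$ to an explicit element of $\cal H(\Gp,\rho)$ whose support is essentially $\K t_\lambda \K$ and whose coefficients are determined by the $\K$-action on $\rho$. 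So the proof reduces to a concrete identification of two elements of $\cal H(\Gp,\rho)$ indexed by the same dominant cocharacter.

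The next step is to unwind the right vertical arrow \eqref{satakecenter} described in Section \ref{sec:compa}: it is the natural $k$-algebra morphism from $\Zz^\c(\Hh_k)$ to $\cal H(\Gp,\rho)$ arising from the action of central Hecke operators on the $\rho$-isotypic component of $\mathrm{ind}_\K^{\Gp}\rho$ (or equivalently the composition of the Bernstein-type embedding with projection to a $\rho$-idempotent). Applying it to $z_\lambda$ should, after redistributing the sum over $\Wf\cdot\lambda$ into the single $\K$-double coset $\K t_\lambda \K$ (using that this double coset is a disjoint union of Iwahori double cosets indexed, up to the $\I$-cosets inside $\K$, by the Weyl orbit of $\lambda$), yield a function on $\K t_\lambda \K$ whose value is precisely the intertwining datum appearing in the definition of $\EuScript T$.

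The heart of the verification is therefore the combinatorial/geometric lemma that the passage from $\I$-bi-invariant supports to $\K$-bi-invariant supports, once twisted by $\rho$, sends Vignéras's dominant Bernstein function to the inverse-Satake basis element. This amounts to matching coefficients in a Bruhat-type decomposition $\K t_\lambda \K = \bigsqcup \I w t_{\lambda'} \I$ and tracking how $\rho$-averaging collapses the Weyl-orbit sum into a single term indexed by $\lambda$. The main obstacle I expect is precisely this bookkeeping in characteristic $p$: the $\rho$-projector is not an honest average (no factor $1/|\Wf|$ is available), so one must exploit the specific support structure from \cite{invsatake} and the integrality built into Vignéras's Bernstein map to see that, termwise, the non-dominant contributions vanish modulo $p$ or collapse onto the dominant one. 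A secondary technical point is the case of a non-strongly-regular $\lambda$, where the stabilizer $\Wf_\lambda$ is nontrivial; here one has to check that the matching of normalizations in \cite{invsatake} and in \eqref{bisom} is compatible with the $|\Wf_\lambda|$-fold redundancy in the Weyl-orbit sum. Once these compatibilities are in place, the equality $(\text{right vertical})\circ \eqref{bisom}(e^\lambda) = \EuScript T(e^\lambda)$ follows directly, establishing the commutativity of \eqref{diag:intro}.
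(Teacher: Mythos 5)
Your overall strategy---reduce to checking the two compositions on the generators $e^\lambda$, $\lambda\in\X^+_*(\T)$---is correct and is indeed how the paper proceeds (this is exactly the content of Proposition~\ref{prop:equal}). However, the way you propose to establish the resulting identity is genuinely different from the paper's argument and contains gaps that you yourself flag but do not resolve.

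First, your picture of the inverse Satake map is not accurate. You describe $\EuScript T(e^\lambda)$ as ``an explicit element of $\cal H(\Gp,\rho)$ whose support is essentially $\K t_\lambda\K$,'' and then propose to match it against the image of $z_\lambda$ by decomposing $\K t_\lambda\K$ into Iwahori double cosets and tracking coefficients. But $\EuScript T_\lambda$ is not defined by a characteristic function of $\K t_\lambda\K$: by \eqref{TB} it is defined by $\1_{\K,v}\mapsto\1_{\K,v}\Bb_{F_\chi}^+(\lambda)$, i.e.\ via the generalized Bernstein map attached to the facet $F_\chi$ coming from the weight $\rho$. Its support is governed by the Bernstein element $\Bb_{F_\chi}^+(\lambda)$, which is a combination of many $\tau_w$ with $w\le e^\lambda$, not by a single $\K$-double coset. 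A Bruhat-decomposition bookkeeping argument of the kind you sketch is precisely what the paper's construction is designed to avoid, and there is no reason to expect the termwise matching you propose to close in characteristic $p$.

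Second, and more importantly, you miss the two ingredients that make the paper's proof work. The isomorphism \eqref{bisom} sends $e^\lambda$ to $z_\lambda=\sum_{\lambda'\in\OO(\lambda)}\Bb_C^+(\lambda')$, which a priori is expressed via the facet $C$, whereas $\EuScript T_\lambda$ is built from the facet $F_\chi$. The crucial step is Lemma~\ref{theoA} (the independence lemma, proved in Section~\ref{proofbyinduction} by a parabolic-induction argument): it lets one rewrite $z_\lambda=\sum_{\lambda'\in\OO(\lambda)}\Bb_{F_\chi}^+(\lambda')$. Without this rewriting the comparison never becomes tractable, and your proposal does not use or even mention this result. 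Once the rewriting is done, the paper kills the non-dominant terms not by combinatorics of coset decompositions but by a clean module-theoretic argument: by \eqref{fact:annu} one has $\Bb_{F_\chi}^+(\lambda')\Bb_{F_\chi}^+(\lambda)=0$ in $\Hh_k$ for $\lambda'\neq\lambda$ in $\OO(\lambda)$, so $\EuScript T_\lambda$ annihilates $\1_{\K,v}\Bb_{F_\chi}^+(\lambda')$, and then the torsion-freeness of $\ind_\K^\Gp\rho$ as an $\cal H(\Gp,\rho)$-module (\cite[Corollary 6.5]{Parabind}) forces $\1_{\K,v}\Bb_{F_\chi}^+(\lambda')=0$. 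You identify ``the non-dominant contributions vanish modulo $p$'' as the heart of the matter but offer only a hope that ``one must exploit the specific support structure\dots'' rather than an argument; the paper's torsion-freeness step is exactly the missing idea.

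In short: the reduction to generators and the identification of what must be proved are correct, but the proposed route (Bruhat decomposition and coefficient matching between $\I$- and $\K$-bi-invariant supports) is not the paper's, rests on an incorrect description of $\EuScript T$, and leaves open both the facet-change step (Lemma~\ref{theoA}) and the vanishing of non-dominant terms.
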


The isomorphism $\EuScript T$ was
 constructed in \cite{invsatake} by means of generalized 
  integral Bernstein maps, as are the subring $\Zz^\c(\Hh_k)$ and the
map \eqref{bisom} in the current article. By analogy with the complex case, we can see the map \eqref{bisom}  as an  isomorphism \`a  la Bernstein in characteristic $p$.
The  above commutative diagram can  then be interpreted  as a statement of compatibility between  Satake  and Bernstein isomorphisms in characteristic $p$.
 Note that under the hypothesis that
the derived subgroup of $\mathbf G$ is simply connected,
it is proved in  \cite{invsatake} that
$\EuScript T$ is  the inverse of the mod $p$ Satake isomorphism defined by Herzig in \cite{satake}. (The  extra hypothesis on $\mathbf G$ is probably not necessary).

\medskip

\medskip

If we  worked with the Iwahori-Hecke algebra $k[\Iw\backslash \Gp/\Iw]$, the analog  of $\Zz^\c(\Hh_k)$ would actually be the whole center of $k[\Iw\backslash \Gp/\Iw]$. We prove:
\begin{thintro}[Theorem \ref{centerIwa}]
The center of the Iwahori-Hecke $k$-algebra $k[\Iw\backslash \Gp/\Iw]$   is isomorphic to $k[\X_*^+(\T)]$.
\end{thintro}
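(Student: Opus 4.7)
The plan is to combine the structural result \eqref{bisom} for the pro-$p$ Iwahori-Hecke algebra with the realization of $k[\Iw\backslash\Gp/\Iw]$ as a corner of $\Hh_k$. Since $\Iw/\I$ has order coprime to $p$, the element $e_\Iw := |\Iw/\I|^{-1}\sum_{t \in \Iw/\I} T_t$ is a well-defined idempotent in $\Hh_k$, and there is a natural identification of $k$-algebras $k[\Iw\backslash\Gp/\Iw] \cong e_\Iw \Hh_k e_\Iw$. Under this identification, $z \mapsto e_\Iw z = e_\Iw z e_\Iw$ sends the center of $\Hh_k$ into the center of $k[\Iw\backslash\Gp/\Iw]$; restricting to the subring $\Zz^\c(\Hh_k) \cong k[\X_*^+(\T)]$ of Proposition \ref{prop:Zmap} yields an algebra homomorphism
\[ \iota \colon k[\X_*^+(\T)] \longrightarrow \Zz(k[\Iw\backslash\Gp/\Iw]). \]

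For injectivity I would use the explicit form of the integral Bernstein elements recalled in \cite{Ann}: for each dominant cocharacter $\lambda$, the central element attached to $\lambda$ in $\Hh_k$ has a nonzero component supported on the double coset $\Iw t_\lambda \Iw$, which survives the projection to $k[\Iw\backslash\Gp/\Iw]$, so the $\iota(\lambda)$ for $\lambda \in \X_*^+(\T)$ are linearly independent. For surjectivity the strategy is to invoke the Bernstein-Lusztig presentation of $k[\Iw\backslash\Gp/\Iw]$ as a free module over the commutative Bernstein subalgebra $\Aa_k \cong k[\X_*(\T)]$ with basis indexed by $\Wf$ through the finite Hecke algebra; a standard argument (valid over any base ring) shows that any central element must lie in $\Aa_k$, hence is of the form $z = \sum_{\lambda} c_\lambda \theta(\lambda)$.

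The final step is to analyze the constraint on the coefficients $(c_\lambda)$ imposed by $T_s z = z T_s$ for each simple reflection $s$, using the Bernstein-Lusztig formula $T_s \theta(\lambda) - \theta(s\lambda) T_s = (q-1)\,\frac{\theta(\lambda)-\theta(s\lambda)}{1 - \theta(-\alpha^\vee)}$. In characteristic $p$, $(q-1)$ reduces to $-1$ and the quadratic relation degenerates to $T_s^2 = -T_s$, so this relation carves out a subspace of $\Aa_k$ whose multiplicative structure is the semigroup algebra $k[\X_*^+(\T)]$ rather than the Weyl-invariant subring $k[\X_*(\T)]^{\Wf}$ that appears in the complex case. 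The main obstacle is precisely this last identification: one must unpack the Bernstein relation with $q = 0$, match the resulting center with the image of $\iota$, and verify that the central elements attached to dominant $\lambda, \mu$ multiply to the central element attached to $\lambda+\mu$ without the extra Weyl-orbit correction terms occurring in characteristic zero — the characteristic-$p$ cancellation ultimately responsible for the divergence from the classical Bernstein theorem.
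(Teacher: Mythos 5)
Your setup is sound: the identification $k[\Iw\backslash\Gp/\Iw]\cong e_\Iw\Hh_k e_\Iw$ with $e_\Iw=\epsilon_{\mathbf 1}$ is correct, and your injectivity argument via the lowest/highest terms of the Bernstein elements is essentially the paper's appeal to Lemma \ref{lemma:fundam}. But your proof has a genuine gap at surjectivity, and you are also missing the structural fact that makes the paper's argument short. The idempotent $\epsilon_{\mathbf 1}$ is \emph{central} in $\Hh_k$ (because the trivial character of $\T^0/\T^1$ is $\Wf$-invariant, so $\tau_w\epsilon_{\mathbf 1}=\epsilon_{\mathbf 1}\tau_w$ for all $w$). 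For a central idempotent $e$ in any ring $A$ one has $\Zz(eAe)=e\Zz(A)$, so the center of the Iwahori-Hecke algebra is exactly $\epsilon_{\mathbf 1}\Zz(\Hh_k)$; there is no need to recompute the commutant of each $\tau_s$. Surjectivity then reduces to comparing the known basis $\{z_\OO\}$ of $\Zz(\Hh_k)$ (indexed by $\Wf$-orbits in $\tilde\X_*(\T)$) with the basis $\{z_\lambda\}$ of $\Zz^\c(\Hh_k)$ under multiplication by $\epsilon_{\mathbf 1}$ — a short term-by-term check, since $\epsilon_{\mathbf 1}\tau_t=\epsilon_{\mathbf 1}$ for $t\in\T^0/\T^1$ kills the torus twist.

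What you propose instead — carving out the commutant of $\{\tau_s\}_{s\in S_{aff}}$ in the Bernstein subalgebra via the Bernstein–Lusztig relation — is a valid alternative route, but you have only identified the computation, not carried it out. Two specific concerns. First, the relation you quote lives in $\Hh\otimes\Z[q^{\pm 1/2}]$ in terms of $\theta(\lambda)$; over $k$ one has $q=0$, so the $\theta(\lambda)$ themselves are not elements of $\Hh_k$ — one must reformulate everything in terms of the integral elements $\Bb_C^+(\lambda)=q^{\ell(e^\lambda)/2}\theta(\lambda)$, and the relevant degeneration is not ``$q-1\mapsto -1$'' in the Bernstein relation but rather the vanishing phenomenon \eqref{fact:annu}: $\Bb(\mu_1)\Bb(\mu_2)=0$ in $\Hh_k$ unless $\mu_1,\mu_2$ lie in a common Weyl chamber, in which case the product is $\Bb(\mu_1+\mu_2)$. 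Second, even granting the correct degenerate relation, you assert but do not prove that the resulting commutant is spanned by the orbit sums and that those multiply as $z_{\lambda_1}z_{\lambda_2}=z_{\lambda_1+\lambda_2}$; this last identity is exactly Proposition \ref{prop:Zmap} and requires the orbit-counting argument given there. As written, the ``main obstacle'' you flag is precisely the content of the theorem, and leaving it unresolved means the proof is incomplete.
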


\subsubsection{Generalized integral Bernstein  maps\label{gibm}}
One  ingredient of the construction of $\EuScript T$ 
in \cite{invsatake}  and of the proof of Theorem \ref{theointro} is the definition of $\Z$-linear injective maps
$$ \Bb_F^\sigma: \Z[\tilde\X_*(\T)]\rightarrow \Hh_\Z$$ defined on the group ring of the (extended) cocharacters $\tilde\X_*(\T)$, and which are multiplicative when restricted to the semigroup ring of any chosen Weyl chamber of $\tilde\X_*(\T)$ (see \ref{tilda} for the definition of $\tilde\X_*(\T)$).
The image of $\Bb_F^\sigma$ happens to be a commutative subring of $\Hh_\Z$ which we denote by $\Aa_F^\sigma$.
The parameter $\sigma$ is a sign and $F$ is a standard facet, meaning a facet  of $C$ containing $x_0$ in its closure. 
The choice of $F$ corresponds to the choice of a Weyl chamber in $\Ap$: for example if $F=C$ (resp. $x_0$), then the corresponding Weyl chamber is the  dominant (resp. antidominant) one.

The  maps $\Bb_F^\sigma$ are called  \emph{integral Bernstein maps} because they are  generalizations of the  Bernstein map $\theta$ mentioned in \ref{classi}.
In the complex case, it is customary to consider  either $\theta$ which is constructed using the dominant chamber, or $\theta^-$ which is constructed using the antidominant chamber (see the dicussion in the introduction of \cite{formulae} for example). By a result by Bernstein (\cite{LuSing}), a basis for the center of $\HhI_\C$ is given by  the central Bernstein functions $$\sum_{\lambda'\in \OO} \theta(\lambda')$$ where $\OO$ ranges over the $\Wf$-orbits in $\X_*(\T)$.  We refer to \cite{Haines} for the geometric interpretation of these functions.
 It is natural to ask whether using $\theta^-$ instead of $\theta$ in the previous formula yields the same central element in $\HhI_\C$. The answer is yes (see \cite[2.2.2]{formulae}). The proof is based on \cite[Corollary 8.8]{LuSing} and  relies on the combinatorics of  the Kazhdan-Lusztig polynomials. Note that there is no  theory of Kazhdan-Lusztig polynomials for the complex pro-$p$ Iwahori-Hecke algebra.

 Integral  (and pro-$p$) versions of $\theta$ and $\theta^-$ 
for  the ring $\Hh_\Z$ were defined in \cite{Ann}. In our language they correspond respectively to $\Bb_C^+=\Bb_{x_0}^-$ and $\Bb_{x_0}^+= \Bb_C^-$.
It is also proved in \cite{Ann} that a $\Z$-basis for  the center of $\Hh_\Z$
is given by 
\begin{equation}\label{ce}\sum_{\lambda'\in \OO} \Bb_C^+(\lambda')\end{equation}where $\OO$ ranges over the $\Wf$-orbits in $\tilde\X_*(\T)$. It is now natural to ask  whether the element \eqref{ce} is the same if 
  \textbf{a/}  we use $-$ instead of $+$, and if more generally, \textbf{b/}  we use any standard facet $F$ instead of $C$, and any sign $\sigma$. We prove:

\begin{lemmaintro}[Lemma \ref{theoA}]
The element \begin{equation*}
\sum_{\lambda'\in \OO} \Bb_{F}^\sigma(\lambda')\end{equation*} in $\Hh_\Z$ does not depend on the choice of the standard facet $F$ and of the sign $\sigma$. 

\end{lemmaintro}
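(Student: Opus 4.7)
Write $z_F^\sigma(\OO) := \sum_{\lambda' \in \OO} \Bb_F^\sigma(\lambda')$. The goal is to prove $z_F^\sigma(\OO) = z_C^+(\OO)$ for every choice of $(F,\sigma)$, the right-hand side being the central element identified by Vign\'eras in \cite{Ann}. The pair $(F,\sigma)$ parametrizes a Weyl chamber of $\Ap$ based at $x_0$: for instance $(C,+)$ gives the dominant chamber, and by the identifications $\Bb_C^+=\Bb_{x_0}^-$ and $\Bb_{x_0}^+=\Bb_C^-$ recalled in \ref{gibm}, each chamber is covered. Since $\Wf$ acts simply transitively on the set of chambers and any two chambers are joined by a gallery, it suffices to prove $z_F^\sigma(\OO) = z_{F'}^{\sigma'}(\OO)$ whenever $(F,\sigma)$ and $(F',\sigma')$ correspond to adjacent Weyl chambers, i.e.\ chambers separated by a single simple reflection $s \in \Wf$.

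For such adjacent pairs, the construction of the maps $\Bb_F^\sigma$ given in \cite{invsatake}, combined with the quadratic relations and the Bernstein-Lusztig-Matsumoto identities controlling how $T_s$ intertwines the image of the Bernstein map, should yield an explicit transition formula of the shape
\[
\Bb_{F'}^{\sigma'}(\lambda)\;-\;\Bb_F^\sigma(\lambda) \;=\; R_s(\lambda)\,\bigl(\Bb_F^\sigma(s\lambda)-\Bb_F^\sigma(\lambda)\bigr)\qquad\text{for all } \lambda\in \tilde\X_*(\T),
\]
where $R_s(\lambda)$ is an explicit ``rational'' coefficient depending on the pairing of $\lambda$ with the coroot of $s$ and, in the pro-$p$ setting, on the characters of the finite torus that arise in the definition of $\Bb_F^\sigma$.

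Summing this transition formula over $\lambda\in\OO$ is the final move: since $\OO$ is $\Wf$-stable, the substitution $\lambda\mapsto s\lambda$ is an involution of $\OO$, so a reindexing together with an $s$-antisymmetry of the coefficient $R_s(\cdot)$ forces the right-hand side to telescope to zero. This yields $z_{F'}^{\sigma'}(\OO)=z_F^\sigma(\OO)$ and, iterated along a gallery, the full independence. The main obstacle is the second step: pinning down the transition formula in a sufficiently closed form. Its shape is dictated by the classical Bernstein-Lusztig-Matsumoto theory, but in the pro-$p$ Iwahori-Hecke ring $\Hh_\Z$ one must track the finite-torus characters and the integrality of the coefficients, which requires a close reading of the combinatorial construction of $\Bb_F^\sigma$ from \cite{invsatake}. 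Once $R_s(\lambda)$ is made explicit, the orbit-sum cancellation in the third step should be routine.
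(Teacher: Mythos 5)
Your proposal is not a complete proof and, as outlined, would not succeed; there are several concrete gaps, and the paper's argument is in fact structured quite differently.

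First, your claim that ``each chamber is covered'' by the pairs $(F,\sigma)$ is false beyond rank~$2$. By definition $\Cute^+(F)=w_F\,\X_*^+(\T)$ where $w_F$ is the longest element of $\Wf_F$, and $\Cute^-(F)=w_Fw_0\,\X_*^+(\T)$. The set $\{w_F\}\cup\{w_Fw_0\}$ runs over at most $2^{|\Pi|+1}$ elements of $\Wf$, whereas there are $|\Wf|$ chambers; already for $GL_4$ you cover at most $16$ of the $24$ chambers. Consequently the gallery you want to walk from $\Cute^\sigma(F)$ to $\Cute^{\sigma'}(F')$ will in general pass through chambers for which no map $\Bb_{?}^{?}$ has been defined, so the reduction to ``adjacent'' steps does not make sense as stated. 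You would first have to extend the construction of the Bernstein maps to arbitrary Weyl chambers, which is not in the paper.

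Second, your central ingredient --- the transition formula with coefficient $R_s(\lambda)$ --- is precisely what you leave unproven and identify yourself as ``the main obstacle.'' In the pro-$p$ setting the Bernstein relations involve sums over characters of the finite torus $\T^0/\T^1$, and the paper explicitly remarks that the classical route to this kind of sign/chamber independence (via Kazhdan--Lusztig combinatorics, cf.\ Corollary~8.8 of \cite{LuSing} as used in \cite{formulae}) has no analogue here: ``there is no theory of Kazhdan--Lusztig polynomials for the complex pro-$p$ Iwahori--Hecke algebra.'' So the formula you hope for is exactly the hard step, and there is a stated reason to doubt it can be pinned down the way you expect. (Also, your cancellation argument requires $R_s(s\lambda)=R_s(\lambda)$, i.e.\ $s$-\emph{symmetry} of $R_s$, not antisymmetry; a reindexing $\lambda\mapsto s\lambda$ in one of the two sums shows this immediately. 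This is a small point, but it signals that the final ``routine'' cancellation is not as clean as claimed.)

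The paper avoids all of this. The sign independence for a fixed facet (Proposition~\ref{prop:fix}) is proved \emph{not} by a transition formula but by an induction on the length $\ell_\OO$, exploiting that the $z_\OO$ form a $\Z$-basis of $\Zz(\Hh_\Z)$, that the involution $\upiota_C$ preserves the center, and that $\Hh_\Z$ has no $\Z$-torsion (so $2(\upiota_C(z_\OO)-z_\OO)=0$ forces equality). The facet independence is then handled entirely via parabolic induction: Lemma~\ref{lemma:jFanti} relates the Bernstein maps of the Levi $\M_F$ to those of $\Gp$ through the embedding $j_F^+$, one applies Proposition~\ref{prop:fix} to the Levi, and a strongly $F$-positive auxiliary coweight $\nu$ is used to translate an arbitrary $\Wf$-orbit into the $F$-positive region where the induction lemma applies. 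None of this requires an explicit ``one-step'' intertwining identity.
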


\medskip
To prove the lemma, we first answer positively Question \textbf{a/} above; we then  study and exploit the behavior of the   integral Bernstein maps
upon a process of parabolic induction.  
In passing we  also consider Question  \textbf{a/} in the $k$-algebra $\Hh_k$ in the case when $\G$ is semisimple, and we suggest a link between such questions and the duality for finite length $\Hh_k$-modules defined in \cite{OS} (see Proposition \ref{theo:indep}).

\subsubsection{} 
In  Section \ref{sec:supersing}, we define and study a natural topology on $\Hh_k$ which  depends only on the conjugacy class of $x_0$. It is the $\mathfrak I$-adic topology where $\mathfrak I$ is a natural monomial ideal of
the  affine semigroup algebra $\Zz^\c(\Hh_k)$.

We define the supersingular block of the category of finite length  $\Hh_k$-modules to be the full subcategory of the modules that  are continuous for the $\mathfrak I$-adic topology on $\Hh_k$ (Proposition-Definition \ref{propdefi}).
A finite
 length $\Hh_k$-module  then turns out to be  in the supersingular block if and only if all its irreducible constituents are supersingular in the sense of 
\cite{Ann}.

In the case when the root system of $\G$ is irreducible,  we establish the following results.
We classify the simple supersingular $\Hh_k$-modules (Theorem \ref{theo:charaff} and subsequent Corollary).  (For example, when $\G$ is semisimple  simply connected,   the simple supersingular modules all have dimension $1$.) We prove in passing that even if the ideal $\mathfrak I$ does depend on the choices made, the supersingular block  is independent of all the choices.

 \medskip
 
% Kottwitz Rapoport GL(n) GSP2n
Theorem   \ref{theo:charaff} extends    \cite[Theorem 5]{Ann}-\cite[Theorem 7.3]{Oparab}
that dealt with the case of ${\rm GL}_n$ and relied on  explicit \emph{minimal expressions} for certain Bernstein functions associated to the minuscule coweights. The results of  \cite{Ann}  and \cite{Oparab} together proved
a \enquote{numerical Langlands correspondence for Hecke modules} of ${\rm GL}_n(\mathfrak F)$: there is a bijection between the finite set of all simple $n$-dimensional supersingular $\Hh_k$-modules  and the finite set of all irreducible $n$-dimensional smooth $k$-representations of the absolute Galois group of $\mathfrak F$, where the action of the uniformizer $\varpi$ on the Hecke modules and the determinant of the Frobenius on the Galois representations are fixed. Recently, Grosse-Kl\" onne constructed a functor from the category of finite length $\Hh_k$-modules 
for ${\rm GL}_n(\mathbb Q_p)$ to the category of \'etale $(\varphi, \Gamma)$-modules. This functor induces a bijection between the two finite sets above, turning the  \enquote{numerical} correspondence into a natural and explicit correspondence in the case of  ${\rm GL}_n(\mathbb Q_p)$. In fact,  Grosse-Kl\" onne constructs  such a functor (with values in a category of modified \'etale $(\varphi, \Gamma)$-modules) in the case of a general split group over  $\mathbb Q_p$ (\cite{GK2}). In the case of ${\rm SL}_n(\mathfrak F)$, Koziol has defined packets of simple supersingular $\Hh_k$-modules and built a bijection between the set of packets and a certain set of projective $k$-representations of the absolute Galois group of $\mathfrak F$;  if 
$\mathfrak F=\mathbb Q_p$, this bijection is proved to be compatible with Grosse-Kl\" onne's functor  and therefore with the explicit Langlands-type correspondence for Hecke modules of ${\rm GL}_n(\mathbb Q_p)$.  This result (\cite{K}) is a first step towards  a mod $p$ principle of functoriality  for Hecke modules.  

The current article provides, in the case of a general split group,  a  classification of the objects that one  wants to apply Grosse-Kl\"onne's functor to, in order to investigate the possibility of a Langlands-type  correspondence for Hecke modules in general.

%Let $\Hh_k^{aff}$ denote  the affine subalgebra   of the functions with support in the subgroup of $\G$ generated by all parahoric subgroups.  We describe in ??? the characters of $\Hh_k^{aff}$. We call such a character non degenerate if  it is different from the character sign and the trivial character of $\Hh_k^{aff}$ (and from certain twists of those characters).

%We prove  (Theorem \ref{theo:charaff}) that an irreducible $\Hh_k$-module is in the supersingular block if and only if it contains a character  for tThis extends \cite[Theorem 7.3]{Oparab} and  \cite[Theorem 5]{Ann}
%that dealt with the case of ${\rm GL}_n$: 
%In the case of ${\rm GL}_n$, the 
%converse statement  is   true  by \cite[Theorem 5]{Ann}.
%Therefore,  in the case of  $\mathbf G={\rm GL}_n$ as well as in the adjoint type case, the notion of supersingularity for finite length modules is  independent of all the choices.

\subsubsection{}  In \ref{onS} we consider
 an admissible irreducible smooth $k$-representation $\uppi$ of $\Gp$. In the case  where the derived subgroup of $\mathbf G$ is simply connected, we use the fact 
that
\eqref{monsatake} is  the inverse of the mod $p$ Satake isomorphism to prove that 
if $\uppi$ is supersingular, then  \begin{equation}\label{condi}\textrm{$\uppi$  is a quotient of 
$\ind_\I^\G 1/\mathfrak I \,\ind_\I^\G 1$.}\end{equation}
The  condition \eqref{condi} is equivalent to saying that $\uppi^{\I}$ contains an irreducible supersingular $\Hh_k$-module.

When  $\G={\rm GL}_n(\mathfrak F)$ and $\Corps$ is a finite extension of $\mathbb Q_p$,
we use the classification of  the nonsupersingular representations obtained in  \cite{Parabind}, the work on generalized special representations in \cite{GK}, and our Lemma \ref{theoA},  to prove that the condition \eqref{condi} is in fact a characterization of the  supersingular representations (Theorem \ref{equiv}). 

Finally, we comment  in \ref{onS} on the
 generalization of this characterization to the case of a split group (with simply connected derived subgroup), and 
 on the independence of the
characterization of the choices made.

We raise the question of  the possibility of 
 a direct proof of this characterization  that does not use the classification of the nonsupersingular representations.

\medskip

\subsection{Notation and preliminaries\label{nots}} We choose the valuation $\val_{\mathfrak{F}}$ on   $\mathfrak{F}$  normalized by $\val_{\mathfrak{F}}(\varpi)=1$ where $\varpi$ is the chosen uniformizer. 
 The  ring of integers of $\Corps$ is denoted by $\Oo$ and its residue field    by $\mathbb F_q$ where $q$  is a power of  the prime number $p$. Recall that $k$ denotes an algebraic closure of $\mathbb F_q$.
Let $\mathbf{G}_{x_0}$ and $\mathbf{G}_C$ denote the Bruhat-Tits group schemes over $\mathfrak{O}$ whose $\mathfrak{O}$-valued points are $\K$ and $\Iw$ respectively. 
   Their reductions over the residue field $\mathbb{F}_q$ are denoted by $\overline{\mathbf{G}}_{x_0}$ and $\overline{\mathbf{G}}_C$. 
   Note that $\Gp= \mathbf{G}_{x_0}(\Corps)=\mathbf{G}_{C}(\Corps)$.
   By \cite[3.4.2, 3.7 and 3.8]{Tit}, %$\overline{\mathbf{G}}_{x_0}$ is connected reductive and $\mathbb{F}_q$-split and  $\overline{\mathbf{G}}_C$ is connected. Hence 
$\overline{\mathbf{G}}_{x_0}$ is connected  reductive and $\mathbb F_q$-split.
Therefore we have ${\mathbf{G}}_C^\circ(\Oo)={\mathbf{G}}_C(\Oo) = \Iw$ and ${\mathbf{G}}_{x_0}^\circ(\Oo)={\mathbf{G}}_{x_0}(\Oo) = \K$.   Denote    by $\K_1$ the pro-unipotent radical of $\K$.   
The quotient $\K/\K_1$ is isomorphic to  $\overline{\mathbf G}_{x_0}(\mathbb F_q)$. 
The Iwahori subgroup $\Iw$  is the preimage in $\K$ of the $\mathbb F_q$-rational points of  a Borel subgroup $\overline{\mathbf B}$
with Levi decomposition $\overline{\mathbf B}= \overline{\mathbf T}\,\overline{\mathbf N}$. 
The pro-$p$ Iwahori subgroup $\I$ is the preimage in $\Iw$ of $\overline{\mathbf N}(\mathbb F_q)$. The preimage of  $ \overline{\mathbf{T}}(\mathbb F_q)$ is  the maximal compact subgroup $\Tp^0$ of $\Tp$.
Note that $\T^0/\T^1=\Iw/\I =  \overline{\mathbf{T}}(\mathbb F_q)$ where  $\T^1:= \T^0\cap\I$.

\subsubsection{Affine root datum} To the choice of $\T$ is attached  the root datum $(\Phi, {\rm X}^*(\Tp), \check\Phi, {\rm X}_*(\Tp))$.
This root system is reduced because  the group $\mathbf{G}$ is $\mathfrak F$-split. We denote by $\Wf$ the finite Weyl group  $N_\G(\T)/\T$, quotient by $\T$ of the normalizer of $\T$.   Recall that  $\Ap$ denotes the apartment of the semisimple building attached to  $\Tp$ (\cite{Tit} and \cite[I.1]{SS}, and we follow the notations of \cite[2.2]{invsatake}). We denote by $\lp\,. \,,\, .\,\rp$ the perfect pairing ${\rm X}_*(\Tp)\times {\rm X}^*(\Tp)\rightarrow \Z$. We will call coweights the elements in $\X_*(\T)$.
We identify $\X_*(\T)$ with the subgroup  $\T/\T^0$ of the extended Weyl group $\W= N_\G(\T)/\T^0$ as in \cite[I.1]{Tit} and \cite[I.1]{SS}: to an element $g\in \Tp$ corresponds a vector $\nu(g)\in \mathbb R\otimes _{\mathbb Z}\X_*({\Tp})$ defined by
\begin{equation}\label{normalization}
    \lp\nu(g),\, \chi\rp =-\val_{\mathfrak F}(\chi(g))  \qquad \textrm{for any } \chi\in \X^*(\Tp).
\end{equation} and $\nu$ induces the required isomorphism   $\Tp/\T^0\cong\X_*(\Tp)$. The group $\Tp/\T^0$ acts by translation on $\Ap$ via $\nu$.  
The actions of $\Wf$ and $\Tp/\T^0$ combine into an action of 
 $\W$  on $\Ap$ as recalled in \cite[page 102]{SS}. Since $x_0$ is a special vertex of the building,   $\W$  is isomorphic to the semidirect product $\Wf\ltimes \X_*(\T)$  where we see $\mathfrak W$ as the fixator in $\W$ of any point lifting $x_0$ in the extended apartment (\cite[1.9]{Tit}).
A coweight $\lambda$ will sometimes be denoted by $e^\lambda$ to underline that we see it as an element in $\W$, meaning as a translation on $\Ap$.
 
 \medskip

 Denote by $\Phi_{aff}$ the set of affine roots. The choice of the chamber $C$ implies  in particular  the choice of the positive affine roots $\Phi_{aff}^+$  taking nonnegative values on $C$. The choice of $x_0$ as an origin of $\Aa$ implies that we identify  the affine roots taking value zero at $x_0$ with $\Phi$. We set $\Phi^+:=\Phi_{aff}^+\cap \Phi$ and $\Phi^-=-\Phi^+$.
The affine roots can be described the following way:   ${\Phi_{aff}}=\Phi\times \Z={\Phi_{aff}^+}\coprod {\Phi_{aff}^-}$ where
$${\Phi_{aff}^+}:=\{(\alpha, r),\: \alpha\in\Phi, \,r>0\}\cup\{(\alpha,0),\, \alpha\in\Phi^+\}.$$

Let  $\Pi$ be the basis for $\Phi^+$: it is the set of simple roots. The  finite Weyl group $\Wf$ is a Coxeter system with generating set $S:=\{s_\alpha, \,\alpha\in \Pi\}$ where $s_\alpha$ denotes the (simple) reflection at the hyperplane $\lp\,.\,,\alpha\rp=0$.
Denote by $\preceq$ the partial ordering on $\X_*^+(\T)$ associated to $\Pi$.
Let  $\Pi_m$  be the set of roots in $\Phi$ that are minimal elements  for $\preceq$. 
Define the set of simple affine roots by  $\Pi_{aff}:=\{(\alpha, 0),\: \alpha\in\Pi\}\cup\{(\alpha,1),\, \alpha\in\Pi_m\}$. Identifying $\alpha$ with $(\alpha,0)$, we consider $\Pi$ a subset of
$\Pi_{aff}$.  For $A\in  \Pi_{aff}$, denote by $s_A$ the following associated reflection: $s_A=s_\alpha$ if $A=(\alpha, 0)$ and  $s_A=s_\alpha e^{\check\alpha}$ if $A=(\alpha,1)$.  
The action of $\W$ on the coweights induces an action on the  set of affine roots: $\W$ acts on $\Phi_{aff}$ by $we^\lambda: (\alpha, r)\mapsto (w\alpha,\,\, r-\lp \lambda, \alpha\rp)$ where we denote by $(w,\alpha)\mapsto w\alpha$ the natural action of $\Wf$ on $\Phi$. The length on the Coxeter system $(\Wf, S)$ extends to $\W$ in such a way that  the length $\ell(w)$ of $w\in \W$  
 is the number of affine roots   $A\in{\Phi_{aff}^+}$ such that
$w(A)\in { \Phi_{aff}^-}$. It satisfies  the following formula, for  $A \in \Pi_{aff}$ and $w\in \W$:
\begin{equation}\label{add}
   \ell(w s_A)= 
   \begin{cases}
       \ell(w)+1 & \textrm{ if }w (A)\in {\Phi_{aff}^+},\\  \ell(w)-1 & \textrm{ if }w (A)\in {\Phi_{aff}^-}.
    \end{cases}
\end{equation}

The affine Weyl group is defined as the subgroup
$\W_{aff}$ of $\W$ generated by $S_{aff} := \{ s_A , \:\: A \in \Pi_{aff}\}$. The length function $\ell$ restricted to $\W_{aff}$ coincides with the length function of the  Coxeter system  $(\W_{aff}, S_{aff})$  (\cite[V.3.2 Thm\ 1(i)]{Bki-LA}). Recall  (\cite[1.5]{Lu}) that $\W_{aff}$ is a normal subgroup of $\W$:   the set $\Omega $ of elements  with length zero  is an abelian subgroup of $\W$ and 
 $\W$ is the semidirect product $\W= \Omega \ltimes \W_{aff}$.
 The length $\ell$ is constant on the double cosets of $\W$ mod $\Omega$. In particular $\Omega$ normalizes $S_{aff}$.

\medskip

The extended Weyl group  $\W$ is equipped with  a partial order $\leq$ that extends the  Bruhat order on $\W_{aff}$.
 By definition, given $w= \omega w_{aff}$,  $w= \omega' w_{aff}' \in \Omega \ltimes\W_{aff}$, we have
$w\leq w'$ if $\omega=\omega'$ and $w_{aff}\leq w_{aff}'$ in the Bruhat order on $\W_{aff}$ (see for example \cite[2.1]{Haines}).

 \medskip
 
We fix a lift $\hat w\in N_\G(\T)$ for any $w\in  \W$. By Bruhat decomposition,  $\Gp$ is the disjoint union of all $\Iw \hat w \Iw$ for $w\in\W$.
 
\subsubsection{Orientation character\label{orientation}} 
The stabilizer of the chamber $C$ in $\W$ is $\Omega$.
We define as in \cite[3.1]{OS} the orientation character $\epsilon_C:\Omega\rightarrow\{\pm 1\}$ of $C$ by setting   $\epsilon_C(\omega) = +1$, resp.\ $-1$, if $\omega$ preserves, resp.\ reverses, a given orientation of $C$. 
Since $ \W/\W_{aff}=\Omega$ we can see $\epsilon_C$ as a character of $\W$ trivial on $\W_{aff}$. 
By definition of the Bruhat order  on $\W$, we have  $\epsilon_{C}(w)=\epsilon_{C}(w')$ for $w,w'\in \W$ satisfying $w\leq w'$.

On the other hand,  the extended Weyl group acts by affine isometries on the Euclidean space $\Ap$. We therefore have a determinant map
$\det:\W\rightarrow \{\pm 1\}$ which is trivial on $\X_*(\T)$. An orientation of $C$ is a choice of a cyclic ordering of its set of vertices  (in the geometric realization of $\Ap$). Therefore,   $\det(\omega)$ is the signature of the permutation of the vertices of $C$  induced by $\omega\in \Omega$ and
 $\det(\omega)=\epsilon_C(\omega)$.

\begin{lemma}
\begin{itemize}
\item[i.] For $w\in \W_{aff}$, we have $\det(w)= (-1)^{\ell(w)}$.

\item[ii.] For  $\lambda\in \X_*(\T)$, we have
$\epsilon_C(w)=(-1)^{\ell(e^\lambda)}$ for any $w\in\W$ such that $w\leq e^\lambda$.
\end{itemize}
\label{calcu1}
\end{lemma}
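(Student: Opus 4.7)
The two statements reduce to comparing two sign characters of $\W$, namely $\det$ and $\epsilon_C$, and using that length in $\W_{aff}$ counts simple reflections in any reduced decomposition.

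For part (i), the plan is to observe that each generator $s_A$ with $A \in \Pi_{aff}$ acts on $\Ap$ as an affine reflection, hence $\det(s_A) = -1$. Since $\ell$ restricted to $\W_{aff}$ is the Coxeter length for $(\W_{aff}, S_{aff})$, any $w \in \W_{aff}$ admits a reduced expression $w = s_{A_1}\cdots s_{A_{\ell(w)}}$, and multiplicativity of $\det$ yields $\det(w) = (-1)^{\ell(w)}$.

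For part (ii), I would combine (i) with the decomposition $\W = \Omega \ltimes \W_{aff}$. Write $e^\lambda = \omega\, w_{aff}$ with $\omega \in \Omega$ and $w_{aff} \in \W_{aff}$; since $\ell$ is constant on $\W$ mod $\Omega$ and $\ell(\omega) = 0$, we get $\ell(e^\lambda) = \ell(w_{aff})$. Translations are orientation-preserving affine isometries of $\Ap$, so $\det(e^\lambda) = 1$. Using part (i) and multiplicativity,
\[
1 = \det(e^\lambda) = \det(\omega)\,\det(w_{aff}) = \det(\omega)\,(-1)^{\ell(w_{aff})},
\]
whence $\det(\omega) = (-1)^{\ell(e^\lambda)}$. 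By the identification $\det(\omega) = \epsilon_C(\omega)$ recalled just before the lemma, and because $\epsilon_C$ is trivial on $\W_{aff}$, this gives $\epsilon_C(e^\lambda) = (-1)^{\ell(e^\lambda)}$. Finally, for any $w \in \W$ with $w \leq e^\lambda$, the compatibility $\epsilon_C(w) = \epsilon_C(e^\lambda)$ noted in Section \ref{orientation} concludes.

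There is no real obstacle here: the only subtle point is checking that $\det(e^\lambda) = 1$ and $\det(s_A) = -1$, which are both immediate from the geometric description of the action of $\W$ on $\Ap$. The argument is then a clean matching of two characters $\det$ and $\epsilon_C$ that are both trivial on $\W_{aff}$ (once composed with $(-1)^\ell$ on that side) and that agree on $\Omega$ by the orientation interpretation of $\epsilon_C$.
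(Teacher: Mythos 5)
Your proof is correct and follows essentially the same route as the paper: part (i) via $\det(s_A)=-1$ and multiplicativity along a reduced expression, and part (ii) by decomposing $e^\lambda=\omega w_{aff}$ in $\Omega\ltimes\W_{aff}$, using $\det(e^\lambda)=1$, $\ell(\omega)=0$, the triviality of $\epsilon_C$ on $\W_{aff}$, the identification $\det(\omega)=\epsilon_C(\omega)$, and finally the Bruhat-order compatibility of $\epsilon_C$.
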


\begin{proof}
The first point comes from the fact that $\det(s)=-1$ for $s\in S_{aff}$.
For the second one,  by definition of the Bruhat order, it is enough to prove that $\epsilon_C(e^\lambda)=(-1)^{\ell(e^\lambda)}$ for 
$\lambda\in \X_*(\T)$. Decompose $e^\lambda= \omega w_{aff}$ with
$w\in\W_{aff}$ and $\omega\in \Omega$. Recall that $\omega$ has length zero.
Since $\epsilon_C$ is trivial on $\W_{aff}$, we have
$\epsilon_C(e^\lambda)=\epsilon_C(\omega)=\det(\omega)$. Since
$\det(e^\lambda)=1$ we have
$\det(\omega)=\det(w_{aff})=(-1)^{\ell(w_{aff})}=(-1)^{\ell(e^\lambda)}$.

\end{proof}
\subsubsection{Distinguished cosets representatives}

\begin{prop}

\phantomsection
%\label{representants}
\begin{itemize}
\item[i.] The set $\Dd$ of all elements $d\in \W$ satisfying
$    d^{-1}(\Phi^+)\subset {\Phi}_{aff}^+$ is a system of representatives of the right cosets $\Wf\backslash\W$. It satisfies
\begin{equation}\label{additive0}
\ell( wd)=\ell(w)+\ell(d)
\textrm{ for any $w\in \Wf$ and $d\in \Dd$.}\end{equation} In particular, $d$ is the unique element with minimal length in $\Wf d$.
\item[ii.] An element  $d\in \Dd$ can be written uniquely $d=e^{\lambda}w$ with $\lambda\in \X_*^+(\T)$ and  $w\in \Wf$. We then have $\ell(e^{\lambda})=\ell(d)+ \ell(w^{-1})=\ell(d)+ \ell(w)$.   
\item[iii.] For  $s\in S_{aff}$ and $d\in \Dd$, we are in one of the following situations:
\begin{itemize}
\item[$\bullet$] $\ell(ds)=\ell(d)-1$ in which case $ds\in \Dd$.
\item[$\bullet$] $\ell(ds)=\ell(d)+1$ in which case either $ds\in \Dd$ or $ds\in \Wf d$.
\end{itemize}
\end{itemize}
\label{prop:dist}
\end{prop}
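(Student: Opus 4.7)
All three parts will be deduced from the length formula $\ell(v) = \#\{A \in \Phi_{aff}^+ : v^{-1}(A) \in \Phi_{aff}^-\}$ recalled in the preliminaries, combined with the semidirect product decomposition $\W = \Wf \ltimes \X_*(\T)$. The key structural observation is that a finite Weyl element $w \in \Wf$ acts on $\Phi_{aff} = \Phi \times \Z$ by $w \cdot (\alpha, r) = (w\alpha, r)$, so it preserves the second coordinate; this decouples the $r = 0$ and $r > 0$ pieces of any inversion set. Writing $d \in \W$ as $d = e^\lambda w$ via the decomposition, one has the explicit formula $d^{-1}(\alpha, 0) = (w^{-1}\alpha, \lp\lambda, \alpha\rp)$.

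For part (i), I first verify that $d \in \Dd$ if and only if $d^{-1}(\alpha) \in \Phi_{aff}^+$ for every simple root $\alpha \in \Pi$. The forward direction is trivial; for the converse, assuming the condition holds on $\Pi$ forces $\lp\lambda, \beta\rp \geq 0$ for all $\beta \in \Pi$ (so $\lambda$ is dominant) and $w^{-1}\beta \in \Phi^+$ whenever $\lp\lambda, \beta\rp = 0$. Then for $\alpha = \sum c_\beta \beta \in \Phi^+$ with $c_\beta \geq 0$, the quantity $\sum c_\beta \lp\lambda, \beta\rp$ vanishes only when $\lp\lambda, \beta\rp = 0$ for every $\beta$ with $c_\beta > 0$, in which case $w^{-1}\alpha$ is a nonnegative combination of positive roots and hence itself in $\Phi^+$. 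By \eqref{add}, the simple-root condition is equivalent to $\ell(s_\alpha d) > \ell(d)$ for every $s_\alpha \in S$, i.e.\ $d$ being of minimum length in $\Wf d$. To obtain additivity $\ell(wd) = \ell(w) + \ell(d)$ for $w \in \Wf$, $d \in \Dd$, I count the inversions of $wd$ by splitting $\Phi_{aff}^+$ into $(\Phi^+ \times \{0\})$ and $(\Phi \times \Z_{>0})$: the count on the first piece equals $\ell(w)$ (using $d \in \Dd$ to conclude that the sign of $d^{-1}(w^{-1}\alpha, 0)$ is controlled by that of $w^{-1}\alpha$), and the count on the second piece equals $\ell(d)$ since $w$ permutes each slice $\Phi \times \{r\}$, $r > 0$. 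Existence and uniqueness of the factorization $v = wd$ with $w \in \Wf, d \in \Dd$ then follow by taking $d$ to be the unique minimum length element in $\Wf v$.

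For part (ii), the decomposition $d = e^\lambda w$ comes from $\W = \Wf \ltimes \X_*(\T)$, so it is automatically unique, and the analysis above already shows that $d \in \Dd$ forces $\lambda$ dominant. For the length identity, invoke the standard formula $\ell(e^\lambda) = \sum_{\alpha \in \Phi^+} \lp\lambda, \alpha\rp$ for dominant $\lambda$ and compute $\ell(d)$ directly by partitioning $\Phi_{aff}^+$ according to the sign of $\alpha \in \Phi$ and whether $r = 0$ or $r > 0$. A careful count gives
\[
\ell(d) = \ell(e^\lambda) - \ell(w) + 2\,\#\{\alpha \in \Phi^+ : \lp\lambda, \alpha\rp = 0 \text{ and } w^{-1}\alpha \in \Phi^-\},
\]
and the last set is empty by the supplementary condition characterizing $\Dd$ that was established in (i). Hence $\ell(e^\lambda) = \ell(d) + \ell(w) = \ell(d) + \ell(w^{-1})$.

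For part (iii), let $s = s_A$ with $A \in \Pi_{aff}$. Formula \eqref{add} gives $\ell(ds) = \ell(d) \pm 1$ according to whether $d(A) \in \Phi_{aff}^\pm$. Since $s_A$ sends $A$ to $-A$ and permutes $\Phi_{aff}^+ \setminus \{A\}$, one has $(ds)^{-1}(\Phi^+) = s_A d^{-1}(\Phi^+) \subset \Phi_{aff}^+$ if and only if $A \notin d^{-1}(\Phi^+)$, equivalently $d(A) \notin \Phi^+$. Thus: if $d(A) \in \Phi_{aff}^-$, then $d(A) \notin \Phi^+$ automatically, so $ds \in \Dd$; if $d(A) \in \Phi_{aff}^+$, then either $d(A) \notin \Phi^+$ and $ds \in \Dd$, or $d(A) = (\alpha, 0)$ with $\alpha \in \Phi^+$, in which case $dsd^{-1} = s_{d(A)} = s_\alpha \in \Wf$, giving $ds \in \Wf d$. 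The main obstacle throughout is the careful length bookkeeping in part (ii); it is the decoupling of the affine-root coordinates under the action of $\Wf$ that keeps this tractable.
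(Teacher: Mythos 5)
Your proof is correct, and it takes a genuinely more self-contained route than the paper's. The paper disposes of parts (i) and (ii) entirely by citation (to \cite{Oparab}, \cite{OS}, \cite{invsatake}), and for (iii) it only supplies the short argument needed for the new case $s\in S_{aff}\setminus S$, reasoning as follows: if $ds\not\in\Dd$, pick $\beta\in\Pi$ with $(ds)^{-1}\beta\in\Phi_{aff}^-$; since $d^{-1}\beta\in\Phi_{aff}^+$ and $s_A$ flips only $A$ among $\Phi_{aff}^+$, one must have $d^{-1}\beta=A$, hence $dA=(\beta,0)\in\Phi_{aff}^+$, $\ell(ds)=\ell(d)+1$, and $dsd^{-1}=s_{dA}=s_\beta\in\Wf$. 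Your treatment of (iii) is essentially the same dichotomy, just organized around the membership criterion $d(A)\notin\Phi^+$. What you add is a uniform inversion-counting proof of (i) and (ii): the reduction of the defining condition to simple roots, the decoupling of the $r=0$ and $r>0$ slices under the left $\Wf$-action (which gives additivity \eqref{additive0} and hence uniqueness of representatives directly), and the explicit formula $\ell(d)=\ell(e^\lambda)-\ell(w)+2\,\#\{\alpha\in\Phi^+:\lp\lambda,\alpha\rp=0,\,w^{-1}\alpha\in\Phi^-\}$, whose correction term vanishes precisely on $\Dd$. I verified this identity: the $\{(\alpha,0):\alpha\in\Phi^+\}$ slice contributes the error set once more, and the $r>0$ slice contributes $\ell(e^\lambda)-\ell(w)+\#\{\alpha:\lp\lambda,\alpha\rp=0,\,w^{-1}\alpha\in\Phi^-\}$, summing to your formula. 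The advantage of your approach is that it makes the proposition independent of the external references; the cost is the more delicate bookkeeping in (ii), exactly as you note. One small point worth making explicit when writing this up: the uniqueness of the minimal-length element in $\Wf v$ (and hence the system-of-representatives claim) is a consequence of the additivity you prove, not an input to it, so the logical order is: existence of a minimum (finiteness of $\Wf$), any minimum lies in $\Dd$ (via the simple-root criterion and \eqref{add}), additivity on $\Dd$, and then uniqueness of the element of $\Dd$ in each coset.
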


\begin{proof} This proposition is proved in \cite[Lemma 2.6, Prop. 2.7]{Oparab} in the case of $\Gp={\rm GL}_n(\Corps)$.
It is checked in \cite[Prop. 4.6]{OS}  that it remains valid for a general split reductive group (see also \cite[Prop. 2.2]{invsatake} for ii),  except for point iii when  $s\in S_{aff}-S$. We check here that the argument goes through. 
Let $s\in S_{aff}$ and  $A$ the corresponding affine root.
Let $d\in \Dd$ and suppose that $ds\not\in \Dd$, then  there is $\beta\in \Pi$ such that
$(ds)^{-1} \beta\in \Phi_{aff}^-$ while $d^{-1} \beta\in \Phi_{aff}^+$. It implies that
$d^{-1}\beta=A$ which in particular ensures that $dA\in \Phi_{aff}^+$ and therefore $\ell(ds)= \ell(d)+1$. Furthermore, 
$ d s d^{-1}= s_{dA}= s_{\beta}\in \Wf$.

\end{proof}

There is an action of the group $\Gp$ on the semisimple building $\mathscr X$ recalled  in \cite[p. 104]{SS} that extends the action of $N_\Gp(\rm T)$
on the standard apartment. 
For $F$ a standard facet,
we denote by ${\EuScript P}_F^\dagger$ the stabilizer of $F$ in $\G$.  \begin{prop}

\phantomsection

\begin{itemize}
\item[i.] The Iwahori subgroup $\Iw$ acts transitively on the apartments of $\mathscr X$ containing $C$.
\item[ii.]  The stabilizer ${\EuScript P}_{x_0}^\dagger$  of $x_0$  acts transitively on the chambers of $\mathscr X$ containing $x_0$ in their closure.

\item[iii.] A $\G$-conjugate of $x_0$ in the closure of $C$ is a ${\EuScript P}_C^\dagger$-conjugate of $x_0$.

\end{itemize}
\label{prop:buildingstuff}

\end{prop}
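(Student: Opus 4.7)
Parts (i) and (ii) are standard consequences of Bruhat--Tits theory together with the $BN$-pair structure of $\Gp$, so the main work in the proposal is to fix a clean way to cite them and then deduce (iii) from (ii). The plan is as follows.

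For (i), I would invoke the axiom of buildings which says that if two apartments share a chamber $C$, then the pointwise stabilizer of $C$ contains an element carrying one apartment to the other. Since $\Iw$ is precisely the pointwise stabilizer of $C$ in $\Gp$ (by definition of the Iwahori via the Bruhat--Tits scheme $\mathbf G_C$, together with $\mathbf G_C^\circ(\Oo)=\mathbf G_C(\Oo)=\Iw$ recalled in \ref{nots}), this gives the claim. A reference to Bruhat--Tits, or to the axioms as formulated in \cite{Tit}, would suffice.

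For (ii), the chambers of $\mathscr{X}$ having $x_0$ in their closure are in natural bijection with the chambers of the spherical building of $\bar{\mathbf G}_{x_0}$ (the link of $x_0$). Through the reduction map $\K\twoheadrightarrow \K/\K_1=\bar{\mathbf G}_{x_0}(\mathbb F_q)$, the action of $\K\subset\mathscr P_{x_0}^\dagger$ on this set of chambers factors through the action of $\bar{\mathbf G}_{x_0}(\mathbb F_q)$, which is $\mathbb F_q$-split connected reductive. Since such a group acts transitively on the chambers of its spherical building (equivalently, its Borel subgroups are all conjugate), the claim follows.

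For (iii), the key observation is that (ii) is exactly what is needed to transport a $\Gp$-conjugation relation into a $\mathscr P_C^\dagger$-conjugation relation. Let $x\in \overline{C}$ with $x=g\cdot x_0$ for some $g\in\Gp$. Then $g^{-1}C$ is a chamber having $g^{-1}x=x_0$ in its closure, so by (ii) there exists $h\in \mathscr P_{x_0}^\dagger$ with $h\cdot C = g^{-1}\cdot C$; equivalently $gh \in \mathscr P_C^\dagger$. Since $h$ fixes $x_0$, we get $(gh)\cdot x_0 = g\cdot x_0 = x$, which is the desired statement.

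I do not expect any genuine obstacle: the only point requiring a small amount of care is phrasing (ii) intrinsically in terms of the link at $x_0$ so that one may apply the transitivity of a split reductive group on its Borel subgroups; everything else reduces to a direct invocation of building axioms or a short two-line manipulation.
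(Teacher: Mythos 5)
Your proposal is correct, and for parts (i) and (iii) it follows the same path as the paper: (i) invokes the Bruhat--Tits result that the Iwahori acts transitively on apartments containing $C$ (the paper cites \cite[4.6.28]{BTII}; your phrasing via the building axiom plus the identification of $\Iw$ with the pointwise fixator is the same idea, though you should be aware that the abstract building axiom only produces \emph{some} element of the pointwise stabilizer, and identifying that stabilizer with the parahoric $\Iw=\mathbf G_C(\Oo)=\mathbf G_C^\circ(\Oo)$ is the content of the Bruhat--Tits reference rather than an independent observation); and your argument for (iii) is word-for-word the paper's. Where you genuinely diverge is in (ii). The paper first works inside the fixed apartment $\Ap$, using transitivity of $\W$ on its chambers, the distinguished representatives $\Dd$ together with \cite[Proposition~4.13~i.]{OS} to force $d^{-1}C=C$, and then bootstraps from $\Ap$ to all of $\mathscr X$ via \cite[Corollaire~2.2.6]{BTI}. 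You instead go directly through the link of $x_0$: the chambers of $\mathscr X$ containing $x_0$ in their closure form the spherical building of $\overline{\mathbf G}_{x_0}(\mathbb F_q)$, the $\K$-action on which factors through the reduction $\K\twoheadrightarrow\overline{\mathbf G}_{x_0}(\mathbb F_q)$, and transitivity then reduces to conjugacy of Borel subgroups of a finite split reductive group. This is a cleaner and more intrinsic argument; it avoids the $\Dd$-machinery and the auxiliary result about apartments through $x_0$, and it in fact proves the sharper statement that already $\K\subset\EuScript P_{x_0}^\dagger$ acts transitively. The only thing you gain from the paper's route is that it stays closer to the apartment-level combinatorics that the paper uses heavily elsewhere, which is presumably why the author chose it.
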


\begin{proof} Point i is \cite[4.6.28]{BTII}. For  ii,  we first consider $C'$  a chamber of $\Ap$ containing $x_0$ in its closure.
The group $\W$ acting transitively on the chambers of $\Ap$, there is 
$d\in \Dd$ and $w_0\in \Wf$ such that $C'=w_0 d C$ and
$C$ contains $d^{-1} x_0$ in its closure. By \cite[Proposition 4.13 i.]{OS}, it implies that $d^{-1}C=C$ and therefore $C'=w_0 C$  or, when considering the action of $G$ on the building,   $C'= \hat w_0 C$ where $\hat w_0\in \K\cap N_G(\T)$ denotes a lift for $w_0$.
Now let $C''$ be a chamber of $\mathscr X$ containing $x_0$ in its closure.
By \cite[Corollaire 2.2.6]{BTI}, there is $k\in {\EuScript P}_{x_0}^\dagger$  such that $k C''$ is in $\Ap$.  Applying the previous observation,  $C''$ is a ${\EuScript P}_{x_0}^\dagger$-conjugate of $C$.
Lastly, let $gx_0$ (with $g\in \Gp$) be a conjugate of $x_0$ in the closure of $C$. By ii, the chamber $g^{-1}C$ is of the form $kC$  for $k\in {\EuScript P}_{x_0}^\dagger$ which implies that $gk\in {\EuScript P}_C^\dagger$ and
$gx_0$ is a ${\EuScript P}_C^\dagger$-conjugate of $x_0$.
 
\end{proof}

\begin{rema}\label{rema:conjug-stabi}
By \cite[Lemma 4.9]{OS},  ${\EuScript P}_C^\dagger$ is the disjoint union of
all $\Iw \hat \omega \Iw= \hat \omega \Iw$ for $\omega\in \Omega$. Therefore, a $\G$-conjugate of $x_0$ in the closure of $C$ is a ${\EuScript P}_C^\dagger\cap N_\G(\T)$-conjugate of $x_0$.

\end{rema}

\subsubsection{Weyl chambers\label{cones}}
The set of dominant coweights  $\X_*^+(\Tp)$ is the set of all $\lambda\in \X_*(\Tp)$ such that $\lp \lambda, \alpha\rp\geq 0$ for all $\alpha\in \Phi^+$.  It is called the dominant chamber.
Its opposite is the antidominant chamber.
A coweight $\lambda$ such that  $\lp \lambda, \alpha\rp> 0$ for all $\alpha\in \Phi^+$ is called strongly dominant. By \cite[Lemma 6.14]{BK}, 
strongly dominant elements do exist.
\medskip

We call a facet  $F$ of $\Ap$  standard if it is a facet of $C$   containing $x_0$ in its closure. 
Attached to a standard facet $F$ is the subset $\Phi_F$ of all   roots in $\Phi$ taking value zero on $F$ and the subgroup  $\Wf_F$ of $\Wf$ generated by the simple reflections stabilizing $F$. Let $\Phi_F^+:=\Phi^+\cap \Phi_F$ and
$\Phi_F^-:=\Phi^-\cap \Phi_F$. Define the following  Weyl chamber in $\X_*(\T)$  as in \cite[4.1.1]{invsatake}:
 $$\Cute^+(F)=\{\lambda\in  \X_*(\T)\textrm{  such that }\lp\lambda,\alpha\rp\geq 0\textrm{ for all }\alpha\in (\Phi^+-\Phi^+_F)\cup \Phi_F^-\}$$ and its opposite $\Cute^-(F)=-\Cute^+(F).$ 
 They are respectively the images of the dominant and antidominant chambers by the longest element $w_F$ in $\Wf_F$.
 
 \medskip
 By  Gordan's Lemma  (\cite[p.~7]{KKMS}), a Weyl chamber is  finitely generated as a semigroup.
 %Note that the facet $F$ here is in fact only  used to define the subset $\Pi_F$ of $\Pi$ of the simple roots taking values on $F$. Then $\Pi_F$ is a basis for $\Phi_F^+$. We could therefore identify  the Weyl chambers simply by the datum of  a sign ($+$ or $-$) and a subset of $\Pi$. 
 
 \subsubsection{\label{tilda}} We follow the notations of \cite[2.2.2, 2.2.3]{invsatake}.  
Recall that $\Tp^1$ is the pro-$p$ Sylow subgroup of $\T^0$.
We denote by $\tilde\W$ the quotient of $N_\Gp(\Tp)$   by $\Tp^1$  and obtain the exact sequence
$$0\rightarrow \Tp^0/\Tp^1 \rightarrow \tilde\W\rightarrow \W\rightarrow 0.$$ 
The group $\tilde\W$  parametrizes the double cosets of $\Gp$ modulo $\I$.  We fix a lift $\hat w\in N_\G(\T)$ for any $w\in \tilde \W$ and denote by $\tau_w$ the characteristic function of the double coset $\I \hat w \I$. The set of all $(\tau_w)_{w\in \tilde \W}$ is a $\Z$-basis for $\Hh_\Z$ which was defined  in the introduction to be  the convolution ring  of  $\Z$-valued functions with compact support in $\I\backslash \Gp
/\I$. For $g\in \Gp$, we will also use the notation $\tau_g$ for the characteristic function of the double coset $\I g \I$. \medskip

For $Y$ a subset of $\W$, we denote by $\tilde Y$ its preimage in $\tilde \W$. In particular, we have   the preimage $\tilde \X_*(\T)$  of $\X_*(\T)$. As well as those of $\X_*(\T)$, its elements will be denoted by $\lambda$ or $e^\lambda$ and called coweights. For $\alpha\in \Phi$, we inflate the function $\lp \,.\, , \alpha\rp$ defined on $\X_*(\T)$ to 
$\tilde \X_*(\T)$.   We still call \emph{dominant coweights} the elements in the preimage  $\tilde \X_*^+(\T)$ of $\X_*^+(\T)$. For $\sigma$ a sign and $F$ a standard facet, we consider the preimage of $\Cute^\sigma(F)$  in $\tilde \X_*(\T)$ and we still denote it by $\Cute^\sigma(F)$.

\medskip

The length function $\ell$ on $\W$ pulls back to a length function $\ell$ on $\tilde\W$  (\cite[Proposition 1]{Ann}).
For $u,v\in \tilde\W$ we write $u\leq v$ (resp. $u<v$) if their projections $\bar u$ and $\bar v$ in $\W$ satisfy $\bar u\leq \bar v$
(resp. $\bar u<\bar v$).
\subsubsection{\label{par:split}} We emphasize the following remark which will be important for the definition of the subring $\Zz^\c(\Hh_\Z)$ of the center of $\Hh_\Z$ in \ref{center}. 
\medskip

For $\lambda\in \X_*^+(\T)$,  the element  $\lambda(\varpi^{-1})\in N_\Gp(\T)$ is a lift for $e^\lambda$ seen in $\W$ 
by our convention \eqref{normalization}.
The map
\begin{equation}\label{splitting}\lambda\in \X_*(\T)\rightarrow [ \lambda(\varpi^{-1})\,{\rm mod} \,\T^1 ]\in \tilde \X_*(\T)\end{equation}
  is a $\Wf$-equivariant splitting for   the exact sequence of abelian groups
\begin{equation}\label{eq:split}0\longrightarrow \T^0/\T^1\longrightarrow \tilde\X_*(\T) \longrightarrow \X_*(\T)\longrightarrow 0.\end{equation}
We will identify $\X_*(\T)$ with its image in $\tilde \X_*(\T)$ \emph{via} \eqref{splitting}.  
Note that this identification depends on the choice of the uniformizer $\varpi$.

\begin{rema}\label{rema:semi}
We have the decomposition of $\tilde \W$ as a semidirect product $\tilde \W=\tilde\Wf\ltimes \X_*(\T)$ where  $\tilde \Wf$ denotes the preimage of $\Wf$ in $\tilde\W$.
\end{rema}
%The length function $\ell$ on $\W$ pulls back to a length function $\ell$ on $\tilde\W$. 
%Denote by $\leq $ the Bruhat order on $\W$. For $\tilde w, \tilde w'\in \tilde \W$, we write $\tilde w\leq \tilde w'$

\subsubsection{Pro-$p$ Hecke rings\label{defi:rings}}  
The product in   the generic pro-$p$ Iwahori-Hecke ring  $\Htg$  is  described in  \cite[Theorem 1]{Ann}. 
It is given by \emph{quadratic relations} and \emph{braid relations}. Stating the quadratic relations in $\Htg$  requires some more notations. We are only going to use them in $\Hh_k$ where they have a simpler form,  and we postpone their description to \ref{idempo}.
We recall here the braid relations:
\begin{equation} \label{braid} \textrm{$\t_{ww'}=\t_w \t_{w'}$ for $w, w'\in \tilde \W$ satisfying $\ell(ww')=\ell(w)+ \ell(w')$.}\end{equation}
The functions in $\Hh_\Z$ with support in the subgroup of $\Gp$ generated by all parahoric subgroups form a  subring $\Hh_\Z^{aff}$   called the affine subring.
It has $\Z$-basis the set of all $\tau_w$ for $w$   in the preimage  $\tilde\W_{aff}$ of $\W_{aff}$ in $\tilde\W$ (see for example \cite[4.5]{OS}).
It is generated by all $\tau_{s}$ for $s$ in the preimage $\tilde S_{aff}$ of $S_{aff}$ and all $\tau_t$ for $t\in \T^0/\T^1$.

\medskip

There is  an involutive automorphism  defined on $\Hh_\Z\otimes_\Z\Z[q^{\pm 1/2}]$ by (\cite[Corollary 2]{Ann}): \begin{equation}
\label{defi:invo}\upiota:\t_w\mapsto (-q)^{\ell(w)}\t_{w^{-1}}^{-1}.\end{equation}   and it actually yields an involution on $\Hh_\Z$.
 Inflating the character $\epsilon_C: \W\rightarrow \{\pm 1\}$ defined in \ref{orientation} to a character of $\tilde\W$,
we define the following $\Z$-linear involution $\upsilon_C$ of $\Hh_\Z$ by:
\begin{equation*}
    \upsilon_C(\tau_w) = \epsilon_C(w) \tau_w \qquad\text{for any $w \in \tilde{\W}$}.
\end{equation*}
It  is the identity on the affine subring $\Hh^{aff}_\Z$. We will consider the following $\Z$-linear involution on $\Hh_\Z$:\begin{equation}
\upiota_C=\upiota\circ \upsilon_C.\label{therightinvolution}
\end{equation}

\begin{rema} The involution   $\upiota$  fixes all $\tau_{w}$ for $w\in\tilde\W$ with length zero. 
The involution   $\upiota_C$  fixes all $\tau_{e^\lambda}$ for $\lambda\in\tilde\X_*(\T)$ with length zero. 
\label{fix0}
\end{rema}

\subsubsection{\label{idempo}}

Let $\R$ be a ring  with unit $1_\R$, 
containing an inverse for $(q1_\R-1)$ and a primitive $(q-1)^{\rm{th}}$ root of $1_\R$.
The group of  characters of $ \T^0/\T^1= \overline {\mathbf T}(\fq)$ with values in $\R^\times$ is isomorphic to the group
 of characters of $ \overline {\mathbf T}(\fq)$ with values in $\fq^\times$ which we denote by $ \hat{\overline {\mathbf T}}(\fq)$.
To $\xi\in  \hat{\overline {\mathbf T}}(\fq)$   we attach
 the idempotent element $\epsilon_\xi\in \Hh_\R$
 as in \cite{Ann} (definition recalled in \cite[2.4.3]{invsatake}). 
 For $t\in \T^0$  we have 
$\epsilon_{\xi} \t_{t}=\t_{t}\epsilon_{\xi} =\xi(t)\epsilon_{\xi}.$
 The idempotent elements $\epsilon_\xi$, ${\xi\in  \hat{\overline {\mathbf T}}(\fq)}$, are pairwise orthogonal
and their sum
 is the identity in $\Hh_\Z\otimes _\Z\R$.
 
 For $A\in \Pi_{aff}$, choose the lift $n_A\in \Gp$ for $s_A$ 
 defined after fixing an épinglage for $\Gp$ as in \cite[1.2]{Ann}.  We refer to \cite[2.2.5]{invsatake}
 for the definition of the associated subgroup $\T_A$ of $\T^0$ which identifies with a subgroup of $\T^0/\T^1$.

 \medskip
 
 For  $\xi\in  \hat{\overline {\mathbf T}}(\fq)$, we have in  $\Hh_\Z\otimes _\Z\R$:

\begin{equation}\label{Qgene'}\left\lbrace\begin{array}{l}
\textrm{ if $\xi$ is trivial on $\T_A$,   then    $\epsilon_{\xi}\t_{n_A}^2=\epsilon_{\xi}((q1_\R-1) \t_{n_A}+ q 1_\R)$}\cr
\textrm{ otherwise $\epsilon_{\xi}\t_{n_A}^2\in q\R^\times \epsilon_{\xi}$}.\cr\end{array}\right.\end{equation}

 The field $k$ is an example of ring $\R$
 as above.  In $\Hh_k$ we have:  

\begin{equation}\label{Q'}\left\lbrace\begin{array}{l}
\textrm{ if $\xi$ is trivial on $\T_A$,   then    $\epsilon_{\xi}\t_{n_A}^2=-\epsilon_{\xi} \t_{n_A}$}\cr
\textrm{ otherwise $\epsilon_{\xi}\t_{n_A}^2=0$}.\cr\end{array}\right.\end{equation}

\begin{rema} \label{rema:prodnul} 
In $\Hh_k$ we have $\tau_{n_A}\,\upiota(\tau_{n_A})=0$ for all $A\in S_{aff}$.
Furthermore, $\upiota(\tau_{n_A})+\tau_{n_A}$ lies in the subalgebra of $\Hh_k$ generated by all $\tau_t$, $t\in \T^0/\T^1$, or equivalently
by all $\epsilon_\xi$, $\xi\in \hat{\overline {\mathbf T}}(\fq)$. This can  be seen using  for example \cite[Remark 2.10]{invsatake}, which also implies the following: \\ 
if $\xi$ is trivial on $\T_A$, then $\upiota(\epsilon_\xi\tau_{n_A})=\epsilon_\xi\upiota (\tau_{n_A})=-\epsilon_\xi (\tau_{n_A}+1)$and\\
if $\xi$ is not trivial on $\T_A$ , then  $\upiota(\epsilon_\xi\tau_{n_A})=-\epsilon_\xi \tau_{n_A}$.

\end{rema}

\subsubsection{\label{weights} Parametrization of the weights}  
The functions in $\Hh_\Z$ with support in $\K$ form a  subring $\H_\Z$.
It has $\Z$-basis the set of all $\tau_w$ for $w\in  \tilde\Wf$. 
Denote by $\H_k$ the $k$-algebra $\H_\Z\otimes_\Z k$.
The  simple modules of $\H_k$ are one dimensional \cite[(2.11)]{Sawada0}.

An irreducible smooth $k$-representation $\rho$ of $\K$ will be called a weight. By  \cite[Corollary 7.5]{CL}   the weights are in one-to-one correspondence with the characters  of  $\H_k$ \emph{via} $\rho\mapsto \rho^\I$. 
To a  character $\chi: \H_k\rightarrow k$ is  attached    the morphism $\bar\chi: \T^0/\T^1\rightarrow k^\times$  such that $\bar\chi(t)=\chi(\t_t)$ for all $t\in \T^0/\T^1$ and
the  set $\Pi_{\bar\chi}$ of   all simple roots $\alpha\in \Pi$
such that $\bar\chi$ is trivial on $\T_\alpha$.
We then have $\chi(\t_{\tilde s_\alpha})=0$ for all $\alpha\in \Pi-\Pi_{\bar\chi}$,  where $\tilde s_\alpha\in \tilde\W$ is any lift for $s_\alpha\in \W$. We denote by $\Pi_{\chi}$ the subset of all
 $\alpha\in\Pi_{\bar\chi}$ such that $\chi(\t_{\tilde s_\alpha})=0$. 
The character $\chi $ is determined by the data of $\bar\chi$ and $\Pi_\chi$ (see also \cite[3.4]{invsatake}).

\begin{rema}\label{thefacet}
Choosing a standard facet $F$ is equivalent to choosing the subset $\Pi_F$ of $\Pi$ of the simple roots taking value zero on $F$.
 The standard facet  corresponding to $\Pi_\chi$ in  the previous discussion will be
  denoted by $F_\chi$.
\end{rema}

\section{\label{firstpart}On the center  of the pro-$p$ Iwahori-Hecke algebra in characteristic $p$}

\subsection{\label{modified}Commutative subrings of the  pro-$p$ Iwahori-Hecke ring}Let $\sigma$ be a sign and $F$ a standard facet.

\subsubsection{} As in \cite[4.1.1]{invsatake}, we introduce
the multiplicative injective map $$\Theta_F^\sigma:\tilde\X_*(\T)\longrightarrow \Htg\otimes_\Z \Z[q^{\pm 1/2}]$$    and the elements
$\Bb_F^\sigma(\lambda):=q^{\ell(e^\lambda)/2} \Theta_F^\sigma(\lambda)$ for all $\lambda\in \tilde\X_*(\T).$ Recall that $\Bb_F^\sigma(\lambda)=  \t_{e^\lambda}$ if $\lambda\in \Cute^\sigma(F)$.
%(and this remark is sufficient to retrieve the  formula for $\Bb_F^\sigma(\lambda)$ for  a general $\lambda$). 
   The map $\Bb_F^\sigma$  does not respect the product in general, but it is multiplicative when restricted to any Weyl chamber (see \cite[Remark 4.3]{invsatake}).
For any coweight  $\lambda\in \tilde\X_*(\T)$, the element 
$\Bb_F^\sigma(\lambda)$ lies in $\Hh_\Z$ (see Lemma \ref{lemma:fundam} below).
Furthermore combining  Lemmas \ref{calcu1}ii.,   \ref{lemma:fundam} and \cite[Lemma  4.4]{invsatake}:
\begin{equation} \label{involution} \upiota_C(\Bb_F^+(\lambda))=\Bb_F^-(\lambda).
\end{equation}
Extend $\Theta_F^\sigma$ linearly  to an injective morphism of $\Z[q^{\pm 1/2}]$-algebras $\Z[q^{\pm 1/2}][\tilde\X_*(\T)]\longrightarrow \Htg\otimes_\Z \Z[q^{\pm 1/2}]$. 
We consider the commutative subring 
$ \Aa^\sigma_F:= \Htg\cap \Im(\Theta_F^\sigma).$
By \cite[Prop. 4.5]{invsatake}, it is a free $\Z$-module with basis the set of all $\Bb_F^\sigma(\lambda)$ for $\lambda \in \tilde \X_*(\T)$. Since the Weyl chambers (in $\tilde \X_*(\T)$) are finitely generated semigroups,  $ \Aa^\sigma_F$ is finitely generated as a ring.

\begin{rema}\label{compa}Note that  $\Bb_{C}^+=\Bb_{x_0}^-$ (resp.  $\Bb_{C}^-=\Bb_{x_0}^+$) coincides with the integral Bernstein map $E^+$ (resp. $E$) introduced in \cite{Ann} and $\Aa_ C^+$ (resp.  $ \Aa_ C^-$) with the commutative ring  denoted by $\cal A^{+,(1)}$ (resp. 
$\cal A^{(1)}$) in \cite[Theorem 2]{Ann}.
 
 \end{rema}
 
\noindent Identify $\X_*(\T)$ with its image in $\tilde \X_*(\T)$ \emph{via} \eqref{splitting}.
 We denote by $(\Aa^\sigma_F)^\c$ the intersection 
\begin{equation*}(\Aa^{\sigma}_F)^\c :=\Htg \cap \Theta^\sigma_F(\Z[\X_*(\T)])\subseteq \Aa^\sigma_F. \end{equation*}
A $\Z$-basis for $(\Aa^\sigma_F)^\c$ is given by all $\Bb_F^\sigma(\lambda)$ for $\lambda\in \X_*(\T)$. It is finitely generated as a ring.

\begin{prop} \label{tensor}
The commutative $\Z$-algebra $\Aa_F^\sigma$ is isomorphic to the tensor product of  the $\Z$-algebras $\Z[\T^0/\T^1]$ and $(\Aa_F^\sigma)^\c$. In particular,  $(\Aa_F^\sigma)^\c$ is a direct summand of 
 $\Aa_F^\sigma$ as a $\Z$-module.
\end{prop}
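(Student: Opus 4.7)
The plan is to leverage the $\Wf$-equivariant splitting \eqref{splitting} of the short exact sequence \eqref{eq:split} to decompose $\tilde\X_*(\T)$ as a product, and then transport this decomposition through $\Bb_F^\sigma$ using the fact that $\Bb_F^\sigma$ is multiplicative on any single Weyl chamber.

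First I would record the following observation. For any $t \in \T^0/\T^1$, seen as an element of $\tilde\X_*(\T)$, the corresponding element $e^t \in \tilde\W$ projects to the identity in $\W$, hence has length zero and pairs to $0$ with every root. Consequently $t$ lies in \emph{every} Weyl chamber $\Cute^\sigma(F)$, so by the definition of $\Bb_F^\sigma$ (see the sentence after its introduction) one has $\Bb_F^\sigma(t) = \tau_t$. Moreover the braid relations \eqref{braid} applied to length-zero elements give $\tau_{tt'}=\tau_t\tau_{t'}$, so the $\Z$-linear map $\Z[\T^0/\T^1]\to \Aa_F^\sigma$ sending $t \mapsto \tau_t$ is a ring homomorphism whose image is a free $\Z$-submodule of rank $|\T^0/\T^1|$.

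Next I would show that for every $\lambda \in \tilde\X_*(\T)$ and every $t \in \T^0/\T^1$ we have
\begin{equation*}
\Bb_F^\sigma(t\lambda) = \tau_t\,\Bb_F^\sigma(\lambda).
\end{equation*}
Indeed, choose a Weyl chamber $\Cute^\sigma(F')$ containing $\lambda$ (possible since the Weyl chambers cover $\tilde\X_*(\T)$); since $t$ pairs trivially with all roots, both $t$ and $t\lambda$ lie in the same chamber. By the multiplicativity of $\Bb_F^\sigma$ on any single Weyl chamber (Remark 4.3 of \cite{invsatake}), $\Bb_F^\sigma(t\lambda) = \Bb_F^\sigma(t)\Bb_F^\sigma(\lambda) = \tau_t\Bb_F^\sigma(\lambda)$, as desired. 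Combined with the splitting \eqref{splitting}, which writes any $\mu \in \tilde\X_*(\T)$ uniquely as $\mu = t\cdot\lambda$ with $t \in \T^0/\T^1$ and $\lambda \in \X_*(\T)$, this shows that the $\Z$-basis $\{\Bb_F^\sigma(\mu)\}_{\mu\in\tilde\X_*(\T)}$ of $\Aa_F^\sigma$ coincides with the set $\{\tau_t\,\Bb_F^\sigma(\lambda)\}_{t,\lambda}$ where $t$ ranges over $\T^0/\T^1$ and $\lambda$ over $\X_*(\T)$.

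To conclude, I would note that the subring generated by the $\tau_t$ and the subring $(\Aa_F^\sigma)^\c$ (with $\Z$-basis $\{\Bb_F^\sigma(\lambda)\}_{\lambda \in \X_*(\T)}$, by definition) commute inside the commutative ring $\Aa_F^\sigma$. The product map thus yields a surjective homomorphism of $\Z$-algebras
\begin{equation*}
\Z[\T^0/\T^1] \otimes_\Z (\Aa_F^\sigma)^\c \longrightarrow \Aa_F^\sigma,
\end{equation*}
which by the preceding paragraph sends the $\Z$-basis $\{t \otimes \Bb_F^\sigma(\lambda)\}$ of the source bijectively onto the $\Z$-basis $\{\tau_t\,\Bb_F^\sigma(\lambda)\}$ of the target. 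This is therefore an isomorphism of $\Z$-algebras, and the freeness of $\Z[\T^0/\T^1]$ over $\Z$ exhibits $(\Aa_F^\sigma)^\c$ as a direct summand of $\Aa_F^\sigma$. The only real point requiring care is the passage to $t\lambda$ via a possibly different standard facet $F'$, so it is important that $\Bb_F^\sigma(t)=\tau_t$ and the multiplicativity relation are independent of the chosen chamber containing $t$; this is exactly what the length-zero property of $t$ provides.
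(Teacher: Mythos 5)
Your proof is correct and follows essentially the same route as the paper's. The paper's proof is a one-liner resting on the identity $\Bb_F^\sigma(\lambda+t)=\Bb_F^\sigma(\lambda)\t_t$ (quoted as a known fact), whereas you derive that identity carefully from the multiplicativity of $\Bb_F^\sigma$ on Weyl chambers together with the observation that a length-zero coweight $t$ pairs to zero with every root and hence lies in every closed Weyl chamber — a useful clarification, but the underlying mechanism (the $\Wf$-equivariant splitting \eqref{splitting} providing a product decomposition of the $\Z$-basis) is the same.
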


\begin{proof}  
Since  the exact sequence \eqref{eq:split} splits,  $\Aa_F^\sigma$ is  a free
$(\Aa_F^\sigma)^\c$-module with basis  the set of all $\t_t$ for  $t\in \T^0/\T^1$. Recall indeed that $\Bb_F^\sigma(\lambda+t)=\Bb_F^\sigma(\lambda)\t_t= \t_t\Bb_F^\sigma(\lambda)$ for all $\lambda\in \tilde \X_*(\T)$ and
$t\in \T^0/\T^1$. 
\end{proof}

\subsubsection{\label{fund}} The following is  a direct consequence of the lemma proved in \cite[\S5]{Haines} and adapted to the pro-$p$ Iwahori-Hecke algebra  in \cite[Lemma 13]{Ann} (see also \cite[1.2 and 1.5]{VigGene}).

\begin{lemma}\label{lemma:fundam} Let $F$ be a standard facet and $\sigma$ a sign.
For any $\lambda\in \tilde \X_*(\T)$, we have
$$\Bb_F^\sigma(\lambda)= \tau_{e^\lambda}+\sum_{w<e^\lambda} a_w \tau_w$$ where  $(a_w)_w $
is a family of elements in $\Z$ (depending on $\sigma$, $F$ and $\lambda$)
indexed by the set of $w\in \tilde \W$ such that $w<e^\lambda$. For those  $w$, we have in particular $\ell(w)<\ell(e^\lambda)$.

\end{lemma}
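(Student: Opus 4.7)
The plan is to follow the standard Bernstein--Lusztig argument, adapted to the pro-$p$ Iwahori--Hecke algebra as in \cite[Lemma 13]{Ann} and \cite[\S5]{Haines}. When $\lambda\in \Cute^\sigma(F)$ there is nothing to prove, since $\Bb_F^\sigma(\lambda)=\tau_{e^\lambda}$ by definition. For arbitrary $\lambda\in\tilde\X_*(\T)$ I would choose $\nu\in\Cute^\sigma(F)$ large enough that $\mu:=\lambda+\nu$ still lies in $\Cute^\sigma(F)$; such a $\nu$ exists because $\Cute^\sigma(F)$ is a cone containing strongly dominant (resp.\ antidominant) elements, and by Gordan's Lemma one can add a suitable generator of the chamber sufficiently many times. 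Using that $\Theta_F^\sigma$ extends to a group homomorphism on $\tilde\X_*(\T)$, I obtain
$$\Bb_F^\sigma(\lambda)=q^{\ell(e^\lambda)/2}\,\Theta_F^\sigma(\mu)\,\Theta_F^\sigma(\nu)^{-1}=q^{(\ell(e^\lambda)+\ell(e^\nu)-\ell(e^\mu))/2}\,\tau_{e^\mu}\,\tau_{e^\nu}^{-1}$$
in $\Htg\otimes_\Z\Z[q^{\pm 1/2}]$, which reduces the problem to expanding $\tau_{e^\mu}\tau_{e^\nu}^{-1}$ in the basis $\{\tau_w\}_{w\in\tilde\W}$ and keeping track of Bruhat order.

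For the expansion I would pick a reduced expression $e^\nu=s_1\cdots s_r\,\omega$ in $\tilde\W$ with $\omega$ of length zero, so that $\tau_{e^\nu}=\tau_{s_1}\cdots\tau_{s_r}\tau_\omega$ by the braid relations \eqref{braid}, and invert factor by factor. For each simple affine reflection $s=s_A$ the quadratic relations \eqref{Qgene'} yield $\tau_{n_A}^{-1}=q^{-1}(\tau_{n_A}-c_A)$ with $c_A$ in the subring $\Z[q^{\pm 1}][\T^0/\T^1]$ (this is seen by inverting on each idempotent $\epsilon_\xi$ and summing). Hence for any $w\in\tilde\W$ and $s\in \tilde S_{aff}$, the product $\tau_w\tau_s^{-1}$ equals $\tau_{ws}$ when $\ell(ws)=\ell(w)+1$, and is otherwise a $\Z[q^{\pm 1}]$-linear combination of $\tau_{w}$ and $\tau_{ws}$. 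Iterating along the chosen reduced word and using the Bruhat-order compatibility of such products gives
$$\tau_{e^\mu}\tau_{e^\nu}^{-1}=q^{(\ell(e^\mu)-\ell(e^\lambda)-\ell(e^\nu))/2}\tau_{e^\lambda}+\sum_{w<e^\lambda}b_w\tau_w,$$
the leading term being forced by the identity $e^\mu=e^\lambda e^\nu$ in $\tilde\W$ together with the unique length-additive cancellation along the reduced expression. Multiplying back by the prefactor $q^{(\ell(e^\lambda)+\ell(e^\nu)-\ell(e^\mu))/2}$ produces an expansion of $\Bb_F^\sigma(\lambda)$ of the required shape, a priori with coefficients in $\Z[q^{\pm 1/2}]$.

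The main obstacle is to confirm that the expansion actually lies in $\Hh_\Z$, i.e.\ that the fractional and negative powers of $q$ all cancel to leave integer coefficients $a_w$, and that no term $\tau_w$ with $w\not< e^\lambda$ survives. The first point follows because the ``length defect'' $\ell(e^\lambda)+\ell(e^\nu)-\ell(e^\mu)$ is a non-negative even integer (pair a reduced expression of $e^\lambda$ with that of $e^\nu$ to see this) that exactly matches the power of $q^{-1}$ produced by the non-length-additive inversions in the previous step; the second follows from the fact that right multiplication by $\tau_s^{\pm 1}$ preserves the property of being a $\Z$-linear combination of $\tau_u$ with $u\le v$ for a given $v$, together with $w<e^\lambda\Rightarrow \ell(w)<\ell(e^\lambda)$ built into the Bruhat order on $\tilde\W$. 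Both facts are standard bookkeeping lemmas in Haines and Vigneras, and the statement of Lemma \ref{lemma:fundam} is their direct translation; this is the technical point that requires care but no new idea.
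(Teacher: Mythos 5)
Your proposal reconstructs the Bernstein--Lusztig triangularity argument that the paper's own proof simply cites from \cite[\S5]{Haines} and \cite[Lemma 13]{Ann}, so the route is the same and the overall structure (write $\lambda=\mu-\nu$ with $\mu,\nu\in\Cute^\sigma(F)$, reduce to $\tau_{e^\mu}\tau_{e^\nu}^{-1}$, invert along a reduced word, track Bruhat order and powers of $q$) is exactly what those references carry out. Two small bookkeeping slips are worth flagging: the clean case $\tau_w\tau_s^{-1}=\tau_{ws}$ occurs when $\ell(ws)=\ell(w)-1$, not $\ell(w)+1$ --- the case you labelled clean is precisely the one that forces the quadratic relations --- and the assertion that right multiplication by $\tau_s^{\pm1}$ preserves $\Z$-linear combinations of $\tau_u$ with $u\le v$ for a \emph{fixed} $v$ is not literally true (the bound may grow to $\max(v,vs)$), so what is really needed is the Bruhat-order lemma in Haines \S5 bounding the support of $T_{e^\mu}T_{e^\nu}^{-1}$ by $e^{\mu-\nu}$. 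You correctly identify this as the technical crux and defer it to the cited sources, which is exactly the level of detail the paper itself gives.
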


\subsubsection{\label{not:irreducibleroot}}  In this paragraph, we suppose that the root system
of  $\Gp$ is  irreducible. It implies in particular that  there is a unique element in $\Pi_m$. It can be written $-\alpha_0$ where $\alpha_0\in \Phi^+$ is the highest root: we have
$\beta\preceq \alpha_0$ for all $\beta\in \Phi$ (\cite[VI. n$^{\circ}1.8$]{Bki-LA}).  For any standard facet $F\neq x_0$, we have  $\alpha_0\not\in \Phi_F$.
Denote by $s_0\in S_{aff}$ the simple reflection associated to $(-\alpha_0, 1)\in \Pi_{aff}$ and $n_{0}:=n_{(-\alpha_0,1)}\in \Gp$ the lift for $s_0$ as chosen in \ref{idempo}.

\begin{lemma} Suppose that $F\neq x_0$ and let $\lambda\in \tilde\X_*^+(\T)$ such that $\ell(e^\lambda)\neq 0$. We have $$\Bb_F^+(\lambda)\in \tau_{n_0}\Hh_\Z\,.$$
\label{calcul-n0}
\end{lemma}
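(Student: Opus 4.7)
My plan is to first treat the leading term $\tau_{e^\lambda}$ appearing in the expansion of $\Bb_F^+(\lambda)$ given by Lemma \ref{lemma:fundam}, and then to pass to the whole element $\Bb_F^+(\lambda)$ by exploiting the multiplicativity of $\Bb_F^+$ on Weyl chambers and the fact that $\tau_{n_0}\Hh_\Z$ is a right ideal of $\Hh_\Z$.

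\medskip

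The first step is to show that $\tau_{e^\lambda}\in\tau_{n_0}\Hh_\Z$ for every $\lambda\in\tilde\X_*^+(\T)$ with $\ell(e^\lambda)\neq 0$. The key observation is that the irreducibility of $\Phi$ forces $\lp\lambda,\alpha_0\rp\geq 1$: writing $\alpha_0=\sum n_i\alpha_i$ with all $n_i\geq 1$, the vanishing $\lp\lambda,\alpha_0\rp=0$ would impose $\lp\lambda,\alpha_i\rp=0$ for every simple $\alpha_i$, making $\lambda$ central and contradicting $\ell(e^\lambda)\neq 0$. Applying formula \eqref{add} with $w=e^{-\lambda}$ and the simple affine root $A=(-\alpha_0,1)$ attached to $s_0$, the element $e^{-\lambda}(A)=(-\alpha_0,1-\lp\lambda,\alpha_0\rp)$ lies in $\Phi_{aff}^-$ since $\lp\lambda,\alpha_0\rp\geq 1$, giving $\ell(s_0e^\lambda)=\ell(e^\lambda)-1$. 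Lifting to $\tilde\W$, the braid relation \eqref{braid} factors $\tau_{e^\lambda}=\tau_{n_0}\tau_{n_0^{-1}e^\lambda}\in\tau_{n_0}\Hh_\Z$. The same length computation yields $\tau_{e^\mu}\in\tau_{n_0}\Hh_\Z$ for any $\mu\in\tilde\X_*(\T)$ with $\lp\mu,\alpha_0\rp\geq 1$, regardless of whether $\mu$ is dominant.

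\medskip

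To pass from the leading term to $\Bb_F^+(\lambda)$, I would exploit the multiplicativity of $\Bb_F^+$ on a suitable Weyl chamber (\cite[Rem.~4.3]{invsatake}). When $\lambda$ admits a decomposition $\lambda=\mu+\nu$ with $\mu\in\Cute^+(F)\cap\tilde\X_*^+(\T)$ of positive length and $\nu\in\tilde\X_*^+(\T)$, multiplicativity on $\tilde\X_*^+(\T)$ gives $\Bb_F^+(\lambda)=\tau_{e^\mu}\Bb_F^+(\nu)$, which lies in $\tau_{n_0}\Hh_\Z$ by the first step and the right-ideal property. For the remaining $\lambda$, one switches to multiplicativity on the Weyl chamber $\Cute^+(F)$: write $\lambda=\mu_1-\mu_2$ with $\mu_1,\mu_2\in\Cute^+(F)$, choosing $\mu_1$ so that $\lp\mu_1,\alpha_0\rp\geq 1$ (always possible since the hypothesis $F\neq x_0$ together with irreducibility ensures $\alpha_0\in\Phi^+\setminus\Phi_F^+$, so $\mu_1$ may be enlarged in $\Cute^+(F)$ in the $\alpha_0$-direction). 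The first step then gives $\tau_{e^{\mu_1}}=\tau_{n_0}h_1\in\tau_{n_0}\Hh_\Z$, and the formula $\Bb_F^+(\lambda)=q^{N}\tau_{e^{\mu_1}}\tau_{e^{\mu_2}}^{-1}$ with $N=(\ell(e^\lambda)-\ell(e^{\mu_1})+\ell(e^{\mu_2}))/2$ coming from the definition of $\Theta_F^+$ expresses $\Bb_F^+(\lambda)$ as $\tau_{n_0}\cdot\bigl(q^{N}h_1\tau_{e^{\mu_2}}^{-1}\bigr)$.

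\medskip

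The main obstacle is then to verify that the bracketed element $q^{N}h_1\tau_{e^{\mu_2}}^{-1}$ actually lies in $\Hh_\Z$ and not merely in $\Hh_\Z\otimes_\Z\Z[q^{\pm 1/2}]$. This requires a careful $q$-divisibility analysis based on the triangular expansion of Lemma \ref{lemma:fundam} and the commutativity of $\tau_{e^{\mu_1}}$ and $\tau_{e^{\mu_2}}^{-1}$ inside $\Aa_F^+$, combined with the established integrality $\Bb_F^+(\lambda)\in\Hh_\Z$ (Proposition \ref{tensor}) and the fact that $\tau_{n_0}$ is not a zero divisor in $\Hh_\Z\otimes_\Z\Z[q^{\pm 1/2}]$.
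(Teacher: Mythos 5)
Your high-level strategy tracks the paper's quite closely: you pick a decomposition $\lambda=\mu_1-\mu_2$ with $\mu_1,\mu_2\in\Cute^+(F)$, observe that the hypothesis $F\neq x_0$ forces $\alpha_0\in\Phi^+\setminus\Phi_F^+$, and you factor $\tau_{n_0}$ out of one of the two translation factors using the length formula \eqref{add} applied to the simple affine root $(-\alpha_0,1)$. (The paper factors out of the inverted factor $\tau_{e^{\mu_2}}^{-1}$, using $\ell(e^{\mu_2}n_0)=\ell(e^{\mu_2})+1$; you factor out of $\tau_{e^{\mu_1}}$, using $\ell(n_0e^{\mu_1})=\ell(e^{\mu_1})-1$. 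Both are fine.)

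The gap is your third paragraph: you never actually establish that the bracketed element $q^N h_1\tau_{e^{\mu_2}}^{-1}$ lies in $\Hh_\Z$, and the route you sketch --- deduce it from $\Bb_F^+(\lambda)\in\Hh_\Z$ together with the fact that $\tau_{n_0}$ is not a zero divisor in $\Hh_{\Z[q^{\pm 1/2}]}$ --- does not work. Non-zero-divisor-ness gives uniqueness of $X$ with $\tau_{n_0}X=\Bb_F^+(\lambda)$; it does \emph{not} give the saturation statement $\bigl(\tau_{n_0}\Hh_{\Z[q^{\pm 1/2}]}\bigr)\cap\Hh_\Z=\tau_{n_0}\Hh_\Z$, and that statement is in fact false. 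Indeed, writing the quadratic relation in the generic ring as $\tau_{n_0}^2=q\tau_{n_0^2}+c_{n_0}\tau_{n_0}$ with $c_{n_0}=\sum_{t\in\T_A}\tau_t$, one computes for $t_1,t_2\in\T^0/\T^1$ in the same coset of the conjugate subgroup $s_0(\T_A)$ that $\tau_{n_0}\cdot q^{-1}\bigl(\tau_{n_0^{-1}t_1}-\tau_{n_0^{-1}t_2}\bigr)=\tau_{t_1}-\tau_{t_2}\in\Hh_\Z$, while $q^{-1}\bigl(\tau_{n_0^{-1}t_1}-\tau_{n_0^{-1}t_2}\bigr)\notin\Hh_\Z$. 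So $\tau_{n_0}X\in\Hh_\Z$ can hold with $X\notin\Hh_\Z$, and the suggested implication fails.

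The paper closes this step differently: after factoring out $\tau_{n_0}$, the remaining factor $q^{\frac{\ell(n_0e^\lambda)+\ell(e^\nu n_0)-\ell(e^\mu)}{2}}\tau_{e^\nu n_0}^{-1}\tau_{e^\mu}$ is recognized as an expression of exactly the type treated by the Haines/Vign\'eras integrality lemma (\cite[Lemma 13]{Ann}, the lemma invoked in Section \ref{fund}), which says directly that such $q$-normalized products $q^{\ast/2}\tau_u^{-1}\tau_v$ lie in $\Hh_\Z$. That lemma is the tool doing the work; the mere non-vanishing of $\tau_{n_0}$ as a left multiplier is not a substitute for it.
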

\begin{proof} It suffices to check the claim for $\lambda\in \X_*^+(\T)$.
Let $\mu, \nu\in \X_*(\T)$ such that $\lambda=\mu-\nu$ and   $w_F\mu, w_F\nu\in  \X_*^+(\T)$  where we recall that  $w_F$  denotes the longest element in $\Wf_F$. Note that $w_F\alpha_0\in \Phi^+$ because $F\neq x_0$. 
Furthermore,  $\lp \lambda, \alpha_0\rp\geq 1$ because there is $\beta\in \Pi$ such that $\lp \lambda, \beta\rp\geq 1$ and $ \beta\preceq \alpha_0$.

We have $e^\nu(-\alpha_0, 1)= (-\alpha_0, 1+\lp \nu, \alpha_0\rp)=(-\alpha_0, 1+\lp w_F\nu, w_F\alpha_0\rp)\in \Phi_{aff}^+$. Therefore $ \ell(e^\nu n_0)=\ell(e^\nu)+1$ and $\tau_{e^\nu}\tau_{n_0}=\tau_{e^\nu n_0}$ in $\Hh_\Z$. On the other hand $e^{-\lambda}(-\alpha_0, 1)= (-\alpha_0, 1-\lp \lambda, \alpha_0\rp)\in \Phi_{aff}^-$ and therefore $ \ell(n_0e^\lambda )=\ell(e^\lambda)-1$.

We perform the computations in $\Htg\otimes_\Z \Z[q^{\pm 1/2}]$ where by definition, $\Bb_F^+(\lambda)=q^{\frac{\ell(e^\lambda)+\ell(e^\nu)-\ell(e^\mu)}{2}}  \tau_{e^\nu}^{-1}\tau_{e^\mu}$.
By the previous remarks
\begin{align*}\Bb_F^+(\lambda)&=\tau_{n_0}q^{\frac{\ell(n_0e^\lambda)+\ell(e^\nu n_0)-\ell(e^\mu)}{2}}  \tau_{e^\nu n_0}^{-1}\tau_{e^\mu} \end{align*} which, by the  lemma evoked in \ref{fund}, lies in $\tau_{n_0}\Hh_\Z$.

\end{proof}

\medskip
\subsection{\label{center}On the center  of  the pro-$p$ Iwahori-Hecke  ring} 
\subsubsection{}  The ring  $\Htg$ is finitely generated as a module over its center $\Zz(\Htg)=(\Aa_C^+)^\Wf$  and  the latter has $\Z$-basis the set of all 
\begin{equation}\label{Z1}
\sum_{\lambda'\in \OO} \Bb_{C}^+(\lambda')\end{equation} where $\OO$ ranges over the $\Wf$-orbits in 
$\tilde\X_*(\T)$.   Moreover, $\Zz(\Htg)$ is a finitely generated $\Z$-algebra.  Those results are  proved in \cite[Theorem 4]{Ann} (note that the hypothesis of irreducibility of the root system of $\G$  in \cite{Ann} is not necessary for the statements about the center). One can also find a proof in \cite{NS}.

\subsubsection{} We denote by $\Zz^\c(\Htg)$ the intersection of $(\Aa^+_C)^\c$ with $\Zz(\Htg)$. 
We have $\Zz^\c(\Htg)=((\Aa^+_C)^\c)^\Wf$. 
It has $\Z$-basis the set of all 
 \begin{equation}\label{zl} z_\lambda:=\sum_{\lambda'\in\OO(\lambda)} \Bb_C^+(\lambda')  \:\textrm{ for $\lambda\in \X_*^+(\T)$}\end{equation}
where  we denote by $\OO(\lambda)$ the $\Wf$-orbit of $\lambda$.

\begin{prop} \begin{enumerate}
\item[i.]  The left (resp. right) $(\Aa_C^+)^\c$-module  $\Htg$ is finitely generated.
\item[ii.]  As a $\Zz^\c(\Htg)$-module, $\Htg$ is finitely generated.
\item[iii.]  $\Zz^\c(\Htg)$ is a finitely generated $\Z$-algebra.
\item[iv.]  As $\Z$-modules,  $\Zz(\Htg)$, $\Aa_C^+$, $\Zz^\c(\Htg)$ and  $(\Aa_C^+)^\c$ are direct summands of $\Hh_\Z$.
\end{enumerate}
\label{prop:finite}

\end{prop}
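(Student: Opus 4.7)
My plan is to deduce parts (i)--(iii) from two inputs recalled just above: the result of \cite{Ann} that $\Htg$ is finitely generated over $\Zz(\Htg) = (\Aa_C^+)^\Wf$, with $\Zz(\Htg)$ a finitely generated $\Z$-algebra, and Proposition~\ref{tensor} describing $\Aa_C^+$ as a free $(\Aa_C^+)^\c$-module of rank $|\T^0/\T^1|$. Part~(iv) will rely on a triangular change-of-basis argument based on Lemma~\ref{lemma:fundam}.

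For (i), since $\Zz(\Htg) \subseteq \Aa_C^+$, the module $\Htg$ is a fortiori finitely generated over $\Aa_C^+$, and composing with Proposition~\ref{tensor} gives it over $(\Aa_C^+)^\c$. For (ii), it then suffices to show that $(\Aa_C^+)^\c \cong \Z[\X_*(\T)]$ is module-finite over $\Zz^\c(\Htg) = ((\Aa_C^+)^\c)^\Wf$: since $\X_*(\T)$ is a finitely generated abelian group, $(\Aa_C^+)^\c$ is a finitely generated $\Z$-algebra, and each generator $a$ satisfies the monic polynomial $\prod_{w \in \Wf}(X - w \cdot a)$ with coefficients in $\Zz^\c(\Htg)$, whence the classical Noether argument concludes. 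Part~(iii) follows from the same polynomials via Hilbert--Noether: let $B$ be the $\Z$-subalgebra of $\Zz^\c(\Htg)$ generated by their coefficients; then $B$ is a finitely generated $\Z$-algebra (hence Noetherian), $(\Aa_C^+)^\c$ is module-finite over $B$, and the $B$-submodule $\Zz^\c(\Htg)$ is therefore also module-finite over $B$, hence a finitely generated $\Z$-algebra.

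For (iv), I filter $\Hh_\Z$ by length, setting $F_L := \bigoplus_{\ell(w) \le L} \Z\,\tau_w$. By Lemma~\ref{lemma:fundam}, for every $\lambda \in \tilde\X_*(\T)$ with $L = \ell(e^\lambda)$ we have $\Bb_C^+(\lambda) \equiv \tau_{e^\lambda} \pmod{F_{L-1}}$. Ordering the basis $\{\tau_w\}_{w \in \tilde\W}$ with all non-translation elements first, then translations by increasing length, the simultaneous substitution $\tau_{e^\lambda} \mapsto \Bb_C^+(\lambda)$ for $\lambda \in \tilde\X_*(\T)$ is a block change of basis whose matrix is the identity on the non-translation block and unit-triangular on the translation block; hence it is unimodular and yields a new $\Z$-basis of $\Hh_\Z$. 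In this basis $\Aa_C^+$ is spanned by the subset $\{\Bb_C^+(\lambda)\}_\lambda$, making it a direct summand; restricting to $\lambda \in \X_*(\T)$ does the same for $(\Aa_C^+)^\c$.

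To obtain the centers, I apply a second unimodular substitution in this basis: picking one representative $\mu_\OO$ in each $\Wf$-orbit $\OO \subseteq \tilde\X_*(\T)$, I replace $\Bb_C^+(\mu_\OO)$ by $z_\OO = \sum_{\mu' \in \OO} \Bb_C^+(\mu')$. Since $z_\OO - \sum_{\mu' \neq \mu_\OO} \Bb_C^+(\mu') = \Bb_C^+(\mu_\OO)$ and different orbits involve disjoint sets of basis vectors, this operation is block-diagonal with blocks of determinant $\pm 1$; the result is again a $\Z$-basis, and the $z_\OO$'s appear as a subset, so $\Zz(\Htg)$ is a direct summand. Restricting to orbits contained in $\X_*(\T)$ handles $\Zz^\c(\Htg)$. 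The main obstacle I foresee is the bookkeeping in (iv): one must check that orderings remain compatible across the two substitutions, and use that the length $\ell$ is constant on $\Wf$-orbits of $\tilde\X_*(\T)$ so that the orbit sums interact cleanly with the length filtration.
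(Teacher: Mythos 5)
Your proof is correct. Parts (i)--(iii) take the same route as the paper: the monic $\Wf$-polynomial plus Artin--Tate argument that you spell out is precisely the content of the Bourbaki reference \cite[V, \S1 n.~9, Thm 2]{Bki-AC} that the paper cites. One small imprecision there: $(\Aa_C^+)^\c$ is isomorphic to $\Z[\X_*(\T)]$ only as a $\Z$-module, not as a ring, since $\Bb_C^+$ is only multiplicative on each Weyl chamber (the cross-chamber products pick up powers of $q$); the conclusion that $(\Aa_C^+)^\c$ is a finitely generated $\Z$-algebra is still correct, but its justification should be via Gordan's Lemma applied to the finitely many Weyl chambers, as the paper does in \ref{modified}, rather than via a ring isomorphism with a group algebra.

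For (iv), your argument is genuinely different from the paper's and is valid. The paper factors the inclusions $\Zz^\c(\Htg)\subseteq (\Aa_C^+)^\c\subseteq \Aa_C^+\subseteq \Hh_\Z$ and handles the last step by invoking the full integral Bernstein basis $(\Bb_{x_0}^+(w))_{w\in\tilde\W}$ of $\Hh_\Z$ from \cite{Ann} (this is why it defers the conclusion to Remark \ref{directsummand}, after Section \ref{intbasis} recalls that basis), then transports from $\Aa_{x_0}^+$ to $\Aa_C^+$ via the involution $\upiota_C$ and \eqref{involution}. You instead construct a bespoke $\Z$-basis of $\Hh_\Z$ in two unitriangular steps: first replacing $\tau_{e^\lambda}$ by $\Bb_C^+(\lambda)$ on the translation indices (unitriangular with respect to the length filtration by Lemma \ref{lemma:fundam}, identity on the non-translation indices), then the block-diagonal orbit-sum substitution. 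This avoids both the forward reference and the involution, at the cost of the bookkeeping you flag, which does work out: the length is constant on $\Wf$-orbits, the orbits partition $\tilde\X_*(\T)$, and the $\Wf$-equivariance of the splitting \eqref{splitting} ensures that restricting to $\X_*(\T)$ is compatible with the orbit decomposition, so the same basis simultaneously exhibits $(\Aa_C^+)^\c$, $\Aa_C^+$, $\Zz^\c(\Htg)$ and $\Zz(\Htg)$ as direct summands. Note that your second step, restricted to $\Aa_C^+$, is exactly the permutation-module observation that underlies the paper's one-line remark that the $\Wf$-invariants of $\Aa_C^+$ form a direct summand.
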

\begin{proof}

Using  Proposition \ref{tensor} and \cite[Theorems 3 and 4]{Ann} which state that $\Htg$ is finitely generated over $\Aa_C^+$ (see Remark \ref{compa}),  we  see  that  $\Htg$ is finitely generated over $(\Aa_C^+)^\c$. Statements ii. and iii. follow from \cite[\S1 n. 9  Thm 2]{Bki-AC}
because 
$\Zz^\c(\Htg)$ is the  ring  of $\Wf$-invariants of  $(\Aa_C^+)^\c$ and $\Z$ is noetherian.
For iv., we first remark that the $\Z$-module $\Zz(\Htg)$ (resp. $\Zz^\c(\Htg)$)  is a direct summand of $\Aa_C^+$ (resp. $(\Aa_C^+)^\c$) since $\Zz(\Htg)=(\Aa^+_C)^\Wf$ (resp. $\Zz^\c(\Htg)=((\Aa^+_C)^\c)^\Wf$).
The $\Z$-module  $(\Aa_C^+)^\c$ is a direct summand of $\Aa_C^+$ by Proposition \ref{tensor}. 
It remains to show that  $\Aa_C^+$ is a direct summand of $\Hh_\Z$ which can be done by considering the integral Bernstein basis for the whole Hecke ring $\Hh_\Z$  introduced in \cite{Ann}. We recall it later in \ref{intbasis} and finish the proof of iv. in  Remark \ref{directsummand}.
\end{proof} 

%\begin{coro}\label{extendZ} 
%Let $R$ be a ring.
%A morphisms of rings $\Zz^\c(\Htg)\rightarrow R$  can be extended  to a morphism of rings $\Zz(\Htg)\rightarrow R$.

%\end{coro}

%\begin{proof} Let $\zeta: \Zz^\c(\Htg)\rightarrow \R$  a character.
%The $\Htg$-module $\Htg\otimes_{\Zz^\c(\Htg)}\zeta$ is finite dimensional and therefore contains a  nonzero submodule with  a central character: the latter coincides with $\zeta$ on $\Zz^\c(\Htg)$.

%\end{proof}
\subsubsection{\label{identif}} 
Given a ring $\R$ with unit $1_\R$, we denote by $\Hh_\R$ the $\R$-algebra $\Htg\otimes_\Z \R$: we identify $q$ with its image  in $\R$. By Proposition \ref{prop:finite}iv., the  $\R$-algebra
$\Zz(\Hh_\Z)\otimes_\Z \R$ (resp.  $\Aa_C^+\otimes _\Z \R$, $(\Aa_C^+)^\c\otimes _\Z \R$ and $\Zz^\c(\Hh_\Z)\otimes _\Z \R$) identifies with a subalgebra of $\Hh_\R$ which we  denote by  $\Zz(\Hh_\R)$ (resp. $(\Aa_C^+)_R$, $(\Aa_C^+)^\c_R$ and $\Zz^\c(\Hh_\R)$). By the work of \cite{NS},  $\Zz(\Hh_\R)$ is not only contained in but equal to the center of 
$\Hh_\R$.\\

\begin{rema} Proposition  \ref{prop:finite} remains valid with $x_0$ instead of $C$ (use the involution $\upiota_C$ and \eqref{involution}). We introduce the subalgebras $(\Aa_{x_0}^+)_R$ and $(\Aa_{x_0}^+)^\c_R$ of $\Hh_\R$
with the obvious definitions.
\end{rema}

For $\lambda\in \tilde\X_*(\T)$ (resp. $w\in \tilde \W$), we  still denote by $\Bb_F^\sigma(\lambda)$ (resp.    $\t_w$)  its  natural image $\Bb_F^\sigma(\lambda)\otimes 1$ (resp.    $\t_w\otimes 1$)  in $\Hh_\R$. An $\R$-basis for $\Zz^\c(\Hh_\R)$ is given by the set of all $z_\lambda$ for $\lambda\in \X_*^+(\T)$, where again we identify the element $z_\lambda$ with its image in $\Hh_\R$.

\medskip
From Proposition \ref{prop:finite} we deduce:
\begin{prop}\label{extendZ} 
Let $\R$ be a field.
A morphism of $\R$-algebras  $\Zz^\c(\Hh_\R)\rightarrow \R$  can be extended  to a morphism of $\R$-algebras $\Zz(\Hh_\R)\rightarrow \R$.

\end{prop}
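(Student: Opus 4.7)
The plan is to construct a ring retraction $r\colon \Zz(\Hh_R) \to \Zz^\c(\Hh_R)$ of the inclusion; then for any character $\chi\colon \Zz^\c(\Hh_R) \to R$, the composite $\chi \circ r$ will be the required extension.

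First I would exploit Proposition~\ref{tensor}, which identifies $\Aa_C^+$ with $(\Aa_C^+)^\c \otimes_{\Z} \Z[\T^0/\T^1]$ as commutative rings, with the inclusion $(\Aa_C^+)^\c \hookrightarrow \Aa_C^+$ given by $a \mapsto a \otimes 1$. Composing with the augmentation $\Z[\T^0/\T^1] \to \Z$, $t \mapsto 1$, yields a ring retraction $\pi\colon \Aa_C^+ \to (\Aa_C^+)^\c$ which sends $\Bb_C^+(\mu + t) \mapsto \Bb_C^+(\mu)$ for $\mu \in \X_*(\T)$ and $t \in \T^0/\T^1$. Because the splitting~\eqref{splitting} is $\Wf$-equivariant, the subgroup $\T^0/\T^1 \subset \tilde\X_*(\T)$ is $\Wf$-stable with $\Wf$ acting by group automorphisms; such automorphisms preserve the augmentation, so $\pi$ is $\Wf$-equivariant.

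Base-changing $\pi$ to $R$ and restricting to $\Wf$-invariants then produces an $R$-algebra map $r$ splitting the inclusion $\Zz^\c(\Hh_R) \hookrightarrow \Zz(\Hh_R)$. To justify the identifications $\Zz(\Hh_R) = ((\Aa_C^+)_R)^{\Wf}$ and $\Zz^\c(\Hh_R) = (((\Aa_C^+)^\c)_R)^{\Wf}$, I would use that $\Wf$ permutes the Bernstein basis $\{\Bb_C^+(\lambda)\}_{\lambda \in \tilde\X_*(\T)}$ of $\Aa_C^+$: the normalizing scalar $q^{\ell(e^\lambda)/2}$ poses no difficulty since $\ell(e^\lambda)$ is $\Wf$-invariant as a function of $\lambda$. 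Hence $\Wf$-invariants are free on the orbit sums and commute with arbitrary base change, yielding the desired identifications.

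The key obstacle is the requirement to land in $R$ itself rather than in a field extension. The alternative route via Proposition~\ref{prop:finite} shows that $\Zz(\Hh_R)$ is finite, hence integral, over $\Zz^\c(\Hh_R)$, so lying-over only extends $\chi$ to a character with values in \emph{some} finite extension of $R$; this would suffice solely when $R$ is algebraically closed. The retraction approach circumvents this by providing an $R$-algebra section at the outset, which furthermore indicates that the extension of $\chi$ is in general far from unique.
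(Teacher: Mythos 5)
Your retraction argument is correct, and it is genuinely different from the route the paper gestures at. The paper gives no proof beyond the phrase \enquote{From Proposition \ref{prop:finite} we deduce}, which most naturally points to the following chain: $\Zz^\c(\Hh_\R)$ is a noetherian $\R$-algebra (by iii), $\Hh_\R$ is a finitely generated $\Zz^\c(\Hh_\R)$-module (by ii), hence so is its submodule $\Zz(\Hh_\R)$, making the latter integral over $\Zz^\c(\Hh_\R)$; one then appeals to lying-over. As you observe, that only produces a character of $\Zz(\Hh_\R)$ with values in a finite extension of $\R$, and closing the gap would require either $\R$ to be algebraically closed (the case $\R = k$ of most interest in the paper, but not what is asserted) or some additional input. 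Your approach sidesteps this entirely: Proposition \ref{tensor} gives the ring isomorphism $\Aa_C^+ \cong \Z[\T^0/\T^1]\otimes_\Z (\Aa_C^+)^\c$, the augmentation $\Z[\T^0/\T^1]\to\Z$ yields a ring retraction $\pi\colon\Aa_C^+\to(\Aa_C^+)^\c$ sending $\Bb_C^+(\lambda+t)\mapsto\Bb_C^+(\lambda)$, and $\Wf$-equivariance of $\pi$ is exactly the $\Wf$-equivariance of the splitting \eqref{splitting}. Since $w\cdot\Bb_C^+(\lambda)=\Bb_C^+(w\lambda)$ (because $\ell(e^{w\lambda})=\ell(e^\lambda)$), $\Wf$ permutes the Bernstein $\Z$-basis, so taking invariants commutes with base change to $\R$, and $\pi$ descends to a ring retraction $\Zz(\Hh_\R)\to\Zz^\c(\Hh_\R)$. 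This is cleaner and more general (it works for any commutative ring $\R$, not just fields), it correctly produces an $\R$-valued extension, and it explains the non-uniqueness of the extension. The one thing worth spelling out, which you allude to, is precisely the basis-permutation fact that makes $(\cdot)^{\Wf}$ commute with $\otimes_\Z\R$; since $\Zz(\Hh_\R)$ and $\Zz^\c(\Hh_\R)$ are \emph{defined} in \ref{identif} as $\Zz(\Hh_\Z)\otimes_\Z\R$ and $\Zz^\c(\Hh_\Z)\otimes_\Z\R$, you may equally just base-change the integral retraction $\Zz(\Htg)\to\Zz^\c(\Htg)$ directly, avoiding any discussion of invariants after base change.
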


\subsubsection{} In the process of constructing $\Zz^\c(\Hh_\Z)$, we first fixed  a hyperspecial vertex  $x_0$ of $C$ and then
an apartment $\Ap$ containing $C$.

\begin{prop} \label{prop:conjugacy} The ring 
$\Zz^\c(\Hh_\Z)$ is not affected by 
\begin{itemize}
\item the choice of another apartment $\Ap'$ containing $C$.
\item the choice of another vertex  $x_0'$ of $C$  provided it is $\Gp$-conjugate to $x_0$.
\end{itemize}

\end{prop}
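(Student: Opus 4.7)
The plan is to treat the two invariance statements separately by comparing the central basis elements $z_\lambda$ of \eqref{zl} computed from the two data sets.

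For the change of apartment, Proposition \ref{prop:buildingstuff}i supplies $j \in \Iw$ with $\Ap' = j\Ap$, so $\T' := j\T j^{-1}$; since $\Iw$ fixes the closure of $C$ pointwise, $j x_0 = x_0$. Conjugation by $j$ yields an isomorphism $c_j : \tilde\W \to \tilde\W'$ preserving length, Bruhat order, and the positive chamber (determined by $C$ and $x_0$, both $j$-invariant). Decomposing $j = i\cdot t$ with $i \in \I$ and $t$ a lift in $\Iw$ of an element of $\Iw/\I \simeq \T^0/\T^1$, and using that $\T$ is abelian so that $t$ commutes with $\lambda(\varpi^{-1})$ for every $\lambda \in \X_*(\T)$, one gets $j\lambda(\varpi^{-1}) j^{-1} \in \I \,\lambda(\varpi^{-1})\, \I$. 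Thus the basis elements $\tau_{e^\lambda}$ and $\tau_{e^{c_j(\lambda)}}$, defined through $\T$ and $\T'$ respectively, coincide as elements of $\Hh_\Z$; by Lemma \ref{lemma:fundam} and the multiplicativity of $\Theta_C^+$ on the dominant cone, this upgrades to an equality of the corresponding Bernstein elements in $\Hh_\Z$, hence $z_\lambda = z'_{c_j(\lambda)}$. Since $c_j$ restricts to a bijection $\X_*^+(\T) \to \X_*^+(\T')$, the two $\Z$-bases of $\Zz^\c(\Hh_\Z)$ coincide.

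For the change of hyperspecial vertex, Remark \ref{rema:conjug-stabi} yields $x_0' = \hat\omega x_0$ for some $\omega \in \Omega$ with a lift $\hat\omega \in N_\Gp(\T)$, so that $\T$, $\tilde\X_*(\T)$, $\X_*(\T)$, and the splitting \eqref{splitting} are unchanged. The groups $\Wf$ and $\Wf' := \omega\Wf\omega^{-1}$ are two distinct splittings of $0 \to \X_*(\T) \to \W \to \W/\X_*(\T) \to 0$; since any two such splittings differ by a cocycle valued in the abelian group $\X_*(\T)$, they induce the same canonical action on $\X_*(\T)$ by conjugation, so the set of $\Wf$-orbits in $\X_*(\T)$ is intrinsic to $\T$. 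It remains to show $z_\OO = z'_\OO$ for each such orbit $\OO$. Both elements are central, have coefficient $1$ at every $\tau_{e^{\lambda'}}$ with $\lambda' \in \OO$, and by Lemma \ref{lemma:fundam} vanish on $\tau_{e^\mu}$ for $\mu$ with $\ell(e^\mu)$ strictly larger than the length $\ell(e^{\lambda^\OO})$ of the dominant representative. Listing orbits by ascending $\ell(e^{\lambda^\OO})$ yields an upper-triangular expansion of the basis $(z_{\OO'})$ in the family $(\tau_{e^{\lambda^{\OO'}}})_{\OO'}$ with $1$'s on the diagonal; inverting this triangular system determines the $z_{\OO'}$-expansion of any central element from its coefficients at these dominant generators, forcing $z_\OO = z'_\OO$.

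The main obstacle lies in the vertex case: the Bernstein maps depend on the positive chamber (hence on $x_0$), so the upper-triangularity argument must carefully compare, for $\mu$ in orbits strictly below $\OO$, the lower-order coefficients $[\Bb_C^+(\lambda')]_{\tau_{e^\mu}}$ with their $x_0'$-analogues. A cleaner alternative I would pursue in parallel is to observe that conjugation by $\hat\omega \in \Pp_C^\dagger$ (which normalizes $\I$) defines an algebra automorphism of $\Hh_\Z$ sending $z_\OO$ to $z'_{c_\omega(\OO)}$; since $c_\omega$ merely permutes the $\Wf$-orbits in $\X_*(\T)$, this identifies the original $\Zz^\c(\Hh_\Z)$ with its $x_0'$-avatar as subrings, giving the invariance directly at the level of rings even before matching basis elements.
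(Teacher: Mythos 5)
Your overall plan (split the two invariance statements and compare the $z_\lambda$) is reasonable, but it ends up partly diverging from and partly reproducing the paper's argument, which handles both changes at once. The paper takes an arbitrary $g\in \EuScript P_C^\dagger$, observes $\I g\I=\hat\omega\I$ with $\omega$ of length zero, so conjugation by $\tau_g$ is an algebra automorphism of $\Hh_\Z$ that transports the data $(\T,x_0)\mapsto (g\T g^{-1}, gx_0)$; it sends $\Bb_C^+(g^{-1}\lambda g)$ to the new $(\Bb_C^+)'(\lambda)$, and since $z_{g^{-1}\lambda g}$ is \emph{central} it is fixed by the conjugation, giving $z'_\lambda = z_{g^{-1}\lambda g}$ and hence equality of the subrings. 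Proposition \ref{prop:buildingstuff}i and Remark \ref{rema:conjug-stabi} let one realize the apartment change by $g\in\Iw$ and the vertex change by $g\in\EuScript P_C^\dagger\cap N_\Gp(\T)$.

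Your apartment argument is correct and genuinely more hands-on than the paper's: writing $j=it$ with $i\in\I$, $t\in\T^0$, and using the abelianness of $\T$ to see $j\lambda(\varpi^{-1})j^{-1}=i\lambda(\varpi^{-1})i^{-1}\in\I\lambda(\varpi^{-1})\I$ identifies $\tau_{e^\lambda}$ with $\tau_{e^{c_j(\lambda)}}$ directly, from which the equality of Bernstein maps (and then of the $z_\lambda$) follows; no appeal to centrality is needed. This is a legitimate alternative for that half.

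Your primary argument for the vertex case, however, has a real gap, and you correctly sense it. The triangularity of the matrix $[\text{coeff. of }\tau_{e^{\lambda^{\OO'}}}\text{ in }z_{\OO}]$ in the length order does let you reconstruct the $z_\OO$-expansion of any central element from its top coefficients, but to conclude $z_\OO = z'_\OO$ you would need to know in advance that these two central elements have the \emph{same} coefficients at $\tau_{e^{\lambda^{\OO'}}}$ for \emph{all} orbits $\OO'$, including those of length strictly smaller than $\ell_\OO$. Lemma \ref{lemma:fundam} gives no control over those lower-order coefficients, and they are precisely the quantities whose equality is in question. So this route does not close. Your proposed ``cleaner alternative'' — conjugation by $\hat\omega\in\EuScript P_C^\dagger\cap N_\Gp(\T)$, which is an automorphism of $\Hh_\Z$ fixing central elements — is exactly the paper's mechanism, and is the argument you should commit to for the vertex case; in fact it subsumes your apartment argument as well.
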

\begin{proof} 
Let $g$ in the stabilizer ${\EuScript P}_{C}^\dagger$ of $C$ in $\G$.
Let $\T':=g \T g^{-1}$ and $x_0'= g x_0 g^{-1}$. The apartment $\Ap'$ corresponding to $\T'$ contains $C$ and $x_0'$ is a hyperspecial vertex of $C$. Starting from $\T'$ and $x_0'$ we  proceed to the construction of the corresponding commutative subring $\Zz^\circ (\Hh_\Z)'$ of the center  of $\Hh_\Z$. 
Since $g\in {\EuScript P}_{C}^\dagger$,  we have
 $\I g\I=\I\hat \omega \I=\I \hat \omega$ for some   $\omega \in \tilde\Omega$. Since this element $\omega$ has length zero,   for $\lambda\in \X_*(\T)$
the characteristic function  of $\I g\lambda (\varpi)g^{-1}\I$ is equal to the product $\tau_{g}\tau_{\lambda(\varpi)}\tau_g^{-1}$. 
Therefore,  the restriction to $\X_*(\T)$ of the new map $(\Bb_C^+)'$ corresponding to the choice of $x_0'$ and $
\T'$   
is defined by
$$ \X_*(\T')\longrightarrow \Hh_\Z, \: \lambda\mapsto \tau_{g} \Bb_C^+( g^{-1}\lambda g)\tau_{g}^{-1}.$$
The element $z'_\lambda\in \Zz^\circ (\Hh_\Z)'$ corresponding to the choice of $\lambda\in \X^+_*(\T')= g  \X^+_*(\T) g^{-1}$ is therefore $\tau_{g}z_{g^{-1}\lambda g}\tau_g^{-1}= z_{\lambda}$.
We have proved that $\Zz^\circ (\Hh_\Z)'=\Zz^\circ (\Hh_\Z)$.

\medskip

By Proposition \ref{prop:buildingstuff}i
 and  Remark \ref{rema:conjug-stabi}
 \begin{itemize}
 \item changing $\Ap$ into another apartment $\Ap'$ containing $C$ and 
 \item changing
 $x_0$ into another vertex $x_0'$ of $C$ which is $\Gp$-conjugate to $x_0$ 
 \end{itemize}
 can be made independently of each other by conjugating by an element of 
  $\Iw$ and  of  ${\EuScript P}_C^\dagger\cap N_\G(\T)$ respectively.
We have checked that these changes do not affect $\Zz^\circ (\Hh_\Z)$.

\end{proof}
If $\mathbf G$ is of adjoint type or $\mathbf G={\rm GL}_n$, then all hyperspecial vertices are conjugate (\cite[2.5]{Tit}): 
\begin{coro}\label{coro:adjoint}
If $\mathbf G$ is  of adjoint type or $\mathbf G={\rm GL}_n$, then $\Zz^\c(\Hh_\Z)$ depends only on the choice of the uniformizer $\varpi$.
\end{coro}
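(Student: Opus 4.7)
The plan is to apply Proposition \ref{prop:conjugacy} directly, combined with the fact from \cite[2.5]{Tit} cited just before the statement. Let me first inventory the choices that enter the construction of $\Zz^\c(\Hh_\Z)$. The chamber $C$ (equivalently the Iwahori $\Iw$, the pro-$p$ Iwahori $\I$, and hence $\Hh_\Z$ itself) is fixed from the outset. Beyond this, the construction of $\Zz^\c(\Hh_\Z)$ used three pieces of extra data: the uniformizer $\varpi$ (which enters the splitting \eqref{splitting} identifying $\X_*(\T)$ with a subgroup of $\tilde\X_*(\T)$), a maximal $\Corps$-split torus $\T$ whose apartment $\Ap$ contains $C$, and a hyperspecial vertex $x_0$ of $C$.

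By Proposition \ref{prop:conjugacy}, the subring $\Zz^\c(\Hh_\Z)$ is unaffected by a change of apartment $\Ap \rightsquigarrow \Ap'$ containing $C$, and it is also unaffected by a change of hyperspecial vertex $x_0 \rightsquigarrow x_0'$ in $C$ provided that $x_0$ and $x_0'$ are $\Gp$-conjugate. So in order to conclude that $\Zz^\c(\Hh_\Z)$ depends only on $\varpi$, it suffices to verify, under the stated hypothesis on $\mathbf{G}$, that any two hyperspecial vertices of $C$ are $\Gp$-conjugate.

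When $\mathbf{G}$ is of adjoint type or $\mathbf{G} = {\rm GL}_n$, this is exactly the content of \cite[2.5]{Tit}: $\Gp$ acts transitively on the set of all hyperspecial vertices of the building $\mathscr X$, so a fortiori on the (finite) subset of hyperspecial vertices of $C$. Combining this with Proposition \ref{prop:conjugacy} gives the corollary at once. There is no real obstacle here; the genuine work has been absorbed into Proposition \ref{prop:conjugacy}, whose proof already produced the explicit conjugation $z'_\lambda = \tau_g z_{g^{-1}\lambda g} \tau_g^{-1} = z_\lambda$ for $g\in \EuScript P_C^\dagger$ that makes the basis elements \eqref{zl} of $\Zz^\c(\Hh_\Z)$ match under the change of hyperspecial vertex, while Tits's transitivity result merely guarantees that in the adjoint or $\mathrm{GL}_n$ setting such a $g$ can always be found.
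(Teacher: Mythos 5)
Your proposal is correct and takes the same route as the paper: invoke Proposition \ref{prop:conjugacy} together with Tits's result \cite[2.5]{Tit} that for $\mathbf G$ of adjoint type or $\mathbf G = {\rm GL}_n$ all hyperspecial vertices are $\Gp$-conjugate. The only added value in your write-up is the explicit inventory of the choices ($\varpi$, $\T$ or equivalently $\Ap$, and $x_0$) and the reminder of how the conjugation in the proof of Proposition \ref{prop:conjugacy} acts on the basis elements $z_\lambda$; the paper simply states the Tits fact in one line and lets the corollary follow.
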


\subsection{\label{defi:spheri}An affine semigroup algebra in the center of the pro-$p$ Iwahori-Hecke algebra in characteristic $p$} 
%In this paragraph we work with the $k$-algebra $\Hh_k$ and we give  the structure of $\Zz^\c(\Hh_k)$.

We will use the following observation several times in this paragraph.  Let $F$ be a standard facet and $\sigma$ a sign.
For $\mu_1,\mu_2\in \X_*(\T)$, we have in $\Hh_k$:
\begin{equation}  \label{fact:annu}\Bb_F^\sigma(\mu_1) \Bb_F^\sigma(\mu_2)= \left\lbrace\begin{array}{ll} \Bb_F^\sigma(\mu_1+\mu_2)&\emph{if $\mu_1$ and $\mu_2$ lie in a common Weyl chamber }\cr 0&\emph{otherwise}. \end{array}\right.\end{equation}

 In $\Htg\otimes_\Z\Z[q^{\pm 1/2}]$ we have indeed
 $\Bb_F^\sigma(\mu_1) \Bb_F^\sigma(\mu_2)=
q^{(\ell(e^{\mu_1})+\ell(e^{\mu_2})-\ell(e^{\mu_1+\mu_2}))/2} \Bb_F^\sigma(\mu_1+\mu_2)$. 
If $\mu_1$ and $\mu_2$ lie in a common Weyl chamber, then
 $\ell(e^{\mu_1})+\ell(e^{\mu_2})-\ell(e^{\mu_1+\mu_2})$ is zero; otherwise,   there is $\alpha\in \Pi$ satisfying $\lp\mu_1, \alpha\rp \lp\mu_2, \alpha\rp<0$ which implies that this quantity  is $\geq 2.$ 
This gives the required equality in $\Hh_k$. 
%$\ell(e^\mu_1)+\ell(e^{\mu_1'})-\ell(e^{\mu_1+\mu_1'})=\sum_{\beta\in \Phi^+}\vert\lp\mu_1,\beta\rp\vert+ \vert\lp\mu_1',\beta\rp\vert- \vert\lp\mu_1+\mu_1',\beta\rp\vert\geq \lp\mu_1,\alpha\rp+\vert \lp\mu_1',\alpha\rp\vert- \vert\lp\mu_1+\mu_1',\alpha\rp\vert\geq 2$

\subsubsection{The structure of $\Zz^\c(\Hh_k)$}
\begin{prop} \label{prop:Zmap}
The map \begin{equation}\label{Zmap} \begin{array}{ccc} k[\X_*^+(\T)]&\longrightarrow &  \Zz^\c(\Hh_k)\cr 
\lambda&\longmapsto &z_\lambda 
\end{array}\end{equation} is an isomorphism of $k$-algebras.
\end{prop}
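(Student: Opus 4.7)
The plan is to verify two things: the map is a $k$-linear bijection and it is multiplicative.

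\medskip

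\emph{Bijectivity.} The map is clearly $k$-linear. By \eqref{zl} together with Proposition \ref{prop:finite} and the identifications of \ref{identif}, the family $\{z_\lambda\}_{\lambda\in \X_*^+(\T)}$ is a $k$-basis of $\Zz^\c(\Hh_k)$. Since $\{\lambda\}_{\lambda\in \X_*^+(\T)}$ is a $k$-basis of the semigroup algebra $k[\X_*^+(\T)]$, the map is a $k$-linear isomorphism. Moreover, the neutral coweight $0$ is sent to $z_0=\Bb_C^+(0)=\t_1=1$, so units are preserved.

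\medskip

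\emph{Multiplicativity.} Let $\lambda,\mu\in \X_*^+(\T)$, identified with elements of $\tilde\X_*(\T)$ \emph{via} \eqref{splitting}. I expand the product using the definition \eqref{zl}:
\begin{equation*}
z_\lambda \, z_\mu \;=\; \sum_{\lambda'\in \OO(\lambda)}\,\sum_{\mu'\in \OO(\mu)} \Bb_C^+(\lambda')\,\Bb_C^+(\mu').
\end{equation*}
By \eqref{fact:annu}, a term in this sum is nonzero only if $\lambda'$ and $\mu'$ lie in a common Weyl chamber, in which case $\Bb_C^+(\lambda')\Bb_C^+(\mu')=\Bb_C^+(\lambda'+\mu')$. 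Since the closed dominant chamber is a strict fundamental domain for the action of $\Wf$ on $\X_*(\T)$, the condition that $\lambda'$ and $\mu'$ both lie in a single chamber $w\Cute^+(C)$ forces $\lambda'=w\lambda$ and $\mu'=w\mu$ for one and the same $w\in \Wf$. Two elements $w,w'\in \Wf$ produce the same pair $(w\lambda,w\mu)$ if and only if $w^{-1}w'\in \Wf_\lambda\cap\Wf_\mu$.

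\medskip

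The key combinatorial input is the identity
\begin{equation*}
\Wf_\lambda \cap \Wf_\mu \;=\; \Wf_{\lambda+\mu},
\end{equation*}
valid for $\lambda,\mu$ dominant: each standard parabolic $\Wf_\nu$ is generated by the $s_\alpha$ for $\alpha\in \Pi$ with $\lp\nu,\alpha\rp=0$, and dominance of $\lambda,\mu$ forces $\lp\lambda+\mu,\alpha\rp=0$ to be equivalent to $\lp\lambda,\alpha\rp=\lp\mu,\alpha\rp=0$. Using this, the nonzero contributions to $z_\lambda z_\mu$ are indexed by $w\in \Wf/\Wf_{\lambda+\mu}$, each yielding
\begin{equation*}
\Bb_C^+(w\lambda)\,\Bb_C^+(w\mu)\;=\;\Bb_C^+\bigl(w(\lambda+\mu)\bigr),
\end{equation*}
and as $w$ ranges over $\Wf/\Wf_{\lambda+\mu}$, the coweight $w(\lambda+\mu)$ ranges without repetition over $\OO(\lambda+\mu)$. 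Therefore
\begin{equation*}
z_\lambda z_\mu \;=\; \sum_{\nu\in \OO(\lambda+\mu)} \Bb_C^+(\nu) \;=\; z_{\lambda+\mu},
\end{equation*}
which is the desired multiplicativity.

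\medskip

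The main subtlety is the orbit-counting: one must use both the strong fundamental-domain property of $\X_*^+(\T)$ under $\Wf$, and the stabilizer identity $\Wf_\lambda\cap\Wf_\mu=\Wf_{\lambda+\mu}$ for dominant $\lambda,\mu$. Everything else is formal: once \eqref{fact:annu} kills the cross-chamber terms in characteristic $p$, the surviving contributions reassemble into a single orbit sum.
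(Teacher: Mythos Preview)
Your proof is correct and follows essentially the same approach as the paper's: both expand the product using \eqref{fact:annu} to discard cross-chamber terms, then identify the surviving pairs with the orbit $\OO(\lambda+\mu)$ via the stabilizer identity $\Wf_\lambda\cap\Wf_\mu=\Wf_{\lambda+\mu}$ for dominant $\lambda,\mu$. The only cosmetic difference is that the paper phrases the bijection as ``surjective plus equal cardinalities'', whereas you give the explicit parametrization by $\Wf/\Wf_{\lambda+\mu}$.
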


\begin{proof} 
We already know that \eqref{Zmap}  maps a $k$-basis for $k[\X_*^+(\T)]$ onto a $k$-basis for $\Zz^\c(\Hh_k)$. We have to check that it respects the product. Let $\lambda_1, \lambda_2\in \X_*^+(\T)$ with respective $\Wf$-orbits  $ \OO(\lambda_1)$ and $\OO(\lambda_2)$. We consider the product
 $$z_{\lambda_1} z_{\lambda_2}= \sum_{\mu_1\in \OO(\lambda_1), \,\mu_2\in \OO(\lambda_2) }  \Bb_F^\sigma(\mu_1) \Bb_F^\sigma(\mu_2)  \in \Hh_k.$$
A Weyl chamber  in $\X_*(\T)$ is a $\Wf$-conjugate of $\X_*^+(\T)$.
Given a Weyl chamber and a coweight (in  $\X_*(\T)$), there is a unique $\Wf$-conjugate of the coweight in the chosen Weyl chamber. 
The map $(\mu_1,\mu_2)\mapsto \mu_1+\mu_2$ yields a bijection between the set of all  $(\mu_1, \mu_2)\in \OO(\lambda_1)\times  \OO(\lambda_2)$ such that $\mu_1$ and $\mu_2$ lie in the same Weyl chamber  and the $\Wf$-orbit $\OO(\lambda_1+\lambda_2)$ of $\lambda_1+\lambda_2$: it is indeed surjective and one checks that the two sets in question have the same size because,  $\lambda_1$ and $\lambda_2$ being both dominant,
the stabilizer  in $\Wf$  of $\lambda_1+\lambda_2$ is  the intersection 
 of  the stabilizers    of $\lambda_1$ and  of $\lambda_2$. 
Together with  \eqref{fact:annu}, this proves that $z_{\lambda_1+\lambda_2}=z_{\lambda_1}z_{\lambda_2}$.
\end{proof}

For a different proof of this proposition, see  the remark after Theorem \ref{coro:diag}.

%Chambres vectorielles contenant $\lambda_1$: les $Stab(\lambda_1)$ conjuguées de $\X^+$. Eléments dans l'orbite de $\lambda_2$ appartenant à une même chambre que $\lambda_1$: 
%$\mu_2\in \OO(\lambda_2)$ such that there is $w\in Stab(\lambda_1)$ such that $\mu_2\in w\X^+$ that is $\mu_2= w\lambda_2$. Nombre de ces éléments: $Card(Stab(\lambda_1))/Card(Stab(\lambda_1)\cap Stab(\lambda_1))$.
%A $u$ fixé, éléments dans l'orbite de $\lambda_2$ appartenant à une même chambre que $\mu_1= u\lambda_1$: 
%les $u$ translatés des précédents.
%Nombre de paires $(\mu_1, \mu_2)$ dans la même chambre de Weyl:
%$Card(\Wf)/Card(Stab(\lambda_1))\times Card(Stab(\lambda_1))/Card(Stab(\lambda_1)\cap Stab(\lambda_1))$.

\subsubsection{}Since $\X_*(\T)$ is a free abelian group (of rank $dim(\T)$), the $k$-algebra  $k[\X_*(\T)]$ is isomorphic to an algebra of Laurent polynomials and has a trivial nilradical. By Gordan's Lemma, $\X_*^+(\T)$ is finitely generated as a semigroup. So $k[\X_*^+(\T)]$ is a finitely generated $k$-algebra and its Jacobson radical  coincides with its nilradical.
% (see for example [Milne, A primer of commutative algebra, Prop 11.8]).
The Jacobson radical  of  $\Zz^\c(\Hh_k)$ is therefore trivial.

\begin{prop}
The Jacobson radical  of $\Zz(\Hh_k)$ is trivial.

\end{prop}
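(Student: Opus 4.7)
The plan is to show that $\Zz(\Hh_k)$ is a reduced, commutative, finitely generated $k$-algebra. Once this is in hand, the commutative Nullstellensatz (every finitely generated commutative algebra over a field is a Jacobson ring whose Jacobson radical coincides with its nilradical) gives the result immediately.

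Finite generation as a $k$-algebra follows from Proposition \ref{prop:finite} after extension of scalars: by Proposition \ref{prop:Zmap}, $\Zz^\c(\Hh_k)\cong k[\X_*^+(\T)]$ is a finitely generated $k$-algebra and in particular noetherian, while $\Hh_k$ is finitely generated as a $\Zz^\c(\Hh_k)$-module. Hence its $\Zz^\c(\Hh_k)$-submodule $\Zz(\Hh_k)$ is itself finitely generated as a $\Zz^\c(\Hh_k)$-module, and therefore as a $k$-algebra.

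For reducedness, I would work with the inclusion $\Zz(\Hh_k)\subseteq (\Aa_C^+)_k$ and prove that the larger ring is reduced. Proposition \ref{tensor}, applied after tensoring with $k$, yields a $k$-algebra isomorphism
\[ (\Aa_C^+)_k \;\cong\; (\Aa_C^+)^\c_k \otimes_k k[\T^0/\T^1]. \]
Since $\T^0/\T^1\cong \overline{\mathbf T}(\fq)\cong (\fq^\times)^r$ has order prime to $p$ and $k$ contains all $(q-1)$-th roots of unity, the group algebra $k[\T^0/\T^1]$ splits as a finite product of copies of $k$, and the displayed tensor product is reduced as soon as $(\Aa_C^+)^\c_k$ is.

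The reducedness of $(\Aa_C^+)^\c_k$ is the one combinatorial input. Given a nonzero $x=\sum_{\mu\in T} c_\mu\, \Bb_C^+(\mu)$ with $T\subseteq \X_*(\T)$ finite and each $c_\mu\neq 0$, I would pick $\mu^\ast\in T$ to be an extreme point of the convex hull of $T$ in $\mathbb R\otimes_\Z \X_*(\T)$. The product formula \eqref{fact:annu} gives
\[ x^n \;=\; \sum_{(\mu_1,\ldots,\mu_n)} c_{\mu_1}\cdots c_{\mu_n}\,\Bb_C^+(\mu_1+\cdots+\mu_n), \]
where the sum runs over tuples in $T^n$ all of whose components lie in a common Weyl chamber. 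Extremality means that $\mu^\ast=\tfrac1n(\mu_1+\cdots+\mu_n)$ with $\mu_i\in T$ forces $\mu_i=\mu^\ast$ for every $i$; since the singleton $\{\mu^\ast\}$ certainly lies in some Weyl chamber, the coefficient of $\Bb_C^+(n\mu^\ast)$ in $x^n$ is exactly $c_{\mu^\ast}^n\neq 0$. Thus $x^n\neq 0$ for every $n\geq 1$ and $x$ is not nilpotent. I expect the main obstacle to be formulating this extremality step cleanly: choosing $\mu^\ast$ as an extreme point of the convex hull, rather than simply a maximum in a generic linear direction, is what rules out any other contribution to the coefficient of $\Bb_C^+(n\mu^\ast)$ and avoids the awkwardness of coweights lying on the boundaries of several Weyl chambers at once.
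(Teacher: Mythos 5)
Your proof is correct, and it diverges from the paper's precisely at the reducedness of $(\Aa_C^+)^\c_k$; the surrounding reductions are essentially the same. The paper argues by contradiction: given a nonzero nilpotent $a\in (\Aa_C^+)^\c_k$, it applies a $\Wf$-conjugation to put a dominant coweight in the support of $a$ (tacitly using that $\Wf$ acts on $(\Aa_C^+)^\c_k$ by $k$-algebra automorphisms, since it permutes Weyl chambers and commutes with addition), then multiplies by $\Bb_C^+(\lambda_0)$ for $\lambda_0$ strongly dominant. By \eqref{fact:annu} this annihilates all non-dominant terms, so the result lands in the copy of the reduced ring $k[\X_*^+(\T)]$ inside $(\Aa_C^+)^\c_k$, while remaining a nonzero nilpotent --- contradiction. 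You instead argue directly: taking $\mu^\ast$ an extreme point of the Newton polytope of $x$, the coefficient of $\Bb_C^+(n\mu^\ast)$ in $x^n$ is exactly $c_{\mu^\ast}^n$, because extremality forces the only contributing tuple to be $(\mu^\ast,\dots,\mu^\ast)$, which trivially satisfies the common-Weyl-chamber condition. Your route is a bit more self-contained (no $\Wf$-equivariance to check, and no need to know in advance that $k[\X_*^+(\T)]$ is reduced), while the paper's version keeps $k[\X_*^+(\T)]$ front and center, consistent with the role the affine semigroup algebra plays elsewhere. Incidentally, your description of the surviving terms in $x^n$ is exact: the $q$-exponent in the product $\Bb_C^+(\mu_1)\cdots\Bb_C^+(\mu_n)$ is $\tfrac12\sum_{\alpha\in\Phi^+}\bigl(\sum_i\vert\lp\mu_i,\alpha\rp\vert-\vert\sum_i\lp\mu_i,\alpha\rp\vert\bigr)$, which vanishes precisely when the $\lp\mu_i,\alpha\rp$ agree in sign for every $\alpha\in\Phi^+$, i.e.\ when all $\mu_i$ share a closed Weyl chamber --- though your argument only ever needs the diagonal tuple.
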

\begin{proof} Since $\Zz(\Hh_k)$ is a finitely generated $k$-algebra contained in 
$(\Aa_C^+)_ k$, 
it is enough to prove that the nilradical of $(\Aa_C^+)_k$ is trivial. Using 
the notations of \ref{idempo},  it is enough to prove that, for any $\xi\in  \hat{\overline {\mathbf T}}(\fq)$,  the  nilradical of the $k$-algebra
 $\epsilon_\xi(\Aa_C^+)_ k$ with unit $\epsilon_\xi$ is trivial. 
By Proposition \ref{tensor}, 
the latter
algebra is isomorphic to  $(\Aa_C^+)^\c_ k$.
 It is therefore enough to prove that the nilradical of 
 $(\Aa_C^+)^\c_ k$ is trivial.

By definition 
(see the convention in \ref{identif}),  
the image of the $k$-linear injective map
$$\Bb_C^+:\k[ \X_*(\T)]\longrightarrow  \Hh_k$$  coincides with  $(\Aa_C^+)^\c_ k$.
\medskip

\begin{fact} Let $\lambda_0\in \X_*^+(\T)$ be a strongly dominant coweight.  
The  ideal  of  $(\Aa_C^+)^\c_ k$ generated by $\Bb_C^+(\lambda_0)$ does not contain any nontrivial nilpotent element.\label{ideal-nil}
\end{fact}

An element $a\in (\Aa_C^+)^\c_ k$ is a $k$-linear combination of elements $\Bb_C^+(\lambda)$ for $\lambda\in \X_*(\T)$ 
and we say that $\lambda\in \X_*(\T)$ is in the support of  $a$ if the   coefficient of $\Bb_C^+(\lambda)$  is nonzero.
Suppose that $a$ is nilpotent and nontrivial.  After conjugating by an element of $\Wf$, we can suppose that
there is an element of $\X_*^+(\T)$  in the support of $a$.
Then let $\lambda_0\in \X_*^+(\T)$ be  strongly dominant.
 The element $a\Bb_C^+({\lambda_0})$ is  nilpotent  and by  \eqref{fact:annu} it is nontrivial. By Fact \ref{ideal-nil}, we have a  contradiction.\\\\
\emph{Proof of the fact:}  The restriction  of $\Bb_C^+$  to $k[ \X^+_*(\T)]$ induces an isomorphism of $k$-algebras $k[ \X^+_*(\T)]\cong  \Bb_C^+(k[ \X^+_*(\T)])$. 
By \eqref{fact:annu}, the ideal  $\mathfrak A$ of  $(\Aa_C^+)^\c$ generated by $\Bb_C^+(\lambda_0)$
coincides with  the ideal  of $\Bb_C^+(k[ \X^+_*(\T)])$ generated by $\Bb_C^+(\lambda_0)$.
 Since the $k$-algebra $k[ \X^+_*(\T)]$  does not contain any  nontrivial nilpotent element, neither does $\mathfrak A$.

\end{proof}

Since $k$ is algebraically closed, we have:

\begin{coro}

Let $z\in\Zz(\Hh_k)$. If  $\zeta(z)=0$ for all characters $\zeta:\Zz(\Hh_k)\rightarrow k$, then $z=0$.
\label{coro:char}

\end{coro}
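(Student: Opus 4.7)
The plan is to deduce this corollary directly from the preceding proposition (that the Jacobson radical of $\Zz(\Hh_k)$ is trivial) combined with the hypothesis that $k$ is algebraically closed. The key bridge is Zariski's Lemma (the weak Nullstellensatz), which requires $\Zz(\Hh_k)$ to be finitely generated as a $k$-algebra; this was already noted earlier in the excerpt (it is a consequence of $\Zz(\Htg)$ being a finitely generated $\Z$-algebra, from Proposition \ref{prop:finite}iii, after extending scalars).

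First, I would observe that because $\Zz(\Hh_k)$ is a finitely generated commutative $k$-algebra and $k=\bar k$, Zariski's Lemma implies that for every maximal ideal $\mathfrak m$ of $\Zz(\Hh_k)$ the residue field $\Zz(\Hh_k)/\mathfrak m$ is a finite extension of $k$, hence equal to $k$. Therefore the canonical projection $\Zz(\Hh_k)\twoheadrightarrow \Zz(\Hh_k)/\mathfrak m = k$ is a $k$-algebra character, and the correspondence $\mathfrak m \mapsto (\text{projection mod }\mathfrak m)$ identifies the set of maximal ideals of $\Zz(\Hh_k)$ with the set of $k$-algebra characters $\zeta:\Zz(\Hh_k)\to k$, via $\mathfrak m_\zeta = \ker\zeta$.

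Next, I would invoke the general fact that for any ring, the Jacobson radical equals the intersection of all maximal ideals. Applied here, this gives
\begin{equation*}
\mathrm{Jac}(\Zz(\Hh_k)) = \bigcap_{\zeta} \ker\zeta,
\end{equation*}
where $\zeta$ ranges over all $k$-algebra characters of $\Zz(\Hh_k)$. An element $z\in \Zz(\Hh_k)$ satisfying $\zeta(z)=0$ for every such $\zeta$ thus belongs to the Jacobson radical. By the previous proposition, $\mathrm{Jac}(\Zz(\Hh_k))=0$, so $z=0$, which is the claim.

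I do not expect any serious obstacle here: the only non-formal inputs are Zariski's Lemma (standard) and the previous proposition (already established). The argument is short and the role of the hypothesis $k=\bar k$ is solely to guarantee that maximal ideals are in bijection with $k$-valued characters; without algebraic closedness one would need to allow characters valued in finite extensions of $k$.
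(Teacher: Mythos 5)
Your argument is correct and is exactly the standard reasoning the paper is implicitly invoking with the phrase ``Since $k$ is algebraically closed, we have:'' preceding the corollary: Zariski's Lemma identifies maximal ideals of the finitely generated $k$-algebra $\Zz(\Hh_k)$ with $k$-characters, and the intersection of their kernels is the Jacobson radical, which the preceding proposition shows is trivial. No gap; this matches the paper's intended (unwritten) proof.
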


%\begin{proof}
%Recall that $k$ is algebraically closed, so  the maximal ideals of $\Zz(\Hh_k)$ are the kernels of the characters.
%\end{proof}

\subsubsection{The center of the Iwahori-Hecke algebra in characteristic $p$.}Let $\R$ be a ring  
containing an inverse for $(q1_\R-1)$ and a primitive $(q-1)^{\rm{th}}$ root of $1_\R$. We can apply the observations of \ref{idempo} and  consider the algebra 
$$\Hh_\R(\xi):=\epsilon_\xi \Hh_\R\epsilon_\xi.$$ It can be seen as the algebra $\cal H(\Gp, \Iw, \xi^{-1})$ of $\Gp$-endomorphisms of the representation
$\epsilon_\xi \ind_{\I}^\Gp {\mathbf 1}_\R$ which is isomorphic
to the compact induction $\ind_\Iw^\Gp \xi^{-1}$ of $\xi^{-1}$  seen as a $\R$-character of $\Iw$ trivial on $\I$: denote by $1_{\Iw, \xi^{-1}}\in \ind_\Iw^\Gp \xi^{-1}$ the unique function with support in $\Iw$ and value $1_\R$ at $1_\Gp$, then the map
\begin{equation}\label{isoiwa} \Hh_\R ({\xi})\rightarrow \cal H(\Gp, \Iw, \xi^{-1}), \: h\mapsto [1_{\Iw, \xi^{-1}}\mapsto 1_{\Iw, \xi^{-1}}h]
\end{equation} gives the identification. In particular, when $\xi=\mathbf 1$ is the trivial character, then the algebra $\Hh_\R(\mathbf 1)$ identifies with the usual   Iwahori-Hecke algebra 
$\HhI_\R=\R[\Iw\backslash \Gp/\Iw]$
with coefficients in $\R$.
%\begin{rema} \label{facet2}To a character $\xi$ is attached the set $\Pi_\xi$ of all $\alpha\in\Pi$ such that $\xi$ is fixed by $s_\alpha$ and the corresponding standard facet denoted by $F_\xi$ (see Remark \ref{thefacet}). For example, $F_{\mathbf 1}=x_0$, and if $\xi$ has  trivial  stabilizer  in $\Wf$,  then $F_\xi=C$ (but these  last two conditions are not equivalent).
%\end{rema}

\begin{rema} \label{centers} Let $\xi\in  \hat{\overline {\mathbf T}}(\fq)$. We have inclusions
$$\epsilon_{\xi }\Zz^\c(\Hh_\R)\subseteq  \epsilon_{\xi }\Zz(\Hh_\R)\subseteq \Zz(\Hh_\R(\xi))$$
where the latter  space  is the center of $\Hh_\R(\xi)$. The inclusion $\epsilon_{\xi }\Zz^\c(\Hh_\R)\subseteq \Zz(\Hh_\R(\epsilon_\xi))$ is strict in  general.  For example if
 $\G={\rm GL}_2(\Corps)$, $\R=k$, and $\xi$ is not fixed by the non trivial element of $\Wf$, then $\Hh_k(\xi)$ is commutative  with a $k$-basis indexed by the elements in $\X_*(\T)$ and contains zero divisors (\cite[Proposition 13]{BL}) while the  $k$-algebra $\epsilon _\xi \Zz^\c(\Hh_k)$  is isomorphic to $k[\X_*^+(\T)]$.

\medskip
If $\xi= \1$ however,  these inclusions are equalities: 
one  easily checks by direct comparison  of the basis elements \eqref{Z1} and \eqref{zl} that the first inclusion is an equality.  The second one comes from the fact  that $\epsilon_\1$ is a central idempotent in $\Hh_\R$.
 In particular we have:

\end{rema}

\begin{theorem}\label{centerIwa}
The center of the Iwahori-Hecke $k$-algebra $k[\Iw\backslash \Gp/\Iw]$   is isomorphic to $k[\X_*^+(\T)]$.
\end{theorem}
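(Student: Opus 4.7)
The plan is to unfold the content of Remark \ref{centers} in the special case $\xi = \mathbf{1}$ and combine it with Proposition \ref{prop:Zmap}. Via the identification \eqref{isoiwa}, the Iwahori-Hecke algebra $k[\Iw\backslash\Gp/\Iw]$ is $\Hh_k(\mathbf{1}) = \epsilon_\mathbf{1}\Hh_k\epsilon_\mathbf{1}$. Since $\epsilon_\mathbf{1}$ is a central idempotent in $\Hh_k$ (as recalled in the remark), this coincides with the two-sided ideal direct summand $\epsilon_\mathbf{1}\Hh_k$, and consequently $\Zz(k[\Iw\backslash\Gp/\Iw]) = \epsilon_\mathbf{1}\Zz(\Hh_k)$.

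I would then establish in turn that $\epsilon_\mathbf{1}\Zz(\Hh_k) = \epsilon_\mathbf{1}\Zz^\c(\Hh_k)$ and that multiplication by $\epsilon_\mathbf{1}$ is an isomorphism from $\Zz^\c(\Hh_k)$ onto $\epsilon_\mathbf{1}\Zz^\c(\Hh_k)$. For the first equality, the identity $\Bb_C^+(\lambda + t) = \Bb_C^+(\lambda)\tau_t$ recalled in the proof of Proposition \ref{tensor}, together with $\tau_t\epsilon_\mathbf{1} = \epsilon_\mathbf{1}$, yields $\epsilon_\mathbf{1}\Bb_C^+(\tilde\lambda) = \epsilon_\mathbf{1}\Bb_C^+(\bar\lambda)$, where $\bar\lambda \in \X_*(\T)$ is the image of $\tilde\lambda \in \tilde\X_*(\T)$ under the projection split by \eqref{splitting}. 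Applied to a $\Wf$-orbit basis element \eqref{Z1} of $\Zz(\Hh_k)$ attached to $\tilde\OO \subset \tilde\X_*(\T)$, this produces an integer multiple of $\epsilon_\mathbf{1} z_{\lambda_0}$, where $\lambda_0$ is the dominant representative of the image orbit in $\X_*(\T)$. Conversely, lifting $\OO(\lambda)$ into $\tilde\X_*(\T)$ through the $\Wf$-equivariant splitting \eqref{splitting} produces an orbit $\tilde\OO$ that maps bijectively onto $\OO(\lambda)$, so $\epsilon_\mathbf{1} z_\lambda$ itself is the image of \eqref{Z1}; the two $k$-spans coincide. For the second assertion, Proposition \ref{tensor} realizes $(\Aa_C^+)_k$ as $(\Aa_C^+)^\c_k \otimes_k k[\T^0/\T^1]$, under which multiplication by $\epsilon_\mathbf{1}$ becomes $a \otimes 1 \mapsto a \otimes \epsilon_\mathbf{1}$; this is manifestly injective, and hence also injective when restricted to the $\Wf$-invariant subalgebra $\Zz^\c(\Hh_k)$.

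Composing the resulting isomorphism $\Zz^\c(\Hh_k) \cong \epsilon_\mathbf{1}\Zz(\Hh_k) = \Zz(k[\Iw\backslash\Gp/\Iw])$ with the isomorphism $k[\X_*^+(\T)] \cong \Zz^\c(\Hh_k)$ of Proposition \ref{prop:Zmap} delivers the desired isomorphism. I do not anticipate a serious obstacle; the principal point requiring care is the fiber-multiplicity bookkeeping in the first equality, which is handled cleanly by selecting the lift of each $\Wf$-orbit through the section \eqref{splitting}.
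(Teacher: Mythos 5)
Your proof is correct and follows essentially the same route as the paper's: identify $\Zz(k[\Iw\backslash\Gp/\Iw])$ with $\epsilon_\1\Zz^\c(\Hh_k)$ via Remark \ref{centers}, then compose with Proposition \ref{prop:Zmap}. The only mild divergence is in the injectivity step, where the paper appeals to the triangularity in Lemma \ref{lemma:fundam} while you use the tensor-product decomposition of Proposition \ref{tensor} together with flatness; both work, and in passing you spell out the equality $\epsilon_\1\Zz^\c(\Hh_k)=\epsilon_\1\Zz(\Hh_k)$ that the paper labels as ``easily checked.''
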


\begin{proof} The map  \begin{equation*}\begin{array}{ccc} k[\X_*^+(\T)]&\longrightarrow & \epsilon_{\mathbf 1} \Zz(\Hh_k)\cr 
\lambda&\longmapsto & \epsilon_{\mathbf 1} z_\lambda 
\end{array}\end{equation*} is surjective by the previous discussion.
It is easily checked to be injective using Lemma \ref{lemma:fundam} (compare with \cite[(1.6.5)]{VigGene}).
\end{proof}

\section{\label{BernCent}The central Bernstein functions in the pro-$p$ Iwahori-Hecke ring} 
 Let $\OO$ be a $\Wf$-orbit in 
$\tilde\X_*(\T)$.   We call the central element of $\Hh_\Z$
\begin{equation}\tag{\ref{Z1}}
z_\OO:=\sum_{\lambda'\in \OO} \Bb_{C}^+(\lambda')\end{equation} 
 the associated central Bernstein function.

\subsection{The support of the central Bernstein functions}
For $h\in \Hh_\Z$, the set of all $ w\in \tilde \W$ such that
$h(\hat {w})\neq 0$ is called the  \emph{support} of $h$.
For  $\OO$  a $\Wf$-orbit in 
$\tilde\X_*(\T)$ we denote by $\ell_\OO$ the common length of all the coweights in $\OO$.

\begin{lemma} Let $\OO$ be a $\Wf$-orbit in 
$\tilde\X_*(\T)$.
The support of $z_\OO$ (resp. $\upiota_C(z_\OO)$)
contains the set of  all  $e^\mu$ for  $\mu\in \OO$: more precisely, the coefficient of $\tau_{e^\mu}$ in the decomposition of $z_\OO$ (resp. $\upiota_C(z_\OO)$) is equal to $1$.  Any other element in the support of
$z_\OO$ (resp. $\upiota_C(z_\OO)$)  has length $<\ell_\OO$.

\label{decomZ}

\end{lemma}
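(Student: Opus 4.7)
The plan is to reduce the claim to a direct application of Lemma \ref{lemma:fundam}, whose triangular expansion
$$\Bb_F^\sigma(\lambda) = \tau_{e^\lambda} + \sum_{w<e^\lambda} a_w\,\tau_w,$$
with each $w$ satisfying $\ell(w)<\ell(e^\lambda)$, will do essentially all the work.

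First I would verify that the length function is constant on the $\Wf$-orbit $\OO$, so that $\ell_\OO$ is unambiguous. This follows from the standard formula $\ell(e^\lambda)=\sum_{\alpha\in\Phi^+}|\lp\lambda,\alpha\rp|$ (which passes from $\X_*(\T)$ to $\tilde\X_*(\T)$ since $\ell$ on $\tilde\W$ is pulled back from $\W$), combined with the fact that $\Wf$ permutes $\Phi^+$ up to sign. The $\Wf$-action on $\tilde\X_*(\T)$ at play here is the one induced by conjugation in $\tilde\W$ (well-defined because any two lifts in $\tilde\Wf$ of the same element of $\Wf$ differ by an element of $\T^0/\T^1 \subseteq \tilde\X_*(\T)$, which is central in the abelian group $\tilde\X_*(\T)$), and this matches the $\Wf$-equivariant splitting \eqref{splitting}.

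Next, I would expand
$$z_\OO = \sum_{\lambda'\in\OO} \Bb_C^+(\lambda') = \sum_{\lambda'\in\OO} \tau_{e^{\lambda'}}\; +\; \sum_{\lambda'\in\OO}\sum_{w<e^{\lambda'}} a_w^{\lambda'}\,\tau_w$$
using Lemma \ref{lemma:fundam}. Since $\mu\mapsto e^\mu$ is injective from $\tilde\X_*(\T)$ into $\tilde\W$, the first sum is a sum of pairwise distinct basis vectors, contributing coefficient $1$ to $\tau_{e^\mu}$ for each $\mu\in\OO$. Every $w$ appearing in the double remainder sum satisfies $\ell(w)<\ell(e^{\lambda'})=\ell_\OO$, hence cannot coincide with any $e^\mu$ with $\mu\in\OO$ (all of which have length $\ell_\OO$); in particular such $w$ neither cancel nor reinforce the coefficient $1$ of $\tau_{e^\mu}$, and they are the only possible additional elements of the support, each of length strictly less than $\ell_\OO$.

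For the second assertion, I would invoke \eqref{involution} to rewrite
$$\upiota_C(z_\OO) = \sum_{\lambda'\in\OO} \upiota_C(\Bb_C^+(\lambda')) = \sum_{\lambda'\in\OO} \Bb_C^-(\lambda'),$$
and rerun the previous paragraph verbatim with $\sigma=-$ in Lemma \ref{lemma:fundam} (which applies for any standard facet and sign). No genuine obstacle arises: the whole argument is a direct bookkeeping consequence of the triangularity in Lemma \ref{lemma:fundam} combined with the $\Wf$-invariance of $\ell$ on cocharacters.
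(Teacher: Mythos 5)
Your proof is correct and follows essentially the same route as the paper, which simply cites Lemma \ref{lemma:fundam} and \eqref{involution}; you have spelled out the bookkeeping (including the useful observation that $\ell$ is constant on $\Wf$-orbits via $\ell(e^\lambda)=\sum_{\alpha\in\Phi^+}|\lp\lambda,\alpha\rp|$, which the paper leaves implicit).
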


\begin{proof}  This is a consequence of Lemma \ref{lemma:fundam} (and of \eqref{involution}).\end{proof}
\begin{prop} The involution $\upiota_C$ fixes the elements in the center $\Zz(\Hh_\Z)$ of $\Hh_\Z$.\label{prop:fix} \\In particular, for $\OO$ a $\Wf$-orbit in $\tilde\X_*(\T)$,
 the element
$\sum_{\lambda'\in \OO} \Bb_{C}^\sigma(\lambda')\in \Hh_\Z$ does not depend on the
sign $\sigma$.

\end{prop}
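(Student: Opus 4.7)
The plan is induction on $n := \ell_\OO$, exploiting the fact that $\upiota_C$ preserves the center (so $d := z_\OO - \upiota_C(z_\OO)$ is central) combined with the tautological anti-invariance $\upiota_C(d) = -d$.

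First I would verify that $\upiota_C$ is in fact an involutive \emph{ring} automorphism of $\Hh_\Z$, not merely a $\Z$-linear involution. The factor $\upiota$ is an algebra automorphism by \eqref{defi:invo}. The factor $\upsilon_C$ is also an algebra automorphism: the ring $\Hh_\Z$ is naturally $\Omega$-graded (the braid relations \eqref{braid} and the pro-$p$ quadratic relations--which express $\tau_{n_A}^2$ in terms of elements indexed by $\tilde\W_{aff}$--both respect the projection $\tilde\W \twoheadrightarrow \Omega$), and $\upsilon_C$ acts by the scalar $\epsilon_C(\omega)$ on the $\omega$-th component. Because $\epsilon_C(w^{-1}) = \epsilon_C(w)$, the two involutions commute, so their composition is involutive. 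In particular $\upiota_C$ preserves $\Zz(\Hh_\Z)$.

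Next I would carry out the induction. The base case $n = 0$ is immediate: Lemma \ref{lemma:fundam} forces $\Bb_C^+(\lambda) = \tau_{e^\lambda}$ whenever $\ell(e^\lambda) = 0$, and by Remark \ref{fix0} these elements are fixed by $\upiota_C$, so $z_\OO$ is fixed. For the inductive step ($n \geq 1$), set $d := z_\OO - \upiota_C(z_\OO)$. By \eqref{involution}, $\upiota_C(z_\OO) = \sum_{\lambda' \in \OO} \Bb_C^-(\lambda')$, so applying Lemma \ref{decomZ} to both basis families shows that the coefficient of $\tau_{e^\mu}$ in $d$ is $1 - 1 = 0$ for every $\mu \in \OO$ and that every other element in the support of $d$ has length strictly less than $n$.

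Since $d$ lies in the center, write $d = \sum_{\OO'} c_{\OO'} z_{\OO'}$. If $d \neq 0$, pick $\OO_0$ with $c_{\OO_0} \neq 0$ and $\ell_{\OO_0}$ maximal. A direct inspection using Lemma \ref{decomZ} shows that for any $\nu \in \OO_0$ the coefficient of $\tau_{e^\nu}$ in $d$ equals $c_{\OO_0}$ (all other summands $c_{\OO'} z_{\OO'}$ contribute $0$: if $\ell_{\OO'} = \ell_{\OO_0}$ then the length-$\ell_{\OO_0}$ support of $z_{\OO'}$ is disjoint from the $\Wf$-orbit of $\nu$, and if $\ell_{\OO'} < \ell_{\OO_0}$ then the entire support of $z_{\OO'}$ has length $< \ell_{\OO_0}$). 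Hence $e^\nu$ lies in the support of $d$, and the support bound above forces $\ell_{\OO_0} < n$. The induction hypothesis then gives $\upiota_C(z_{\OO'}) = z_{\OO'}$ for every $\OO'$ appearing in $d$, so $\upiota_C(d) = d$. Combined with $\upiota_C(d) = -d$, this yields $2d = 0$, hence $d = 0$ since $\Hh_\Z$ is $\Z$-torsion free. The ``in particular'' statement then follows by applying $\upiota_C$ term-by-term: $z_\OO = \upiota_C(z_\OO) = \sum_{\lambda' \in \OO} \Bb_C^-(\lambda')$ via \eqref{involution}. The main subtlety is the support analysis in the inductive step; it rests crucially on the leading-term exactness recorded in Lemma \ref{decomZ}.
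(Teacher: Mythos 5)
Your proof is correct and follows essentially the same strategy as the paper's: induction on $\ell_\OO$, using Lemma~\ref{decomZ} to show that the difference $z_\OO - \upiota_C(z_\OO)$ is a $\Z$-combination of central Bernstein functions of strictly smaller length, applying the inductive hypothesis, and concluding via $2d = 0$ and torsion-freeness of $\Hh_\Z$. Your repackaging (working directly with $d := z_\OO - \upiota_C(z_\OO)$ and playing $\upiota_C(d) = -d$ against $\upiota_C(d) = d$) and your explicit verification that $\upiota_C$ is a ring automorphism hence preserves the center are minor refinements of the same argument rather than a different route.
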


\begin{proof} 
We prove that $\upiota_C$ fixes $z_\OO$ by induction on $\ell_\OO$.\\

If $\ell_\OO=0$, then conclude using Remark \ref{fix0}.
Let $\OO$  a $\Wf$-orbit in $\tilde\X_*(\T)$ such that $\ell_\OO>0$.
The element  $\upiota_C(z_\OO)$ is central in $\Hh_\Z$. 
Recall that  a $\Z$-basis for $\Zz(\Hh_\Z)$ is given by the central Bernstein functions $z_\OO$ where $\OO$ ranges over the $\Wf$-orbits in $\tilde\X_*(\T)$.  
Lemma  \ref{decomZ} implies that $\upiota_C(z_\OO)$ decomposes as a sum
$$\upiota_C(z_\OO)= z_{\OO}+\sum_{\OO'}a_{\OO'} z_{\OO'}$$ where $\OO'$ ranges over a finite set of  $\Wf$-orbits in $\tilde\X_*(\T)$ such that  $\ell_{\OO'}<\ell_\OO$ and $a_{\OO'}\in \Z$. By induction and applying  the involution $\upiota_C$ we get
$$z_\OO= \upiota_C(z_{\OO})+\sum_{\OO'}a_{\OO'}z_{\OO'}$$ 
and  $2(\upiota(z_\OO)-z_{\OO}))=0$. Since $\Hh_\Z$ has no $\Z$-torsion, $\upiota(z_\OO)=z_{\OO}$. The second statement follows from \eqref{involution}.

\end{proof}

When $\Gp$ is semisimple, the projection  in $\Hh_k$ of the equality proved in  Proposition \ref{prop:fix} can be obtained independently  using  the duality for finite length $\Hh_{k}$-modules  defined in \cite{OS}:
\begin{prop} \label{theo:indep} Suppose that $\Gp$ is semisimple. 
   The element
$\sum_{\lambda'\in \OO} \Bb_{C}^\sigma(\lambda')\in \Hh_{k}$ is fixed by the involution $\upiota_C$  and therefore does not depend on the
sign $\sigma$.

\end{prop}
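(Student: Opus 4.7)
The identity $\upiota_C(\Bb_C^+(\lambda)) = \Bb_C^-(\lambda)$ from \eqref{involution} shows that $\upiota_C\bigl(\sum_{\lambda' \in \OO} \Bb_C^+(\lambda')\bigr) = \sum_{\lambda' \in \OO} \Bb_C^-(\lambda')$; hence the claimed $\sigma$-independence is equivalent to showing that the element $z_\OO^+ := \sum_{\lambda' \in \OO} \Bb_C^+(\lambda')$ is fixed by $\upiota_C$ in $\Hh_k$. I would deduce this from the stronger assertion that $\upiota_C$ fixes every element of the center $\Zz(\Hh_k)$, which is an algebra involution of $\Zz(\Hh_k)$ since $\upiota_C$ is an algebra involution of $\Hh_k$.

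The first step is a reduction to central characters. By Corollary \ref{coro:char}, to prove that an element $z \in \Zz(\Hh_k)$ equals $\upiota_C(z)$ it suffices to verify that $\zeta(z) = \zeta(\upiota_C(z))$ for every character $\zeta: \Zz(\Hh_k) \to k$. Any such $\zeta$ is realised as the central character of a simple finite-dimensional $\Hh_k$-module: indeed, Proposition \ref{prop:finite}(ii) implies that $\Hh_k$ is finite as a module over $\Zz^\c(\Hh_k)$ and a fortiori over $\Zz(\Hh_k)$, so $\Hh_k/(\ker\zeta)\Hh_k$ is a nonzero finite-dimensional $k$-algebra (Nakayama), and any simple module $M$ for this quotient carries the central character $\zeta$.

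At this point I would invoke the duality of \cite{OS}: for $\Gp$ semisimple, their results yield an auto-duality on the category of finite-length $\Hh_k$-modules which, up to a global cohomological shift, is compatible with the twist functor $M \mapsto M^{\upiota_C}$ sending an $\Hh_k$-module $M$ to the same underlying $k$-vector space with action $h \cdot_{\upiota_C} m := \upiota_C(h) m$. The relevant consequence is that for every simple $M$, the twist $M^{\upiota_C}$ has the same class as $M$ in the Grothendieck group of finite-length $\Hh_k$-modules, and therefore the same central character. On the other hand, a central element $z \in \Zz(\Hh_k)$ acts on $M^{\upiota_C}$ by the scalar $\zeta(\upiota_C(z))$, so the comparison forces $\zeta(\upiota_C(z)) = \zeta(z)$ for every $\zeta$ and every $z$, which yields the proposition.

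The main obstacle is to extract from \cite{OS} the precise form of the duality and, in particular, to make rigorous the claim that the twist functor $M \mapsto M^{\upiota_C}$ preserves Grothendieck classes of finite-length modules. The semisimplicity hypothesis on $\Gp$ is critical here, as it is what ensures the dualizing bimodule for $\Hh_k$ takes the simple form involving only the twist by $\upiota_C$; in general (i.e.\ if $\Gp$ has a non-trivial connected centre) additional contributions from $\X_*(\T)$ would enter, obstructing a direct identification of central characters.
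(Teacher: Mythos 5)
Your overall strategy — reduce to showing $\upiota_C$ fixes the whole center, then reduce to central characters via Corollary~\ref{coro:char}, then invoke the duality of \cite{OS} — is the route the paper takes. However, the specific lemma you interpolate is wrong: the twist functor $M\mapsto \upiota_C^*M$ does \emph{not} preserve isomorphism classes (or Grothendieck classes) of simple modules. For instance, $\upiota_C$ interchanges the trivial and sign characters of $\Hh_k$ (the paper records $\Xx_{triv}=\Xx_{sign}\circ\upiota_C$ in \ref{par:classif}), and these one-dimensional modules are non-isomorphic, so $[M^{\upiota_C}]\neq[M]$ in general. What \emph{is} true — and is exactly what the proposition asserts — is that $M$ and $\upiota_C^*M$ have the same central character; but you cannot use that to prove itself.

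The paper avoids this circularity by exploiting the $\Zz(\Hh_k)$-bimodule structure on $\operatorname{Ext}$-groups rather than any claim about Grothendieck classes. Concretely, take $M=\Hh_k\otimes_{\Zz(\Hh_k)}\zeta$, which is finite-dimensional and has central character $\zeta$. Corollary~6.12 of \cite{OS} (valid when $\Gp$ is semisimple, $d$ its semisimple rank) gives a right $\Hh_k$-module isomorphism $\operatorname{Ext}^d_{\Hh_k}(M,\Hh_k)\cong\Hom_k(\upiota_C^*M,k)$. Since the category of left $\Hh_k$-modules is $\Zz(\Hh_k)$-linear, $\operatorname{Ext}^d_{\Hh_k}(M,\Hh_k)$ is a \emph{central} $\Zz(\Hh_k)$-bimodule and therefore inherits from $M$ the central character $\zeta$; the right-hand side has central character $\zeta\circ\upiota_C$. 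Both sides are nonzero, so $\zeta=\zeta\circ\upiota_C$, and Corollary~\ref{coro:char} finishes. This is sharper than what you proposed precisely because it never asserts an identity of modules, only an identity of the scalars by which $\Zz(\Hh_k)$ acts on a specific isomorphism of $\operatorname{Ext}$-groups.
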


\begin{proof} 
Suppose that  $\Gp$ is semisimple.  Let $\OO$ be a $\Wf$-orbit in 
$\tilde\X_*(\T)$.  We want to prove, without using Proposition \ref{prop:fix}, that in $\Hh_k$ we have
$z_\OO=\upiota_C(z_\OO)$.\\
 Let $\zeta: \Zz(\Hh_k)\rightarrow k$ a character and $M=\Hh_k\otimes_{\Zz(\Hh_k)} \zeta$ the induced $\Hh_k$-module. It is  finite dimensional over $k$  and therefore, by \cite[Corollary 6.12]{OS} we have an isomorphism of right $\Hh_k$-modules
$${\rm Ext}^d_{\Hh_k}(M, \Hh_k)=\Hom_{k}(\upiota_C^*M, k)$$
where $d$ is the semisimple rank of $\Gp$ and $\upiota_C^*M$ denotes the left $\Hh_k$-module $M$ 
with action twisted by the 
involution $\upiota_C$  defined by \eqref{therightinvolution}.
The category of left $\Hh_k$-modules is naturally a $\Zz(\Hh_k)$-linear category and therefore, for  $X$ and $Y$ two given  left $\Hh_k$-modules, ${\rm Ext}^d_{\Hh_k}(X,Y)$ inherits  a structure of 
central $\Zz(\Hh_k)$-bimodule.
Hence, the right $\Hh_k$-module ${\rm Ext}^d_{\Hh_k}(M, \Hh_k)$ has  a central character equal to $\zeta$. On the other hand,
$\Hom_{k}(\upiota_C^*M, k)$ has $\zeta\circ \upiota_C$ as a central character. 
Therefore, 
$\zeta(z_{ \OO})=\zeta\circ \upiota_C(z_{ \OO})$. 
By Corollary \ref{coro:char}, we have  the required equality $z_{ \OO}=\upiota_C(z_{ \OO})$.
\end{proof}

\subsection{Independence  lemma} The following lemma will be proved in
\ref{proofbyinduction}.
\begin{lemma} For  $\OO$  a $\Wf$-orbit in 
$\tilde\X_*(\T)$,
the element \begin{equation*}
\sum_{\lambda\in \OO} \Bb_{F}^\sigma(\lambda)\end{equation*}  in $\Hh_\Z$ does not depend on the choice of the standard facet $F$ and of the sign $\sigma$.
\label{theoA}
\end{lemma}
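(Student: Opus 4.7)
The plan is to first treat the independence with respect to the sign $\sigma$ for general $F$, and then the independence with respect to the facet $F$, both by reducing to the already-established case $F=C$ contained in Proposition~\ref{prop:fix}.

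Set $z_\OO^{F,\sigma} := \sum_{\lambda'\in\OO}\Bb_{F}^{\sigma}(\lambda')$, and let $z_\OO := z_\OO^{C,+}\in \Zz(\Hh_\Z)$ be the central Bernstein function. By Proposition~\ref{prop:fix} we already have $z_\OO^{C,+}=z_\OO^{C,-}=z_\OO$. From the identity \eqref{involution}, $\upiota_C(\Bb_F^+(\lambda))=\Bb_F^-(\lambda)$, so once we know $z_\OO^{F,+}=z_\OO$ for every standard facet $F$, the element $z_\OO^{F,-}=\upiota_C(z_\OO^{F,+})=\upiota_C(z_\OO)$ equals $z_\OO$ as well, because $z_\OO$ is central and thus fixed by $\upiota_C$ (Proposition~\ref{prop:fix}). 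Thus the entire lemma reduces to the single equality
\[
z_\OO^{F,+} \;=\; z_\OO \qquad\text{for every standard facet } F.
\]

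To prove this equality I would proceed by parabolic induction, exploiting the Levi subgroup $\Mf_F$ attached to a standard facet $F$, whose finite Weyl group is $\Wf_F$ and whose root system is $\Phi_F$. The key point is that $F$ plays for $\Mf_F$ exactly the role that $C$ plays for $\Gp$: the Weyl chamber $\Cute^+(F)\subseteq \tilde\X_*(\T)$ is, up to sign issues arising from the longest element $w_F\in\Wf_F$, the dominant chamber of $\Mf_F$. The strategy is to establish an intertwining formula between $\Bb_F^+$ on $\Gp$ and an analogous Bernstein map $\Bb^{\Mf_F,+}$ inside the pro-$p$ Iwahori-Hecke ring $\Hh_\Z^{\Mf_F}$ of $\Mf_F$. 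Writing $\lambda=\mu-\nu$ with $\mu,\nu\in \Cute^+(F)$ and using the definition $\Bb_F^+(\lambda)=q^{\ell(e^\lambda)/2}\tau_{e^\nu}^{-1}\tau_{e^\mu}$, I would compare the lengths $\ell$ on $\Gp$ with the length $\ell_{\Mf_F}$ on $\Mf_F$ on elements of $\tilde\X_*(\T)$; their difference is constant on each $\Wf_F$-orbit, which in a $\Wf$-orbit $\OO$ allows summing contributions orbit-by-orbit under the $\Wf_F$-action. In this way one rewrites
\[
z_\OO^{F,+} \;=\; \sum_{\OO_F\subseteq\OO}\;\sum_{\lambda'\in \OO_F}\Bb^{\Mf_F,+}(\lambda'),
\]
where $\OO_F$ ranges over the $\Wf_F$-orbits in $\OO$, and each inner sum belongs to the center of $\Hh_\Z^{\Mf_F}$ by the already known case $F=C$ applied to the Levi $\Mf_F$.

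Once this is in place, the equality $z_\OO^{F,+}=z_\OO$ follows by checking that the image in $\Hh_\Z$ of $\sum_{\OO_F}\sum_{\OO_F}\Bb^{\Mf_F,+}$ coincides with $z_\OO$, using Lemma~\ref{decomZ}: on the one hand both elements have $\sum_{\mu\in\OO}\tau_{e^\mu}$ as leading term, and on the other hand both are $\W_F$-invariant combinations of the basis elements $\tau_w$ with strictly smaller length than $\ell_\OO$ in the complementary part. Combining this leading-term identification with the centrality of the inner sums and an induction on $\ell_\OO$ analogous to the induction used in the proof of Proposition~\ref{prop:fix}, one concludes.

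The main obstacle I foresee is precisely the length comparison and the construction of a clean intertwining map between $\Bb_F^+$ on $\Gp$ and the Levi Bernstein map on $\Mf_F$: one must verify that the support estimates of Lemma~\ref{lemma:fundam} are compatible with the parabolic picture and that the $\Wf_F$-grouping of a $\Wf$-orbit is carried out faithfully by the formula $\Bb_F^+(\lambda)=q^{\ell(e^\lambda)/2}\tau_{e^\nu}^{-1}\tau_{e^\mu}$. All other steps are inductive and parallel to the $F=C$ argument already performed.
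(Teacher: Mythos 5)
Your high-level plan matches the paper's: reduce the sign question to Proposition~\ref{prop:fix} via $\upiota_C$, and then handle the dependence on $F$ by transferring the $F=C$ case down to the Levi $\M_F$. But the middle of your argument has a genuine gap that the paper has to work to close.

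The rewriting
\[
z_\OO^{F,+} \;=\; \sum_{\OO_F\subseteq\OO}\;\sum_{\lambda'\in \OO_F}\Bb^{\Mf_F,+}(\lambda')
\]
is not available in general, because the intertwining formula $j_F^+\bigl({}_F\Bb_F^+(\lambda)\bigr)=\Bb_F^+(\lambda)$ of Lemma~\ref{lemma:jFanti} requires $\lambda$ to be $F$-\emph{positive}, and a $\Wf$-orbit $\OO$ typically contains many coweights that are not $F$-positive (indeed $\OO$ is not even contained in $\tilde\W_F$'s positive cone). So you cannot push $z_\OO^{F,+}$ through $j_F^+$ term by term. The paper's fix is to first establish the Levi comparison only for $\Wf_F$-orbits of $F$-positive coweights (Step 1/), and then choose a strongly $F$-positive coweight $\nu$ with $\lambda+\nu$ $F$-positive for all $\lambda\in\OO$, write $\Bb_F^+(\lambda)$ as a $q$-power times $\Bb_F^+(\lambda+\nu)\Bb_F^+(-\nu)$, check that the $q$-power is constant along each $\Wf_F$-orbit $\OO_F^i\subseteq\OO$ (this uses that $\Phi^+-\Phi_F^+$ is $\Wf_F$-stable and $\langle\nu,\alpha\rangle=0$ for $\alpha\in\Phi_F^+$), and apply Step 1/ to the shifted orbits. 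This shift-by-$\nu$ device is the essential missing ingredient; you flag the length comparison as an obstacle but do not resolve it.

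The concluding paragraph is also shaky: you appeal to centrality of the Levi sums and an induction on $\ell_\OO$ with a leading-term comparison \`a la Lemma~\ref{decomZ}, but $j_F^+$ does not send the center of $\Hh_\Z(\M_F)$ into the center of $\Hh_\Z$, and the paper's proof of Lemma~\ref{theoA} does not use any induction on $\ell_\OO$ (that induction is what proves Proposition~\ref{prop:fix}, which you are correctly using as a black box, but it is not repeated here). Once you supply the $F$-positivity step and the $\nu$-shift with the $q$-power constancy on $\Wf_F$-orbits, the conclusion is a direct chain of equalities and no further induction is needed.
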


\begin{coro} The center of $\Hh_\Z$ is contained in the intersection of all the commutative rings $\Aa_F^\sigma$ for $F$ a standard facet and $\sigma$ a sign.
\end{coro}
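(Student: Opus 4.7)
The plan follows the outline sketched in \ref{gibm}: first settle Question a/ (sign independence for each fixed standard facet $F$), then Question b/ (facet independence). The case $F=C$ of sign independence is already Proposition \ref{prop:fix}, and this will serve as a base case.

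For sign independence with a general standard facet $F$, I would proceed by induction on $\ell_\OO$. The base case $\ell_\OO=0$ reduces via Remark \ref{fix0} to length-zero elements, for which each $\Bb_F^\sigma(\lambda)=\tau_{e^\lambda}$. In the inductive step, Lemma \ref{lemma:fundam} gives a triangular decomposition $\Bb_F^\sigma(\lambda)=\tau_{e^\lambda}+\sum_{w<e^\lambda}a_w(F,\sigma,\lambda)\tau_w$, so the two orbit-sums $\sum_{\lambda\in\OO}\Bb_F^+(\lambda)$ and $\sum_{\lambda\in\OO}\Bb_F^-(\lambda)$ share the leading term $\sum_{\mu\in\OO}\tau_{e^\mu}$ and differ only by an element $D$ of the $\Z$-span of $\{\tau_w:\ell(w)<\ell_\OO\}$. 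By \eqref{involution}, $\upiota_C(D)=-D$. The task is then to express $D$ as a $\Z$-combination of shorter orbit-differences $\sum_{\lambda'\in\OO'}(\Bb_F^+(\lambda')-\Bb_F^-(\lambda'))$ with $\ell_{\OO'}<\ell_\OO$; having the integral Bernstein basis of $\Hh_\Z$ at our disposal, one writes $D$ in that basis and groups terms by $\Wf$-orbit, at which point the induction hypothesis together with the $\Z$-torsion-freeness argument used in the proof of Proposition \ref{prop:fix} forces $D=0$.

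For facet independence, the natural framework is parabolic induction. To each standard facet $F$ is attached a standard Levi subgroup $\Mf_F$ of $\G$ with root system $\Phi_F$; its pro-$p$ Iwahori-Hecke ring $\Hh_\Z^{\Mf_F}$ is equipped with its own integral Bernstein maps $\Bb^{\Mf_F,\sigma}$. The key point is that a natural $\Z$-linear induction map $\Hh_\Z^{\Mf_F}\to\Hh_\Z$ intertwines $\Bb^{\Mf_F,\sigma}$ with $\Bb_F^\sigma$ on $\tilde\X_*(\T)$: from the Levi's viewpoint the facet $F$ plays the role of $C_{\Mf_F}$, so inside $\Hh_\Z^{\Mf_F}$ Proposition \ref{prop:fix} directly yields invariance of $\Wf_F$-orbit sums under change of sign. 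Averaging over coset representatives of $\Wf/\Wf_F$ via conjugation by lifts in $N_\G(\T)$ then rebuilds $\sum_{\lambda\in\OO}\Bb_F^\sigma(\lambda)$ in $\Hh_\Z$ and identifies it with $z_\OO$, independent of both $F$ and $\sigma$.

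The main obstacle will be the parabolic induction step: constructing and analyzing the map $\Hh_\Z^{\Mf_F}\to\Hh_\Z$ so that it intertwines the integral Bernstein maps, which requires careful bookkeeping of the $q^{\ell(\cdot)/2}$ factors and length contributions that enter the definition of $\Theta_F^\sigma$, together with a verification that the images of $\Wf_F$-orbit sums reassemble, after averaging, into $\Wf$-orbit sums in $\Hh_\Z$. Once this compatibility is in place, the corollary that $\Zz(\Hh_\Z)\subseteq\bigcap_{F,\sigma}\Aa_F^\sigma$ follows immediately from the lemma and the basis $\{z_\OO\}$ of $\Zz(\Hh_\Z)$.
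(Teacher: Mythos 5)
Your closing deduction is correct and is exactly the paper's: the center has $\Z$-basis $\{z_\OO\}$, and once Lemma \ref{theoA} identifies each $z_\OO$ with $\sum_{\lambda\in\OO}\Bb_F^\sigma(\lambda)\in\Aa_F^\sigma$, the containment is immediate. The gaps are in the proposed route to the lemma itself.

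First, the sign-independence step for a general $F$ does not survive scrutiny. The inductive argument of Proposition \ref{prop:fix} works because $\upiota_C$ preserves the center $\Zz(\Hh_\Z)$; hence $\upiota_C(z_\OO)$ is again central and admits a unique expansion in the basis $\{z_{\OO'}\}$, to which the triangularity of Lemma \ref{lemma:fundam} applies. For $F\neq x_0,C$ the element $z_F^\sigma:=\sum_{\lambda\in\OO}\Bb_F^\sigma(\lambda)$ is not known to be central — that centrality is essentially what you are trying to prove — so there is no a priori reason the difference $D=z_F^+-z_F^-$ can be written as a $\Z$-combination of shorter \emph{orbit}-differences. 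Expanding $D$ in the integral Bernstein basis of $\Hh_\Z$ produces coefficients indexed by elements of $\tilde\W$, not by $\Wf$-orbits; the orbit structure you want to ``group by'' is a consequence of centrality, not a free feature of the basis. This step, as proposed, has a genuine hole. (It is also unnecessary: the paper proves $\sum_\lambda\Bb_F^+(\lambda)=z_\OO$ for every standard $F$ first, and sign-independence then drops out of \eqref{involution} together with the $\upiota_C$-invariance of $z_\OO$.)

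Second, the facet-independence step correctly points at parabolic induction, and indeed the intertwining you describe is exactly Lemma \ref{lemma:jFanti}. But ``averaging over coset representatives of $\Wf/\Wf_F$ via conjugation by lifts in $N_\G(\T)$'' is not a workable mechanism: lifts of non-length-zero elements of $\tilde\W$ are not invertible in $\Hh_\Z$, so there is no conjugation action to average over, and in any case this would not address the real obstruction, namely that the intertwining $j_F^+({}_F\Bb_{F'}^+(\lambda))=\Bb_{F'}^+(\lambda)$ only holds for $F$-\emph{positive} $\lambda$, whereas a $\Wf$-orbit $\OO$ contains non-$F$-positive coweights. The paper's solution is the translation trick: pick a strongly $F$-positive $\nu$ with $\lambda+\nu$ $F$-positive for all $\lambda\in\OO$, apply Proposition \ref{prop:fix} for the Levi $\M_F$ to the $\Wf_F$-orbit of $\lambda+\nu$, keep track of the uniform power of $q$ across each $\Wf_F$-orbit, and then multiply by $\Bb^+(-\nu)$ to undo the shift. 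That normalization is where the work lies, and your sketch omits it.
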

%\noindent In the case of $\G={\rm GL}_3(\Corps)$, there are $4$ standard facets:  the vertex $x_0$, the chamber $C$ and two edges $F_1$ and $F_2$. 
%\begin{figure}[h!]
%\begin{center}
%\includegraphics[width=7cm]{ellipse.pdf}
 %\end{center}
%\end{figure}
%[height=12cm]

\subsection{Inducing the generalized integral Bernstein functions\label{proof:theo2}}
We study the behavior of the integral Bernstein maps upon  parabolic induction and subsequently prove Lemma \ref{theoA}.

\subsubsection{\label{induc}} Let $F$ be a standard facet, $\Pi_F$ the associated set of simple roots and $\P_F$
the  corresponding standard parabolic subgroup with Levi decomposition $\P_F=\M_F{\rm N}_F$.  The root datum attached to the choice of the split torus $\T$ in $\M_F$ is  $(\Phi_F, {\rm X}^*(\Tp), \check\Phi_F, {\rm X}_*(\Tp))$ (notations in \ref{cones}).
 The extended Weyl group of $\M_F$ is
$\W_F=(N_\Gp(\T)\cap \M_F)/\Tp^0$. It is isomorphic to the semidirect product  $\Wf_F\ltimes \X_*(\T)$ where $\Wf_F$ is the finite Weyl group  $(N_\Gp(\T)\cap \M_F)/\Tp$ (also defined in \ref{cones}). We denote by $\ell_F$ its length function and by $\underset {F}\leq $ the Bruhat order on $\W_F$.

Set $\tilde \W_F=(N_\Gp(\T)\cap \M_F)/\Tp^1$. It is a subgroup of $\tilde \W$. 
The double cosets of $\M_F$ modulo its pro-$p$ Iwahori subgroup $\I\cap \M_F$   are indexed by the elements in  $\tilde \W_F$.
For $w\in \W_F$, we denote by $\t_w^F$ the characteristic function of the  double coset containing  the lift $\hat w$ for $w$ (which lies in $ N_\G(\T)\cap \M_F$). 
The set of all 
$(\tau^F_w)_{w\in \W_F}$ is a basis for the pro-$p$ Iwahori-Hecke ring 
$\Hh_\Z(\M_F)$ of  $\Z$-valued functions with compact support in $(\I\cap \M_F)\backslash \M_F/(\I\cap \M_F)$.  
The ring $\Hh_\Z(\M_F)$  does not inject in $\Hh_\Z$ in general.

\medskip

An element in $w\in\W_F$  is called $F$-positive if 
$w^{-1}(\Phi^+-\Phi_F^+)\subset \Phi_{aff}^+.$ For example for $\lambda\in\X_*(\T)$, the element $e^\lambda$ is  $F$-positive
if  and only if $\lp \lambda, \alpha \rp\geq 0$  for all $\alpha\in \Phi^+-\Phi_F^+.$ In this case, we will say that the coweight $\lambda$ itself is $F$-positive.  If furthermore   $\lp \lambda, \alpha \rp>0$  for  $\alpha\in \Phi^+-\Phi_F^+$ and 
$\lp \lambda, \alpha \rp= 0$  for  $\alpha\in \Phi_F^+$, then it is called strongly $F$-positive. The $F$-positive coweights 
are the $\Wf_F$-conjugates of the dominant coweights. 
The $C$-positive (resp.  strongly $C$-positive) coweights  are the dominant (resp. strongly dominant) coweights.
An element in $\W_F$ is $F$-positive if and only if it belongs to $ e^\lambda \Wf_F$ for some $F$-positive coweight $\lambda\in \X_*(\T)$.  If $\mu$ and $\nu \in \X_*(\T)$ are $F$-positive  coweights such that $\mu-\nu$  is also $F$-positive, then we have the equality (see \cite[1.2]{invsatake} for example)
\begin{equation}\label{lengthF}
\ell(e^{\mu-\nu})+\ell(e^{\nu})-\ell(e^{\mu})=  \ell_F(e^{\mu-\nu})+\ell_F(e^{\nu})-\ell_F(e^{\mu})
\end{equation}
An element in $\tilde\W_F$ will be called $F$-positive if its projection in $\W_F$ is $F$-positive.

\medskip

 The subspace of   $ \Hh_{\Z}(\M_F)$ generated over $\Z$ by all $\t_w^F$ for $F$-positive $w\in \tilde\W_F$ 
 is denoted by   $ \Hh_{\Z}(\M_F)^+$. It is in fact a ring and there is an injection of rings 
$$\begin{array}{cccc}j_F^+:& {\Hh}_{\Z}(\M_F)^+&\longrightarrow& {\Hh}_{\Z}\cr &\t^F_w&\longmapsto &\t_w\cr\end{array}$$ which extends to an injection of $\Z[q^{\pm 1/2}]$-algebras
$$j_F:{\Hh}_{\Z}(\M_F)\otimes _\Z \Z[q^{\pm 1/2}]\rightarrow  {\Hh}_{\Z}\otimes_\Z \Z[q^{\pm 1/2}].$$
This is a classical result for  complex Hecke algebras (\cite[(6.12)]{BK}). The argument is valid over $\Z[q^{\pm 1/2}]$.

\begin{rema} An element  $w\in\tilde\W_F$  is called  $F$-negative  (resp. strongly $F$-negative) if 
 $w^{-1}$ is $F$-positive (resp. strongly $F$-positive) and as before, $ \Hh_{\Z}(\M_F)$ contains as a subring  the space   $ \Hh_{\Z}(\M_F)^-$ generated over $\Z$ by all $\t_w^F$ for $F$-negative $w\in \tilde\W_F$. There is an injection of rings
 $j_F^-: {\Hh}_{\Z}(\M_F)^-\longrightarrow {\Hh}_{\Z}, \t_w^F\longmapsto \t_w.$
\label{rema:nega}
\end{rema}
\begin{fact} Let $v\in \W_F$ such that $v\underset{F}\leq e^\lambda$ for $\lambda\in \X_*(\T)$ a $F$-positive coweight. Then $v$ is $F$-positive. 
\label{fact1}
\end{fact}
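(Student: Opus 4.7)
The plan is to reduce to showing that, writing $v=ue^\mu$ with $u\in\Wf_F$ and $\mu\in\X_*(\T)$, the translation part $\mu$ is $F$-positive. Since $\Wf_F$ stabilizes $\Phi^+-\Phi_F^+$ (a classical feature of parabolic root subsystems: simple reflections from $\Pi_F$ only flip roots inside $\Phi_F$), the action formula $we^\mu\cdot(\alpha,r)=(w\alpha,\,r-\langle\mu,\alpha\rangle)$ on affine roots translates the inversion condition $v^{-1}(\alpha,0)\in\Phi_{aff}^+$ for $\alpha\in\Phi^+-\Phi_F^+$ into $\langle\mu,\beta\rangle\geq 0$ for all $\beta\in\Phi^+-\Phi_F^+$, i.e.\ $F$-positivity of $\mu$.

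A key ingredient is the length identity for $w=u'e^{\mu'}\in\W_F$,
\[
\ell(w)-\ell_F(w)\;=\;\sum_{\alpha\in\Phi^+-\Phi_F^+}\bigl|\langle\mu',\alpha\rangle\bigr|,
\]
obtained by enumerating the inversions of $w$ on the affine roots of $\Gp$ that lie outside $(\Phi_F)_{aff}$ (which $\W_F$ permutes). Setting $2\rho_F:=\sum_{\alpha\in\Phi^+-\Phi_F^+}\alpha$, the right-hand side is $\geq\langle\mu',2\rho_F\rangle$ since $|x|\geq x$, with equality precisely when $w$ is $F$-positive.

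Next, I would fix a saturated Bruhat chain $e^\lambda=w_0,w_1,\ldots,w_k=v$ in $\W_F$ with $w_{i}\underset{F}< w_{i-1}$ and $\ell_F(w_i)=\ell_F(w_{i-1})-1$, and exploit: (a) each step is multiplication by a reflection $t_i=s_{(\beta_i,r_i)}$ with $\beta_i\in\Phi_F$, modifying the translation part by an integer multiple of $\beta_i^\vee$, so $\mu-\lambda\in\Z\Phi_F^\vee$; and (b) for $\beta_i\in\Phi_F$ the sign of the affine root $(\beta_i,r_i)$ in $\Phi_{aff}$ agrees with its sign in $(\Phi_F)_{aff}$, so the inversion also decreases the length in $\W$; by the odd-length-change property of reflections in the Coxeter system $(\W_{aff},S_{aff})$, one has $\ell(w_i)\leq\ell(w_{i-1})-1$, whence $\ell(e^\lambda)-\ell(v)\geq k=\ell_F(e^\lambda)-\ell_F(v)$, equivalently $\ell(v)-\ell_F(v)\leq\ell(e^\lambda)-\ell_F(e^\lambda)$.

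To finish: the $\Wf_F$-invariance of $\Phi^+-\Phi_F^+$ implies $\Wf_F$-invariance of $2\rho_F$, so $\langle\beta^\vee,2\rho_F\rangle=0$ for every $\beta\in\Phi_F$; together with (a) this gives $\langle\mu,2\rho_F\rangle=\langle\lambda,2\rho_F\rangle$. The $F$-positivity of $\lambda$ yields $\ell(e^\lambda)-\ell_F(e^\lambda)=\langle\lambda,2\rho_F\rangle$. Combining with (b) and the length identity for $v$:
\[
\sum_\alpha\bigl|\langle\mu,\alpha\rangle\bigr|\;=\;\ell(v)-\ell_F(v)\;\leq\;\ell(e^\lambda)-\ell_F(e^\lambda)\;=\;\langle\mu,2\rho_F\rangle\;=\;\sum_\alpha\langle\mu,\alpha\rangle,
\]
which, since $|x|\geq x$, forces each $\langle\mu,\alpha\rangle\geq 0$, so $\mu$ is $F$-positive. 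The main obstacle I anticipate is the careful derivation of the length identity and the length-change analysis in (b), both of which are essentially combinatorial statements about the $\W_F$-action on the affine roots $(\Phi-\Phi_F)\times\Z$.
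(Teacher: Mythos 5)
Your proof is correct, and it takes a genuinely different route from the paper's. The paper first invokes \cite[Lemma 2.9.ii]{invsatake} for the case of dominant $\lambda$, then handles a general $F$-positive $\lambda=u\lambda_0$ (with $\lambda_0$ dominant, $u\in\Wf_F$) by induction on $\ell_F(u)$, using properties of the Bruhat order under conjugation by a simple reflection \cite[Lemma 4.3]{Haines} to reduce to $se^\lambda s$. Your argument instead proves the statement directly for any $F$-positive $\lambda$, without splitting into a dominant base case, by combining three ingredients: the relative Iwahori--Matsumoto length identity $\ell(w)-\ell_F(w)=\sum_{\alpha\in\Phi^+-\Phi_F^+}|\langle\mu',\alpha\rangle|$ for $w=u'e^{\mu'}\in\W_F$; the fact that a saturated $\underset{F}{\leq}$-chain only decreases $\ell$ (an inversion at an affine root with vector part in $\Phi_F$ is also an inversion in $\W$, and the parity of $\ell$ flips under a reflection in $\W_{aff}$); and the invariance of $\langle\cdot,2\rho_F\rangle$ along the chain because each step shifts the translation part by an element of $\ZZ\Phi_F^\vee$, on which $2\rho_F$ pairs to zero by $\Wf_F$-invariance. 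The final inequality $\sum|\langle\mu,\alpha\rangle|\leq\sum\langle\mu,\alpha\rangle$ then forces each term to be nonnegative. Your approach has the advantage of being self-contained (no appeal to the external lemma and to a separate dominant case), and it isolates a conceptually clean monotonicity statement -- that $\ell-\ell_F$ is nonincreasing along $\underset{F}{\leq}$ while $\langle\cdot,2\rho_F\rangle$ is conserved -- which makes the mechanism behind the fact more transparent. The trade-off is that it requires verifying the length identity and the odd-parity step carefully (both of which you correctly flag and which do hold, the former by enumerating inversions over $(\Phi-\Phi_F)\times\ZZ$ after splitting off the $\Omega$-component using that $\ell$ is constant on $\Omega$-double cosets), whereas the paper's proof outsources the harder computation to a citation.
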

 
\begin{proof} Suppose first that $\lambda$ is dominant. Then the claim  is \cite[Lemma 2.9.ii]{invsatake}.
In general,  $\lambda$  is a $\Wf_F$-conjugate of a dominant coweight $\lambda_0$:  there is $u\in \Wf_F$ such that
$e^\lambda= ue^{\lambda_0} u^{-1}$. We argue by induction on $\ell_F(u)$. Let $s$ be a simple reflection in $\Wf_F$ such that $\ell_F(su)= \ell_F(u)-1$.   By the properties of Bruhat order (see \cite[Lemma 4.3]{Haines} for example), one of  $v$, $vs$, $sv$, $svs$ is  $\underset{F}\leq se^\lambda s$ and by induction  this element is $F$-positive, which implies that $v$ is  $F$-positive.

\end{proof}
\subsubsection{\label{basestep}}

 Let $F'\subseteq\overline C$ be another facet containing $x_0$ in its closure such that
$F\subseteq \overline F'$. It implies that  $\Phi_{F'}\subseteq \Phi_F$ and $\Phi^+_{F'}\subseteq\Phi^+_F$. 
Let  ${}_F\thg_{F'}^+$ be   the  map constructed
as in \ref{modified} with respect to the root data attached to $\M_F$:  $${{}_F}\thg_{F'}^{+}:\Z[q^{\pm 1/2}][\tilde\X_*(\T)]\longrightarrow   \Hh_\Z(\M_F)\otimes_\Z \Z[q^{\pm 1/2}].$$ The corresponding $\Z$-linear integral map is denoted by ${}_F\Bb_{F'}^+: \Z[\tilde\X_*(\T)]\longrightarrow   \Hh_\Z(\M_F)$ and defined by 
${}_F\Bb_{F'}^+(\lambda)= q^{\ell_F(e^\lambda)/2}\:{{}_F}\thg_{F'}^{+}(\lambda)$ for all $\lambda\in \tilde \X_*(\T).$ It satisfies $\bf(\lambda)=\t_{e^\lambda}^F$ if  $\lp\lambda, \alpha\rp\geq 0$ for all 
$\alpha\in (\Phi_F^+-\Phi_{F'}^+)\cup \Phi_{F'}^-$.

\begin{rema} If $F=x_0$ then ${}_{x_0}\Bb^+_{F'}=\Bb_{F'}^+$.

\end{rema}

 \begin{lemma}
 
\label{lemma:jFanti}

Let $\lambda\in \tilde\X_*(\T)$ be an $F$-positive coweight. Then
$\bf(\lambda)$ lies in
$\tilde{\rm H}_{\Z}(\M_F)^+$
 and 
\begin{equation}\label{induction}
j_F^+(\bf(\lambda))= \Bb_{F'}^+(\lambda).
\end{equation}
\end{lemma}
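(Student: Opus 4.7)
The plan is to prove the containment and the identity simultaneously by fixing a convenient decomposition of $\lambda$ and then computing both sides in $\Hh_\Z \otimes_\Z \Z[q^{\pm 1/2}]$. First I would pick coweights $\mu,\nu\in \Cute^+(F')$ with $\lambda=\mu-\nu$: such a decomposition exists because one can choose $\nu$ deep in the interior of $\Cute^+(F')$ (using the existence of strongly $F'$-dominant coweights, a variant of \cite[Lemma 6.14]{BK}), and then $\mu:=\lambda+\nu$ still lies in $\Cute^+(F')$. The crucial observation is that since $\Phi_F^+\setminus\Phi_{F'}^+\subseteq \Phi^+\setminus\Phi_{F'}^+$, the chamber $\Cute^+(F')$ is contained in the analogous chamber for the root datum of $\M_F$, and every element of $\Cute^+(F')$ is in particular $F$-positive. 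Consequently $\mu$ and $\nu$ are $F$-positive, with ${}_F\Bb_{F'}^+(\mu)=\tau^F_{e^\mu}$, ${}_F\Bb_{F'}^+(\nu)=\tau^F_{e^\nu}$, $\Bb_{F'}^+(\mu)=\tau_{e^\mu}$, $\Bb_{F'}^+(\nu)=\tau_{e^\nu}$.

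Next, using the multiplicativity of the maps $\Theta_{F'}^+$ and ${}_F\Theta_{F'}^+$ on the whole group ring, I would derive
\begin{equation*}
\Bb_{F'}^+(\lambda) \,=\, q^{(\ell(e^\lambda)+\ell(e^\nu)-\ell(e^\mu))/2}\,\tau_{e^\nu}^{-1}\tau_{e^\mu},
\end{equation*}
\begin{equation*}
\bf(\lambda) \,=\, q^{(\ell_F(e^\lambda)+\ell_F(e^\nu)-\ell_F(e^\mu))/2}\,(\tau^F_{e^\nu})^{-1}\tau^F_{e^\mu},
\end{equation*}
in the respective Hecke algebras after adjoining $q^{\pm 1/2}$. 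Since $\mu$, $\nu$ and $\mu-\nu=\lambda$ are all $F$-positive, the length identity \eqref{lengthF} guarantees that the two $q$-exponents coincide. The map $j_F$ sends $\tau^F_{e^\mu}$ to $\tau_{e^\mu}$ and $\tau^F_{e^\nu}$ to $\tau_{e^\nu}$, hence $(\tau^F_{e^\nu})^{-1}$ to $\tau_{e^\nu}^{-1}$; applying $j_F$ to the second formula therefore reproduces the first, yielding the equality $j_F(\bf(\lambda))=\Bb_{F'}^+(\lambda)$ in $\Hh_\Z\otimes_\Z\Z[q^{\pm 1/2}]$.

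Finally, to upgrade the conclusion so that $j_F^+$ rather than $j_F$ suffices, I would invoke the analog of Lemma \ref{lemma:fundam} inside $\Hh_\Z(\M_F)$: it gives a decomposition $\bf(\lambda)=\tau^F_{e^\lambda}+\sum_{w\,\underset{F}{<}\,e^\lambda} a_w\tau^F_w$ with $a_w\in\Z$. By Fact \ref{fact1}, every $w\underset{F}{\leq}e^\lambda$ is $F$-positive because $\lambda$ is, so $\bf(\lambda)\in \Hh_\Z(\M_F)^+$ and the identity above takes place in the image of $j_F^+$.

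The step I expect to require the most care is not either of the two computations themselves but rather the initial bookkeeping that $\mu,\nu$ can be chosen simultaneously in $\Cute^+(F')$ (not merely in the larger Levi chamber $\Cute_F^+(F')$): it is precisely this common choice, combined with the $F$-positivity of $\lambda$, that activates \eqref{lengthF} and makes the $q$-exponents arising from the ambient Hecke algebra agree with those arising from the Hecke algebra of $\M_F$.
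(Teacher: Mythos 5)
Your proof is correct and follows essentially the same route as the paper: the same decomposition $\lambda=\mu-\nu$ with $\mu,\nu\in\Cute^+(F')$, the length comparison \eqref{lengthF} (activated because $\mu,\nu,\lambda$ are all $F$-positive), the multiplicativity of $j_F$, and the combination of the analog of Lemma \ref{lemma:fundam} with Fact \ref{fact1} to place $\bf(\lambda)$ inside $\Hh_\Z(\M_F)^+$. The only cosmetic difference is ordering: you establish the identity in $\Hh_\Z\otimes_\Z\Z[q^{\pm1/2}]$ first and the positivity second, while the paper does the reverse; and you spell out the chamber inclusion $\Cute^+(F')\subseteq\{\text{$F$-positive coweights}\}$ that the paper leaves implicit.
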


\begin{proof}
Decompose $\lambda= \mu-\nu$ with $\mu,\nu\in \Cute^+(F')$. 
 Then in $\Hh_\Z(\M_F)\otimes_\Z \Z[q^{\pm 1/2}]$ we have $\bf(\lambda)= q^{(\ell_F(e^\lambda)+\ell_F(e^\nu)-\ell_F(e^\mu))/2}\t^F_{e^{\mu}} (\t^F_{e^{\nu}})^{-1}$.  By  Lemma \ref{lemma:fundam} applied to 
  the pro-$p$ Iwahori-Hecke algebra $\Hh_\Z(\M_F)$, it
  decomposes in $\Hh_\Z(\M_F)$  into a linear combination of $\t_{\tilde w}^F$ for $\tilde w\in \tilde\W_F$ where the projection  $w$ of $\tilde w$ in $\W_F$ satisfies $w\underset{F}\leq e^\lambda$.
  Fact  \ref{fact1} ensures that those $w$ (and $\tilde w$) are $F$-positive.
 Now, $j_F$ respects the product  and $$j_F^+(\bf(\lambda))= j_F(\bf(\lambda))= q^{(\ell_F(e^\lambda)+\ell_F(e^\nu)-\ell_F(e^\mu))/2}\t_{e^{\mu}} (\t_{e^{\nu}})^{-1}$$ because $\mu$ and $\nu$ are in particular $F$-positive.  Apply  \eqref{lengthF} to conclude.

\end{proof}
\subsubsection{\label{proofbyinduction}} 
We prove Lemma \ref{theoA}. Let $\OO$ be a $\Wf$-orbit in $\tilde\X_*(\T)$.
Since  $\Bb_{x_0}^+=\Bb_C^-$ and using \eqref{involution},
it is enough to prove 
\begin{equation}\label{toprove}\sum_{\lambda\in \OO} \Bb_{F}^+(\lambda)=\sum_{\lambda\in \OO} \Bb_{C}^+(\lambda)\end{equation}
for any standard facet $F$.
If $F= x_0$ then the result is given by Proposition \ref{prop:fix}. 
Let $F$ be a standard facet such that
$F\neq x_0$. \\\\ %Then the associated Levi subgroup
%is a proper Levi subgroup of $\Gp$ and its semisimple rank is strictly smaller than the semisimple rank of $\Gp$.\\\\
1/  Let  $ \mu\in \tilde\X_*(\T)$ be  a $F$-positive coweight with $\mathfrak W_F$-orbit   $\OO_F$. We have the following identity
 $$\sum_{\mu'\in \OO_F}\Bb_F^+(\mu')=
 \sum_{\mu'\in \OO_F}j_F^+
({}_{F}\Bb_F^+(\mu'))=
 \sum_{\mu'\in \OO_F}j_F^+({}_{F}\Bb_C^+(\mu'))= \sum_{\mu'\in \OO_F}\Bb_C^+(\mu')$$ where the first and third equalities come from \eqref{induction} and the second one from 
 Proposition \ref{prop:fix} applied to $\M_F$.\\\\
 2/   
 Choose $\nu$ a strongly  $F$-positive  coweight such that $\lambda+\nu$ is $F$-positive for all $\lambda\in \OO$.
 Decompose  the $\Wf$-orbit $\OO$ into the disjoint union of $\mathfrak W_F$-orbits   $\OO_F^{\,i}$ for $i\in\{1, .., r\}$.
  Since $\nu$ lies in both $\tilde\X_*^+(\T)$ and $\Cute^+(F)$,  we have $\Bb_F^+(-\nu)=\Bb_C^+(-\nu)=\upiota_C(\tau_{e^{-\nu}})$. \\\\
Let $i\in\{1, ..., r\}$
and  $\lambda\in \OO_F^{\,i}$. We have in $\Hh_\Z\otimes_\Z\Z[q^{\pm 1/2}]$
$$\Bb_F^+(\lambda)=q^{\frac{\ell(e^{\lambda})-\ell(e^{\lambda+ \nu})-\ell(e^{\nu})}{2}} \Bb_F^+(\lambda+\nu) \Bb_F^+(-\nu).$$\\
Note  that $\ell(e^{\lambda})-\ell(e^{\lambda+ \nu})-\ell(e^{\nu})$ does not depend on $\lambda\in \OO_F^{\,i}$: since $\lp\nu ,\alpha\rp=0$ for all $\alpha\in \Phi^+_F$,  this quantity is equal to
$\sum_{\alpha\in \Phi^+- \Phi_F^+}\vert \lp  \lambda, \alpha\rp\vert- \vert \lp  \lambda+ \nu, \alpha\rp\vert-
\vert \lp \nu, \alpha\rp\vert$ which does not depend on the choice of $\lambda\in \OO_F^{\,i}$ because $\Phi^+- \Phi_F^+$ is invariant under the action of $\mathfrak W_F$.
Therefore, if we pick a representative $\lambda_i\in  \OO_F^{\,i}$, we have
\begin{align*} \sum_{\lambda\in  \OO_F^{\, i}}\Bb_F^+(\lambda)&=q^{\frac{\ell(e^{\lambda_i})-\ell(e^{\lambda_i+ \nu})-\ell(e^{\nu})}{2}} \sum_{\lambda\in  \OO_F^{\, i}} \Bb_F^+(\lambda+\nu) \Bb_C^+(-\nu).\cr&=q^{\frac{\ell(e^{\lambda_i})-\ell(e^{\lambda_i+ \nu})-\ell(e^{\nu})}{2}} \sum_{\lambda\in  \OO_F^{\, i}} \Bb_C^+(\lambda+\nu) \Bb_C^+(-\nu)\:\:\qquad\textrm{by 1/ applied to the $\Wf_F$-orbit of $\lambda+\nu$}\cr&= \sum_{\lambda\in  \OO_F^{\, i}}\Bb_C^+(\lambda)\end{align*} which proves that 
$ \sum_{\lambda\in  \OO}\Bb_F^+(\lambda)= \sum_{\lambda\in  \OO}\Bb_C^+(\lambda)$.\\\\

 \section{Compatibility between Satake and Bernstein-type isomorphisms in characteristic $p$.\label{sec:compa}} 
\medskip

In this section all the algebras have coefficients in $k$.

\medskip

Let $(\rho, \V)$ be a weight and $v$ a chosen nonzero $\I$-fixed vector. Let $\chi: \H_k\rightarrow k$ be the associated character and $F_\chi$ the corresponding standard facet (Remark \ref{thefacet}). 
We consider the compact induction $\ind_\K^\Gp\rho$ and its $k$-algebra of $\Gp$-endomorphisms
$\cal H (\Gp, \rho)$. The $\I$-invariant subspace $(\ind_\K^\Gp\rho)^\I$  is naturally a right $\Hh_k$-module.
Let $\1_{\K, v}\in \ind_\K^\Gp \rho$ be the ($\I$-invariant) function with support $\K$ and value $v$ at $1$.
%By \cite[Lemma 3.6]{invsatake},  the map
%\begin{equation}\label{isofond0}\begin{array}{ccc}\chi\otimes _{\H_k} \Hh_k&\cong& (\ind_{\K }^\Gp\rho )^{\I}\cr 1\otimes 1&\mapsto& \1_{\K , v}\cr   \end{array}\end{equation} induces an
%$\Hh_k$-equivariant isomorphism. 
The map
\begin{equation}\label{satakecent}\begin{array}{ccc}
\Zz(\Hh_k)&\longrightarrow& \Hom_ {\Hh_k }((\ind_{\K }^\Gp\rho)^\I, (\ind_{\K }^\Gp\rho)^\I)\cr
z&\longmapsto  &[f \mapsto f z] \cr
\end{array}\end{equation} defines a morphism of $k$-algebras. 
On the other hand, by \cite[Corollary 3.14]{invsatake}, 
passing to $\I$-invariants yields 
an isomorphism  of $k$-algebras
\begin{equation}\label{bcn}\cal H (\Gp, \rho )= \Hom_ {\G }(\ind_{\K }^\Gp\rho, \ind_{\K }^\Gp\rho)\overset{\sim}\longrightarrow 
\Hom_ {\Hh_k }((\ind_{\K }^\Gp\rho)^\I, (\ind_{\K }^\Gp\rho)^\I).\: \end{equation}
Composing  \eqref{satakecent} with the inverse of \eqref{bcn} therefore gives a morphism of $k$-algebras  $\Zz(\Hh_k)\rightarrow \cal H (\Gp, \rho ) $ and we consider its restriction to $\Zz^\c(\Hh_k)$:

\begin{equation}\label{satakecenter}\begin{array}{ccl}
\Zz^\c(\Hh_k)&\longrightarrow& \cal H(\Gp, \rho)\cr
z&\longmapsto  &[\1_{\K, v} \mapsto \1_{\K, v} z] .\cr
\end{array}
\end{equation}
For $\lambda\in \X_*^+(\T)$, we denote by  
${\EuScript T}'_\lambda\in  \cal H(\Gp, \rho)$   the image by    \eqref{satakecenter}
of the central Bernstein function $z_{\lambda}$ defined by \eqref{zl}.

\bigskip

On the other hand, recall that we have the   isomorphism  of $k$-algebras (\cite[Theorem 4.11]{invsatake})
\begin{equation}\label{monsatake1}\begin{array}{cccl}
{\EuScript T}:&k[\X^+_*(\T)]&\overset{\simeq}\longrightarrow& \cal H(\Gp, \rho)\cr
%\lambda &\longmapsto  &{\EuScript T}_\lambda\cr
\end{array}
\end{equation}
where ${\EuScript T}_\lambda$ for $\lambda\in \X_*^+(\T)$ is defined by \begin{equation} \label{TB}{\EuScript T}_\lambda: \1_{\K, v}\mapsto \1_{\K, v}\Bb_{F_\chi}^+(\lambda).\end{equation}

\begin{prop}\label{prop:equal} We have ${\EuScript T}'_\lambda={\EuScript T}_\lambda$ for all $\lambda\in \X_*^+(\T)$.

\end{prop}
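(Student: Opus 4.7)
The plan is to use Lemma \ref{theoA} to rewrite $z_\lambda$ in terms of the Bernstein basis attached to the facet $F_\chi$, and then to observe that only the dominant representative of the $\Wf$-orbit survives when acting on $\1_{\K,v}$.

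By the definitions of $\EuScript T'_\lambda$ via \eqref{satakecenter} and of $\EuScript T_\lambda$ via \eqref{TB}, and because both elements of $\cal H(\Gp,\rho)$ are uniquely determined by their value on $\1_{\K,v}$ (as $v$ generates the irreducible $\K$-representation $\rho$, so $\1_{\K,v}$ generates $\ind_\K^\Gp \rho$ as a $\Gp$-representation), the identity $\EuScript T'_\lambda = \EuScript T_\lambda$ is equivalent to
\begin{equation*}
\1_{\K,v} \, z_\lambda \;=\; \1_{\K,v} \, \Bb_{F_\chi}^+(\lambda) \qquad\text{in } (\ind_\K^\Gp \rho)^\I.
\end{equation*}
Applying Lemma \ref{theoA} with the standard facet $F = F_\chi$ and sign $\sigma = +$ yields
\begin{equation*}
z_\lambda \;=\; \sum_{\lambda' \in \OO(\lambda)} \Bb_{F_\chi}^+(\lambda') \qquad\text{in } \Hh_k.
\end{equation*}
Since $\lambda \in \X_*^+(\T)$ is the unique dominant element of its $\Wf$-orbit $\OO(\lambda)$, the proposition reduces to the vanishing
\begin{equation*}
\1_{\K,v} \, \Bb_{F_\chi}^+(\lambda') \;=\; 0 \quad\text{in } (\ind_\K^\Gp \rho)^\I \qquad\text{for every } \lambda' \in \OO(\lambda) \setminus \{\lambda\}.
\end{equation*}

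To establish this vanishing I would use the explicit description of the map $\mu \mapsto \1_{\K,v}\,\Bb_{F_\chi}^+(\mu)$ developed in \cite{invsatake} in the course of proving \eqref{monsatake1}. The facet $F_\chi$ is chosen so as to match the character $\chi$ associated to $\rho$ (Remark \ref{thefacet}), so that the Bernstein-type map $\Bb_{F_\chi}^+$ is precisely calibrated to the weight; the construction shows that $\1_{\K,v}\,\Bb_{F_\chi}^+(\mu)$ vanishes for coweights $\mu$ outside the dominant chamber of their $\Wf$-orbit. Concretely, I would invoke Lemma \ref{lemma:fundam} to decompose $\Bb_{F_\chi}^+(\lambda') = \tau_{e^{\lambda'}} + \sum_{w < e^{\lambda'}} a_w \tau_w$, then combine the $\K$-equivariance of $\1_{\K,v}$ with the parametrization of weights recalled in \ref{weights} to verify the vanishing term by term.

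The main obstacle lies in this vanishing assertion. While it is morally a consequence of the construction of $\EuScript T$ in \cite[Theorem 4.11]{invsatake}, extracting it cleanly requires unwinding the mechanism behind that construction, in particular the computation of $\1_{\K,v}\,\tau_w$ for the relevant $w \in \tilde\W$ appearing in the expansion of $\Bb_{F_\chi}^+(\lambda')$. Once this step is established, Proposition \ref{prop:equal}, and hence the commutativity of diagram \eqref{diag:intro} stated in Theorem \ref{coro:diag}, follows at once.
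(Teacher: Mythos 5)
Your reduction is exactly the paper's: check equality on $\1_{\K,v}$, invoke Lemma \ref{theoA} to rewrite $z_\lambda$ in the $F_\chi$-Bernstein basis, and reduce to the vanishing
\begin{equation*}
\1_{\K,v}\,\Bb_{F_\chi}^+(\lambda')=0 \qquad\text{for all }\lambda'\in\OO(\lambda)\setminus\{\lambda\}.
\end{equation*}
You correctly flag this vanishing as the crux, but you do not prove it, and the route you sketch --- expanding $\Bb_{F_\chi}^+(\lambda')$ via Lemma \ref{lemma:fundam} and checking the vanishing ``term by term'' using $\K$-equivariance and the weight parametrization --- is the wrong strategy. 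The individual terms $\1_{\K,v}\,\tau_w$ appearing in that expansion do not vanish one by one; the vanishing is a collective cancellation, and a direct term-by-term attack runs into exactly the combinatorial difficulty you acknowledge.

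The paper's argument is both different and much shorter. Since $\lambda\in\X_*^+(\T)$ and $\lambda'\neq\lambda$ lies in the same $\Wf$-orbit, $\lambda$ and $\lambda'$ cannot lie in a common Weyl chamber (each closed Weyl chamber meets a $\Wf$-orbit in exactly one point). Hence \eqref{fact:annu} gives $\Bb_{F_\chi}^+(\lambda)\,\Bb_{F_\chi}^+(\lambda')=0$ in $\Hh_k$. Because ${\EuScript T}_\lambda$ is a $\Gp$-endomorphism, it commutes with the right Hecke action, so
\begin{equation*}
{\EuScript T}_\lambda\bigl(\1_{\K,v}\,\Bb_{F_\chi}^+(\lambda')\bigr)=\1_{\K,v}\,\Bb_{F_\chi}^+(\lambda)\,\Bb_{F_\chi}^+(\lambda')=0.
\end{equation*}
Now invoke \cite[Corollary~6.5]{Parabind}, which says $\ind_\K^\Gp\rho$ is torsion-free over $\cal H(\Gp,\rho)$: since ${\EuScript T}_\lambda\neq 0$, this forces $\1_{\K,v}\,\Bb_{F_\chi}^+(\lambda')=0$. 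This is the ingredient missing from your argument. Note also that you silently assume $\ell(e^\lambda)>0$; the case $\ell(e^\lambda)=0$ (where $\OO(\lambda)=\{\lambda\}$ and $z_\lambda=\tau_{e^\lambda}=\Bb_{F_\chi}^+(\lambda)$) should be dispatched separately or at least remarked on.
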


\begin{proof} It is enough to check that these operators coincide on 
$\1_{\K, v} $.  If $\lambda$ has length zero, then $\Bb_{F_\chi}^+(\lambda)=z_\lambda=\tau_{e^\lambda}$ and the claim is true.
 Otherwise $\lambda$  has length $>0$ and recall that $\OO(\lambda)$ denotes the $\Wf$-orbit of $\lambda$.\\
a/ Let $\lambda'\in\OO(\lambda)$ and suppose that $\lambda'\neq \lambda$.  By  \eqref{fact:annu},  we have
$\Bb_{F_\chi}^+(\lambda') \Bb_{F_\chi}^+(\lambda)=\Bb_{F_\chi}^+(\lambda) \Bb_{F_\chi}^+(\lambda')=0$  in $\Hh_k$.
It implies that ${\EuScript T}_\lambda(\1_{\K, v}\Bb_{F_\chi}^+(\lambda') )=0$ and therefore that $\1_{\K, v}\Bb_{F_\chi}^+(\lambda')=0$ by \cite[Corollary 6.5]{Parabind} that claims that $\ind_{\K}^\Gp \rho$ is a 
torsion-free $\cal H(\Gp, \rho)$-module.

\medskip

\noindent b/ 
By Lemma \ref{theoA}, we have
\begin{align*}{\EuScript T}'_\lambda(\1_{\K, v})&=  \1_{\K, v} \Bb_{F_\chi}^+(\lambda)+\sum_{\lambda' \in\OO(\lambda), \lambda'\neq \lambda} \1_{\K, v}  \Bb_{F_\chi}^+(\lambda')\cr
&= {\EuScript T}_\lambda(\1_{\K, v})+ \sum_{\lambda' \in\OO(\lambda), \lambda'\neq \lambda} \1_{\K, v}\Bb_{F_\chi}^+(\lambda')\cr&={\EuScript T}_\lambda(\1_{\K, v}) \textrm{ by a/}.\end{align*}

\end{proof}

\begin{rema} \label{rema:key}
By \cite[Lemma 3.6]{invsatake},  the map
\begin{equation}\label{isofond0}\begin{array}{ccc}\chi\otimes _{\H_k} \Hh_k&\cong& (\ind_{\K }^\Gp\rho )^{\I}\cr 1\otimes 1&\mapsto& \1_{\K , v}\cr   \end{array}\end{equation} induces an
$\Hh_k$-equivariant isomorphism. 
 Proposition \ref{prop:equal} combined with 
\eqref{isofond0} proves that for $\lambda\in \X_*^+(\T)$, the right actions of $z_\lambda$  and  of $\Bb^+_{F_\chi}(\lambda)$  on $1\otimes 1\in  \chi\otimes _{\H_k} \Hh_k$  coincide. This remark will be important for the classification of the simple supersingular  $\Hh_k$-modules in  \ref{subsec:classi}.\end{rema}
Proposition \ref{prop:equal} implies:
\begin{theorem}\label{coro:diag} 
The diagram \begin{equation}\begin{CD}k[\X_*^+(\T)] @>{\eqref{Zmap}}>>  \Zz^\c(\Hh_k)\cr
@|   @VV{\eqref{satakecenter} }V \cr
k[\X_*^+(\T)] @>{\EuScript{T}}>>        \cal H(\Gp, \rho)\cr
\end{CD}\label{diag}\end{equation}
 is a commutative diagram of isomorphisms of $k$-algebras. 
%\begin{center} \hspace{1cm}  \xymatrix{ & \ar[ld]_*[@]{\sim}^{\eqref{Zmap}}k[\X_*^+(\T)]\ar[rd]_*[@]{\sim}^{\EuScript{T}}& \\ \Zz^\c(\Hh)  \ar[rr]^*[@]{\sim}_{\eqref{satakecenter} }& & \cal H(\Gp, \rho)\\ }\end{center}
\end{theorem}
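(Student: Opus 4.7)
The proof will be almost immediate from the work already assembled in the excerpt. The plan is to observe that the only content of the theorem beyond what has already been proved is the commutativity of the square and the fact that the right-hand vertical arrow is an isomorphism, both of which follow formally.

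First, I would trace the two compositions in the diagram on a basis element $\lambda\in \X_*^+(\T)$. Going clockwise (up via the identity, then right via $\EuScript T$), we land on $\EuScript T_\lambda\in \cal H(\Gp,\rho)$, which by definition \eqref{TB} is the endomorphism $\1_{\K,v}\mapsto \1_{\K,v}\Bb_{F_\chi}^+(\lambda)$. Going counter-clockwise (right via \eqref{Zmap}, then down via \eqref{satakecenter}), we land on the endomorphism $\1_{\K,v}\mapsto \1_{\K,v}z_\lambda$, which is precisely $\EuScript T'_\lambda$ in the notation of the excerpt. These two endomorphisms agree by Proposition \ref{prop:equal}, so the square commutes on the generating set $\X_*^+(\T)$ of $k[\X_*^+(\T)]$, hence commutes as a diagram of $k$-algebra maps.

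Next, I would argue that all four arrows are isomorphisms. The left vertical arrow is the identity. The top horizontal arrow \eqref{Zmap} is an isomorphism by Proposition \ref{prop:Zmap}. The bottom horizontal arrow $\EuScript T$ is an isomorphism by \cite[Theorem 4.11]{invsatake}, which is restated as \eqref{monsatake1} in the excerpt. Once three sides of a commutative square of maps are isomorphisms, the fourth is automatically an isomorphism; applying this observation, the right vertical arrow \eqref{satakecenter} is an isomorphism of $k$-algebras.

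The only genuinely substantive input is Proposition \ref{prop:equal}, whose proof was already carried out earlier and which itself rests on the independence Lemma \ref{theoA} and the torsion-freeness of $\ind_\K^\Gp\rho$ over $\cal H(\Gp,\rho)$. There is therefore no real obstacle at this stage of the argument; the theorem is the clean packaging of Proposition \ref{prop:equal} together with the structural identifications \eqref{Zmap} and \eqref{monsatake1}. The main point to be careful about in the write-up is simply that the map \eqref{satakecenter} is \emph{a priori} only defined on $\Zz^\c(\Hh_k)$ as a restriction of a morphism out of the full center, so one should state explicitly that the commutativity of the square forces it to land in the image of $\EuScript T$, thereby identifying \eqref{satakecenter} with the $k$-algebra isomorphism $\EuScript T\circ \eqref{Zmap}^{-1}$.
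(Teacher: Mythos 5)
Your proposal is correct and follows the paper's own route: the paper states the theorem as an immediate consequence of Proposition \ref{prop:equal} (which gives $\EuScript T'_\lambda=\EuScript T_\lambda$, hence commutativity on the $k$-basis $\{e^\lambda\}_{\lambda\in\X_*^+(\T)}$), with the isomorphism property of \eqref{satakecenter} following because the other three arrows are isomorphisms. One small quibble of phrasing: you should invoke $\{e^\lambda\}$ as a $k$-\emph{basis} rather than a generating set to justify commutativity of linear maps; that is what actually makes the argument work.
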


Remark that we have not used the fact that {\eqref{Zmap}} is multiplicative. We proved this fact beforehand
in Proposition \ref{prop:Zmap} but it can also be seen as a consequence of the commutativity of the diagram.

\section{Supersingularity\label{sec:supersing}}

We turn to the study of the $\Hh_k$-modules with finite length.
We consider  right   modules unless otherwise specified. Recall that $k$ is algebraically closed with characteristic $p$.

\subsection{A basis for the pro-$p$ Iwahori-Hecke ring\label{intbasis}} We recall the $\Z$-basis for $\Hh_\Z$ defined in  \cite{Ann}. It is indexed by  $w\in\tilde \W$ and is denoted by $(E_w)_{w\in \tilde\W }$ in \cite{Ann}.
We  will call it $(\Bb_{x_0}^+(w))_{w\in \tilde\W }$ because it coincides on $\tilde\X_*(\T)$ with the definition introduced in \ref{modified} (see also Remark \ref{compa}).  Recall  that we have a decomposition of $\tilde \W$ as the semidirect product:
$$\tilde \W=\X_*(\T)\rtimes \tilde\Wf.$$  For $w_0\in \tilde \Wf$ set $\Bb_{x_0}^+(w_0)=\tau_{w_0}$ and for 
$w= e^\lambda w_0\in  \X_*(\T)\rtimes \tilde\Wf $,  define  in $\Hh_\Z\otimes_\Z \Z[q^{\pm 1/2}]$:
$$\Bb^+_{x_0}(w)=q^{(\ell(w)-\ell(w_0)-\ell(e^\lambda))/2}\Bb_{x_0}^+(\lambda) \Bb_{x_0}^+(w_0)= q^{(\ell(w)-\ell(w_0))/2}\Theta_{x_0}^+(\lambda)\tau_{w_0}.$$     
By \cite[Theorem 2 and Proposition 8]{Ann}, this element 
lies in $\Hh_\Z$
and  the set of all $(\Bb_{x_0}^+(w))_{w\in \tilde\W }$ is a $\Z$-basis for $\Hh_\Z$.

\begin{rema}\label{directsummand}
As a $\Z$-module,   $\Hh_\Z$ is the direct sum of $\Aa_{x_0}^+$ and of the $\Z$-module with basis
$(\Bb_{x_0}^+(e^\lambda w_0))$ where $\lambda$ ranges over $\X_*(\T)$ and
$w_0$ over the set of elements in  $\tilde \Wf$ the projection of which in $\Wf$ is nontrivial.
Applying \eqref{involution}, we obtain that  the $\Z$-module $\Aa_{C}^+$  is a direct summand of $\Hh_\Z$ as well.

\end{rema}

\begin{rema} Let $d\in \Dd$ and $\tilde d\in\tilde \W$ a lift for $d$.
Write  $\tilde d= e^{\lambda} w_0$ with $w_0\in \tilde \Wf$, $\lambda\in \X_*^+(\Tp)$ and $\ell(e^{\lambda})=\ell(d)+\ell(w_0)$ (Proposition \ref{prop:dist}).   
%(which implies $\tau_{e^{\lambda}}=\tau_{d^{-1}}\tau_{w_0}$).
Then in $\Hh_\Z\otimes_\Z\Z[q^{\pm 1/2}]$, we  have \begin{equation}\label{Bd}\Bb_{x_0}^+(\tilde d)=q^{(\ell(\tilde d)-\ell(w_0)+\ell(e^\lambda))/2}\tau_{e^{-\lambda}}^{-1}\tau_{w_0}= q^{\ell(\tilde d)}\tau_{\tilde d^{-1}}^{-1}=(-1)^{\ell(d)}\upiota(\tau_{\tilde d}).\end{equation}
\end{rema}

\subsection{\label{topo}Topology on the pro-$p$ Iwahori-Hecke algebra in characteristic $p$}
We consider the (finitely generated) ideal $\mathfrak I$ of $\Zz^\circ(\Hh_k)$ generated by all $z_\lambda$    for $\lambda\in \X_*^+(\T)$ such that $\ell(e^\lambda)>0$ and the associated   ring filtration of  $\Zz^\circ(\Hh_k)$.  A $\Zz^\circ(\Hh_k)$-module $\m$ can be endowed with the $\mathfrak I$-adic topology induced by the filtration
$$\m\supseteq \m  \mathfrak I\supseteq  \m \mathfrak I^2 \supseteq ...$$
An example of such  module is $\Hh_k$ itself.  
We define on $\Hh_k$ another  decreasing filtration $(F_n\Hh_k)_{n\in \N}$  by  $k$-vector spaces where 
\begin{equation}\label{fil}F_n\Hh_k:= \textrm{$k$-vector space generated by all $\Bb_{x_0}^+(w)$ for  $w\in \tilde\W$ such that $\ell(w)\geq n$.}\end{equation}

\begin{lemma} \label{lemma:stronger}The filtration \eqref{fil} is a filtration of $\Hh_k$ as a left $\Aa_{x_0}^+$-module. In particular, it is a filtration of
 $\Hh_k$ as a (left and right) $\Zz^\circ(\Hh_k)$-module.
It  is compatible with the $\mathfrak I$-filtration:
for all $n\in \N$, we have
$$(F_n \Hh_k) \, \mathfrak I= \mathfrak I\, (F_n \Hh_k) \:    \subseteq F_{n+1} \Hh_k.$$
\end{lemma}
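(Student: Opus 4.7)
The plan is to reduce everything to one explicit formula for products of the form $\Bb_{x_0}^+(\lambda)\Bb_{x_0}^+(w)$ in the integral basis recalled in \ref{intbasis}, and to exploit both the $\Wf$-invariance of $\ell$ on translations and the vanishing of $q$ in $\Hh_k$.

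First, using Proposition \ref{prop:fix} together with Remark \ref{compa} and \eqref{involution}, I would rewrite each generator of $\Zz^\c(\Hh_k)$ as
\begin{equation*}
z_\lambda\;=\;\sum_{\lambda'\in \OO(\lambda)}\Bb_C^+(\lambda')\;=\;\sum_{\lambda'\in \OO(\lambda)}\Bb_{x_0}^+(\lambda'),
\end{equation*}
which in particular shows that $\Zz^\c(\Hh_k)\subseteq \Aa_{x_0}^+$. Since $\ell(e^{\lambda'})=\sum_{\alpha\in\Phi^+}|\lp\lambda',\alpha\rp|$ is $\Wf$-invariant, the length is constant equal to $\ell(e^\lambda)$ on the orbit $\OO(\lambda)$; in particular every $\lambda'$ appearing in a generator of $\mathfrak I$ satisfies $\ell(e^{\lambda'})\geq 1$.

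Next, for $\lambda\in\tilde\X_*(\T)$ and $w=e^\mu w_0$ with $w_0\in\tilde\Wf$, I would substitute $\Bb_{x_0}^+(\lambda)=q^{\ell(e^\lambda)/2}\Theta_{x_0}^+(\lambda)$ into the definition $\Bb_{x_0}^+(w)=q^{(\ell(w)-\ell(w_0))/2}\Theta_{x_0}^+(\mu)\tau_{w_0}$ from \ref{intbasis}. Multiplicativity of $\Theta_{x_0}^+$ on $\tilde\X_*(\T)$ and collecting the $q$-powers produce
\begin{equation*}
\Bb_{x_0}^+(\lambda)\,\Bb_{x_0}^+(w)\;=\;q^{(\ell(e^\lambda)+\ell(w)-\ell(e^{\lambda+\mu}w_0))/2}\,\Bb_{x_0}^+(e^{\lambda+\mu}w_0).
\end{equation*}
The triangle inequality $\ell(e^\lambda\cdot w)\leq \ell(e^\lambda)+\ell(w)$ in $\tilde\W$, applied to $e^\lambda\cdot w=e^{\lambda+\mu}w_0$, ensures that the exponent is a nonnegative (necessarily even) integer, so this identity already lives in $\Hh_\Z$ and descends to $\Hh_k$.

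From this the conclusions are immediate. If $\ell(w)\geq n$, the image in $\Hh_k$ of the product above is either zero (when the exponent is strictly positive) or the basis element $\Bb_{x_0}^+(e^{\lambda+\mu}w_0)$ of length $\ell(e^\lambda)+\ell(w)\geq n$; hence $\Aa_{x_0}^+\cdot F_n\Hh_k\subseteq F_n\Hh_k$, which is the first assertion. The $\Zz^\c(\Hh_k)$-module statement follows from the inclusion $\Zz^\c(\Hh_k)\subseteq \Aa_{x_0}^+$ together with the centrality of $\Zz^\c(\Hh_k)$ in $\Hh_k$. Applying the same formula to each summand $\Bb_{x_0}^+(\lambda')$ of a generator $z_\lambda$ of $\mathfrak I$ gives contributions of length $\ell(e^{\lambda'})+\ell(w)\geq 1+n$, proving $\mathfrak I\cdot F_n\Hh_k\subseteq F_{n+1}\Hh_k$; centrality of $\mathfrak I$ yields the equality $(F_n\Hh_k)\mathfrak I=\mathfrak I(F_n\Hh_k)$.

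The main (mild) obstacle is the $q$-bookkeeping leading to the key formula: one has to check that the exponents $\ell(w)-\ell(w_0)-\ell(e^\mu)$ coming from the definition of $\Bb_{x_0}^+(w)$ combine correctly with those produced by $\Theta_{x_0}^+(\lambda)\Theta_{x_0}^+(\mu)=\Theta_{x_0}^+(\lambda+\mu)$, and that the remaining exponent is nonnegative via the triangle inequality. Once this is in place, the rest of the proof is a direct application of $q=0$ in $\Hh_k$ combined with the $\Wf$-invariance of $\ell(e^{\lambda'})$.
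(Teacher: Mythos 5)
Your proof is correct and takes essentially the same route as the paper's: the key point in both is the identity $\Bb_{x_0}^+(\lambda)\Bb_{x_0}^+(w)=q^{(\ell(e^{\lambda})+\ell(w)-\ell(e^\lambda w))/2}\,\Bb_{x_0}^+(e^\lambda w)$ obtained directly from the definition of the integral Bernstein basis, combined with $q=0$ in $k$. You simply make explicit some of the bookkeeping the paper leaves implicit, namely the re-expression $z_\lambda=\sum_{\lambda'\in\OO(\lambda)}\Bb_{x_0}^+(\lambda')$ (hence $\Zz^\c(\Hh_k)\subseteq\Aa_{x_0}^+$), the $\Wf$-invariance of $\ell(e^{\lambda'})$ on orbits, and the triangle inequality ensuring the $q$-exponent is nonnegative.
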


\begin{proof} Let $\lambda\in \tilde \X_*(\T)$ and $w\in \tilde \W$.
From the definition of $ \Bb_{x_0}^+$, we see that 
$$\Bb_{x_0}^+(\lambda)\Bb_{x_0}^+(w)=q^{(\ell(e^{\lambda})+\ell(w)-\ell(e^\lambda w))/2} \Bb_{x_0}^+({e^\lambda}w)$$ and therefore, in $\Hh_k$ we have: $ \Bb_{x_0}^+(\lambda)\Bb_{x_0}^+(w)=0$ if $\ell(e^\lambda)+\ell(w)>\ell(e^\lambda w)$ and 
$\Bb_{x_0}^+(\lambda)\Bb_{x_0}^+(w) =\Bb_{x_0}^+({e^\lambda}w)$ if $\ell(w)+\ell(e^\lambda)=\ell(e^\lambda w)$.
 It proves the claims.

\end{proof}

\begin{prop} \label{prop:equiv}The $\mathfrak I$-adic topology on $\Hh_k$ is equivalent to the topology on $\Hh_k$ induced by the filtration $(F_n\Hh_k)_{n\in \N}$. In particular, it is independent of  the choice of the uniformizer $\varpi$.

\end{prop}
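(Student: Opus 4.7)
The plan is to establish the two inclusions of filtrations that together give the equivalence of topologies: $\mathfrak{I}^n\Hh_k \subseteq F_n\Hh_k$ for every $n\geq 0$, and for every $m\geq 0$ the existence of some $n=n(m)$ with $F_n\Hh_k\subseteq \mathfrak{I}^m\Hh_k$. The first is immediate by iterating Lemma~\ref{lemma:stronger}: applied at $n=0$ it gives $\mathfrak{I}\subseteq F_1$, and then $\mathfrak{I}^n\Hh_k=\mathfrak{I}\cdot\mathfrak{I}^{n-1}\Hh_k\subseteq\mathfrak{I}\cdot F_{n-1}\Hh_k\subseteq F_n\Hh_k$ by induction.

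For the converse—the substantive direction—I would combine three ingredients. The first is a quantitative estimate at the centre. Proposition~\ref{prop:Zmap} identifies $\Zz^\c(\Hh_k)$ with the affine semigroup algebra $k[\X_*^+(\T)]$ via $\lambda\mapsto z_\lambda$, so $z_\mu z_\nu = z_{\mu+\nu}$ for $\mu,\nu\in\X_*^+(\T)$. The length function is additive on dominant coweights (since $\ell(e^\nu)=\sum_{\alpha\in\Phi^+}\langle\nu,\alpha\rangle$ when $\nu$ is dominant), so fixing monoid-generators $\mu_1,\ldots,\mu_s$ of $\X_*^+(\T)$ (which exist by Gordan's Lemma) and setting $L:=\max_i\ell(e^{\mu_i})$, any $\lambda=\sum n_i\mu_i$ with $\ell(e^\lambda)\geq n$ satisfies $\sum_{i:\ell(e^{\mu_i})>0}n_i\geq n/L$. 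This forces the factorization
\[
z_\lambda=\prod_i z_{\mu_i}^{n_i}\in\mathfrak{I}^{\lceil n/L\rceil},
\]
and consequently $F_n\cap\Zz^\c(\Hh_k)\subseteq\mathfrak{I}^{\lceil n/L\rceil}$ once one verifies (using Lemma~\ref{lemma:fundam} and the $\Wf$-orbit structure behind $z_\lambda$) that this intersection is spanned by such $z_\lambda$.

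The second ingredient is finite generation of $\Hh_k$ over $\Zz^\c(\Hh_k)$ from Proposition~\ref{prop:finite}.ii: choose module generators $g_1,\ldots,g_r\in\Hh_k$ of bounded $F$-degree $L'$—for instance, finitely many products of the form $\tau_{\tilde w_0}\Bb_C^+(\lambda)$ with $\tilde w_0\in\tilde\Wf$ and $\lambda$ running through a transversal for the $\Wf$-action on $\X_*(\T)$ modulo the dominant cone. The third ingredient is a filtered-generation statement: any $h\in F_n\Hh_k$ admits a decomposition $h=\sum_i c_i g_i$ with $c_i\in F_{n-L'}\cap\Zz^\c(\Hh_k)$. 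Combined with the first estimate this yields $h\in\mathfrak{I}^{\lceil(n-L')/L\rceil}\Hh_k$, which gives the desired inclusion of filtrations.

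The main obstacle is this last filtered-generation step, since the surjection $\Zz^\c(\Hh_k)^r\twoheadrightarrow\Hh_k$ is not automatically strict for the natural filtrations on source and target. I plan to handle it by passing to the associated graded $\operatorname{gr}^F\Hh_k$, a module over $\operatorname{gr}^F\Zz^\c(\Hh_k)$ by Lemma~\ref{lemma:stronger}, whose natural $k$-basis is given by the classes of the integral Bernstein elements $\Bb_{x_0}^+(w)$ from Section~\ref{intbasis}. Using the semidirect decomposition $\tilde\W=\X_*(\T)\rtimes\tilde\Wf$ together with the $\Wf$-orbit structure underlying Proposition~\ref{prop:Zmap}, one can show that $\operatorname{gr}^F\Hh_k$ is generated over $\operatorname{gr}^F\Zz^\c(\Hh_k)$ by finitely many classes of bounded degree; the filtered decomposition then lifts from the graded level to $\Hh_k$ by a standard successive-approximation argument, using that the filtration $F$ is separated (since each element has finite support in the basis $\{\Bb_{x_0}^+(w)\}$). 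Finally, the independence of the $\mathfrak{I}$-adic topology from the uniformizer $\varpi$ is automatic once the equivalence is established, because the filtration $F$ depends only on the length function on $\tilde\W$, which is intrinsic to $(\Gp,C)$.
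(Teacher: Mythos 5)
Your decomposition into the two inclusions is correct, and your quantitative estimate on the centre --- $F_n\cap\Zz^\c(\Hh_k)\subseteq\mathfrak I^{\lceil n/L\rceil}$, obtained from Proposition~\ref{prop:Zmap}, the additivity of length on $\X_*^+(\T)$, and Gordan's Lemma --- is sound and correctly identified as the engine. But the filtered-generation step, which you rightly flag as the crux, is not repaired by the ``standard successive-approximation argument'' you invoke. Separatedness of the filtration $(F_n\Hh_k)$ lets you conclude $\bigcap_n F_n=0$, but the lift from $\operatorname{gr}^F$ to $\Hh_k$ needs more: each iteration removes the lowest-degree part of the error term at the cost of possibly adding higher-degree terms, so the process need not terminate; what one wants is completeness, and the filtration here is \emph{not} complete (the completion is a direct product, strictly larger than $\Hh_k$). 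With separatedness alone the best you get is $h\in\mathfrak I^{\lceil(n-L')/L\rceil}\Hh_k+F_N\Hh_k$ for every $N$, which is circular, since controlling $F_N$ by powers of $\mathfrak I$ is precisely what you are trying to prove.

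The argument can nonetheless be repaired, and the repair makes the graded machinery unnecessary. The point is that, in $\Hh_k$ (where $q=0$), the $\Zz^\c(\Hh_k)$-module structure is not merely filtered but actually \emph{graded} by the length of the basis $\{\Bb_{x_0}^+(w)\}$: as the proof of Lemma~\ref{lemma:stronger} shows, $\Bb_{x_0}^+(\lambda)\Bb_{x_0}^+(w)$ is either $0$ or the single basis element $\Bb_{x_0}^+(e^\lambda w)$ with $\ell(e^\lambda w)=\ell(e^\lambda)+\ell(w)$, so $z_\lambda\Bb_{x_0}^+(w)$ is homogeneous of degree $\ell(e^\lambda)+\ell(w)$. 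Hence $\Hh_k\cong\operatorname{gr}^F\Hh_k$ canonically as graded $\Zz^\c(\Hh_k)$-modules, and a finite generating set (Proposition~\ref{prop:finite}.ii) may be replaced by its homogeneous components; once the generators are homogeneous of degree $\leq L'$, the strictness $F_n\Hh_k\subseteq\sum_i (F_{n-\deg g_i}\cap\Zz^\c(\Hh_k))\,g_i$ holds on the nose, with no approximation at all. You should also be more careful with the putative generators: the explicit family ``$\tau_{\tilde w_0}\Bb_C^+(\lambda)$ with $\lambda$ a transversal'' is not obviously finite, and you would do better to simply cite Proposition~\ref{prop:finite}.ii and take homogeneous parts. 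Finally, the paper's own proof follows a genuinely different and more elementary route: it works basis element by basis element, shows $\Bb_{x_0}^+((m+1)\lambda)=z_\lambda^m\Bb_{x_0}^+(\lambda)\in\mathfrak I^m\Hh_k$, and then factors an arbitrary $\Bb_{x_0}^+(\tilde w)$ through a translation part of controlled length using the length-additivity result $\ell(e^\lambda w_0)=\ell(e^{\lambda-\mu})+\ell(e^\mu w_0)$ from Vign\'eras [VigGene, (1.6.3)] --- precisely the finite-generation input that your graded argument would also ultimately require. Your approach is conceptually clean once the grading observation is made explicit, whereas the paper's is more self-contained at the level of explicit bases; both lean on the same two nontrivial facts (Proposition~\ref{prop:Zmap} and some form of bounded-degree finite generation).
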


\begin{proof}  
%The previous lemma ensures  that the $\mathfrak I$-adic topology on $\Hh_k$ is stronger than the topology induced by $(F_n\Hh_k)_{n\in \N}$. 
We have to prove that given $m\in\N$,  $m\geq 1$, there is $n\in \N$ such that  $F_{n}\Hh_k\subseteq  \mathfrak I^m \Hh_k  $.

\begin{fact} For $\lambda\in \X_*(\T)$ such that $\ell(e^\lambda)>0$ and for  $m\geq 1$, we have $\Bb_{x_0}^+((m+1)\lambda)\in \mathfrak I^m \Hh_k$.\label{m+1}
\end{fact}

\begin{proof}[Proof of the fact] We check that for  $m\in \N$ we have $\Bb_{x_0}^+((m+1)\lambda)= z_{\lambda}^m \Bb_{x_0}^+(\lambda)$.
Notice that  $\Bb_{x_0}^+(2\lambda)= \Bb_{x_0}^+(\lambda)\Bb_{x_0}^+(\lambda)=z_{\lambda}\Bb_{x_0}^+(\lambda)$ by  \eqref{fact:annu} and Lemma \ref{theoA}. 
Now let $m\geq 2$. We have  $\Bb_{x_0}^+((m+1)\lambda)= \Bb_{x_0}^+(m\lambda)\Bb_{x_0}^+(\lambda)=z_{\lambda}^m \Bb_{x_0}^+(\lambda)$ by induction.

\end{proof}

\begin{fact}  \label{fact:born}Let $m\geq 1$.
There is $A_m\in \N$ such that   for any  $\lambda\in \X_*(\T)$, if
 $\ell(e^\lambda)> A_m$ then $\Bb_{x_0}^+(\lambda)\in  \mathfrak I^m\Hh_k $.

\end{fact}

\begin{proof}[Proof of the fact] Let $\{z_{\lambda_1},\dots,z_{ \lambda_r}\}$  be a system of generators of   $\mathfrak I$ where $\lambda_1, \dots, \lambda_r\in  \X_*^+(\T)$. Let  $A_m:=
m\sum_{i=1}^r \ell(e^{\lambda_i})$.
Let $\lambda\in \X_*(\T)$ such that $\ell(e^\lambda)>0$. It is $\Wf$-conjugate to an element $\lambda_0\in \X^+_*(\T)$ and one can write $\lambda= w_0.\lambda_0$ with $w_0\in \Wf$ and
$\lambda_0=\sum_{i=1}^r a_i \lambda_i$ with $a_i\in \N$ (not all  equal to zero). If  $\ell(e^\lambda)=\ell(e^{\lambda_0})>A_m$, then there is $i_0\in\{1, ..., r\}$  such that $a_{i_0}> m$ and
$\Bb_{x_0}^+(\lambda)=\prod_{i=1}^r \Bb_{x_0}^+(a_i(w_0.\lambda_i)) \in \Bb_{x_0}^+((m+1)(w_0.\lambda_{i_0}))\Hh_k\subseteq \mathfrak I^m\Hh_k $ by Fact \ref{m+1}.
\end{proof}

 We know turn to the proof of the proposition.  Let $m\geq 1$. To  any $w_0\in \Wf$  corresponds,  by \cite[(1.6.3)]{VigGene},  a finite set $\X(w_0)$ of elements in $\X_*(\T)$ such that
$$\textrm{ for all  $\lambda\in \X_*(\T)$  there is $\mu\in \X(w_0)$ such that $\ell(e^\lambda w_0 )= \ell(e^{ \lambda-\mu})+\ell ( e^{\mu} w_0)$}.$$
Let $\tilde w\in \tilde \W$ with image $w_0$  by  the projection $\tilde \W\rightarrow\Wf$. Its image $w$ by $\tilde \W\rightarrow \W$ has the form $w=  e^\lambda w_0\in  \X_*(\T) \rtimes \Wf$ and
 there is $\mu\in \X(w_0)$ such that $\ell(w)= \ell(e^{ \lambda-\mu}) +\ell (e^\mu w_0)
$. Choose lifts $\widetilde{e^\mu w_0 }$  and
$\widetilde{e^{ \lambda-\mu}}$ in $\tilde \W$ for  ${e^{\mu}w_0}$  and
${e^{ \lambda-\mu}}$. The product $\widetilde{e^{ \lambda-\mu}} \widetilde{e^{\mu}w_0}$
differs from $\tilde w$ by an element in $\T^0/\T^1$ (which has length zero). Therefore, 
$\Bb_{x_0}^+(\tilde w)\in\Bb_{x_0}^+(\lambda-\mu)  \Hh_k$ (see the proof of Lemma \ref{lemma:stronger} for example). 
If $\ell(\tilde w)> A_m(w_0):=A_m+{\rm max}\{ \ell (e^{\mu'}w_0), \,\mu'\in \X(w_0)\}$ then $\ell(e^{\lambda-\mu})>A_m$ and 
$\Bb_{x_0}^+(\tilde w)\in    \mathfrak I^m \Hh_k$ by Fact \ref{fact:born}.
%$\Bb_{x_0}^+(w_0 e^\lambda)= \Bb_{x_0}^+(w_0 e^{\mu})\Bb_{x_0}^+(e^{ \lambda-\mu})$}$$
We have proved that  $n>  {\rm max}\{A_m(w_0), w_0\in \Wf\}$ implies $F_{n}\Hh_k\subseteq   \mathfrak I^m \Hh_k$.

 \end{proof}

%Recall that $Z$ is the connected center of $\Gp$.  We fix a character $\omega: Z\rightarrow k$.
%There is an embedding $k[Z/Z\cap \T^1]\rightarrow  \Zz^\c(\Hh_k)$ given by\red{ $g\mapsto \tau_{g^{-1}}$ ????}

\subsection{The category of  finite length modules  over the pro-$p$ Iwahori-Hecke algebra in characteristic $p$}
We consider the abelian category $\Mod_{fg}(\Hh_k)$
%, resp. $\Mod_{fg}^\omega(\Hh_k)$ 
of all $\Hh_k$-modules with finite length.
%,  all $\Hh_k$-modules, 
%all $\Hh_k$-modules with finite length and such that  for any $g\in Z$ the central element $\tau_g$ acts by $\omega(g^{-1})$. 

\medskip

For a $\Hh_k$-module, having finite length is  equivalent to being finite dimensional as a $k$-vector space  (\cite[5.3]{Durham} or \cite[Lemma 6.9]{OS}).
Therefore,  any irreducible $\Hh_k$-module is finite dimensional  and has a central character, and any module in  $\Mod_{fg}(\Hh_k)$ decomposes uniquely into a direct sum of indecomposable modules.

\def\id{\mathfrak M}

\subsubsection{The category of finite dimensional $\Zz^\c(\Hh_k)$-modules.} \noindent 
Let $\Mod_{fd}(\Zz^\c(\Hh_k))$ denote the category of finite dimensional $\Zz^\c(\Hh_k)$-modules. For  $\id$  a maximal ideal of $\Zz^\c(\Hh_k)$, we consider 
 the full subcategory  $$ {\id}-\Mod_{fd}(\Zz^\c(\Hh_k))$$ 
of the modules $\m$ of $\id$-torsion, that is to say such that
 there is $e\in \N$ satisfying  $ \m\id ^e=0$.
The category $\Mod_{fd}(\Zz^\c(\Hh_k))$  decomposes into the direct sum of all $ {\id}-\Mod_{fd}(\Zz^\c(\Hh_k))$ where $ {\id}$-ranges over the maximal ideals of $\Zz^\c(\Hh_k)$.

\subsubsection{Blocks of $\Hh_k$-modules with finite length}
For  $\id$  a maximal ideal of $\Zz^\c(\Hh_k)$, we say that a  $\Hh_k$-module with finite length  is a $\id$-torsion  module if its
restriction  to a $\Zz^\c(\Hh_k)$-module
lies in the subcategory $ {\id}-\Mod_{fd}(\Zz^\c(\Hh_k))$. We denote by
\begin{equation}\label{bloc} {\id}-\Mod_{fg}(\Hh_k)\end{equation} the full subcategory 
of $\Mod_{fg}(\Hh_k)$ whose objects are the  $\id$-torsion  modules. 

\begin{lemma} Let $\mathfrak M$ and $\mathfrak N$ be two maximal ideals of $\Zz^\c(\Hh_k)$. If there is a nonzero $\mathfrak M$-torsion module $M$ and a nonzero  $\mathfrak N$-torsion module $N$
such that   ${\rm Ext}_{\Hh_k}^r(\m, \n)\neq 0$ for some $r\geq 0$, then  $\mathfrak M=\mathfrak N$.

\end{lemma}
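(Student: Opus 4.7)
The plan is to exploit the standard functoriality of $\mathrm{Ext}$ with respect to central multiplications. Because $\Zz^\c(\Hh_k)$ sits inside the center of $\Hh_k$, for any $z\in\Zz^\c(\Hh_k)$ and any right $\Hh_k$-module $X$, the right multiplication $m_z^X:X\to X,\; x\mapsto xz$ is $\Hh_k$-linear. Thus $\mathrm{Ext}_{\Hh_k}^r(M,N)$ carries two a priori distinct $\Zz^\c(\Hh_k)$-module structures: one induced by $m_z^M$ (acting by precomposition) and one by $m_z^N$ (acting by postcomposition). My first step will be to check that these two actions coincide, so that $\mathrm{Ext}_{\Hh_k}^r(M,N)$ is an honest $\Zz^\c(\Hh_k)$-module, and that the two descriptions of the $z$-action agree.

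The second step is to transfer the torsion hypotheses to $\mathrm{Ext}$. Since $M\cdot\mathfrak M^e=0$ for some $e\geq 0$, the functor $\mathrm{Ext}_{\Hh_k}^r(M,\,\cdot\,)$ sends $\mathfrak M^e$ to $0$ via the $M$-action; hence $\mathrm{Ext}_{\Hh_k}^r(M,N)\cdot\mathfrak M^e=0$. Symmetrically, $N\cdot\mathfrak N^{e'}=0$ gives $\mathrm{Ext}_{\Hh_k}^r(M,N)\cdot\mathfrak N^{e'}=0$. Therefore $\mathrm{Ext}_{\Hh_k}^r(M,N)$ is annihilated by the ideal $\mathfrak M^e+\mathfrak N^{e'}$ of $\Zz^\c(\Hh_k)$.

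The final step is purely algebraic. By assumption $\mathrm{Ext}_{\Hh_k}^r(M,N)\neq 0$, so $\mathfrak M^e+\mathfrak N^{e'}$ is a proper ideal of $\Zz^\c(\Hh_k)$; pick a maximal ideal $\mathfrak P$ containing it. Then $\mathfrak M\subseteq\sqrt{\mathfrak M^e}\subseteq\mathfrak P$ and likewise $\mathfrak N\subseteq\mathfrak P$. Maximality of $\mathfrak M$ and $\mathfrak N$ forces $\mathfrak M=\mathfrak P=\mathfrak N$.

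The only point requiring any care is the first one: verifying that the two $\Zz^\c(\Hh_k)$-actions on $\mathrm{Ext}_{\Hh_k}^r(M,N)$ coincide. For $r=0$ this is the tautology $\varphi\circ m_z^M=m_z^N\circ\varphi$ for $\Hh_k$-linear $\varphi$, which is exactly the centrality of $z$. For $r\geq 1$ one either deduces it by a standard delta-functor/universality argument (both maps $m_z^M{}_*$ and $m_z^N{}_*$ are natural transformations of $\delta$-functors agreeing in degree $0$), or more concretely by computing with a projective resolution $P_\bullet\to M$ and lifting $m_z^M$ to a chain map on $P_\bullet$ that is homotopic to right multiplication by $z$ on each $P_i$, so that on $\mathrm{Hom}_{\Hh_k}(P_\bullet,N)$ the induced map is homotopic to postcomposition by $m_z^N$. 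This is routine, so I do not expect any real obstacle.
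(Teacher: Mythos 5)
Your proposal is correct and is essentially the paper's own argument: the paper equips $\mathrm{Ext}^r_{\Hh_k}(M,N)$ with the structure of a central $\Zz^\c(\Hh_k)$-bimodule (which is exactly your observation that the two $\Zz^\c(\Hh_k)$-actions agree), notes that it is then simultaneously $\mathfrak M$-torsion and $\mathfrak N$-torsion, and concludes it must vanish unless $\mathfrak M=\mathfrak N$. You simply spell out the two points the paper leaves implicit — the verification that the $M$-side and $N$-side actions coincide (by a $\delta$-functor or projective-resolution argument), and the commutative-algebra step via $\sqrt{\mathfrak M^e}+\sqrt{\mathfrak N^{e'}}$ and a containing maximal ideal — both of which are routine and correct.
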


\begin{proof}
For any $\Hh_k$-modules $X$ and $Y$, the natural morphisms of algebras $\Zz^\c(\Hh_k)\rightarrow {\rm End}_{\Hh_k}(X)$
and  $\Zz^\c(\Hh_k)\rightarrow {\rm End}_{\Hh_k}(Y)$  equip $\Hom_{\Hh_k}(X, Y)$ with a structure of central $\Zz^\c(\Hh_k)$-bimodule. The  space  ${\rm Ext}_{\Hh_k}^r(\m, \n)$ is therefore naturally a  central $\Zz^\c(\Hh_k)$-bimodule. It is an $\mathfrak M$-torsion module and a  $\mathfrak N$-torsion module:
it is zero unless $\mathfrak M=\mathfrak N$.

\end{proof}

 Since  $\Zz^\c(\Hh_k)$ is a central finitely generated  subalgebra of $\Hh_k$, 
an indecomposable $\Hh_k$-module  with finite length  is a $\id$-torsion module for some    maximal ideal $\id$  of $\Zz^\c(\Hh_k)$. 

%In this case, if $m$ denotes the length of $\m$, we have (by induction on $m$)
%$\id^m\m=\{0\}.$ Consider the filtration
%\begin{equation} 0= \id^m\m \subseteq \id^{m-1}\m
 %\subseteq ..... \subseteq \id\m\subseteq \m.
%\end{equation}

%\begin{prop} Si $\m$ est indécomposable....
%Ce sont des egalités strictes et les quotients sont irreductibles ????

%\end{prop}

\begin{comment}
\begin{proof}

i. Let $\m$ be a $\Hh_k$-module with length $m$ and suppose that it has type $\zeta$. Suppose that $m\geq 2$ and let $\n$ be an irreducible submodule. Then $\m/\n$ has type $\zeta$ and
by induction,  ${\id}^{m-1} \m\subseteq \n$ so 
${\id}^{m} \m=0$.
Now suppose that there is  there is $e\in \N$ such that $\id^e \m=\{0\}$.  We want to prove that $\m$ has type $\zeta$.
It is enough to prove the claim for $\m$  irreducible:
let  $z\in \Zz^\c(\Hh_k)$, then $(z-\zeta(z))^e$ acts by zero on $\m$ which implies that $z$ acts by  $\zeta(z)$.

ii.  If it has length 1 or 2, then okay.
Let $z\in \Zz^\c(\Hh_k)$. 
There is a polynomial $P\in k[X]$ with degree $\leq$ the length of $\m$ and such that $P(z)\m=0$. If it has $2$ distinct roots, then
$\m$ is decomposable. So ... conclude...
\end{proof}

\end{comment}
\begin{rema} A $\Hh_k$-module with finite length $M$ lies in the block corresponding to some maximal ideal $\mathfrak M$ if and only if all the characters of $\Zz^\c(\Hh_k)$ contained in $M$  have kernel $\mathfrak M$.

\end{rema}

\begin{rema}
The blocks \eqref{bloc} are not indecomposable. They can for example be further decomposed \emph{via} the idempotents introduced in \ref{idempo}.
\end{rema}

\subsubsection{\label{supersing-block}The supersingular block}

\begin{defi}  \label{defi:sup-id}
 We call a maximal ideal $\id$ of  $\Zz^\circ(\Hh_k)$ supersingular if it contains the ideal  $\mathfrak I$ defined in \ref{topo}.
A character of $\Zz^\c(\Hh_k)$ is called  supersingular if its kernel is a supersingular maximal ideal of 
$\Zz^\circ(\Hh_k)$. 

\end{defi}

Given a character $\omega$ of the connected center $Z$ of $\Gp$, there is a unique  supersingular character $\zeta_\omega$ of  $\Zz^\c(\Hh_k)$ satisfying $\zeta_\omega(z_\lambda)=\omega(\lambda(\varpi))$ for any $\lambda\in \X_*^+(\T)$ with length zero.
A character  of the center of $\Hh_k$ is called  \enquote{null} in \cite{Ann} if it takes value zero at all central elements \eqref{Z1} for all $\Wf$-orbits $ \OO$ in $\tilde \X_*(\T)$ containing a coweight with length $\neq 0$.  

\begin{lemma} \label{SSZ} A character $\Zz(\Hh_k)\rightarrow k$ is  \enquote{null}  if and only if  its restriction to $\Zz^\c(\Hh_k)$ is a supersingular character in the sense of Definition \ref{defi:sup-id}.

\end{lemma}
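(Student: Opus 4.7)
The plan is to treat the two implications separately, using that both \emph{null} and \emph{supersingular} are vanishing conditions on central elements indexed by orbits, and bootstrapping the filtration comparison of Proposition \ref{prop:equiv}.

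\textbf{Null $\Rightarrow$ supersingular restriction.} Given a null character $\zeta$, any $\lambda\in \X_*^+(\T)$ with $\ell(e^\lambda)>0$ determines, through the $\Wf$-equivariant splitting \eqref{splitting}, a $\Wf$-orbit $\OO(\lambda)\subseteq \tilde\X_*(\T)$ of common length $\ell_{\OO(\lambda)}=\ell(e^\lambda)>0$, and $z_\lambda=z_{\OO(\lambda)}$ by \eqref{zl}. Hence $\zeta$ annihilates every generator of $\mathfrak I$, so $\zeta|_{\Zz^\c(\Hh_k)}$ is supersingular.

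\textbf{Supersingular restriction $\Rightarrow$ null.} Assume $\zeta(\mathfrak I)=0$ and fix a $\Wf$-orbit $\OO\subseteq \tilde\X_*(\T)$ with $\ell_\OO>0$. The crucial step is to prove $z_\OO^{\,N}\in F_{N\ell_\OO}\Hh_k$ for every $N\geq 1$. Iterating \eqref{fact:annu}, a nonvanishing product $\Bb_C^+(\mu_1)\cdots \Bb_C^+(\mu_N)$ with $\mu_i\in \OO$ forces the images $\bar{\mu_i}$ to lie in a common Weyl chamber; but each Weyl chamber meets the $\Wf$-orbit of a dominant representative $\bar\lambda$ in a single element, so the $\bar{\mu_i}$ all coincide and $\ell(\mu_1\cdots\mu_N)=N\ell_\OO$. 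Since $z_\OO^{\,N}$ is central, Proposition \ref{prop:fix} combined with \eqref{involution} and Remark \ref{compa} yields $z_\OO^{\,N}=\upiota_C(z_\OO^{\,N})=\sum \Bb_{x_0}^+(\mu_1\cdots\mu_N)$, each summand lying in $F_{N\ell_\OO}\Hh_k$ by \eqref{fil}.

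By Proposition \ref{prop:equiv} I now choose $N$ with $N\ell_\OO$ large enough that $z_\OO^{\,N}\in \mathfrak I\Hh_k$. Consider the right $\Hh_k$-module $M:=k\otimes_{\Zz(\Hh_k)}\Hh_k$, where $k$ is given the right $\Zz(\Hh_k)$-action via $\zeta$. Finite generation of $\Hh_k$ over $\Zz(\Hh_k)$ (Proposition \ref{prop:finite}ii), together with Nakayama applied at the maximal ideal $\ker\zeta$, yields $M\neq 0$, so $1\otimes 1\neq 0$. Every element of $\mathfrak I$ acts on $1\otimes 1$ by the scalar $\zeta(\mathfrak I)=0$, so every element of $\mathfrak I\Hh_k$ does too; in particular $(1\otimes 1)\cdot z_\OO^{\,N}=0$. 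On the other hand, being central, $z_\OO^{\,N}$ acts by $\zeta(z_\OO)^N$, forcing $\zeta(z_\OO)=0$. The main technical obstacle is the filtration estimate on $z_\OO^{\,N}$: one must transport its $\Bb_C^+$-expansion into the $\Bb_{x_0}^+$-basis of \eqref{fil} via $\upiota_C$-invariance of central elements; once this is in hand, the module-theoretic step is standard.
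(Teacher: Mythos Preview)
Your proof is correct, but it takes a genuinely different route from the paper's.

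The paper does not invoke Proposition~\ref{prop:equiv} at all. Instead, for the nontrivial implication it observes that the finite-dimensional module $\Hh_k\otimes_{\Zz(\Hh_k)}\zeta$ contains a character $\hat\zeta$ of the commutative subalgebra $(\Aa_{x_0}^+)_k$ extending $\zeta$, and then argues directly: for $\lambda\in\X_*^+(\T)$ with $\ell(e^\lambda)>0$, the identity \eqref{fact:annu} forces at most one term $\hat\zeta(\Bb_{x_0}^+(\lambda'))$ in the orbit sum $z_\lambda$ to be nonzero, and if one were nonzero then $\zeta(z_\lambda)\neq 0$, contradicting supersingularity. Hence $\hat\zeta$ kills every $\Bb_{x_0}^+(\lambda')$ with $\lambda'\in\X_*(\T)$ of positive length, and then (via the splitting \eqref{splitting}) every $\Bb_{x_0}^+(\lambda')$ with $\lambda'\in\tilde\X_*(\T)$ of positive length, so $\zeta(z_\OO)=0$ for every orbit $\OO$ of positive length.

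Your argument replaces this extension-of-character step by the filtration comparison: you show $z_\OO^{\,N}\in F_{N\ell_\OO}\Hh_k$ and then use Proposition~\ref{prop:equiv} to land in $\mathfrak I\Hh_k$, finishing with a Nakayama-style test in $k\otimes_{\Zz(\Hh_k)}\Hh_k$. This is more indirect, since Proposition~\ref{prop:equiv} is itself a nontrivial topological comparison, whereas the paper's proof needs only the elementary \eqref{fact:annu} and the existence of a character of $(\Aa_{x_0}^+)_k$. On the other hand, your approach makes transparent that \enquote{null} is really an $\mathfrak I$-adic continuity condition, which fits nicely with Proposition--Definition~\ref{propdefi}. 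One small simplification: since $z_\OO=\sum_{\lambda'\in\OO}\Bb_{x_0}^+(\lambda')$ already holds by Proposition~\ref{prop:fix} (or Lemma~\ref{theoA}), you can compute $z_\OO^{\,N}$ directly in the $\Bb_{x_0}^+$-basis via \eqref{fact:annu} and avoid the detour through $\upiota_C$.
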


\begin{proof}  Consider a character $\zeta: \Zz(\Hh_k)\rightarrow k$ whose restriction to $\Zz^\circ(\Hh_k)$ is supersingular. We want to prove that $\zeta$ is \enquote{null}.
The $\Hh_k$-module $\Hh_k\otimes _{\Zz(\Hh_k)} \zeta$ being finite dimensional, it contains a character $\hat{\zeta}$ for the commutative finitely generated $k$-algebra $(\Aa_{x_0}^+)_ k$ and the restriction of $\hat{\zeta}$ to 
$\Zz(\Hh_k)$  coincides with $\zeta$. 
%We look at the restriction of  $\hat{\zeta}$  to  $(\Aa_{x_0}^+)_ k$.
 Let $\lambda\in \X_*^+(\T)$ with  $\ell(e^{\lambda})\neq 0$;
by \eqref{fact:annu},  there is at most one $\Wf$-conjugate $\lambda'$ of $\lambda$ such that $\hat{\zeta}(\Bb_{x_0}^+(\lambda'))\neq 0$
and if there exists such a $\lambda'$, then $\hat{\zeta}(z_\lambda)=\zeta(z_\lambda)\neq 0$, which is a contradiction: we have proved that $\hat{\zeta}(\Bb_{x_0}^+(\lambda'))=0$ for all 
$\lambda'\in \X_*(\T)$ with  $\ell(e^{\lambda'})\neq 0$ which  implies that it is also the case for  $\lambda'\in \tilde\X_*(\T)$ with  $\ell(e^{\lambda'})\neq 0$. Therefore, $\zeta$ is \enquote{null}.

\end{proof}
A finite dimensional $\Hh_k$-module $\m$ with  central character is  called supersingular in \cite{Ann} if this central character is\enquote{null}. 
We extend this definition.

\begin{propdefi} A  finite length $\Hh_k$-module  is in the supersingular block and is called \emph{supersingular} if and only if equipped with the discrete topology, it is a continuous module for the $\mathfrak I$-adic topology on $\Hh_k$ or equivalently, for the topology induced by the filtration \eqref{fil}.\\
%The notion of supersingularity is independent of the choice of the uniformizer $\varpi$. 
\label{propdefi}
\end{propdefi}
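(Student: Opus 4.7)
The plan is to verify two separate assertions packaged in the statement: first, that the two topologies on $\Hh_k$ give the same notion of continuity, and second, the intrinsic characterization of membership in the supersingular block. The equivalence of topologies is already in hand: by Proposition \ref{prop:equiv}, the $\mathfrak I$-adic filtration and the filtration $(F_n\Hh_k)_{n\in\N}$ generate the same topology on $\Hh_k$, so a discrete $\Hh_k$-module is continuous for one if and only if it is continuous for the other.

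Next, I would unwind the continuity condition. Since $M$ has finite length it is in particular finitely generated over $\Hh_k$, so giving $M$ the discrete topology makes the action $\Hh_k\to M$, $h\mapsto mh$, continuous for every $m\in M$ if and only if for each such $m$ some power $\mathfrak I^{n(m)}$ lies in the annihilator of $m$. A finite generating set then yields a uniform $n$, so the continuity condition is equivalent to the purely algebraic statement
\[
M\cdot \mathfrak I^n=0 \quad \text{for some } n\in \N.
\]

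The heart of the proposition is then to prove: $M$ lies in the supersingular block if and only if $M\cdot \mathfrak I^n=0$ for some $n\geq 1$. For the forward direction, pick a composition series $0=M_0\subsetneq M_1\subsetneq\cdots\subsetneq M_r=M$ in $\Mod_{fg}(\Hh_k)$. Each factor $M_i/M_{i-1}$ is finite-dimensional over the algebraically closed field $k$, so by Schur's lemma the center $\Zz(\Hh_k)$, and in particular $\Zz^\c(\Hh_k)$, acts through a character $\zeta_i$. The assumption that $M$ is in the supersingular block means that every such $\zeta_i$ is supersingular, i.e.\ $\ker\zeta_i\supseteq \mathfrak I$. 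Hence $\mathfrak I$ kills every $M_i/M_{i-1}$, which gives $M_i\mathfrak I\subseteq M_{i-1}$ and, iterating, $M\cdot \mathfrak I^r=0$.

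For the converse, suppose $M\cdot \mathfrak I^n=0$ and let $N$ be any irreducible constituent of $M$; then $N\cdot \mathfrak I^n=0$. Again by Schur's lemma $\Zz^\c(\Hh_k)$ acts on $N$ through a character $\zeta$, and the scalar $\zeta(z)$ is nilpotent for each $z\in \mathfrak I$, hence zero. Thus $\mathfrak I\subseteq \ker\zeta$, so $\zeta$ is supersingular, and $M$ lies in the supersingular block. I do not expect a real obstacle here: once Proposition \ref{prop:equiv}, Schur's lemma over the algebraically closed field $k$, and the block decomposition recalled earlier in the section are taken as given, the argument reduces to a two-line filtration bookkeeping. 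The only point requiring a moment's care is the passage between \enquote{continuous for the $\mathfrak I$-adic topology} and \enquote{annihilated by a power of $\mathfrak I$}, which uses finite generation of $M$.
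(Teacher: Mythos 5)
Your proof is correct and, once the dust settles, runs along essentially the same lines as the paper's: reduce the topological condition (via Proposition \ref{prop:equiv}) to the purely algebraic statement that some power of $\mathfrak I$ annihilates $M$, and then match that against block membership. The one organizational difference is that you work with a composition series and apply Schur's lemma to each irreducible factor, whereas the paper phrases the reduction in terms of the indecomposable summands (each being $\id$-torsion for a unique maximal ideal $\id$, supersingular precisely when $\id\supseteq\mathfrak I$); your version is slightly more self-contained, since it gets the bound $M\,\mathfrak I^{\,\mathrm{length}(M)}=0$ directly from the filtration $M_i\mathfrak I\subseteq M_{i-1}$ without invoking the block decomposition already set up in the section, but the underlying content is the same.
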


\begin{proof}
An indecomposable  $\Hh_k$-module $M$ with finite length is in the supersingular block if and only if there is $m\geq 1$ such that
$ M\mathfrak I^m =\{0\}$. Then use Proposition \ref{prop:equiv}.
\end{proof}

%Note that a finite length module is supersingular if and only if its irreducible constituents are all supersingular.  

%\begin{rema}\label{rema:adjoint-ind} If $\mathbf G$ is of adjoint type or $\mathbf G= {\rm GL}_n$, then by   
%Corollary \ref{coro:adjoint} and the proof of Proposition \ref{prop:conjugacy}, the ideal $\mathfrak I$ and therefore the notion of supersingularity for the  finite length $\Hh_k$-modules   is independent of all the choices.

%\end{rema}

\medskip
\subsection{Classification of the simple supersingular  modules over the pro-$p$ Iwahori-Hecke algebra in characteristic $p$\label{subsec:classi}}

We establish this classification in the case where the root system of $\Gp$ is irreducible which we will suppose in  \ref{par:classif}. Until then the results are valid without further assumption on the root system.

\subsubsection{} Denote by $\Hh_k^{aff}$  the natural image in $\Hh_k$ of the affine Hecke subring $\Hh_\Z^{aff}$ of $\Hh_\Z$ defined in \ref{defi:rings}. We generalize \cite[Theorem 7.3]{Oparab}:

\begin{prop}  \label{charaff}A finite length $\Hh_k$-module   in the supersingular block contains a character for the  affine Hecke subalgebra $\Hh_k^{aff}$.

\end{prop}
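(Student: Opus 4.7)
Let $M$ be a nonzero finite length $\Hh_k$-module in the supersingular block. By Proposition-Definition \ref{propdefi} combined with Proposition \ref{prop:equiv}, there exists an integer $N\geq 1$ such that $M\cdot F_N\Hh_k=0$; equivalently, $M\cdot\Bb_{x_0}^+(w)=0$ for every $w\in\tilde\W$ with $\ell(w)\geq N$. This is the operational form of the supersingularity hypothesis that the argument will exploit.

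Using the pairwise orthogonal central idempotents $\epsilon_\xi$ of \ref{idempo}, I would decompose $M=\bigoplus_{\xi\in\hat{\overline{\mathbf T}}(\fq)}M\epsilon_\xi$ and pick $\xi$ with $M_\xi:=M\epsilon_\xi\neq 0$. The torus subalgebra then acts on $M_\xi$ through the character $\xi$, so producing a character of the whole affine subalgebra $\Hh_k^{aff}$ inside $M$ reduces to exhibiting a nonzero vector $v\in M_\xi$ which is a simultaneous right eigenvector for each $\tau_{n_A}$, $A\in\Pi_{aff}$. The quadratic relations \eqref{Q'} force every such eigenvalue to lie in $\{0,-1\}$, with $-1$ only permitted when $\xi$ is trivial on $\T_A$.

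The construction of $v$ proceeds iteratively. Starting from any nonzero $v_0\in M_\xi$, if $v_0$ is not already such a joint eigenvector, some $A\in\Pi_{aff}$ satisfies $v_0\tau_{n_A}\notin kv_0$; I would then replace $v_0$ by a suitable correction of $v_0\tau_{n_A}$ (using the quadratic relations \eqref{Q'} and the braid relations \eqref{braid} to eliminate the $v_0$-component) in such a way that a length invariant attached to the $\Bb_{x_0}^+$-support of the vector strictly increases. The vanishing $M\cdot F_N\Hh_k=0$ then forces the procedure to terminate after finitely many steps, yielding the desired joint eigenvector and thus a one-dimensional $\Hh_k^{aff}$-submodule of $M$.

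The main obstacle is to define and control the length invariant, since right multiplication by $\tau_{n_A}$ on Bernstein basis elements mixes lengths in a delicate way through the triangular change of basis of Lemma \ref{lemma:fundam}. The key new input that makes this work for an arbitrary split reductive $\Gp$, generalizing the $\GL_n$-treatment of \cite[Theorem 5]{Ann} and \cite[Theorem 7.3]{Oparab}, is Lemma \ref{calcul-n0}: for any standard facet $F\neq x_0$ and any dominant coweight $\lambda$ with $\ell(e^\lambda)>0$, one has $\Bb_F^+(\lambda)\in\tau_{n_0}\Hh_\Z$. Combined with the facet-independence Lemma \ref{theoA}, this should provide the cancellations needed to perform the length-increasing step uniformly, playing the role that explicit minimal expressions for minuscule coweights played in the $\GL_n$-case. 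The standing assumption that the root system of $\G$ is irreducible enters precisely through the existence of the highest root and hence of the affine simple reflection $n_0$ used in Lemma \ref{calcul-n0}.
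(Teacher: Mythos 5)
Your overall shape — use supersingularity via Proposition-Definition \ref{propdefi} to get an integer $N$ with $M\cdot F_N\Hh_k=0$, and then force termination of an eigenvector construction — matches the paper's starting point. But the execution diverges in ways that matter, and one attribution is simply wrong.

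First, and most importantly, Proposition \ref{charaff} does \emph{not} require the irreducible root system hypothesis, and its proof does \emph{not} use Lemma \ref{calcul-n0}. The paper is explicit about this: the opening of \S\ref{subsec:classi} says the results before \ref{par:classif} hold without restriction on the root system, and the proof of Theorem \ref{theo:charaff} refers back to Proposition \ref{charaff} ``(without restriction on the root system of $\G$).'' Lemma \ref{calcul-n0} and the highest root $\alpha_0$ enter only in the \emph{converse} direction (Theorem \ref{theo:charaff}), where one shows that a module containing a non-trivial/non-sign character of $\Hh_k^{aff}$ is actually supersingular. So building your argument around Lemma \ref{calcul-n0} is a wrong turn that also needlessly weakens the statement.

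Second, your iterative scheme is underspecified at the crucial point: you don't say what the ``length invariant'' is or prove it increases, and right multiplication by $\tau_{n_A}$ in the Bernstein basis is exactly where the combinatorics is subtle. The paper sidesteps this by making one global jump rather than many small ones: start not from an $\epsilon_\xi$-eigenvector (which only controls $\T^0/\T^1$) but from a full $\H_k$-eigenvector $x$ (the finite Hecke algebra has only one-dimensional simple modules, by \cite[(2.11)]{Sawada0}), pick $d\in\Dd$ of \emph{maximal length} with $x':=x\Bb_{x_0}^+(\tilde d)\neq 0$ (possible because $M F_n\Hh_k=0$ for $n$ large), and then verify directly that $x'$ is an $\Hh_k^{aff}$-eigenvector. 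The verification uses three concrete inputs your sketch doesn't mention: Proposition \ref{prop:dist}(iii), which sorts the possibilities for $ds$ with $s\in S_{aff}$; the identity $\Bb_{x_0}^+(\tilde d)=(-1)^{\ell(d)}\upiota(\tau_{\tilde d})$ from \eqref{Bd}; and Remark \ref{rema:prodnul}, giving $\tau_{n_A}\upiota(\tau_{n_A})=0$ and that $\upiota(\tau_{n_A})+\tau_{n_A}$ lies in the torus subalgebra. Without these (or a worked-out substitute) your correction step has no teeth, so as written the proposal has a genuine gap in addition to the misattribution.
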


\begin{proof} Let $M$ be a $\Hh_k$-module with finite length in the supersingular block. By  Proposition \ref{propdefi}, there is $n\in\N$ such that, for any $w\in \tilde \W$, if $\ell(w)> n$ then $M \Bb_{x_0}^+(w) =0$. Let $x\in M$ supporting a character  for $\H_k$ (see \ref{weights}) and
let $d\in \Dd$ with maximal length such that $x \Bb_{x_0}^+(\tilde d)\neq 0$ where $\tilde d\in \tilde \W$ denotes a lift for $d$ (the property  $x\Bb_{x_0}^+(\tilde d)\neq 0$ does not depend on the choice of the lift $\tilde d$). As in the proof of \cite[Theorem 7.3]{Oparab}, we prove that $x':=x\Bb_{x_0}^+(\tilde d)$ supports a character for $\Hh_k^{aff}$ which is  the $k$-algebra generated by all $\tau_t$ and all $\tau_{\tilde s}$ for $t\in \T^0/\T^1$ and $s\in S_{aff}$ with chosen lift $\tilde s\in \tilde \W$ (see paragraph \ref{defi:rings}). 
From the relations \eqref{braid} we get that $x'\tau_t=
x \tau_{dtd^{-1}} \Bb_{x_0}^+(\tilde d)$ is proportional to $x'$.
Now let $s\in S_{aff}$. If $\ell(ds)=\ell(d)-1$, then $ds\in \Dd$ after Proposition \ref{prop:dist} and, by \eqref{Bd}, the element  $x'$ is equal to $x\upiota(\tau_{\tilde d \tilde s}) \upiota(\tau_{\tilde s})$ (up to an invertible element in $k$), so $x'\tau_{\tilde s} =0$ by Remark \ref{rema:prodnul}.
If $\ell(ds)= \ell(d)+1$ 
and $ds\in \Dd$, then $x\Bb^+_{x_0}(\tilde d\tilde s)$ is equal to zero on one side and,   by  \eqref{Bd}, to  $x'\upiota(\tau_{\tilde s})$ (up to an invertible element in $k$)
 on the other side. It proves   that $x'\tau_{\tilde s}$ is proportional to $x'$ by Remark \ref{rema:prodnul}.  If $\ell(ds)= \ell(d)+1$ 
and $ds\not\in \Dd$
then there is $s'\in S$ such that $ds=s'd$ by Proposition \ref{prop:dist},
 and
$x'\upiota(\tau_{\tilde s})$ is proportional to $x\upiota(\tau_{\tilde s'}) \Bb_{x_0}^+(\tilde d)$ and therefore  to $x'$ because $ \upiota(\tau_{\tilde s'})\in\H_k$. We conclude that $x'\tau_{\tilde s}$ is proportional to $x'$  by Remark \ref{rema:prodnul}.

\end{proof}
 
 \subsubsection{Characters of $\Hh_k^{aff}$} 
 We call character of $\Hh_k^{aff}$ a morphism of $k$-algebras $\Hh_k^{aff}\rightarrow k$.
A character $\Xx$  of $\Hh_k^{aff}$ is completely determined by:
 \begin{itemize}
 \item[-] the unique $\xi\in  \hat{\overline {\mathbf T}}(\fq)$  such that $\Xx(\epsilon_\xi)=1$ (see notation in \ref{idempo}). This $\xi$ is 
 defined by $\xi(t)=\Xx(\tau_t)$ where $t\in\T^0/\T^1\simeq {\overline {\mathbf T}}(\fq) $ and we call it the restriction of $\Xx$ to $k[\T^0/\T^1]$.
 \item[-] the values $\Xx(\tau_{n_A})$ for all $A\in S_{aff}$, which,  by the quadratic relations \eqref{Q'}  satisfy:
$\Xx(\tau_{n_A})\in\{0, -1\}$ if $\xi$ is trivial on $\T_A$  and 
 $\Xx(\tau_{n_A})=0$ otherwise.
 \end{itemize}
 
Conversely,  one checks that any such  datum of $\xi\in \hat{\overline {\mathbf T}}(\fq)$ and values $\Xx(\tau_{n_A})$ for all $A\in S_{aff}$ satisfying the above  conditions defines a character $\Xx$ of $\Hh_{k}^{aff}$.

\begin{exa}\label{triv-sign}

  The  pro-$p$ Iwahori-Hecke ring  $\Hh_\Z$ is endowed with two natural morphisms of rings $\Hh_\Z\rightarrow \Z$ defined by
 $$
 \tau_{w}\mapsto q^{\ell(w)} \quad\text{and}\quad \tau_{w}\mapsto (-1)^{\ell(w)}.
$$ We denote by $\Xx_{triv}$ and $\Xx_{sign}$ the characters of $\Hh_k$ that they respectively induce, as well as their restrictions to characters of $\Hh^{aff}_k$. The former can be described by: 
$\xi=\1$ and $\Xx_{triv}(\tau_{n_A})=0$ for all $ A\in S_{aff}$; the latter by  $\xi=\1$ and $\Xx_{sign}(\tau_{n_A})=-1 $  for all $ A\in S_{aff}$.

 \end{exa}

Let $\Xx$ be a character of $\Hh_k^{aff}$ and $\xi$ the corresponding element in $\hat{\overline {\mathbf T}}(\fq)$. \begin{itemize}
 \item Let $\xi_0\in \hat{\overline {\mathbf T}}(\fq)$ and suppose that $\xi_0$ is trivial on $\T_\alpha$ for all $\alpha\in\Pi$. Then one can consider the twist $ (\xi_0)\Xx$ of $\Xx$ by
 $\xi_0$ in the obvious way. The restriction of $ (\xi_0)\Xx$ to $k[\T^0/\T^1]$ is the product $\xi_0 \xi$ and  $ (\xi_0)\Xx$ coincides with $\Xx$  on the elements of type $\tau_{n_A}$  for $ A\in S_{aff}$. 
 By \emph{twist of the character $\Xx$} we mean from now on a twist of $\Xx$ by an element in    $\hat{\overline {\mathbf T}}(\fq)$  that is  trivial on $\T_\alpha$ for all $\alpha\in\Pi$.
 \item  The involution $\upiota_C$ extends to an involution of the $k$-algebra $\Hh_k$. The composition $\Xx\circ \upiota_C$ is then also a character for $\Hh_k^{aff}$.   Note that $\Xx$ and $\Xx\circ \upiota_C$
 have the same restriction to $k[\T^0/\T^1]$ (Remark \ref{fix0}). Furthermore, if $\Xx(\tau_{n_A})=-1$ for some $A\in S_{aff}$, then 
$\Xx\circ \upiota_C(\tau_{n_A})=0$ (use Remark \ref{rema:prodnul}). For example, $\Xx_{triv}=\Xx_{sign}\circ \upiota_C$.

 \item There is an action of $\tilde \Omega$ by conjugacy on  $\tilde \W_{aff}$. Since the elements in $\tilde \Omega$ have length zero, this yields an action of $\tilde \Omega$ on  $\Hh_k^{aff}$ and its  characters. For $\omega\in   \tilde \Omega$, we denote by $\omega.\Xx$ the character $\Xx(\tau_{\omega^{-1}}\, .\,\tau_{\omega})$.

  \end{itemize}

\begin{lemma} A simple $\Hh_k$-module containing a twist of the character
 ${\mathcal X}_{triv}$ or of  the character ${\mathcal X}_{sign}$ 
of  $\Hh^{aff}_k$ is not supersingular. 
\label{notss}

\end{lemma}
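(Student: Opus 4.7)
The plan is to show that the central character $\zeta_M$ of $M$ is not null, and then conclude via Lemma~\ref{SSZ} that $M$ is not supersingular. Concretely, I would exhibit a $\lambda\in \X_*^+(\T)$ with $\ell(e^\lambda)>0$ such that $\zeta_M(z_\lambda)\neq 0$.

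First, I would choose $\lambda$ strongly dominant and lying in the coroot lattice $\mathbf{Q}(\Phi^\vee)$; for instance $\lambda_0=\sum_{\alpha\in\Phi^+}\alpha^\vee$ (i.e.\ $2\rho^\vee$) does the job. Since $\mathfrak W\subset \W_{aff}$ and $\mathfrak W$ preserves $\mathbf{Q}(\Phi^\vee)$, every $\lambda'\in\OO(\lambda)$ lies in the coroot lattice, so $e^{\lambda'}\in\tilde\W_{aff}$. By Lemma~\ref{lemma:fundam} each $\Bb_{x_0}^+(\lambda')$ is supported in $\tilde\W_{aff}$, and Lemma~\ref{theoA} gives
\[
z_\lambda=\sum_{\lambda'\in\OO(\lambda)}\Bb_{x_0}^+(\lambda')\in\Hh_k^{aff}.
\]
Letting $v\in M$ support $\Xx$, the containment $z_\lambda\in\Hh_k^{aff}$ implies $vz_\lambda=\Xx(z_\lambda)v$, so $\zeta_M(z_\lambda)=\Xx(z_\lambda)$.

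Next, I would compute $\Xx(z_\lambda)$ and show it is nonzero. Because $\Xx(\tau_{n_A})\in\{0,-1\}$, the character $\Xx$ either vanishes (twist-of-$\mathcal X_{triv}$ case) or acts as $(-1)^{\ell(w)}$ times a torus contribution (twist-of-$\mathcal X_{sign}$ case) on $\tau_w$ for $w\in\tilde\W_{aff}$ of positive length, so the value $\Xx(z_\lambda)$ is governed by the length-zero component of $z_\lambda$ in the subalgebra $k[\T^0/\T^1]$. I would identify this component by expanding $\Bb_{x_0}^+(\lambda)=(-1)^{\ell(e^\lambda)}\upiota(\tau_{e^\lambda})$ (formula~\eqref{Bd}) along a reduced decomposition of $e^\lambda$ in $S_{aff}$, using the formula $\upiota(\tau_{n_A})=-\tau_{n_A}-\sum_{\xi \text{ triv on }\T_A}\epsilon_\xi$ from Remark~\ref{rema:prodnul}. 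A length-counting (via Lemma~\ref{lemma:fundam}) eliminates the contributions of the non-dominant $\lambda'\in\OO(\lambda)\setminus\{\lambda\}$, leaving the length-zero part of $z_\lambda$ as (a scalar multiple of) a sum of idempotents $\epsilon_\xi$ with $\xi$ trivial on the relevant $\T_\alpha$'s for $\alpha\in\Pi$. Since the twist $\xi_0$ is by hypothesis trivial on all such $\T_\alpha$, the idempotent $\epsilon_{\xi_0}$ appears in this sum and $\Xx(\epsilon_{\xi_0})=1$, yielding $\Xx(z_\lambda)=1\neq 0$ in the twist-of-$\mathcal X_{triv}$ case. The twist-of-$\mathcal X_{sign}$ case follows by applying Proposition~\ref{prop:fix} ($\upiota_C$ fixes $z_\lambda$): $\mathcal X_{sign}(z_\lambda)=(-1)^{\ell(e^\lambda)}\mathcal X_{triv}(z_\lambda)=\pm 1$, and the twist-invariance argument carries over.

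I expect the main difficulty to lie in the second step: the explicit tracking of the length-zero idempotents produced through the iterated involution $\upiota$ applied to a reduced expression of $\tau_{e^\lambda}$. The pro-$p$ braid and quadratic relations, together with the expansion of $\upiota(\tau_{n_A})$ from Remark~\ref{rema:prodnul}, require careful bookkeeping to pin down exactly which $\epsilon_\xi$ survive, and it is precisely here that the twist hypothesis $\xi_0|_{\T_\alpha}=\mathbf{1}$ for $\alpha\in\Pi$ is needed so that $\xi_0$ pairs nontrivially with the surviving idempotents.
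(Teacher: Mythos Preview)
Your overall strategy---restrict to $\lambda$ in the coroot lattice so that $z_\lambda\in\Hh_k^{aff}$, then evaluate the character directly---is sound, but the execution in step two has a genuine gap. The claim that ``length-counting via Lemma~\ref{lemma:fundam} eliminates the contributions of the non-dominant $\lambda'$'' is unjustified: Lemma~\ref{lemma:fundam} only bounds the support of $\Bb_{x_0}^+(\lambda')$ by $\{w:w\leq e^{\lambda'}\}$, and for $e^{\lambda'}\in\W_{aff}$ this set \emph{does} contain length-zero elements (namely $\T^0/\T^1$). So non-dominant $\lambda'$ may well contribute to the length-zero part of $z_\lambda$, and your proposed bookkeeping through a reduced expression of $\upiota(\tau_{e^\lambda})$ does not address them at all. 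You yourself flag this as the hard step; as written it is not a proof.

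The paper's argument bypasses this entirely and is worth comparing. It treats the \emph{sign} case first, not the trivial one: for a vector $m$ supporting a twist of $\Xx_{sign}$, the restriction to $\H_k$ has associated facet $F_\chi=C$, and Remark~\ref{rema:key} gives $m\,z_\lambda=m\,\Bb_C^+(\lambda)=m\,\tau_{e^\lambda}$ directly (no restriction on $\lambda$, no sum over the orbit to control). Writing $e^\lambda=w\omega$ with $w\in\tilde\W_{aff}$, $\omega\in\tilde\Omega$, one gets $m\,\tau_{e^\lambda}=(\pm 1)\cdot m\,\tau_\omega\neq 0$ since $\tau_\omega$ is invertible. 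The $\Xx_{triv}$ case is then reduced to the sign case via $\upiota_C$ and Proposition~\ref{prop:fix}, exactly as you suggest---but in the opposite direction. The key point you are missing is Remark~\ref{rema:key}: it replaces the full orbit sum $z_\lambda$ by the single term $\Bb_{F_\chi}^+(\lambda)$ when acting on $m$, which is what makes the computation immediate.

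If you want to rescue your own route, the right tool is not length-counting but \eqref{fact:annu}: since your twist of $\Xx$ is multiplicative on $\Hh_k^{aff}$ and $\Bb_C^+(\lambda')\Bb_C^+(\lambda'')=0$ for $\lambda',\lambda''$ in distinct Weyl chambers, at most one term in $z_\lambda=\sum_{\lambda'}\Bb_C^+(\lambda')$ survives under the character. For the sign twist the dominant term $\Bb_C^+(\lambda)=\tau_{e^\lambda}$ already gives a nonzero value, so you are done; then reduce triv to sign via $\upiota_C$. (Your formula $\Xx_{sign}(z_\lambda)=(-1)^{\ell(e^\lambda)}\Xx_{triv}(z_\lambda)$ is incorrect, by the way: since $\upiota_C$ fixes $z_\lambda$ and $\Xx_{triv}=\Xx_{sign}\circ\upiota_C$, the two values are \emph{equal}.)
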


\begin{proof} Let $\m$ a simple $\Hh_k$-module.  
Suppose that it contains a twist of the character
 ${\mathcal X}_{sign}$ supported by the nonzero vector $m\in \m$. In particular, $m$ supports the character of $\H_k$ parametrized  by  (a twist of) the trivial character of $ \hat{\overline {\mathbf T}}(\fq)$ and by the facet $C$ (see \ref{weights}). By Remark \ref{rema:key}, we have
 $$m\, z_{\lambda}= m \,\Bb_{C}^+(\lambda)$$ for all $\lambda\in \X_*^+(\T)$. 
There is $\omega\in \tilde \Omega$ and $w\in \tilde \W_{aff}$ such that  the
element $\lambda(\varpi^{-1}) \mod \T^1$ corresponds to
$w\omega\in \tilde\W$.  Since $\Bb_C^+(\lambda)= \tau_{\lambda(\varpi^{-1})}$, the element $m\Bb_C^+(\lambda)$    is equal to $(-1)^{\ell(w)} m\tau_{\omega}$ (up to multiplication by an element in $k^\times$) and we recall that $\tau_{\omega}$ is invertible in $\Hh_k$. We have proved that $m.z_{\lambda}\neq 0$ and $\m$ is not supersingular.

Now if $\m$ contains  a twist of the character
 ${\mathcal X}_{triv}$, then $\upiota_C^*\m$ contains a twist
 of the character
 ${\mathcal X}_{sign}$ and is not supersingular (notation in the proof of Proposition \ref{theo:indep}). By Proposition \ref{prop:fix},   it implies that $\m$ is not supersingular either.
\end{proof}

 \subsubsection{} 
 Consider  the image  of $\tilde \Omega$  in $\Hh_k$ via 
$\omega\mapsto \tau_{\omega}$. 
For  $\mathcal X$   a character of  $\Hh_k^{aff}$,
 denote by $\tilde\Omega_{\mathcal X}$ its  fixator  under the action  of $\tilde \Omega$. It obviously contains $\T^0/\T^1$ as a subgroup.
 We consider the set $\Pp$ of  pairs $(\Xx, \sigma)$ where $\Xx$ is a character of   $\Hh_k^{aff}$ and $(\sigma, \V_\sigma)$  an irreducible finite dimensional $k$-representation of $\tilde\Omega_\Xx$ (up to isomorphism) whose restriction to $\T^0/\T^1$ coincides with the inverse of the restriction of $\Xx$: for any $t\in \T^0/\T^1$ and $v\in \V_\sigma$, we have 
 $\sigma(t) v=\Xx(\tau_{t^{-1}})v$.

The set $\Pp$ is  naturally endowed with an action of $\tilde \Omega$: for $(\Xx, \sigma)\in \Pp$ and $\omega \in \tilde\Omega$, denote by $\omega.\sigma$ the representation of $\tilde\Omega_{\omega.\Xx}=\omega \tilde\Omega_{\Xx} \omega^{-1}$ naturally obtained by conjugating $\sigma$;  then $\omega.(\Xx, \sigma):=(\omega.\Xx, \omega.\sigma)\in \Pp$.

\medskip

 Let $(\Xx, \sigma)\in \Pp$. Consider  the subalgebra $\Hh_k(\mathcal X)$ of $\Hh_k$ generated by 
$k[\tilde\Omega_{\mathcal X}]$ and $\Hh_k^{aff}$. It is isomorphic to the twisted tensor product of algebras
$$ \Hh_k(\mathcal X)\simeq k[\tilde\Omega_{\mathcal X}]\otimes_{k[\T^0/\T^1]}\Hh_k^{aff}$$
where the product is given by $(\omega\otimes h)  (\omega'\otimes h')=\omega \omega'\otimes \tau_{\omega'}^{-1} h\tau_{\omega'} h'$. As a left $\Hh_k(\mathcal X)$-module, $\Hh_k$ is free with basis the set of all $\tau_\omega$ where $\omega$ ranges over a set of representatives of  the right cosets $\tilde\Omega_{\mathcal X}\backslash \tilde \Omega$.
The tensor product  $\sigma\otimes \Xx$ is  naturally a   right $\Hh_k(\mathcal X)$-module: the right action of $\omega\otimes h$ on $v\in \V_\sigma$ is given by $\Xx(h)\sigma(\omega^{-1})v$.
The right $\Hh_k(\mathcal X)$-module $\sigma\otimes \Xx$ is irreducible. 
As a $\Hh_k^{aff}$-module, it is isomorphic to a direct sum of copies of $\Xx$.

\begin{lemma} The isomorphism classes of the simple $\Hh_k$-modules containing a character for $\Hh_k^{aff}$ are represented by  the induced modules
$$\mm(\Xx, \sigma):= (\sigma\otimes \Xx)\otimes_{\Hh_k(\Xx)}\Hh_k$$ where
 $(\Xx, \sigma)$ ranges over the set of orbits in $\Pp$ under the action of $\tilde\Omega$.
 \label{classiSuper}
\end{lemma}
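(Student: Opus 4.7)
The plan is classical Clifford theory, exploiting that $\Hh_k$ is a crossed product of $\Hh_k^{aff}$ by the invertible length-zero elements $\tau_\omega$ for $\omega\in\tilde\Omega$. First I would observe that if $\mm$ is a simple $\Hh_k$-module containing a character $\Xx$ of $\Hh_k^{aff}$ supported by a nonzero vector $m_0$, then since $\tilde\Omega$ normalizes $\Hh_k^{aff}$ the vector $m_0\tau_\omega$ supports the character $\omega^{-1}.\Xx$, so $m_0\tau_\omega\cdot \Hh_k^{aff}=k\cdot m_0\tau_\omega$. Combined with the fact that $\Hh_k$ is spanned as a $k$-space by $\Hh_k^{aff}\cdot\{\tau_\omega: \omega\in\tilde\Omega/(\T^0/\T^1)\}$, this gives $\mm=m_0\Hh_k=\sum_\omega k\cdot m_0\tau_\omega$. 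In particular $\mm$ is finite-dimensional, and every character of $\Hh_k^{aff}$ occurring in $\mm$ lies in the (necessarily finite) $\tilde\Omega$-orbit of $\Xx$.

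Next I would extract $\sigma$ from the $\Xx$-isotypic subspace $\mm(\Xx):=\{v\in\mm : vh=\Xx(h)v\text{ for all }h\in\Hh_k^{aff}\}$, which is finite-dimensional, nonzero, and stable under right multiplication by $\tau_\omega$ for $\omega\in\tilde\Omega_\Xx$. Setting $\sigma(\omega)v:=v\tau_{\omega^{-1}}$ defines a left $k[\tilde\Omega_\Xx]$-representation on $\mm(\Xx)$ satisfying $\sigma(t)v=\Xx(\tau_{t^{-1}})v$ for $t\in\T^0/\T^1$. To see $\sigma$ is irreducible, let $0\neq N\subseteq\mm(\Xx)$ be a $\tilde\Omega_\Xx$-stable subspace; simplicity of $\mm$ forces $N\Hh_k=\mm$, but $N\Hh_k=\sum_\omega N\tau_\omega$, and $N\tau_\omega$ lies in the $\omega^{-1}.\Xx$-isotypic component, which is disjoint from $\mm(\Xx)$ unless $\omega\in\tilde\Omega_\Xx$, in which case $N\tau_\omega\subseteq N$. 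Hence $\mm(\Xx)=\mm(\Xx)\cap N\Hh_k=N$, producing an element $(\Xx,\sigma)\in\Pp$. The tautological $\Hh_k(\Xx)$-equivariant inclusion $\sigma\otimes\Xx\cong\mm(\Xx)\hookrightarrow\mm$ extends by adjunction to a nonzero $\Hh_k$-homomorphism $\mm(\Xx,\sigma)\to\mm$, surjective by simplicity; since both sides have $k$-dimension $[\tilde\Omega:\tilde\Omega_\Xx]\cdot\dim_k\sigma$, this map is an isomorphism.

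For the converse I would run the same eigenspace analysis inside $\mm(\Xx,\sigma)$: any nonzero submodule $N$ meets some $\omega^{-1}.\Xx$-isotypic component, and translating by $\tau_\omega$ produces a nonzero $\tilde\Omega_\Xx$-stable subspace of $\mm(\Xx,\sigma)(\Xx)=\sigma\otimes\Xx$; irreducibility of $\sigma$ then forces $N$ to contain the generating subspace $\sigma\otimes\Xx\otimes 1$, so $N=\mm(\Xx,\sigma)$. Finally, any isomorphism $\mm(\Xx,\sigma)\cong\mm(\Xx',\sigma')$ must match the sets of $\Hh_k^{aff}$-characters appearing, so $\Xx'=\omega.\Xx$ for some $\omega\in\tilde\Omega$, and the induced isomorphism of $\Xx'$-isotypic components together with conjugation by $\tau_\omega$ exhibits $\sigma'\cong\omega.\sigma$. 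Thus the isomorphism class of $\mm(\Xx,\sigma)$ depends only on the $\tilde\Omega$-orbit of $(\Xx,\sigma)$ in $\Pp$.

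The subtlest step is the irreducibility of $\sigma$: it relies crucially on the fact that $N\tau_\omega$ lands outside $\mm(\Xx)$ for $\omega\notin\tilde\Omega_\Xx$, which in turn uses that distinct characters of $\Hh_k^{aff}$ give $k$-linearly independent eigenvectors. The only other point requiring care is the sign convention in passing from the right $\Hh_k$-action to the left representation $\sigma$, but once the formula $\sigma(\omega)v=v\tau_{\omega^{-1}}$ is chosen both the condition on $\T^0/\T^1$ imposed in the definition of $\Pp$ and the $\Hh_k(\Xx)$-equivariance needed for the induction step become automatic.
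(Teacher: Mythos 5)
Your proof is correct and follows essentially the same Clifford-theoretic approach as the paper, exploiting the crossed-product structure $\Hh_k = \Hh_k^{aff}\cdot k[\tilde\Omega]$ and the decomposition of any simple module into $\Hh_k^{aff}$-character eigenspaces permuted by the invertible elements $\tau_\omega$. The only (minor) variation is in recovering $\mm \cong \mm(\Xx,\sigma)$: the paper takes $\sigma$ to be an arbitrary irreducible $k[\tilde\Omega_\Xx]$-subrepresentation of the $\Xx$-isotypic component and then concludes from irreducibility of both sides, whereas you prove the $\Xx$-isotypic component is itself irreducible as a $\tilde\Omega_\Xx$-representation and close with a dimension count — both routes are valid.
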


\begin{proof} 
First note that for any $\omega\in \tilde \Omega$,   the
$(\Hh_k^{aff}, \omega.\Xx)$-isotypic component of $\mm(\Xx, \sigma)$ is isomorphic to 
$\omega.\sigma\otimes\omega \Xx$ as a right ${\Hh_k(\omega.\Xx)}$-module.\\
1/ We check that a  $\Hh_k$-module of the form  $\mm(\Xx, \sigma)$ is irreducible. 
Restricted to $\Hh_{k}^{aff}$ it is  semisimple and isomorphic to a direct sum of $\Xx$ and of its conjugates. Therefore, a submodule $\mm$ of  $\mm(\Xx, \sigma)$ contains a  nonzero $(\Hh_k^{aff}, \omega.\Xx)$-isotypic vector for some $\omega\in \tilde\Omega$ and  after translating by $\tau_{\omega^{-1}}$, we see that $\mm$ contains a  nonzero $(\Hh_k^{aff}, \Xx)$-isotypic vector.
But the $(\Hh_k^{aff}, \Xx)$-isotypic component in $\mm(\Xx, \sigma)$ supports the irreducible representation $\sigma$ of  $k[\tilde\Omega_\Xx]$. Therefore $\mm=\mm(\Xx, \sigma)$. \\
2/  Let $\mm$ be a simple $\Hh_k$-module containing the character $\Xx$ of $\Hh_k^{aff}$. Its $(\Hh_k^{aff}, \Xx)$-isotypic component 
% is a module for $k[\tilde\Omega_\Xx]$ and 
contains  an irreducible (finite dimensional) representation  $\sigma$ of 
$k[\tilde\Omega_\Xx]$ which coincides with  the inverse of $\Xx$ on  $k[\T^0/\T^1]$. Therefore, and using 1/,  $\mm\simeq (\sigma\otimes \Xx) \otimes_{\Hh_k(\Xx)} \Hh_k $.\\
3/  Let $\omega\in \tilde\Omega$ and $(\Xx, \sigma)\in \Pp$.  The $(\Hh_k^{aff}, \Xx)$-isotypic component of 
$\mm(\omega.(\Xx, \sigma))$  contains the representation $\sigma$ of   $k[\tilde\Omega_\Xx]$. 
The simple $\Hh_k$-module $\mm(\omega.(\Xx, \sigma))$ is therefore isomorphic to 
$\mm(\Xx, \sigma)$ by 2/.\\
4/ Let $(\Xx, \sigma)$ and $(\Xx', \sigma')$  in $ \Pp$ and suppose that they induce isomorphic $\Hh_k$-modules. Looking at the restriction of the latter to $\Hh_{k}^{aff}$ we see that there is $\omega\in \tilde \Omega$ such that  $\Xx'= \omega.\Xx$.
%Now $\mm(\Xx', \sigma')=\mm(\omega.(\Xx, \omega^{-1}\sigma'))\simeq \mm(\Xx, \omega^{-1}\sigma')$ by 3/. 
Therefore, by 3/, 
$\mm(\Xx, \omega^{-1}\sigma')$ and $\mm(\Xx, \sigma)$ are isomorphic and looking at the restriction to the 
$(\Hh_k^{aff}, \Xx)$-isotypic component  shows that $\sigma'\simeq \omega.\sigma$. Therefore,  
$(\Xx', \sigma')$ and $(\Xx, \sigma)$ are conjugate.

\end{proof}

\subsubsection{Classification of the simple supersingular $\Hh_k$-modules when the root system of $\G$ is irreducible.\label{par:classif}} 
We generalize \cite[Theorem 5(1)]{Ann}-\cite[Theorem 7.3]{Oparab}.

\begin{theorem}  Suppose that the root system of $\G$ is irreducible. 
\label{theo:charaff}A simple $\Hh_k$-module is supersingular if and only if it contains a character  for $\Hh^{aff}_k$ that is different from a twist of $\Xx_{triv}$  or   $\Xx_{sign}$. 
\end{theorem}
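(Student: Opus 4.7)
The forward direction is immediate: by Proposition~\ref{charaff}, a simple supersingular $\Hh_k$-module contains a character of $\Hh_k^{aff}$; by Lemma~\ref{notss}, such a character cannot be a twist of $\Xx_{triv}$ or $\Xx_{sign}$.

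For the converse, let $M$ be a simple $\Hh_k$-module containing a character $\Xx$ of $\Hh_k^{aff}$ that is not a twist of $\Xx_{triv}$ or $\Xx_{sign}$, supported by a nonzero vector $x\in M$; set $\chi=\Xx\vert_{\H_k}$ and let $F_\chi$ be the associated standard facet. Since $M$ is simple of finite length it has a central character, so it suffices to show that $xz_\lambda=0$ for every $\lambda\in\X_*^+(\T)$ with $\ell(e^\lambda)>0$. The assignment $1\otimes 1\mapsto x$ extends to an $\Hh_k$-equivariant surjection $\chi\otimes_{\H_k}\Hh_k\twoheadrightarrow M$, and Remark~\ref{rema:key} then yields the key identity $xz_\lambda=x\,\Bb_{F_\chi}^+(\lambda)$.

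Irreducibility of the root system gives $S_{aff}\setminus S=\{s_0\}$; I split into three cases. \emph{(i)} If $F_\chi\neq x_0$ and $\Xx(\tau_{n_0})=0$, Lemma~\ref{calcul-n0} gives $\Bb_{F_\chi}^+(\lambda)\in\tau_{n_0}\Hh_\Z$, and $x\tau_{n_0}=0$ forces $xz_\lambda=0$. \emph{(ii)} If $F_\chi=x_0$, then $\Pi_\chi=\Pi$ forces $\Xx(\tau_{n_\alpha})=0$ and $\bar\chi$ trivial on $\T_\alpha$ for every $\alpha\in\Pi$; the hypothesis that $\Xx$ is not a twist of $\Xx_{triv}$ then forces $\Xx(\tau_{n_0})=-1$. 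Dominance of $\lambda$ with $\ell(e^\lambda)>0$ gives $\lp\lambda,\alpha_0\rp\geq 1$, so one may write $e^\lambda=u\tilde s_0$ with $\ell(u)=\ell(e^\lambda)-1$, whence $\Bb_{x_0}^+(\lambda)=\upiota_C(\tau_{e^\lambda})=(-1)^{\ell(e^\lambda)}\upiota(\tau_{n_0})\upiota(\tau_u)$ by Lemma~\ref{calcu1} and the anti-involutivity of $\upiota$; Remark~\ref{rema:prodnul} then gives $x\upiota(\tau_{n_0})=-(\Xx(\tau_{n_0})+1)x=0$, so again $xz_\lambda=0$.

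The remaining case $F_\chi\neq x_0$ with $\Xx(\tau_{n_0})=-1$ is the main obstacle. Since $\Xx$ is not a twist of $\Xx_{sign}$, some $\alpha_*\in\Pi$ must satisfy $\Xx(\tau_{n_{\alpha_*}})=0$. The plan is to generalize Lemma~\ref{calcul-n0}: decompose $\lambda=\mu-\nu$ with $\mu,\nu\in\Cute^+(F_\chi)$ chosen so that $e^\nu(\alpha_*,0)\in\Phi_{aff}^+$, which via the braid relations yields $\Bb_{F_\chi}^+(\lambda)\in\tau_{n_{\alpha_*}}\Hh_\Z$, and hence $xz_\lambda=0$ since $x\tau_{n_{\alpha_*}}=0$. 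When $\alpha_*\in\Pi_\chi$ the chamber condition $-\alpha_*\in\Phi_{F_\chi}^-$ already forces $\lp\nu,\alpha_*\rp\leq 0$; for $\alpha_*\in\Pi\setminus\Pi_{\bar\chi}$ one instead must choose $\nu$ on the face $\lp\cdot,\alpha_*\rp=0$ of $\Cute^+(F_\chi)$. Verifying that such $\nu$ can be chosen with $\lambda+\nu\in\Cute^+(F_\chi)$ uniformly in $\lambda$, and that the residual factor in $\Hh_\Z$ has integral coefficients via Lemma~\ref{lemma:fundam} and the length formula~\eqref{lengthF} adapted to $\alpha_*$, is the technical heart of the argument.
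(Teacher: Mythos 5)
Your forward direction and your case~(i) match the paper, but the case split into~(ii) and~(iii) misses the reduction that makes the argument go through. The paper's converse begins by observing that, since the involution $\upiota_C$ fixes every central Bernstein function $z_\lambda$ (Proposition~\ref{prop:fix}), the module $\mm$ is supersingular if and only if $\upiota_C^*\mm$ is; moreover $\upiota_C^*\mm$ contains the character $\Xx\circ\upiota_C$, which is again not a twist of $\Xx_{triv}$ or $\Xx_{sign}$ (they are swapped by $\circ\,\upiota_C$), and by Remark~\ref{rema:prodnul} satisfies $\Xx\circ\upiota_C(\tau_{n_0})=0$ whenever $\Xx(\tau_{n_0})=-1$. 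Replacing $\Xx$ by $\Xx\circ\upiota_C$ therefore reduces WLOG to the case $\Xx(\tau_{n_0})=0$, after which the only dichotomy is $F_\chi=x_0$ (which forces $\Xx$ to be a twist of $\Xx_{triv}$, a contradiction) versus $F_\chi\neq x_0$, and the latter is your case~(i). Your cases~(ii) and~(iii) are thus unnecessary, and without the reduction you are stuck.

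The gap in your case~(iii) is real, not merely technical: the factorization $\Bb_{F_\chi}^+(\lambda)\in\tau_{n_{\alpha_*}}\Hh_\Z$ for a \emph{simple} root $\alpha_*$ cannot hold for all dominant $\lambda$ of positive length. Following the proof of Lemma~\ref{calcul-n0}, that factorization needs $\ell(n_{\alpha_*}e^\lambda)=\ell(e^\lambda)-1$, i.e.\ $e^{-\lambda}(\alpha_*,0)=(\alpha_*,\lp\lambda,\alpha_*\rp)\in\Phi_{aff}^-$, which for $\alpha_*\in\Phi^+$ requires $\lp\lambda,\alpha_*\rp<0$ --- impossible when $\lambda$ is dominant. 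This is precisely why Lemma~\ref{calcul-n0} uses the highest root $\alpha_0$, whose pairing with every dominant $\lambda$ with $\ell(e^\lambda)>0$ is $\geq 1$; no choice of $\nu$ in the decomposition $\lambda=\mu-\nu$ can repair the length inequality that fails at $n_{\alpha_*}e^\lambda$. (Your case~(ii) is salvageable but has the decomposition reversed: with $\lp\lambda,\alpha_0\rp\geq 1$ one gets $\ell(s_0e^\lambda)=\ell(e^\lambda)-1$, hence the \emph{left} factorization $\tau_{e^\lambda}=\tau_{n_0}\tau_u$ up to $\T^0/\T^1$, so that $x\,\upiota(\tau_{n_0})\upiota(\tau_u)=0$; the right factorization $e^\lambda=u\tilde s_0$ you wrote does not have $\ell(e^\lambda s_0)=\ell(e^\lambda)-1$ for dominant $\lambda$.) In any event, the intended proof circumvents both~(ii) and~(iii) by the involution trick above.
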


\begin{rema} This proves in particular (if the root system of $\G$ is irreducible) that the notion of supersingularity  for Hecke modules does not depend on any of the choices made.

\end{rema}

\begin{proof}[Proof of Theorem \ref{theo:charaff}] 
We already proved in Proposition \ref{charaff} (without restriction on the root system of $\G$), that a simple supersingular module contains a character for $\Hh_k^{aff}$ and by Lemma  \ref{notss}, we know that this character is not a twist of $\Xx_{triv}$  or   $\Xx_{sign}$. 

Conversely, let $\mm$ be a simple $\Hh_k$-module containing  the character $\mathcal X$   for $\Hh_k^{aff}$ and suppose that $\Xx$ is not a twist of $\Xx_{triv}$ or  $\Xx_{sign}$.
We want to prove that $\mm$ is supersingular. Since,  by Proposition \ref{prop:fix}, it is equivalent to showing that $\upiota_C^*\mm$ is supersingular (notation in the proof of Proposition \ref{theo:indep}), we can  suppose (see the discussion before Lemma \ref{notss}) that $\mathcal X(\tau_{n_0})=0$  where $n_0$ was introduced in \ref{not:irreducibleroot}.

Let $m\in \mm$ a nonzero vector supporting $\mathcal X$.   Let $\chi$ be the restriction of $\mathcal X$  to $\H_k$ and $F_\chi$ the associated standard facet.   Suppose that $F_\chi= x_0$, then $\Pi_{\bar \chi}=\Pi_{\chi}=\Pi$  (notation in \ref{weights}) and $\mathcal X(\tau_{n_\alpha})=0$ for all $\alpha\in \Pi$. Since, by hypothesis, we also have $\mathcal X(\tau_{n_0})=0$,   the character $\mathcal X$ is  equal to $\mathcal X_{triv}$ up to twist. Therefore, $F_\chi\neq x_0$.
Let  $\lambda\in \X_*^+(\T)$  with $\ell(e^\lambda)>0$. By Remark \ref{rema:key}
 $$m.z_{\lambda}= m. \Bb_{F_\chi}^+(\lambda).$$
and since   $F_\chi\neq x_0$,  we have $m.z_{\lambda}=0$ by  Lemma \ref{calcul-n0}.
We have proved that $\Zz^\circ(\Hh_k)$ acts on $m$ and therefore on $\mm$ by a supersingular character.

\end{proof}

Let $\Pp^*$ denote the subsets of pairs $(\Xx, \sigma)$ in $\Pp$ such that $\Xx$ is different from a twist of $\Xx_{triv}$  or   $\Xx_{sign}$. 
It is stable under the action of $\tilde\Omega$. 
Lemma \ref{classiSuper} and Theorem \ref{theo:charaff}
 together give the following:

\begin{coro} Suppose that the root system of $\G$ is irreducible. The map $$(\Xx, \sigma)\mapsto \mm(\Xx, \sigma)$$ induces a bijection between
the $\tilde\Omega$-orbits of pairs $(\Xx, \sigma)\in \Pp^*$    and a
 system of representatives of the isomorphism classes of the  simple supersingular $\Hh_k$-modules.
\end{coro}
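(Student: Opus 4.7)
The plan is to obtain the corollary as an immediate consequence of Lemma \ref{classiSuper} and Theorem \ref{theo:charaff}, together with the fact (already asserted in the excerpt) that $\Pp^*$ is stable under the $\tilde\Omega$-action.

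Indeed, Lemma \ref{classiSuper} supplies a bijection between $\tilde\Omega$-orbits in $\Pp$ and isomorphism classes of simple $\Hh_k$-modules containing a character for $\Hh_k^{aff}$, and by Proposition \ref{charaff} every simple supersingular $\Hh_k$-module contains such a character. The task is therefore to identify which $\tilde\Omega$-orbits correspond to the supersingular classes. Inspecting the construction of $\mm(\Xx, \sigma)$ given in the proof of Lemma \ref{classiSuper}, the characters of $\Hh_k^{aff}$ appearing in the restriction of $\mm(\Xx, \sigma)$ to $\Hh_k^{aff}$ are precisely the conjugates $\omega.\Xx$ for $\omega \in \tilde\Omega$. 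Hence Theorem \ref{theo:charaff} tells us that $\mm(\Xx, \sigma)$ is supersingular if and only if none of these $\omega.\Xx$ is a twist of $\Xx_{triv}$ or of $\Xx_{sign}$.

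Since $\Pp^*$ is $\tilde\Omega$-stable, this latter condition is equivalent to $\Xx$ itself not being a twist of $\Xx_{triv}$ or $\Xx_{sign}$, i.e. to $(\Xx, \sigma) \in \Pp^*$. Restricting the bijection of Lemma \ref{classiSuper} to the $\tilde\Omega$-stable subset $\Pp^* \subseteq \Pp$ then yields the bijection claimed in the corollary. No genuine obstacle arises in this argument beyond packaging together Lemma \ref{classiSuper} and Theorem \ref{theo:charaff}; the substance of the classification is contained entirely in Theorem \ref{theo:charaff} (whose proof exploited the special role of the highest root via Lemma \ref{calcul-n0}, and which in turn relied on the independence Lemma \ref{theoA}).
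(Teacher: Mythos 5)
Your proposal is correct and matches the paper's own (implicit) proof: the paper simply states that Lemma \ref{classiSuper} and Theorem \ref{theo:charaff} together give the corollary, and your argument is a clean unpacking of exactly that combination, including the observation that $\tilde\Omega$-stability of $\Pp^*$ is what lets one pass from the existential criterion in Theorem \ref{theo:charaff} to the condition $(\Xx,\sigma)\in\Pp^*$.
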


\subsection{Pro-$p$ Iwahori invariants of parabolic inductions and of special representations.} 

\subsubsection{} 
In this paragraph,   $\kb$ is an arbitrary field.
Let $F$ be a standard facet, $\Pi_F$ the associated set of simple roots and $\P_F$
the  group of $\Corps$-points of the corresponding standard parabolic subgroup with Levi decomposition $\P_F=\M_F{\rm N}_F$. We use the same notations as in \ref{induc}. The unipotent subgroup  ${\rm N}_F$ is generated by all the root subgroups $\Uu_\alpha$ for $\alpha\in \Phi^+-\Phi_F^+$.  Let ${\rm N}_F^-$ denote the opposite unipotent subgroup of $\G$.
The pro-$p$ Iwahori subgroup $\I$ has the following decomposition:
$$\I= \: \I_F^+\:\I_F^0\:\I_F^-\textrm{ where  $\I_F^+:=\I\cap {\rm N}_F$,  $\I_{F}^0:=\I\cap \M_F$, $\I_F^-:=\I\cap {\rm N}_F^-$}.$$
Recall that, by Remark \ref{rema:nega}, the subspace  $ \Hh_{\kb}(\M_F)^-$  of $ \Hh_{\kb}(\M_F)$ generated over $\kb$ by all $\t_w^F$ for all $F$-negative $w\in \tilde\W_F$ identifies with a sub-$\kb$-algebra of $\Hh_\kb$   \emph{via} the injection
$$\begin{array}{cccc}j_F^-:& {\Hh}_{\kb}(\M_F)^-&\longrightarrow& {\Hh}_{\kb}\cr &\t^F_w&\longmapsto &\t_w\cr\end{array}.$$
This endows   $\Hh_k$ with a structure of left module over $\Hh_{\kb}(\M_F)^-$.

\begin{prop} Let $(\upsigma, \V_\upsigma)$ be a smooth    $\kb$-representation of $\M_F$. Consider the parabolic induction $\Ind_{\P_F}^\Gp\upsigma$ and its $\I$-invariant subspace  $(\Ind_{\P_F}^\Gp\upsigma)^\I$.
There is a surjective morphism of right $\Hh_\kb$-modules
\begin{equation}\label{eq:surjective} \upsigma^{\I_F^0}\otimes _{ {\Hh}_\kb(\M_F)^-}\Hh_\kb \longrightarrow (\Ind_{\P_F}^\Gp\upsigma)^\I\end{equation}
sending $v\otimes 1$ to the unique $\I$-invariant function with support in $\P_F\I$ and value $v$ at $1_\Gp$.
\label{prop:invparab}
\end{prop}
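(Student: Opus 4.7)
The plan is threefold: construct the natural map on the first tensor factor, verify its equivariance for the right action of $\Hh_\kb(\M_F)^-$ (acting on the target through $j_F^-$), and deduce surjectivity of the induced tensor-product map by exhausting the $(\P_F,\I)$-double cosets of $\Gp$ via the right $\Hh_\kb$-action.

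First, the Iwahori decomposition $\I = \I_F^+\I_F^0\I_F^-$ yields $\P_F\cap\I = \I_F^+\I_F^0$; since $\upsigma$ extends trivially across ${\rm N}_F$ to a representation of $\P_F$, the formula $f_v(pi):=\upsigma(p)v$ for $p\in\P_F$, $i\in\I$ is well-defined on $\P_F\I$ exactly when $v\in\V_\upsigma^{\I_F^0}$, producing the desired element $f_v\in(\Ind_{\P_F}^\Gp\upsigma)^\I$ with $f_v(1_\Gp)=v$ and support $\P_F\I$. I would then establish the identity $f_v\cdot j_F^-(\tau_w^F) = f_{v\cdot\tau_w^F}$ for every $F$-negative $w\in\tilde\W_F$, which is exactly what makes the map descend to a morphism out of $\V_\upsigma^{\I_F^0}\otimes_{\Hh_\kb(\M_F)^-}\Hh_\kb$. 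Expanding both actions via the standard convolution formulas, this comes down to matching representatives of $\I\hat w\I/\I$ on the $\Hh_\kb$-side with representatives of $\I_F^0\hat w\I_F^0/\I_F^0$ on the $\Hh_\kb(\M_F)$-side. The crucial input is that $F$-negativity of $w$ gives the inclusions $\hat w^{-1}\I_F^+\hat w\subseteq\I_F^+$ and $\hat w\I_F^-\hat w^{-1}\subseteq\I_F^-$; combined with the restricted support of $f_v$, these let one choose representatives of the ambient cosets inside $\I_F^0$, the extraneous $\I_F^\pm$-contributions being either absorbed by the fixator of $v$ or vanishing on $f_v$ for support reasons.

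Granted the equivariance, surjectivity will follow from the Iwasawa decomposition $\Gp=\P_F\K$ together with the Bruhat--Iwahori decomposition of $\K$, which produce the $\kb$-linear direct-sum decomposition
$$(\Ind_{\P_F}^\Gp\upsigma)^\I = \bigoplus_{d}V_d,$$
indexed by distinguished representatives $d$ of $\P_F\backslash\Gp/\I$, where $V_d$ denotes the $\I$-fixed functions supported on $\P_F\hat d\I$. Each $V_d$ can then be reached by applying a suitable $\tau_{\tilde d}\in\Hh_\kb$ (with $\tilde d$ a lift of $d$ in $\tilde\W$) to the image of $\V_\upsigma^{\I_F^0}$, matching the ``$d$-th isotypic component'' of the target with an appropriate fixed subspace of $\V_\upsigma$. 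The main obstacle will be the equivariance step: aligning the two coset decompositions demands careful tracking of how $F$-negative elements of $\tilde\W_F$ interact with the unipotent pieces $\I_F^\pm$, and of how the support of $f_v$ propagates under Hecke convolution, so that the unwanted terms appearing on the ambient side disappear.
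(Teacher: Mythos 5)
Your proposal follows the same three-step plan as the paper: (1) construct the basic function $f_v$ supported on $\P_F\I$ with $f_v(1)=v$, using $\P_F\cap\I=\I_F^+\I_F^0$ to see that the defining formula is consistent precisely for $v\in\V_\upsigma^{\I_F^0}$; (2) prove the equivariance identity $f_v\cdot\tau_w = f_{v\cdot\tau_w^F}$ for $F$-negative $w\in\tilde\W_F$ so that the map factors through the tensor product; (3) obtain surjectivity by decomposing the target along distinguished representatives of $\P_F\backslash\Gp/\I$ and hitting each piece $V_d$ with $\tau_{\hat d}$. This is exactly the chain of Lemmas \ref{fact:coset}, \ref{fact:cd-nega} and \ref{lemma:compu-parab} in the paper.

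There is, however, one concrete error in the technical heart of step (2). You assert that $F$-negativity of $w$ yields $\hat w^{-1}\I_F^+\hat w\subseteq\I_F^+$ and $\hat w\I_F^-\hat w^{-1}\subseteq\I_F^-$. This is reversed: by the paper's Lemma \ref{fact:cd-nega} (compare condition \eqref{cd}), an $F$-negative $w$ satisfies $\hat w\,\I_F^+\,\hat w^{-1}\subseteq\I_F^+$ and $\hat w^{-1}\I_F^-\hat w\subseteq\I_F^-$, i.e.\ $\hat w$ contracts $\I_F^+$ and dilates $\I_F^-$. The direction matters: the argument that $f_v\cdot\tau_w$ remains supported on $\P_F\I$ uses $\I_F^-\hat w\subseteq\hat w\I_F^-$, which comes from $\hat w^{-1}\I_F^-\hat w\subseteq\I_F^-$; with your inclusions one gets the opposite containment $\hat w\I_F^-\subseteq\I_F^-\hat w$, which does not allow you to push $\I_F^-$ past $\hat w$ into $\I$, and the convolution would not visibly stay on $\P_F\I$ nor reduce to the Levi-level convolution. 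So the statement of the crucial input needs to be corrected before the coset-matching you describe can go through.
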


\begin{rema} In the case of ${\mathbf G}= {\rm PGL}_n$ or ${\rm GL}_n$,
Proposition 5.2 in \cite{Oparab} implies that \eqref{eq:surjective} is an isomorphism.  This  result should be true for a general (split) ${\mathbf G}$, but we will only use the surjectivity here.
\end{rema}

The proposition follows from the discussion below. All the   lemmas  are proved in the next paragraph.
\medskip

\begin{lemma} Let $\Dd_F=\{d\in \Wf, \: d^{-1}\Phi_F^+\subseteq \Phi^+\}$. 
\begin{itemize} 
\item[i.] For $d\in \Dd_F$, we have $\P_F\I\hat{d}\,\I=\P_F \hat{d}\,\I$.
\item[ii.] The set of all  $\hat{d}\in \Gp$  for $d\in \Dd_F$ is a system of representatives of the double cosets 
$\P_F\backslash \Gp/\I$.

\item[iii.] For $d\in \Dd_F$, let $\I \hat{ d}\, \I=\coprod_y\I \hat{d} y$ be a decomposition into right cosets.
Then $$\P_F \hat{d}\,\I=\coprod_{y}\P_F \I \hat{d} y.$$

\item[iv.] Let $d\in \Dd_F$. By the projection $\P_F\twoheadrightarrow \M_F$, the image of  $\P_F\cap \hat {d} \,\I \hat {d} ^{-1}$ is $\I_F^0$.
\end{itemize}
\label{fact:coset} 
\end{lemma}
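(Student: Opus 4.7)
All four statements are combinatorial exercises in affine Bruhat--Tits theory, mixing the Iwahori decomposition with the definition of $\Dd_F$. My plan is to handle (i) and (iv) by direct analysis of root subgroups and affine roots, deduce (ii) from Iwasawa decomposition, and derive (iii) from (i) together with (iv).

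For (i) I would begin from the Iwahori decomposition $\I=\I_F^+\I_F^0\I_F^-$ and observe that $\I_F^+\I_F^0\subseteq\P_F$, so $\P_F\I\hat{d}\I=\P_F\I_F^-\hat{d}\I$. It then suffices to establish that $\hat{d}^{-1}\I_F^-\hat{d}\subseteq\I$, or equivalently $\I_F^-\hat{d}\subseteq\hat{d}\I$. The group $\I_F^-$ is generated by $\T^1$ and the root subgroups $\I\cap\Uu_\alpha$ for $\alpha\in\Phi^-\setminus\Phi_F^-$, each of which is associated to an affine root of the form $(\alpha,r)$ with $r\geq 1$, i.e.\ a positive affine root. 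Under $\hat{d}^{-1}$ such an affine root is sent to $(d^{-1}\alpha,r)$, and since $r\geq 1$ this is again positive, so the resulting conjugate sits in $\I$.

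For (ii) the strategy is the Iwasawa decomposition $\Gp=\P_F\cdot\K$ combined with finite Bruhat decomposition. We have $\P_F\backslash\Gp/\I=(\P_F\cap\K)\backslash\K/\I$; reducing modulo $\K_1$ this becomes $\overline{\mathbf{P}}_F(\mathbb{F}_q)\backslash\overline{\mathbf{G}}_{x_0}(\mathbb{F}_q)/\overline{\mathbf{B}}(\mathbb{F}_q)$, which by the standard finite Bruhat decomposition is indexed by $\Wf_F\backslash\Wf$. The set $\Dd_F$ is the usual system of minimal-length representatives, and lifting a representative of each double coset in the finite group to a lift $\hat{d}\in\Gp$ gives (ii).

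For (iv) the inclusion $\I_F^0\subseteq\P_F\cap\hat{d}\I\hat{d}^{-1}$ reduces to $\hat{d}^{-1}\I_F^0\hat{d}\subseteq\I$, which follows by the same affine-root bookkeeping as in (i): for $\alpha\in\Phi_F^+$ the root $(\alpha,0)$ is sent to $(d^{-1}\alpha,0)$ which is positive precisely because $d\in\Dd_F$, while for $\alpha\in\Phi_F^-$ the affine root $(\alpha,1)$ is sent to $(d^{-1}\alpha,1)$, again positive. For the reverse inclusion, given $x=mn\in\P_F\cap\hat{d}\I\hat{d}^{-1}$ with $m\in\M_F$, $n\in\rm{N}_F$, I would use the Iwahori decomposition of $\hat{d}\I\hat{d}^{-1}$ attached to the chamber $\hat{d}C$ and compare it with the decomposition $\G=\rm{N}_F^-\cdot\M_F\cdot\rm{N}_F$, then read off that the $\M_F$-component of $x$ must lie in $\I_F^0$ by checking the affine roots stabilized.

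The main obstacle will be (iii). Part (i) already gives $\P_F\hat{d}\I=\P_F\I\hat{d}\I=\bigcup_y\P_F\I\hat{d}y$ from the disjoint decomposition $\I\hat{d}\I=\coprod_y\I\hat{d}y$, so only disjointness remains. If $p_1 i_1\hat{d}y_1=p_2 i_2\hat{d}y_2$ with $p_j\in\P_F$, $i_j\in\I$, then rearranging gives an element of $\P_F$ expressible as $i_2\hat{d}y_2y_1^{-1}\hat{d}^{-1}i_1^{-1}\cdot p_2p_2^{-1}$, so that $\hat{d}y_2y_1^{-1}\hat{d}^{-1}$ lies in $\I\cdot(\P_F\cap\hat{d}\I\hat{d}^{-1})\cdot\I$. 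Invoking (iv) to control the $\M_F$-component of $\P_F\cap\hat{d}\I\hat{d}^{-1}$ forces $y_2y_1^{-1}\in\I\cap\hat{d}^{-1}\I\hat{d}$, which is exactly the condition that $y_1$ and $y_2$ represent the same coset in the chosen system of right coset representatives $(\I\cap\hat{d}^{-1}\I\hat{d})\backslash\I$. The delicate point is the correct choice of the indexing set for $y$ and the careful tracking of which subgroup of $\I$ gets absorbed where; this is where I expect most of the technical work to concentrate.
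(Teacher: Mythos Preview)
Your treatments of (i) and (ii) are correct and essentially coincide with the paper's: absorb $\I_F^+\I_F^0$ into $\P_F$ and check $\hat d^{-1}\I_F^-\hat d\subseteq\I$ for (i); use Iwasawa plus finite Bruhat for (ii).

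For (iii) and (iv), however, your plan has a real gap. You propose to prove (iv) first by a direct Iwahori-decomposition argument and then deduce the disjointness in (iii) from (iv). Neither step is solid. In your disjointness argument, from $p_1 i_1\hat d y_1=p_2 i_2\hat d y_2$ one gets $\hat d\,y_2y_1^{-1}\hat d^{-1}=i_2^{-1}(p_2^{-1}p_1)i_1\in\I\,\P_F\,\I\cap\hat d\I\hat d^{-1}$; there is no reason this lies in $\I\cdot(\P_F\cap\hat d\I\hat d^{-1})\cdot\I$, and even if it did, knowing only that the $\M_F$-projection of $\P_F\cap\hat d\I\hat d^{-1}$ is $\I_F^0$ would not force the element into $\I$. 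Your sketch of the reverse inclusion in (iv), comparing the Iwahori factorization of $\hat d\I\hat d^{-1}$ with the Levi decomposition $\P_F=\M_F{\rm N}_F$, is also too vague: these two factorizations are not a priori compatible, so one cannot simply ``read off'' the $\M_F$-component.

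The paper reverses your logical order and treats (iii) and (iv) simultaneously via a single key inclusion, proved by reduction to the residue field. Since $\hat d\in\K$, everything lives in $\K$; reducing modulo $\K_1$ sends $\I$ to $\overline{\mathbf N}(\mathbb F_q)$, $\I_F^-$ to $1$, and $\P_F\cap\K$ to $\overline{\mathbf P}_F(\mathbb F_q)$. One is then reduced to the finite-group fact $\overline{\mathbf P}_F(\mathbb F_q)\cap d\,\overline{\mathbf N}(\mathbb F_q)\,d^{-1}\subseteq\overline{\mathbf N}(\mathbb F_q)$ for $d\in\Dd_F$ (Carter, 2.5.12). This gives $\P_F\cap\I_F^-\hat d\I\hat d^{-1}\subseteq\I$, from which the disjointness in (iii) follows immediately (if $\hat d\in\P_F\I_F^-\hat d y$ then the $\P_F$-part lies in $\I$, hence $\hat d\in\I\hat d y$), and (iv) drops out since $\P_F\cap\hat d\I\hat d^{-1}\subseteq\P_F\cap\I=\I_F^0\I_F^+$. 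The missing ingredient in your plan is precisely this passage through the finite group.
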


An element  $m\in \M_F$  contracts $\I_F^+$ and dilates $\I_F^-$ if it satisfies the conditions (see  \cite[(6.5)]{BK}):
\begin{equation} \label{cd}m \I_F^+ m^{-1}\subseteq \I_F^+, \qquad m^{-1} \I_F^- m\subseteq \I_F^-. \end{equation}

\begin{rema} 
This property of an element  $m\in \M_F$ only depends on
the double coset \\ $\I_F^0m  \I_F^0$.  Furthermore,
if $m\in \K\cap \M_F$ then $m \I_F^+ m^{-1}= \I_F^+$  and  $m^{-1}\I_F^- m= \I_F^-. $\label{rema:equa}
\end{rema}

\begin{lemma} 
Let $w\in\tilde\W_F$.
The element $\hat w$ satisfies \eqref{cd} if and only if $w$ is $F$-negative.

\label{fact:cd-nega}
\end{lemma}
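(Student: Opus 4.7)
The plan is to use the semidirect decomposition $\tilde\W_F = \tilde\Wf_F \ltimes \X_*(\T)$ of Remark \ref{rema:semi} to reduce to the case of a pure translation, then to settle the translation case by a direct filtration-index computation on the root subgroups making up $\I_F^{\pm}$.

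Write $w = u\, e^\mu$ with $u \in \tilde\Wf_F$ projecting to some $\bar u \in \Wf_F$, and $\mu \in \X_*(\T)$. I would choose a lift $\hat u \in N_\G(\T) \cap \K \cap \M_F$; this is possible because $\bar u$ is a product of simple reflections $s_\alpha$ for $\alpha \in \Phi_F$, each admitting a lift $n_\alpha \in \K \cap \M_F$. Then $\hat w$ and $\hat u \cdot \mu(\varpi^{-1})$ differ by an element of $\T^1 \subseteq \I_F^0$, so by Remark \ref{rema:equa} they satisfy \eqref{cd} simultaneously. Since $\hat u \in \K \cap \M_F$, conjugation by $\hat u$ preserves both $\I_F^+$ and $\I_F^-$ (again by Remark \ref{rema:equa}), so the question reduces to whether $\mu(\varpi^{-1})$ satisfies \eqref{cd}.

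A standard parabolic fact is that $\bar u \in \Wf_F$ permutes $\Phi^+ - \Phi_F^+$; combined with the defining formula for the action of $\W$ on $\Phi_{aff}$ recalled in the preliminaries, this shows that $w$ is $F$-negative if and only if $\lp\mu, \alpha\rp \leq 0$ for every $\alpha \in \Phi^+ - \Phi_F^+$. To close the argument, I would use the Iwahori factorization of $\I_F^{\pm}$ as a product of filtered root subgroups $\Uu_\alpha(r_\alpha)$ with $\alpha \in \Phi^{\pm} - \Phi_F^{\pm}$, together with the identity $\mu(\varpi^{-1})\, \Uu_\alpha(r)\, \mu(\varpi) = \Uu_\alpha(r - \lp\mu, \alpha\rp)$ coming directly from the sign convention \eqref{normalization}. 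Both containments in \eqref{cd} then reduce to the single inequality $\lp\mu, \alpha\rp \leq 0$ for $\alpha \in \Phi^+ - \Phi_F^+$, which matches the $F$-negativity criterion exactly.

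The main obstacle I anticipate is purely notational: making sure the decomposition $w = u\, e^\mu$ admits a lift compatible with the filtrations so that Remark \ref{rema:equa} applies cleanly, and then carefully tracking the direction of the filtration shift under conjugation by $\mu(\varpi^{-1})$ given that \eqref{normalization} uses $\lp\nu(g), \chi\rp = -\val_{\mathfrak F}(\chi(g))$ (an off-by-sign here would flip \emph{positive} and \emph{negative}). No deeper difficulty is expected.
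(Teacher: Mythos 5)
Your proof is correct and follows essentially the same route as the paper's: both reduce to the pure translation $\mu(\varpi^{-1})$ by using the semidirect decomposition together with Remark \ref{rema:equa}, and then settle the translation case by the shift $\Uu_{(\alpha,r)}\mapsto\Uu_{(\alpha,r-\lp\alpha,\mu\rp)}$ on filtered root subgroups. You merely make explicit the decomposition $w=u\,e^\mu$, the choice of lift $\hat u\in\K\cap\M_F$, and the characterization of $F$-negativity of $w$ in terms of $\mu$, all of which the paper leaves implicit in the phrase ``By Remark \ref{rema:equa} it is enough to prove the result for $w=e^\lambda$.''
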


 Let $(\upsigma, \V_\upsigma)$  as in the proposition. 
Let $v\in\V_\upsigma^{\I_F^0}$ and $d\in \Dd_F$.
By Lemma \ref{fact:coset} ii and iv,  the $\I$-invariant function  
$$f_{d,v}\in (\Ind_{\P_F}^\Gp\sigma)^\I$$ with support in $\P_F\hat  d\, \I$ and value $v$ at $\hat d$ is well-defined and the set of all 
$f_{d,v}$ form a basis of $(\Ind_{\P_F}^\Gp\sigma)^\I$, when $d$ ranges over $\Dd_F$ and $v$ over a basis of $\V_\upsigma^{\I_F^0}$.

\begin{lemma}\begin{itemize}

\item[i.]  Let     $w$  an $F$-negative element in $\tilde\W_F$.
Then $f_{1,v}\:.\:\tau_w= f_{1, \, v.\tau_w^F}.$

\item[ii.] We have
$f_{1, v}\: .\:\tau_{\hat d}= f_{d,v} .$

\end{itemize}
\label{lemma:compu-parab}
\end{lemma}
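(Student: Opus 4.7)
The plan is to prove both statements by a direct computation of the right $\Hh_\kb$-action on $\I$-invariants, unfolding the definition in terms of coset representatives of $\I\hat w\I$ and $\I\hat d\I$ in $\Gp$. The key geometric inputs are Lemmas~\ref{fact:coset} and~\ref{fact:cd-nega}; the crucial observation throughout is that $\hat w,\hat d\in \P_F$ while $f_{1,v}$ is supported in the single $(\P_F,\I)$-double coset $\P_F\I$, so that only representatives compatible with this support will contribute non-trivially.

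For (ii), the support of $f_{1,v}\cdot\tau_{\hat d}$ is contained in $\P_F\hat d\I$ by Lemma~\ref{fact:coset}.i, so it suffices to compute the value at $x=\hat d$. Using Lemma~\ref{fact:coset}.iii, one extracts coset representatives for $\I\hat d\I$ so that only those landing in the support $\P_F\I$ contribute when one evaluates at $\hat d\cdot\text{(representative)}$; Lemma~\ref{fact:coset}.iv, which identifies the image of $\P_F\cap\hat d\I\hat d^{-1}$ in $\M_F$ with $\I_F^0$, then pins down a single surviving $\I$-orbit whose contribution equals $v$.

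For (i), the $F$-negativity of $w$ gives, via Lemma~\ref{fact:cd-nega}, the contraction and dilation conditions $\hat w\I_F^+\hat w^{-1}\subseteq\I_F^+$ and $\hat w^{-1}\I_F^-\hat w\subseteq\I_F^-$. Combined with the Iwahori factorisation $\I=\I_F^+\I_F^0\I_F^-$, these inclusions yield the key reduction
$$\I\hat w\I=\I_F^+\I_F^0\hat w\I,$$
since $\I_F^-\hat w=\hat w(\hat w^{-1}\I_F^-\hat w)\subseteq\hat w\I$. From here the strategy is to produce a system of $\I$-coset representatives inside $\I_F^0\hat w$ in bijection with those of $\I_F^0\hat w\I_F^0$ in $\M_F$ that parameterise the action of $\tau_w^F$ on $\V_\upsigma^{\I_F^0}$, exploiting that $\upsigma$ is extended trivially from $\M_F$ to $\P_F=\M_F\mathrm{N}_F$ and that $\hat w\in\M_F\subseteq\P_F$. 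Each such representative $h\in\M_F$ then contributes $\upsigma(h)v$, and the resulting sum matches the definition of $v\cdot\tau_w^F$.

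The main technical obstacle lies in part (i): one must carefully match the $\I$-coset decomposition of $\I\hat w\I$ in $\Gp$ with the $\I_F^0$-coset decomposition of $\I_F^0\hat w\I_F^0$ in $\M_F$. This amounts to verifying the compatibility of the two decompositions under the algebra embedding $j_F^-\colon \Hh_\kb(\M_F)^-\hookrightarrow \Hh_\kb$, and is the standard calculation using the Iwahori factorisation of $\hat w^{-1}\I\hat w$ together with the contraction and dilation conditions, exactly as in the proof that $j_F^-$ respects multiplication (see \cite[(6.12)]{BK}); I expect no genuinely new difficulty here beyond careful book-keeping with the various subgroups.
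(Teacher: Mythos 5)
Your proposal is correct and follows essentially the same route as the paper: for (ii) you use Lemma~\ref{fact:coset}.i to localize the support and \ref{fact:coset}.iii (together with iv, implicitly via the definition of $f_{d,v}$) to compute the value at $\hat d$, and for (i) you use $F$-negativity via Lemma~\ref{fact:cd-nega} to shrink the support of $f_{1,v}\cdot\tau_w$ to $\P_F\I$ and then match the $\I$-coset decomposition of $\I\hat w\I$ with the $\I_F^0$-coset decomposition of $\I_F^0\hat w\I_F^0$. The only cosmetic difference is that where you cite the multiplicativity argument for $j_F^-$ in \cite[(6.12)]{BK} for the final book-keeping in (i), the paper defers to the worked-out ${\rm GL}_n$ case in \cite[6A.3]{Oparab} — both pointers cover the same standard computation.
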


\subsubsection{Proof of the lemmas}
Recall that
given $\alpha\in\Phi$, the root subgroup $\Uu_\alpha$
is endowed with a filtration $\Uu_{(\alpha, k)}$ for $k\in \Z$ (see for example  \cite[I.1]{SS} or  \cite[4.2]{OS}) and that the product map 
\begin{equation}\label{rootsubgroupeq}
    \prod_{\alpha\in \Phi^-} \Uu_{(\alpha, 1)}\times  \Tp^1 \times  \prod_ {\alpha\in \Phi^+} \Uu_{(\alpha, 0)}\overset{\sim}\longrightarrow \I
\end{equation}induces a bijection, where the products on the left hand side are ordered in some arbitrary chosen way (\cite[Proposition I.2.2]{SS}).
The subgroup $\I_F^+$ (resp. $\I_F^-$) of $\I$ is generated by the image of
 $ \prod_{\alpha\in \Phi^+-\Phi_F^+} \Uu_{(\alpha, 0)}$ (resp. 
 $ \prod_{\alpha\in \Phi^--\Phi_F^-} \Uu_{(\alpha, 1)}$). The subgroup
 $\I_F^0$ of $\I$ is generated the image of
 $ \prod_{\alpha\in \Phi_F^-} \Uu_{(\alpha, 1)}\times\T^1\times \prod_{\alpha\in \Phi_F^+} \Uu_{(\alpha, 0)}$.

\begin{proof}[Proof of Lemma \ref{fact:coset}]  i. We have    $\P_F\I\hat{  d}\I=\P_F \I_F^-\hat{  d}\I$. But for $\alpha \in \Phi^+$, we have $\hat{  d}^{-1} \,\Uu_{(-\alpha, 1) }\,\hat{  d}=
\Uu_{(-d^{-1}\alpha, 1)}\subseteq \I$ so $\I_F^-\hat{  d}\subseteq \hat{  d}\I$ and $\P_F\I\hat{  d}\I=\P_F\hat{  d}\I$. Point ii   follows by Bruhat decomposition for $\K$ and Iwasawa decomposition for $\Gp$. For iii,  
we first recall that the image of $\P_F\cap \K$ by the reduction $red:\K\rightarrow  \overline{\mathbf G}_{x_0}(\mathbb F_q)$ modulo $\K_1$  is  a parabolic
subgroup $ \overline{\mathbf P}_F(\mathbb F_q)$ containing $ \overline{\mathbf B}(\mathbb F_q)$ (notations in \ref{nots}).
% and with unipotent radical denoted by $\overline{\mathbf N}_F(\mathbb F_q)$. 
Recall that the Weyl group of $\overline{\mathbf G}_{x_0}(\mathbb F_q)$ is $\Wf$:
for $w\in \Wf$ we will still denote by $w$ a chosen lift in $\overline{\mathbf G}_{x_0}(\mathbb F_q)$.
The set $\Dd_F$ is a system of representatives of  $\overline{\mathbf P}_F(\mathbb F_q)\backslash \overline{\mathbf G}_{x_0}(\mathbb F_q)/\overline{\mathbf N}(\mathbb F_q)$.
For $d\in \Dd_F$ we have, using \cite[2.5.12]{Carter},
$$ \overline{\mathbf P}_F(\mathbb F_q)\cap d  \overline{\mathbf N}(\mathbb F_q) d^{-1}\subset \overline{\mathbf N}(\mathbb F_q). $$
 We deduce that  the image of $\P_F\cap \I_F^-\hat d \I \hat d^{-1}$ by $red$ is contained in $\overline{\mathbf N}(\mathbb F_q)$
and therefore $\P_F\cap  \I_F^- \hat d \I \hat d^{-1}$ is contained in $ \I$.\\
Now let $d\in\Dd_F$ and $y\in \I$. By the previous observations,  $\hat{  d}\in \P_F \I \hat{  d} y = \P_F \I ^-_F\hat{  d} y$  implies 
$\hat{  d}\in \I \hat{  d} y $. It proves iii.
In passing we proved that $\P_F\cap \hat d \I  \hat d^{-1}$ is contained in $\P_F\cap\I= \I_F^0\I_F^+$. Since  $ \I_F^0$ is contained in 
$\P_F\cap \hat d \I \hat d^{-1}$ by definition of $\Dd_F$, it proves iv.
\end{proof}

\begin{proof}[Proof of Lemma \ref{fact:cd-nega}] By Remark \ref{rema:equa} it is enough to prove the result for $w=e^\lambda\in\X_*(\T)$. A lift for $e^\lambda$ is given by $\lambda(\varpi^{-1})$.
The element $\lambda(\varpi^{-1})$ satisfies \eqref{cd} if   \begin{equation}\textrm{for all $\alpha\in \Phi^+-\Phi^+_F$ we have }\lambda(\varpi^{-1})\,\Uu_{(\alpha, 0)}\lambda(\varpi)\subseteq \I_F^+\textrm{ and }
\lambda(\varpi)\Uu_{(-\alpha, 1)}\lambda(\varpi^{-1})\subseteq \I_F^-.\label{cd'}\end{equation}
By \cite[Remark 4.1(1)]{OS} (for example),
  $\lambda(\varpi^{-1})\Uu_{(\alpha, 0)}\lambda(\varpi)= \Uu_{(\alpha, -\lp\alpha,\lambda\rp)}$
 and $ 
\lambda(\varpi)\Uu_{(-\alpha, 1)}\lambda(\varpi^{-1})=\Uu_{(-\alpha, 1-\lp \alpha,\lambda\rp)}$.
Condition \eqref{cd'} is satisfied if and only if
  $\lambda$ is $F$-negative (definition in \ref{induc}).

\end{proof}

\begin{proof}[Proof of Lemma \ref{lemma:compu-parab}] i. Let  $w$ be  an $F$-negative element in $\tilde\W_F$. The function
$f_{1,v}\:.\:\tau_w$ has support in $\P_F\I_F^-\hat w \I$. Since $\hat w$ satisfies \eqref{cd}, we have
$\P_F\I_F^-\hat w \I=\P_F\hat w \I=\P_F\I$. It remains to compute the value of 
$f_{1,v}\:.\:\tau_w$ at $1_\G$ (we choose the unit element $1_\G$ of $\G$  as a lift for $1\in \Dd_F$).
The proof goes through exactly as in \cite[6A.3]{Oparab} where it is written up in the case of $\mathbf G= {\rm GL}_n$.
ii.  Let  $d\in \Dd_F$.
By Lemma \ref{fact:coset}i, the $\I$-invariant function $f_{1, v}\: .\:\tau_{d}$ has support in $\P_F\hat d\I$ and it  follows from Lemma \ref{fact:coset}iii that it takes value $v$ at $\hat d$.

\end{proof}

\medskip

\subsubsection{}  Here we consider again representations with coefficients in the algebraically closed field $k$ with characteristic $p$.
We draw corollaries from Proposition  \ref{prop:invparab}.
\begin{coro} Let $F\neq x_0$ be a standard facet. If $\upsigma$ is an admissible $k$-representation  of $\M_F$ with   a central character,  then   $(\Ind_{\P_F}^\Gp\upsigma)^\I$ is a finite dimensional $\Hh_k$-module  whose 
irreducible subquotients are not supersingular.
\label{coro:supercusp}
\end{coro}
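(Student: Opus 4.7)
First, the finite-dimensionality of $(\Ind_{\P_F}^\Gp\upsigma)^\I$ follows from the standard fact that smooth parabolic induction preserves admissibility: since $\upsigma$ is admissible, so is $\Ind_{\P_F}^\Gp\upsigma$, and its space of $\I$-invariants is finite-dimensional over $k$, hence a finite-length $\Hh_k$-module.

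For the non-supersingularity of the irreducible subquotients, the plan is to exhibit a central element $z\in \Zz^\c(\Hh_k)\cap\mathfrak I$ that acts invertibly on $(\Ind_{\P_F}^\Gp\upsigma)^\I$. Once such $z$ is produced, no simple subquotient $M$ can be supersingular: by Proposition-Definition \ref{propdefi}, $M$ would be annihilated by some power of $\mathfrak I$, forcing $z$ to act nilpotently on $M$, contradicting the invertibility inherited from the ambient module. My candidate is $z=z_\lambda$ for $\lambda\in\X_*^+(\T)$ chosen in the central cocharacter sublattice $\{\mu\in\X_*(\T):\lp\mu,\alpha\rp=0\text{ for all }\alpha\in\Phi_F\}$ and satisfying $\lp\lambda,\alpha\rp>0$ for every $\alpha\in\Phi^+-\Phi^+_F$. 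Since $F\neq x_0$, $\Phi_F\subsetneq\Phi$, this sublattice has positive rank and meets the strictly $F$-positive dominant cone, so such $\lambda$ exist. Then $\lambda(\varpi^{-1})\in Z(\M_F)$ acts on $\upsigma$ by the nonzero scalar $\omega_\sigma(\lambda(\varpi^{-1}))\in k^\times$, where $\omega_\sigma$ denotes the central character of $\upsigma$, and $\ell(e^\lambda)=\sum_{\alpha\in\Phi^+-\Phi^+_F}\lp\lambda,\alpha\rp>0$, so that $z_\lambda\in\mathfrak I$.

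To verify the invertibility of $z_\lambda$, I would pass through the surjection \eqref{eq:surjective} of Proposition \ref{prop:invparab} and analyze the right action of $z_\lambda$ on $V=\upsigma^{\I_F^0}\otimes_{\Hh_k(\M_F)^-}\Hh_k$. By Lemma \ref{theoA}, $z_\lambda=\sum_{\mu\in\OO(\lambda)}\Bb^-_F(\mu)$, and since $\Wf_F$ fixes $\lambda$, the orbit is parametrized by $\Wf/\Wf_F$. For each orbit element $\mu$ that is $F$-negative, $\Bb^-_F(\mu)$ lies in the image of $j_F^-$ by the $F$-negative analog of Lemma \ref{lemma:jFanti} indicated in Remark \ref{rema:nega}; its contribution to $v\otimes 1$ can thus be transferred across the tensor and reduces to the action of an element of $\Hh_k(\M_F)^-$ on $\upsigma^{\I_F^0}$, governed by $\omega_\sigma$. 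The main obstacle is controlling the contributions from those $\mu\in\OO(\lambda)$ that are not $F$-negative: for these, $\Bb^-_F(\mu)$ must be expressed as $q^{\star}\tau_{e^{\mu_1}}\tau_{e^{\mu_2}}^{-1}$ with $\mu_1,\mu_2\in\Cute^-(F)$, and the evaluation of their action on $v\otimes 1$ requires the product formula \eqref{fact:annu} together with the centrality of $z_\lambda$. Granting this delicate analysis, the total action of $z_\lambda$ on $V$ is by a nonzero scalar depending on $\omega_\sigma$, which descends to $(\Ind_{\P_F}^\Gp\upsigma)^\I$ via the surjection and completes the argument.
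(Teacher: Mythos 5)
Your finite-dimensionality argument and your overall strategy (produce a central element of $\mathfrak I$ acting invertibly on $V=\upsigma^{\I_F^0}\otimes_{\Hh_k(\M_F)^-}\Hh_k$ by combining Proposition \ref{prop:invparab} with Lemma \ref{theoA}) are the right ones and match the paper's proof. However, your specific choice of $\lambda$ is the wrong one, and the ``delicate analysis'' you defer is not merely deferred but actually fails for that choice.

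You take $\lambda$ \emph{dominant} and \emph{strongly $F$-positive}, lying in the central cocharacter sublattice $\{\mu:\lp\mu,\alpha\rp=0\text{ for }\alpha\in\Phi_F\}$. The paper instead picks $\lambda$ \emph{strongly $F$-negative} (so $\lp\lambda,\alpha\rp<0$ on $\Phi^+-\Phi_F^+$ and $=0$ on $\Phi_F$) and then uses $z_{\lambda_0}$ where $\lambda_0$ is the dominant representative of $\OO(\lambda)$; that $\lambda_0$ is in general \emph{not} in the central sublattice, so your $z$ and the paper's $z$ are genuinely different. The strongly $F$-negative choice makes three things hold at once: (1) $\lambda\in\Cute^-(F)$, so $\Bb_F^-(\lambda)=\tau_{e^\lambda}$ and this element is $F$-negative, hence pullable through $j_F^-$; (2) $\lambda(\varpi^{-1})$ is central in $\M_F$, so $(v\otimes 1)\Bb_F^-(\lambda)=\omega(\lambda(\varpi))(v\otimes 1)$ with $\omega(\lambda(\varpi))\in k^\times$; (3) for every other $\lambda'\in\OO(\lambda)$ one has $\Bb_F^-(\lambda)\Bb_F^-(\lambda')=0$ by \eqref{fact:annu} (since $\lambda$ is regular modulo $\Phi_F$, it shares no Weyl chamber with any other orbit element), and then (2) forces $(v\otimes 1)\Bb_F^-(\lambda')=0$ with no further computation. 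The ``obstacle'' you flag simply disappears.

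With your $\lambda$, properties (1) and (2) cannot coexist for the same orbit element. Concretely, take $\Gp=\GL_3(\Corps)$ and $\Pi_F=\{\alpha_1\}$, so $\M_F\cong\GL_2\times\GL_1$. Your $\lambda$ is $(a,a,b)$ with $a>b$, and its $\Wf$-orbit is $\{(a,a,b),(a,b,a),(b,a,a)\}$. The unique element of this orbit in $\Cute^-(F)$ (hence the only one whose $\Bb_F^-$-image equals a $\tau_{e^\mu}$ with $e^\mu$ $F$-negative) is $(a,b,a)$, and $(a,b,a)(\varpi^{-1})=\operatorname{diag}(\varpi^{-a},\varpi^{-b},\varpi^{-a})$ is \emph{not} central in $\M_F$. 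So $\tau^F_{e^{(a,b,a)}}$ acts on $\upsigma^{\I_F^0}$ by a genuine Hecke operator depending on $\upsigma$, not by the central character, and your $z_\lambda$ need not act as a scalar --- let alone an invertible one --- on $V$. To repair the argument you must replace your $\lambda$ by a strongly $F$-negative coweight and let $z_{\lambda_0}$ be the central Bernstein function attached to its orbit; this is precisely what the paper does.
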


\begin{proof} The fact that   $(\Ind_{\P_F}^\Gp\upsigma)^\I$ is finite dimensional is a consequence of the admissibility of $\upsigma$.
Let $\lambda\in \X_*(\T)$ a strongly $F$-negative coweight (see Remark \ref{rema:nega}) and 
$\lambda_0\in\X_*^+(\T)$ the unique dominant coweight in its $\Wf$-orbit $\OO(\lambda)$. By Lemma \ref{theoA}
$$z_{\lambda_0}=\sum_{\lambda'\in \OO(\lambda)}\Bb_F^-(\lambda').$$
We compute the action of $z_{\lambda_0}$ on an element of the form $v\otimes 1\in  \upsigma^{\I_F^0}\otimes _{ {\Hh}_k(\M_F)^-}\Hh_k$.
We have $\Bb_F^-(\lambda)=\tau_{e^{\lambda}}$ and therefore,
$$(v\otimes 1)\Bb_F^-(\lambda)= v\otimes \tau_{e^{\lambda}}=
v \otimes j_F^-(\tau^F_{e^{\lambda}})= (v  \tau^F_{e^{\lambda}}) \otimes 1.$$
Recall that $\tau^F_{e^\lambda}=\tau^F_{\lambda(\varpi^{-1})}$ and that
$\lambda(\varpi^{-1})$ is a central element in $\M_F$. 
Therefore, $v  \tau^F_{e^{\lambda}}= \omega(\lambda(\varpi)) v$ where $\omega$ denotes the central character of $\upsigma$.
By \eqref{fact:annu}, it implies  in particular that $(v\otimes 1)\Bb_F^-(\lambda')=0$
for 
$\lambda'\in \OO(\lambda)$ distinct from $\lambda$.
We have proved that 
$z_{\lambda_0}$ acts by multiplication by  $\omega(\lambda(\varpi))\neq 0$
 on $ \upsigma^{\I_F^0}\otimes _{ {\Hh}_k(\M_F)^-}\Hh_k$  and therefore on  $(\Ind_{\P_F}^\Gp\upsigma)^\I$ by Proposition \ref{prop:invparab}. It proves the claim.
\end{proof}
\begin{coro}   Let $F$ be a standard facet.
Let ${\rm Sp}_F$ be the generalized special $k$-representation of $\Gp$
$${\rm Sp }_F=\dfrac{\Ind_{\P_F}^\Gp \:1}{\sum_{F'\neq F \subset \overline F} \Ind_{\P_{F'}}^\Gp \:1}$$ where $F'$ ranges  over the set of standard facets $\neq F$ contained in the closure of $F$.
The $\I$-invariant subspace of ${\rm Sp }_F$ 
is a finite dimensional $\Hh_k$-module  whose 
irreducible subquotients are not supersingular.\label{coro:spec}
\end{coro}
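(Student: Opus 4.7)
\smallskip

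My plan is to split the argument into two cases depending on whether $F=x_0$, and in each case to reduce to Corollary \ref{coro:supercusp}. When $F=x_0$, the sum $\sum_{F'\neq F\subset \overline F}\Ind_{\P_{F'}}^\Gp \mathbf 1$ is empty (there is no standard facet strictly contained in $\overline{\{x_0\}}$), so ${\rm Sp}_{x_0}=\Ind_\Gp^\Gp \mathbf 1$ is just the trivial $\Gp$-representation. Its $\I$-invariants form the one-dimensional $\Hh_k$-module on which $\tau_w$ acts by $q^{\ell(w)}=0\in k$ whenever $\ell(w)>0$; this is precisely the character $\Xx_{triv}$ of Example \ref{triv-sign}, which by Lemma \ref{notss} gives rise to a non-supersingular simple module.

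When $F\neq x_0$, I would apply Corollary \ref{coro:supercusp} to the trivial representation $\upsigma=\mathbf 1$ of $\M_F$, which is admissible with trivial central character. This already yields that $(\Ind_{\P_F}^\Gp \mathbf 1)^\I$ is finite dimensional with no supersingular irreducible subquotient. Moreover, the explicit computation in the proof of \ref{coro:supercusp} shows that, for a strongly $F$-negative coweight $\lambda$ with $\ell(e^\lambda)>0$ and dominant conjugate $\lambda_0\in\OO(\lambda)$, the central element $z_{\lambda_0}\in \mathfrak I$ acts by the scalar $1$ on $(\Ind_{\P_F}^\Gp \mathbf 1)^\I$.

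Since ${\rm Sp}_F$ is a quotient of the admissible representation $\Ind_{\P_F}^\Gp \mathbf 1$ it is itself admissible, hence $({\rm Sp}_F)^\I$ is finite dimensional. The natural $\Hh_k$-equivariant map
\[
\phi:(\Ind_{\P_F}^\Gp \mathbf 1)^\I\longrightarrow ({\rm Sp}_F)^\I
\]
has image a submodule $M\subseteq ({\rm Sp}_F)^\I$ on which $z_{\lambda_0}$ still acts as $1$. Since $z_{\lambda_0}\in \mathfrak I$, no simple subquotient of $M$ can be supersingular (on a supersingular module every element of $\mathfrak I$ acts nilpotently, forbidding the eigenvalue $1$).

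The main obstacle is to propagate this conclusion from $M$ to all of $({\rm Sp}_F)^\I$, i.e.\ to show that $\phi$ is surjective. This reduces to the vanishing of the connecting map in $\I$-cohomology attached to the short exact sequence
\[
0\longrightarrow N\longrightarrow \Ind_{\P_F}^\Gp \mathbf 1\longrightarrow {\rm Sp}_F\longrightarrow 0,
\]
where $N=\sum_{F'\neq F\subset \overline F}\Ind_{\P_{F'}}^\Gp \mathbf 1$. Using the Iwasawa decomposition $\Gp=\P_F\K$, the situation restricts to the flag variety of $\overline{\mathbf G}_{x_0}(\mathbb F_q)$: the spaces $(\Ind_{\P_{F'}}^\Gp \mathbf 1)^\I$ are indexed by the finite Bruhat double cosets $\Wf_{F'}\backslash \Wf$, and the surjectivity of $\phi$ amounts to the statement $N^\I=\sum_{F'}(\Ind_{\P_{F'}}^\Gp \mathbf 1)^\I$, a finite combinatorial identity over $\overline{\mathbf G}_{x_0}(\mathbb F_q)/\overline{\B}(\mathbb F_q)$. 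Once this is established, $M=({\rm Sp}_F)^\I$ and the argument of the previous paragraph concludes the proof.
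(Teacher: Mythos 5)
Your overall structure agrees with the paper's: the $F=x_0$ case is handled exactly as in the paper (trivial representation, the character $\Xx_{triv}$, and Lemma \ref{notss}), and for $F\neq x_0$ the paper also reduces to Corollary \ref{coro:supercusp} once it knows that $({\rm Sp}_F)^\I$ is a quotient of $(\Ind_{\P_F}^\Gp 1)^\I$. The paper obtains this quotient statement by citing Grosse-Kl\"onne \cite[(18)]{GK}, noting that the cited result holds for a general split group.

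The gap in your argument is precisely this last point: you correctly identify the surjectivity of $\phi:(\Ind_{\P_F}^\Gp 1)^\I\to ({\rm Sp}_F)^\I$ as the crux, but you do not prove it, and the reduction you propose is not right. You claim that surjectivity of $\phi$ ``amounts to'' the identity $N^\I=\sum_{F'}(\Ind_{\P_{F'}}^\Gp \mathbf 1)^\I$ inside $(\Ind_{\P_F}^\Gp 1)^\I$. But that identity only controls the \emph{kernel} of $\phi$: left exactness of $\I$-invariants gives $\ker\phi=N^\I$, and $\phi$ being surjective is an entirely separate statement (it is the vanishing of the boundary map $({\rm Sp}_F)^\I\to {\rm H}^1(\I,N)$, which you mention but then abandon in favor of the incorrect reduction). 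Even granting your combinatorial identity for $N^\I$, you would still need to compare $\dim(\Ind_{\P_F}^\Gp 1)^\I-\dim N^\I$ with $\dim({\rm Sp}_F)^\I$ — and the latter is exactly what you do not yet have a handle on. So your plan, as stated, leaves the essential step unfinished, and the specific reduction proposed for it is incorrect. To complete the argument you would need to either carry out the ${\rm H}^1$-vanishing, or simply invoke \cite[(18)]{GK} as the paper does.
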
 
\begin{proof}  Suppose first that $F\neq x_0$. By \cite[(18)]{GK} (which is valid with no restriction on the split group $\G$),  $({\rm Sp }_F)^\I$ is a quotient
of $(\Ind_{\P_F}^\Gp \:1)^\I$. Apply Corollary \ref{coro:supercusp}. If $F=x_0$, then the special representation in question is the trivial character of $\G$ whose  $\I$-invariant subspace is isomorphic to the trivial character of $\Hh_k$  and is not supersingular (Example \ref{triv-sign} and Lemma \ref{notss}).

\end{proof}

\subsection{On supersingular representations\label{onS}}
Let $\rho$ be a  weight  of $\K$. 
By \eqref{diag}, there is a correspondence  between the $k$-characters of $\cal H(\Gp, \rho)$ and the $k$-characters  of $\Zz^\c(\Hh_k)$,  and we will use the same letter $\zeta$ for two characters paired up by \eqref{diag}. With this notation, by the work in \ref{sec:compa},  we have a surjective morphism of representations of $\Gp$:
\begin{equation} \zeta\otimes_{\Zz^\c(\Hh_k)} \ind_\I^\Gp 1\longrightarrow  \zeta\otimes_{\cal H(\Gp, \rho)} \ind_\K^\Gp \rho.\label{quotient-univ}\end{equation}

For $\omega$ a character of the connected center  of $\Gp$, let $\zeta_\omega$ the supersingular character of $\Zz^\c(\Hh_k)$ as in
\ref{supersing-block}. Remark that the representation $\zeta_\omega \otimes_{\Zz^\c(\Hh_k)} \ind_\I^\Gp 1$  of $\Gp$ has  central character  $\omega$. \\

From now on  we suppose that the derived group of $\Gp$ is simply connected and  that $\Corps$ is a finite extension of $\mathbb Q_p$.
\begin{lemma} A character $\cal H(\Gp, \rho)\rightarrow k$   
is parametrized by the pair $(\Gp, \omega)$  in   the sense  of \cite[Proposition 4.1]{Parabind} if and only if 
it corresponds to the supersingular character $\zeta_\omega$  of $\Zz^\c(\Hh_k)$ \emph{via} \eqref{diag}.
\end{lemma}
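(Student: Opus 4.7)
The plan is to trace both sides of the asserted equivalence through the commutative diagram \eqref{diag} and verify that they pull back to the same explicit character of $k[\X_*^+(\T)]$.

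First I would unpack the supersingular character $\zeta_\omega$. By Definition \ref{defi:sup-id}, its kernel contains the ideal $\mathfrak I$, so $\zeta_\omega(z_\lambda) = 0$ for every $\lambda \in \X_*^+(\T)$ with $\ell(e^\lambda) > 0$; combined with the normalization $\zeta_\omega(z_\lambda) = \omega(\lambda(\varpi))$ on coweights of length zero, this determines $\zeta_\omega$ uniquely. Since $\ell(e^\lambda) = 0$ if and only if $\lp\lambda,\alpha\rp = 0$ for all $\alpha \in \Phi$, the vanishing locus is exactly $\X_*^+(\T) \setminus \X_*(Z)$, where $Z$ denotes the connected center of $\Gp$.

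Next I would transport $\zeta_\omega$ leftward along the isomorphism \eqref{bisom} of Proposition \ref{prop:Zmap} to obtain the character $\chi_\omega : k[\X_*^+(\T)] \to k$ defined by $\chi_\omega(\lambda) = \omega(\lambda(\varpi))$ for $\lambda \in \X_*(Z)$ and $\chi_\omega(\lambda) = 0$ otherwise. By the commutativity of \eqref{diag} (Theorem \ref{coro:diag}), the character of $\cal H(\Gp, \rho)$ corresponding to $\zeta_\omega$ is transported from $\chi_\omega$ along $\EuScript T$; equivalently, it is the unique character sending $\EuScript T_\lambda$ to $\omega(\lambda(\varpi))$ when $\lambda \in \X_*(Z)$ and to $0$ on all remaining generators.

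The substantive step is then to compare this description with the parametrization of \cite[Proposition 4.1]{Parabind}. Under the standing hypotheses (derived group simply connected, $\Corps$ a finite extension of $\mathbb Q_p$), the map $\EuScript T$ is the inverse of Herzig's mod $p$ Satake isomorphism by \cite{invsatake}, and the classification in \cite{Parabind} of characters of $\cal H(\Gp, \rho)$ by pairs $(\Pp, \omega)$ is phrased through that very Satake transform. The pair $(\Gp, \omega)$ is, by construction, the one for which the Satake parameter is supported on the invertible sub-monoid of $\X_*^+(\T)$---which here is precisely $\X_*(Z)$---and equals $\omega \circ (\lambda \mapsto \lambda(\varpi))$ there. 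This matches $\chi_\omega$ term by term, giving the equivalence.

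The main obstacle is this last matching: one must unfold the conventions of \cite{Parabind} and verify that its definition of a $(\Gp, \omega)$-parametrized character agrees, after translation through $\EuScript T$, with the vanishing and normalization conditions defining $\chi_\omega$. This reduces to a bookkeeping check on the Satake side, but it is the only step that steps outside the framework internal to the present paper.
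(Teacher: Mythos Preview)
Your proposal is correct and follows essentially the same approach as the paper: both arguments transport characters through the commutative diagram \eqref{diag} and reduce the equivalence to the vanishing condition $\psi(\EuScript T_\lambda)=0$ for $\ell(e^\lambda)\neq 0$ together with the normalization on length-zero coweights. The paper invokes \cite[Corollary 4.2 and Lemma 4.4]{Parabind} to unpack the meaning of ``parametrized by $(\Gp,\omega)$'' and handles the central-character part via the surjection \eqref{quotient-univ}, whereas you compute directly on $\X_*(Z)$; these are cosmetic variants of the same argument.
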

\begin{proof}  In this proof we denote by   $\psi:\cal H(\Gp, \rho)\rightarrow k$ and 
  $\zeta: \Zz^\c(\Hh_k)\rightarrow k$ a pair of characters corresponding to each other  by \eqref{diag}.  
  Recall that $\EuScript T$ denotes the inverse Satake isomorphism \eqref{monsatake1}.
  By \cite[Corollary 4.2]{Parabind} (see also Corollary 2.19 loc.cit),  the character $\psi:\cal H(\Gp, \rho)\rightarrow k$ is parametrized by the pair $(\Gp, \omega)$
 if and only if $\psi \circ \EuScript T(\lambda)=0$ for all $\lambda\in \X_*^+(\T)$ such that 
 %$\lambda(\varpi)$ does not belong to the connected center of $\Gp$, which is equivalent to 
 $\ell(e^\lambda)\neq 0$ and if $\psi\otimes _{\cal H(\Gp, \rho)} \ind_\K^\G\rho$ has central character equal to $\omega$ (see Lemma 4.4 and its proof loc.cit). Since, for all $\lambda\in\X_*^+(\T)$, we have
$\zeta(z_\lambda)=\psi\circ \EuScript T(\lambda)$ and since
$ \psi\otimes_{\cal H(\Gp, \rho)} \ind_\K^\Gp \rho$ is a quotient of $\zeta\otimes_{\Zz^\c(\Hh_k)} \ind_\I^\Gp 1$, we have proved (using  the remark  before the statement of this lemma) that $\psi$ is parametrized by the pair $(\Gp, \omega)$ if and only if $\zeta=\zeta_\omega$.

\end{proof}

%Recall  that a character $\zeta: \cal H(\Gp, \rho)\rightarrow k$ is parameterized by a pair $(\Gp, \omega)$ for some character $\omega$ of the  connected center of $\Gp$ in   the sense  of \cite[Proposition 4.1]{Parabind}.

A smooth irreducible admissible $k$-representation  of $\G$
has a central character. 
A smooth irreducible admissible $k$-representation $\uppi$ with central character $\omega: Z\rightarrow k^\times$
   is called supersingular  with respect to $(\K,\T, \B)$  (\cite[Definition 4.7]{Parabind})
%\mag{(in the sense of \cite{Parabind} et Abe.... voir cela en détail -- aussi Lemma 4.4 de H)} 
if  for all weights $\rho$ of  $\K$,   any  map
$\ind_\K^\Gp\rho\rightarrow \uppi$ factorizes through
$$\zeta_\omega\otimes _{\cal H(\Gp, \rho)}\ind_\K^\Gp\rho\longrightarrow \uppi.$$
 Note that if the first map is zero, then the condition is trivial.  By \eqref{quotient-univ}, a supersingular representation  with  central character $\omega: Z\rightarrow k^\times$ is therefore a quotient of $ \zeta_\omega\otimes_{\Zz^\c(\Hh_k)} \ind_\I^\Gp { 1}$  and, by Definition \ref{defi:sup-id},  of 
 $$ \ind_\I^\Gp { 1}/\mathfrak I \,\ind_\I^\Gp { 1}.$$ 
 
\begin{rema}
\begin{itemize}
\item[i.]The representation  
$\ind_\I^\Gp { 1}/\mathfrak I \,\ind_\I^\Gp { 1}$
 depends only on the conjugacy class of $x_0$. It is independent of all the choices if $\mathbf G$ is of adjoint type or $\mathbf G= {\rm GL}_n$.
 \item[ii.] An irreducible admissible representation $\uppi$ of $\Gp$ is a quotient of 
 $\ind_\I^\Gp { 1}/\mathfrak I \,\ind_\I^\Gp { 1}$ if and only if
 $\uppi^\I$ contains a supersingular $\Hh_k$-module.
 Recall that  when  the root system of $\G$ is irreducible, we have proved that the notion of supersingularity  for $\Hh_k$-modules is 
  independent of all the choices made. 
 \end{itemize}
\end{rema}

\begin{theorem} If $\G= {\rm GL}_n(\mathfrak F)$ or ${\rm PGL}_n(\mathfrak F)$, a
 smooth irreducible admissible $k$-representation $\uppi$   
 %with central character $\omega$ 
 is supersingular if and only if  $\uppi^\I$ contains a supersingular $\Hh_k$-module, that it to say if and only if $\uppi$ it is a quotient of  
\begin{equation}\label{uni}\ind_\I^\Gp { 1}/\mathfrak I \,\ind_\I^\Gp { 1}. \end{equation}
\label{equiv}
\end{theorem}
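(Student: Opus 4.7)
The forward implication, as well as the equivalence of the quotient formulation ``$\uppi$ is a quotient of $\ind_\I^\Gp 1/\mathfrak I \,\ind_\I^\Gp 1$'' with ``$\uppi^\I$ contains a supersingular $\Hh_k$-module'', has already been established in the discussion preceding the statement via the compatibility diagram \eqref{diag} and the remark above the theorem. It therefore suffices to prove the converse, which I plan to establish by contrapositive: assume $\uppi$ is not supersingular, and deduce that $\uppi^\I$ contains no supersingular irreducible $\Hh_k$-submodule.

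I would apply Herzig's classification \cite{Parabind} of irreducible admissible smooth $k$-representations of $\Gp={\rm GL}_n(\Corps)$ (the ${\rm PGL}_n$ case reducing to this via pullback along the projection ${\rm GL}_n\twoheadrightarrow{\rm PGL}_n$, which is compatible with $\I$-invariants and with the supersingular block): one has $\uppi\simeq I(P,\sigma,Q)$ for a unique triple, and $\uppi$ is supersingular if and only if $P=\Gp$. Under our hypothesis one thus has $P=\P_F$ for some proper standard facet $F\neq x_0$, and by Herzig's construction $\uppi$ is a subquotient of the parabolic induction $V=\Ind_{\P_F}^\Gp\tau$ for some admissible representation $\tau$ of $\M_F$ with central character, built from $\sigma$ and a generalized Steinberg representation of the Levi (in the sense of \cite{GK}). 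By Corollary \ref{coro:supercusp}, whose proof crucially uses Lemma \ref{theoA} to express $z_{\lambda_0}$ through the integral Bernstein map $\Bb_F^-$ adapted to $F$ and matched with the embedding $j_F^-$, the $\Hh_k$-module $V^\I$ is finite-dimensional and has no supersingular irreducible subquotient.

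The main obstacle is then to propagate this vanishing from $V^\I$ to $\uppi^\I$. The difficulty is that the functor $(-)^\I$ is only left exact on smooth $k$-representations of $\Gp$, its defect being controlled by $H^1(\I,-)$, so a subquotient realization $\uppi=V_1/V_0\subseteq V$ does not automatically exhibit $\uppi^\I$ as a subquotient of $V^\I$. I plan to circumvent this by exploiting Herzig's explicit geometric construction, in combination with the results of \cite{GK} on generalized special representations, in order to realize $\uppi$ (or a convenient twist thereof, whose effect on $\Hh_k$-modules is given by an unramified character of $\tilde\Omega/\tilde\W_{aff}$ and preserves supersingularity in view of Proposition-Definition \ref{propdefi}) as a genuine subrepresentation of a parabolic induction of the above form, forcing $\uppi^\I\hookrightarrow V^\I$. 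The irreducible $\Hh_k$-subquotients of $\uppi^\I$ are then among those of $V^\I$ and therefore all nonsupersingular, which is the desired conclusion.
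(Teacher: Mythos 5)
You have correctly located the crux of the argument — the failure of exactness of $(-)^\I$ — but your proposed resolution is not the one the paper uses, and as stated it leaves a real gap. You plan to realize $\uppi$ (up to twist) as a genuine \emph{subrepresentation} of a parabolic induction so that $\uppi^\I\hookrightarrow V^\I$. In characteristic zero this kind of maneuver works because the second adjunction lets one pass between quotients and subobjects of parabolic inductions at the cost of a twist by the modulus character; but in characteristic $p$ the modulus character is trivial and no such adjunction is available. The Steinberg representation, for example, is naturally a \emph{quotient} of $\Ind_\B^\Gp 1$, not a subrepresentation, and you give no argument for why it (or more generally a twist of a generalized special representation) should embed into any $\Ind_{\P_F}^\Gp\tau$ with $\tau$ admissible with central character. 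Without that, the inclusion $\uppi^\I\hookrightarrow V^\I$ you need is unjustified.

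The paper's actual route uses \emph{both} Corollaries \ref{coro:supercusp} and \ref{coro:spec} in tandem and never needs $\uppi$ to sit inside a parabolic induction. It invokes Herzig's classification \cite[Theorem 1.1]{Parabind} to reduce to two cases: $\uppi$ is either a parabolic induction from a strict parabolic (covered directly by Corollary \ref{coro:supercusp}), or a twist by a character of $\Gp$ of a generalized special representation ${\rm Sp}_F$. For the latter case, the paper does \emph{not} embed ${\rm Sp}_F$ anywhere; instead it cites the computation \cite[(18)]{GK} that the $\I$-invariant subspace $({\rm Sp}_F)^\I$ is a \emph{quotient} of $(\Ind_{\P_F}^\Gp 1)^\I$ as an $\Hh_k$-module. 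Since a quotient of a module with no supersingular irreducible subquotients again has none, Corollary \ref{coro:supercusp} applied to $\Ind_{\P_F}^\Gp 1$ immediately disposes of this case — this is precisely Corollary \ref{coro:spec}, which your proposal never invokes. In short: the obstruction you identified is real, but it is surmounted by the Grosse-Kl\"onne surjectivity on $\I$-invariants of the special quotient map (a nontrivial direct computation), not by a subobject realization. You should replace your step involving $\uppi^\I\hookrightarrow V^\I$ by an appeal to Corollary \ref{coro:spec}, after using Herzig's classification to split into the two cases the paper treats.
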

\begin{proof} Let $\uppi$ be 
 a
 smooth irreducible admissible $k$-representation of $\G$   with central character $\omega$. If it is a quotient of
 $\ind_\I^\Gp { 1}/\mathfrak I \,\ind_\I^\Gp { 1}$ then it is a quotient of
  $\zeta_\omega\otimes_{\Zz^\c(\Hh_k)} \ind_\I^\Gp {1}$, and $\uppi^\I$ contains the  supersingular character $\zeta_\omega$ of $\Zz^\c(\Hh_k)$. Therefore it contains a supersingular $\Hh_k$-module.
 By Corollaries \ref{coro:supercusp} and \ref{coro:spec}, it implies that
 $\uppi$ is neither a  representation  induced from a strict parabolic subgroup of $\G$ nor  (a twist by a character of $\G$ of)   a generalized special representation. 
By \cite[Theorem 1.1]{Parabind} that classifies all smooth irreducible admissible $k$-representation of $\G$,  we conclude by elimination that
 the representation $\uppi$  is supersingular.  
 \end{proof}

The results of \cite{Parabind}  have been generalized to 
the case of  a $\mathfrak F$-split connected reductive group $\G$ in \cite{abe}: the classification of the  smooth irreducible admissible   representations  of $\G$ is quite similar to the case of  $ {\rm GL}_n(\mathfrak F)$  (expect for a certain subtlety when the root system of $\G$ is not irreducible).
Based on this classification and on Corollaries \ref{coro:supercusp} and \ref{coro:spec}, N. Abe confirmed that  the space of $\I$-invariant vectors of a nonsupersingular representation does not  contain any supersingular $\Hh_k$-module. 
Therefore, Theorem \ref{equiv}   is true  for a general split group with simply connected derived subgroup.

%In the case where the root system of $\Gp$ is not irreducible there is a technical variation and an extra case to consider in the classification of the irreducible nonsupercuspidal  representations.
%N. Abe confirmed that the computation  of the $\I$-invariant subspace in the extra case yields an analogous result, namely this subspace does not contain any supersingular module.
%Therefore, Theorem \ref{equiv}   is true 

%\theendnotes
\end{document}